\documentclass{amsart}
\usepackage{format}

\title{A Microlocal Description of Aubert-Zelevinsky Duality on Unipotent $L$-Parameters}
\date{May 2026}

\begin{document}

\begin{abstract}
    We give a microlocal description of the Aubert--Zelevinsky involution for all unipotent representations of all inner forms of simple adjoint unramified $p$-adic groups. Via the realization of enhanced $L$-parameters as perverse sheaves, we show that the involution corresponds to the composition of three operations on an endoscopic subgroup: Fourier transform, Chevalley involution and duality on local systems.
    When the group is not inner to an unramified triality form of $D_4$ we further show that one does not need to pass to an endoscopic subgroup.
    This was previously verified in certain special examples by several authors where only the contribution by Chevalley involution and Fourier transform was observed. Duality on local systems is invisible in those examples since only self-dual local systems appear.

    Motivated by categorical considerations, we provide a second formulation, involving complex conjugation from the compact form of the dual group, giving a covariant functor of perverse sheaves that agrees with Aubert--Zelevinsky duality on $L$-parameters and as involutions of graded Hecke algebras. This formulation holds without passing to an endoscopic subgroup and is valid for all inner forms of simple adjoint unramified groups. 
    
    Finally, we prove the microlocal Hiraga conjecture for unipotent $A$-parameters of inner-to-split simple adjoint groups as a consequence of our results.

    In order to give a uniform proof of our results we reformulate and clarify several aspects of the construction of the unipotent local Langlands correspondence. 
    This additionally allows us to characterize how various affine and graded Hecke algebras are identified. 
    We prove that there is a `canonical' way to do so by showing that there is a unique isomorphism of graded Hecke algebras compatible with the Kottwitz isomorphism.
    As an application of this, we show that a simple module of the geometric graded Hecke algebra is uniquely determined by certain composition multiplicities coming from the corresponding representation of the $p$-adic group.
    This can be understood as a characterization of the unipotent local Langlands correspondence.
    
\end{abstract}

\maketitle

\section{Introduction}

The Aubert--Zelevinsky duality is an involution of the Grothendieck group of admissible smooth representations of a connected reductive group \(G\) over a non-archimedean local field \(F\). 
It was introduced by Zelevinsky for \(\mathrm{GL}_n\) \cite{zelevinsky}, in general by Aubert \cite{Au}, and as a functor on the bounded derived category of smooth representations with admissible cohomology by Bernstein--Bezrukavnikov--Kazhdan \cite{BBK}. 
It is defined on the Grothendieck group by the formula
\begin{equation}
\AZ([\pi]) = \sum_{P} (-1)^{r(P)} \, i_P^G \circ r_P^G([\pi]) 
\quad \in K_0(\Rep(G)),
\end{equation}
where \(P\) ranges over standard parabolic subgroups of \(G\), and \(r(P)\) denotes the dimension of a maximal split torus in the center of a Levi factor of \(P\).

When \(\pi\) is irreducible, \(\AZ(\pi)\) is irreducible up to sign; we write \(\lvert \AZ(\pi) \rvert\) for the underlying irreducible representation.
Thus the involution also induces a transformation on enhanced \(L\)-parameters.
Explicit combinatorial formulae for this action have been computed for classical groups by M\oe glin--Waldspurger \cite{MW-duality}, Atobe--M\'inguez \cite{AtobeMinguez}, and Lanard--M\'inguez \cite{lanardminguez}. Moreover, the work of Evens and Mircovic \cite{evens1997fourier} on the Iwahori-spherical case strongly suggests a connection between the Aubert-Zelevinsky involution and the Fourier transform.
However, a full geometric realization of this involution at the level of parameters has so far been missing.

The purpose of this paper is to give a microlocal description of the effect of the Aubert--Zelevinsky involution on enhanced \(L\)-parameters in the unipotent setting.
To state our result, we briefly recall the unipotent local Langlands correspondence.

\subsection{The Unipotent Local Langlands Correspondence for Inner Forms of Adjoint Unramified Groups}\label{sec:intro-llc}
Let $W_F$ be the Weil of $F$.
Let $G^*$ be an unramified adjoint group and let $\Inn(G^*)$ denote the inner forms of $G^*$.
Let $G^\vee$ denote the complex Langlands dual group of $G^*$ and let $\Lg G = G^\vee\rtimes W_F$.
Vogan's formulation of the enhanced local Langlands correspondence predicts a bijection between irreducible smooth representations $\pi$ of inner forms $G$ of $G^*$, and conjugacy classes of triples $(\phi,N,\rho)$ where 
\begin{equation}\label{eq:i-c}
    \phi:W_F\to \Lg G
\end{equation}
is a continuous homomorphism with semisimple image and a section of the projection map $\Lg G\to W_F$, $N\in \mathfrak g^\vee$ is nilpotent and satisfies
\begin{equation}\label{eq:1}
    \Ad(\phi(w))(N) = ||w||N, \quad w\in W_F,
\end{equation}
and $\rho\in \Irr(A(\phi,N)))$ is an irreducible representation where $A(\phi,N)$ is the component group of the joint centralizer of $\phi$ and $N$ in $G^\vee$.

In this paper we restrict attention to unipotent parameters, namely those $(\phi,N,\rho)$ for which $\phi$ is trivial on the inertia subgroup of $W_F$.
This case was established by Lusztig \cite{Lu-unip1,Lu-unip2}, who constructed a bijection
\begin{equation}
    \mathbf r_G: \Irr^u(G) \rightarrow \Phi^u(\Lg G)
\end{equation}
where $\Irr^u(G)$ is the class of irreducible representations with so-called unipotent reduction and $\Phi^u(G^\vee)$ is the set of unipotent enhanced $L$-parameters for $G^*$.

In this case the infinitesimal parameter $\phi$ is determined by its value $s:=\phi({\mathrm{Fr}})$ on any lift of Frobenius.
Accordingly, we shall denote a unipotent $L$-parameter by $(s,N,\rho)$, and Equation~\eqref{eq:1} becomes
\begin{equation}
    \Ad(s)(N) = q^{-1} N.
\end{equation}

\begin{remark}
    Some authors use the convention that $||\Fr|| = q$. This raises a subtle normalization issue, which we address in Section~\ref{sec:normalization}.
\end{remark}
\subsection{Operations on $L$-parameters}\label{sec:intro-operations}
In this paper, we will consider the following operations on $L$-parameters. 

\begin{enumerate}
    \item \textbf{Fourier transform:} Consider the variety $V_{(s,q)} := \{ x \in \mathfrak{g}^{\vee} : \Ad(s) x = q^{-1} x \}$. Any $L$-parameter $(s,N, \rho)$ corresponds to a simple perverse sheaf $\mathcal{F} := \IC(\mathcal{O}_N, \rho)$ on $V_{(s,q)}$ where $\mathcal{O}_N$ is the $G^{\vee}(s)$-orbit of $N \in V_{(s,q)}$. The Fourier transform $\check {\mathcal F} $ of $\mathcal F$ is a simple $G^\vee(s)$-equivariant perverse sheaf on $V^*_{(s,q)}$.
    The Killing form restricts to a non-degenerate $G^\vee(s)$-equivariant pairing $B:V_{(s,q)}\times V_{(s^{-1},q)}\to \mathbb C$ and thus induces a $G^\vee(s)$-equivariant isomorphism $\phi_B:V_{(s^{-1},q)}\xrightarrow{\sim} V_{(s,q)}^*.$
    Pulling back along $\phi_B^*$ we obtain a simple perverse sheaf 
    \begin{equation} \phi_B^*\check {\mathcal F} = \IC(\mathcal{O}_{\check{N}}, \check{\rho})  \in \Irr (\Perv_{G^\vee(s)}(V_{(s^{-1},q)}))
    \end{equation}
    for some orbit $\mathcal{O}_{\check{N}} \subset V_{(s^{-1},q)}$ and some local system $\check{\rho}$. This induces the following map on $L$-parameters:
    \begin{equation}
        \FT^u : \Phi^u(\Lg G) \rightarrow \Phi^u(\Lg G), \quad (s,N, \rho) \mapsto (s^{-1}, \check N, \check \rho).
    \end{equation}

    \item \textbf{Chevalley involution:} Let $C : G^\vee \to G^\vee$ (resp. $\underline{C} : \mathfrak{g}^{\vee} \rightarrow \mathfrak{g}^{\vee}$) be the Chevalley involution. Up to inner automorphism, it is characterized by acting as inversion on a maximal torus of \(G^\vee\). The map $C$ can be extended to $\Lg G$ by acting trivially on the Weil group. It induces the following map on $L$-parameters
    \begin{equation}
        C^u : \Phi^u(\Lg G) \rightarrow \Phi^u(\Lg G), \quad (s,N, \rho) \mapsto (C(s),\underline C(N), C_*(\rho)).
    \end{equation}
    
    \item \textbf{Duality on local systems:}
    \begin{equation}
        \mathbb{D}^u : \Phi^u(\Lg G) \rightarrow \Phi^u(\Lg G), \quad (s,N, \rho) \mapsto (s, N, \rho^*).
    \end{equation}
    \item \textbf{Split complex conjugation:}
    Let $\tau : G^{\vee} \rightarrow G^{\vee}$ (resp. $\underline \tau :\mathfrak{g}^{\vee} \rightarrow \mathfrak{g}^\vee$) be the complex conjugation map coming from the split $\mathbb{R}$-form. The map $\tau$ can be extended to $\Lg G$ by acting trivially on the Weil group. It induces the following map on $L$-parameters
    \begin{equation}
        \tau^u : \Phi^u(\Lg G) \rightarrow \Phi^u(\Lg G), \quad (s,N, \rho) \mapsto (\tau(s), \underline{\tau} (N), \tau_*(\rho)).
    \end{equation}

    \item \textbf{Compact complex conjugation:}
    Let $\tau_c : G^{\vee} \rightarrow G^{\vee}$ (resp. $\underline \tau_c :\mathfrak{g}^{\vee} \rightarrow \mathfrak{g}^{\vee}$) be the complex conjugation map coming from the compact $\mathbb{R}$-form. The map $\tau_c$ can be extended to $\Lg G$ by acting trivially on the Weil group. It induces the following map on $L$-parameters
    \begin{equation}
        \tau_c^u : \Phi^u(\Lg G) \rightarrow \Phi^u(\Lg G), \quad (s,N, \rho) \mapsto (\tau_c(s), \underline{\tau}_c (N), (\tau_c)_*(\rho)).
    \end{equation}
    Note that $\tau_c = C \circ \tau$ and thus $\tau_c^u = C^u \circ \tau^u$.

    \item \textbf{Inversion of the compact part:} Any semisimple element has a polar decomposition $s = s_c s_h$. Inverting the compact part yields an operation on $L$-parameters
    \begin{equation}
       \cin^u  : \Phi^u(\Lg G) \rightarrow \Phi^u(\Lg G), \quad (s,N, \rho) \mapsto (s_c^{-1}s_h, N, \rho).
    \end{equation}
    \item \textbf{Operations on an endoscopic group:} Any parameter $(s,N,\rho)\in \Phi(\Lg G)$ gives rise to a parameter $(s_h,N,\rho)\in \Phi(G^\vee(s_c))$.
    When the operations above are applied to $(s_h,N,\rho)$ with respect to the group $G^\vee(s_c)$ we include $s_c$ as a subscript.
    So for example $C_{s_c}^u(s_h,N,\rho)$ denotes applying the Chevalley involution of the group $G^\vee(s_c)$.
\end{enumerate}

\begin{remark}
If the Weil--Deligne group is realised as $W_F\times \mathrm{SL}_2(\mathbb C)$, then the restriction to $\mathrm{SL}_2(\mathbb C)$ is required to be algebraic, so direct composition with complex conjugation does not produce a valid $L$-parameter. The correct replacement is the Galois conjugate parameter
\begin{equation}
    \tau^{G^\vee}_c\circ \phi\circ (\id\times \tau_c^{\mathrm{SL}_2(\mathbb C)}),
\end{equation}
where $\phi:W_F\times \mathrm{SL}_2(\mathbb C)\to \Lg G$ is an $L$-parameter, and it is easy to check that this agrees with $\tau_c^u$ under the usual translation between the two kinds of $L$-parameters. A similar adjustment works for $\tau$.
\end{remark}

With these operations in place, we can state the main result of this paper.

\begin{theoremintro}
    \label{thm:A}[Theorem \ref{thm:A-proof}]
    Let $G^*$ be an adjoint unramified group, $G\in \Inn(G^*)$ and $\pi \in \Irr^u(G)$. 
    Let $\mathbf r_G(\pi) = (s,N,\rho)$. 
    \begin{enumerate}
        \item We have $\mathbf r_G(|\mathscr{AZ}(\pi)|) = (s,N',\rho')$ where
        \begin{equation}\label{eq:endoscopic-formula}
            (s_h,N',\rho') = \mathbb D_{s_c}^u\circ C_{s_c}^u\circ \FT_{s_c}^u(s_h,N,\rho) \qquad \in \Phi(G^\vee(s_c)).
        \end{equation}
        If $G^* \ne \hphantom{ }^3D_4$ then moreover
        \begin{equation}\label{eq:split-formula}
            \mathbf r_G(|\mathscr{AZ}(\pi)|) = \mathbb D^u\circ C^u\circ \FT^u(\mathbf r_G(\pi)).
        \end{equation}
    
        \item We have $\mathbf r_G(|\mathscr{AZ}(\pi)|) = (s,N',\rho')$ where
        \begin{equation}\label{eq:end-conj}
            (s_h,N',\rho') = \tau_{c,s_c}^u\circ \FT_{s_c}^u(s_h,N,\rho)\qquad \in \Phi(G^\vee(s_c)).
        \end{equation}
        Additionally, 
        \begin{equation}\label{eq:anti-holo}
            \mathbf r_G(|\mathscr{AZ}(\pi)|) = \cin^u\circ \tau_c^u\circ \FT^u(\mathbf r_G(\pi)).
        \end{equation}
    
        \item $\mathscr{AZ}([\pi]) = (-1)^d|\mathscr{AZ}(\pi)|$ where $d$ is the dimension of the connected center of the cuspidal support of $\mathbf r_G(\pi)$.
    \end{enumerate}
\end{theoremintro}
Our results build on the work of Evens--Mirkovi\'c \cite{evens1997fourier} which relates the Fourier transform to an involution of the unramified principal series Bernstein component via the Borel--Casselman equivalence and the reduction theorems of Lusztig and Barbasch--Moy \cite{lusztig1989affine, barbasch1993reduction}. 
This involution does not correspond directly to the Aubert--Zelevinsky dual, and using it (or rather it's precursor on the graded Hecke algebra) to obtain a full description of Aubert--Zelevinsky duality remains a challenging problem.

Through computations in many examples, Aubert-Zelevinsky duality has been conjectured by several authors to correspond to the composition of Fourier transform and the Chevalley involution (see for example \cite{Cunningham,cunningham-g2,g2-qin,f4arthur}).
However, the Fourier transform and Chevalley involution alone are not enough to obtain a complete description. 
A counter-example is supplied by the non-trivial inner form ${}^3E_6$ of split $E_6$. 
If $(s,N,\rho)$ is the $L$-parameter of a unipotent supercuspidal representation of $^3E_6$ \cite[\S6.12]{morris-sc}, then the restrictions of $\rho$ and $\rho\circ C$ to the center $Z_{G^\vee}\simeq \mathbb Z/3\mathbb Z$ are dual to each other, and the Fourier transform does not change this. Therefore they parameterize representations of different inner forms of $E_6$, whereas the Aubert--Zelevinsky dual produces a representation of the same group. 
Nonetheless, the appearance of $\mathbb D^u$ in Theorem \ref{thm:A} as the necessary correction term is not necessarily surprising. 
An analysis of the involution in \cite{evens1997fourier} relates the Fourier transform to Aubert--Zelevinsky duality and contragredient. 
A formula of this kind appears in \cite[Equation 1.3]{ciubotaru-kim} and was likely known to experts already around the time of Evens--Mirkovi\'c's paper.
The Adams--Vogan conjecture proposes a connection between the contragredient and the composition $\mathbb D^u\circ C^u$, and its application here correctly predicts the necessary $\mathbb D^u$ correction term.
For split groups, we show that this modification is all that is required. 
We also show that the connection fails for the unramified outer triality forms $^3D_4$ of $D_4$.

The reason for the failure is that the Fourier transform produces a sheaf on the $q$-eigenspace of $s$, but the Chevalley involution does not necessarily map it back to the $q^{-1}$-eigenspace since it only inverts the conjugacy class in the $G^\vee$-factor and not the $W_F$-factor of $\Lg G$ (see Remark \ref{rmk:quadratic-restriction}).
In particular, when $G^*$ is an outer triality form, the eigenvalues of Frobenius on $\mathfrak g^\vee$ are cube roots of unity and therefore may differ from their own inverse.
We are able to side-step this issue by applying the operations to an endoscopic subgroup which corresponds to split group.
This gives us Equation \ref{eq:endoscopic-formula}. 
An endoscopic term is not unexpected given the work of Hiraga \cite{Hiraga_2004}, although we do not use his results in this paper.

We further note that this failure is quite unexpected. 
However recent categorical developments in the local Langlands correspondence \cite{zhu-categorical-llc, hansen2026categoricallocallanglandsconjecture} provide a clear obstruction coming from considerations of variance. 
In Equation \ref{eq:split-formula}  the natural extension of $\mathbb D^u$ to a functor of perverse sheaves is Verdier duality.
This has the opposite variance of Aubert--Zelevinsky duality and the Chevalley involution, thereby precluding any natural equivalence between the functorial extensions of the operations.

This problem motivates Theorem \ref{thm:A}(2) which arises from the construction of a covariant functorial extension of $\mathbb D^u$. The elementary identity for characters $\chi$ of finite groups, $\chi^*(g)=\chi(g^{-1})$, provides the basis for our approach. 
We show that complex conjugation $\tau$ with respect to the split form of $G^\vee$ sends each conjugacy class in the component group of $G^\vee(\phi,N)$ to its inverse, and therefore realises the operation $\mathbb D^u$ for hyperbolic infinitesimal parameters. 
The identity $C=\tau\circ\tau_c$ allows us to simplify the composition $\mathbb D^u\circ C^u$ to the single operation $\tau_c^u$. The inversion map $\cin^u$ is needed to correct the compact part of the infinitesimal parameter if one wishes to avoid passing to an endoscopic subgroup.
This yields \ref{eq:anti-holo}, which also covers the previously problematic case $G^*=\hphantom{ }^3D_4$.

Finally, we note that although complex conjugation arises from natural categorical considerations, its appearance is unusual because Aubert--Zelevinsky duality also makes sense over $l$-adic coefficient fields. 
We do not have a good conceptual explanation for why the compact real structure should be relevant.
However, our results are stronger than just Equation \ref{eq:anti-holo} and we show that these operations agree as involutions of graded Hecke algebras (see Section \ref{sec:intro-outline}).
This provides evidence that it is the correct generalisation to any categorical framework over complex coefficients, compatible with the construction of the unipotent local Langlands correspondence.

In this paper we will give a uniform proof of Theorem \ref{thm:A} for all unipotent irreducible representations.
Since it will be important to track the action of geometric operations on Bernstein components, we have found it convenient to reformulate and characterize several aspects of the construction of the unipotent local Langlands correspondence.
We now explain this reformulation before giving an outline of our proof.

\subsection{Construction of the unipotent Local Langlands Correspondence}\label{sec:ullc}
The main difference in our construction of the unipotent local Langlands correspondence from the traditional approach of \cite{Lu-unip1,Lu-unip2,solleveld-unip} is that we use the Bernstein progenerator rather than the type theoretic Hecke algebra.
This perspective clarifies the role of the graded Hecke algebra and how it relates to intertwining operators on both the $p$-adic and geometric sides of the correspondence.

The set $\Irr^u(G)$ of irreducible representations with unipotent reduction consists of those representations containing the lift of a unipotent cuspidal representation in some parahoric restriction.
The full subcategory of objects whose subquotients lie in this set decomposes as a product of Bernstein components,
\begin{equation}
    \Rep^u(G) = \prod_{\mathfrak{s}} \Rep^{\mathfrak{s}}(G),
\end{equation}
where the product runs over all unipotent inertial classes $\mathfrak{s}$.

Each unipotent inertial class $\mathfrak{s}$ admits a progenerator $\Pi \in \Rep_{\mathfrak{s}}(G)$ whose endomorphism algebra can be identified with an affine Hecke algebra $\bfH$ (specialized at $v = q^{1/2}$), yielding an equivalence
\begin{equation}
    \Rep^{\mathfrak{s}}(G) \cong \Rep_{\mathbf{v}=q^{1/2}}(\bfH).
\end{equation}
The construction of $\Pi$, and the identification of $\End(\Pi)$ with an affine Hecke algebra, depend on a choice of parabolic $Q$ for $M$ and a base point $(M, \tau_0) \in \mathfrak{s}$.
Let $\mathfrak{r} = [M, \tau_0]_M$ denote the inertial class of $(M, \tau_0)$ in $M$, and let $\mathfrak{r}_u \subset \mathfrak{r}$ denote the subset of unitary supercuspidal representations.
For each $\tau \in \mathfrak{r}_u$, let $\Rep_{\tau, \mathbb{R}_+^*}(G)$ denote the full subcategory of $\Rep(G)$ consisting of representations whose subquotients have supercuspidal support of the form $(M, \tau \otimes \chi)$, where $\chi \colon M \to \mathbb{R}_+^*$ is an $\mathbb{R}_+^*$-valued unramified character.
This yields a refinement of the finite-length subcategory of the Bernstein component,
\begin{equation}
    \Rep^{\mathfrak{s},fl}(G) = \bigoplus_{\tau \in \mathfrak{r}_u} \Rep^{fl}_{\tau, \mathbb{R}_+^*}(G).
\end{equation}
Associated to $\tau$ is also a graded Hecke algebra $\bfH_{[\tau]}$, obtained from $\bfH$ by the reduction procedure recalled in Section~\ref{sec:reduction}, and there is an equivalence of finite-length categories
\begin{equation}\label{eq:intertwining}
\Rep^{fl}_{\tau,\mathbb R_+^\times}(G)\;\simeq\;\Mod^{fl}_{\mathbb R, \mathbf r=r}(\bfH_{[\tau]})
\end{equation}
where $r = \frac 12\log(q)$.

The assignment of $L$-parameters proceeds as follows. 
We assume we are given a local Langlands correspondence for supercuspidal representations.
In this paper this will be supplied by \cite{feng-opdam-solleveld}.
If $\tau$ has $L$-parameter $(s,N,\rho)\in\Phi^u(M^\vee)$, then the pair $(N,\rho)$ determines a cuspidal local system on the group $M^\vee(s_c)$.
Following Section \ref{sec:geometric-graded}, we may therefore form the graded Hecke algebra
\begin{equation}
\gH:=\gH(G^\vee(s_c),Q^\vee(s_c),M^\vee(s_c),N,\rho)
\end{equation}
which is realized as the Ext-algebra of the parabolically induced sheaf of $\IC(N,\rho)$ from $M^\vee(s_c)$ to $G^\vee(s_c)$.
The simple modules of $\gH$ are parametrized by certain quadruples
\begin{equation}(\sigma,N,\rho,r)\in\underline\Phi(G^\vee(s_c)) \qquad \text{(Definition \ref{def:graded-parameters})}
\end{equation}
where in particular $\sigma$ is a semisimple element in the Lie algebra of $G^{\vee}(s_c)$. 

The passage from the $p$-adic group to the complex dual group comes from identifying the two algebras $\bfH_{[\tau]}$ and $\gH$; in Theorem~\ref{thm:B} we show that there is a canonical way to do so (given the supercuspidal correspondence we have fixed above).
Let $\pi \in \Irr_{\tau, \mathbb{R}_+^*}(G)$ correspond to the simple module $\ubL$ of $\gH$ under the above chain of equivalences.
Let $\underline{\iota}$ denote the normalizing involution of Definition~\ref{def:graded-involution}, and let $(\sigma, N', \rho', -r)$ be the parameter of $\underline{\iota}^* \ubL$.
Then $\pi$ is assigned the $L$-parameter
\begin{equation}
    (s_c\exp(\sigma),\, N',\, \rho').
\end{equation}
Note that usage of $\underline{\iota}$ is by convention, and we refer to Section \ref{sec:normalization} for a discussion on normalizations.

The following theorem is the main result of Section~\ref{sec:llc}.

\begin{theoremintro}\label{thm:B}
\textnormal{(Theorem~\ref{thm:kott-compatibility} $+$ Corollary~\ref{cor:weights-for-LLC-module})}
The Kottwitz isomorphism canonically identifies the algebras $\gH$ and $\bfH_{[\tau]}$, and the simple module $\ubL$ encoding the $L$-parameter of $\pi$ is uniquely determined by the composition multiplicities of its minimal Jacquet module. More precisely:
\begin{enumerate}
    \item
    Let $\mathfrak{z}_{M^\vee,\vartheta}$ and $\mathfrak{X}_{\un}(M)$ be the Lie algebras of 
    $Z(M^\vee)_\vartheta$ and $X_{\un}(M)$ respectively. Via the natural inclusions
    \begin{equation}
        \mathbb{C}[\mathfrak{z}_{M^\vee,\vartheta}] \to \gH, \qquad 
        \mathbb{C}[\mathfrak{X}_{\un}(M)] \to \bfH_{[\tau]},
    \end{equation}
    the derivative of the Kottwitz isomorphism $\kott \colon X_{\un}(M) \xrightarrow{\sim} Z(M^\vee)_\vartheta$ extends uniquely to an isomorphism $\gH \xrightarrow{\sim} \bfH_{[\tau]}$.

    \item
    For $\pi' \in \Rep^{fl}(G)$ and $\tau' \in \Irr(M)$, let
    \begin{equation}
        m(\pi':\tau') = [r_{Q,M}^G(\pi'):\tau']
    \end{equation} 
    denote the composition multiplicity of
    $\tau'$ in the Jacquet module $r_{Q,M}^G(\pi')$. Then $\ubL$ is the unique 
    $\gH$-module with weights
    \begin{equation}
        \sum_{\sigma \in \mathfrak{z}_{M^\vee,\vartheta,\mathbb{R}}}
        m\!\left(\pi : \tau \otimes \kott^{-1}(\exp(\sigma))\right)
        [\ubA_{(\sigma,\, r)}],
    \end{equation}
    where $\mathfrak{z}_{M^\vee,\vartheta,\mathbb{R}}$ is the split real form of $\mathfrak{z}_{M^\vee,\vartheta}$, $r = \frac{1}{2}\log(q)$, and $\ubA_{(\sigma,r)}$ denotes the simple module of $\mathbb C[\mathfrak z_{M^\vee,\vartheta}]\otimes \mathbb C[\mathbf r]$ of weight $(\sigma,r)$. 
    In particular, the $L$-parameter of $\pi$ is uniquely
    recovered from these multiplicities.
\end{enumerate}
\end{theoremintro}

The uniqueness claims follow from general rigidity results on graded Hecke algebras proved in Section \ref{sec:rigidity}.
In particular we show that a graded isomorphism of graded Hecke algebras is uniquely determined by its restriction to its polynomial subalgebra.
We also use this trick at various points in the proof of Theorem \ref{thm:A}, and believe it may be a useful result of independent interest for other functoriality questions.

\subsection{Proof Outline}\label{sec:intro-outline}
We now sketch the strategy of the proof of Theorems \ref{thm:A}.
In the unipotent local Langlands correspondence, any parameter $(s,N,\rho)$ can be described (up to $G^\vee$-conjugacy) by a quadruple
\begin{equation}
(u,P,\mathbf c,\underline{\mathbf L}),
\end{equation}
where $u$ is a compact semisimple element of $G^\vee\rtimes \mathrm{Fr}$, $\mathbf c=(L,\mathcal C,\mathcal F)$ is a cuspidal datum for $G^\vee(u)$ in the sense of \cite{lusztig1988cuspidal}, $P$ is a parabolic subgroup of $G^\vee(u)$ with Levi factor $L$, and $\underline{\mathbf L}$ is a simple module over the graded Hecke algebra
\begin{equation}
\gH(G^\vee(u),P,\mathbf c)
\end{equation}
with real central character.
We refer to the triple $(u,P,\mathbf c)$ as the \emph{geometric support} of the parameter $(s,N,\rho)$.

We show that each of the operations we consider can be realized via some map 
\begin{equation}
    \alpha:(u, P, \mathbf{c}) \mapsto (u', P', \mathbf{c}')
\end{equation}
on geometric supports together with pushing forward irreducible representations via some isomorphism of the corresponding graded Hecke algebras 
\begin{equation}
     \beta: \gH(G^\vee(u),P,\mathbf c) \rightarrow \gH(G^\vee(u'),P',\mathbf c').
\end{equation}

Therefore to prove Theorem \ref{thm:A}, it suffices to show that the corresponding maps on geometric support and Hecke algebras match. Thus, the main technical part of this paper is proving the following graded Hecke algebra descriptions of the operations we consider.
\begin{theoremintro}[Theorem \ref{thm:param-to-graded}]
For any cuspidal datum $\mathbf{c} = (L, \mathcal{C}, \mathcal{F})$ denote the dual cuspidal datum by $\mathbf{c}^* = (L, \mathcal{C}, \mathcal{F}^*)$.
Let $t_s$ ($s \in S$) and $x \in \mathfrak{z}_L^*$ be the standard generators of the corresponding graded Hecke algebra (cf. Section \ref{sec:geometric-graded}) and denote by $w_0$ the longest element in the corresponding (relative) Weyl group. The operations on $L$-parameters defined above may be described in terms of geometric parameters as follows.
    \begin{alignat}{3}
\intertext{\hspace{\parindent}(i) \textbf{Aubert--Zelevinsky duality:}}
    &\alpha(u, P, \mathbf{c}) = (u, P, \mathbf{c}), 
        &\qquad &\beta(t_s) = -t_{w_0(s)}, 
        &\qquad &\beta(x) = w_0(x).
\intertext{\hspace{\parindent}(ii) \textbf{Fourier transform:}}
    &\alpha(u, P, \mathbf{c}) = (u^{-1}, P, \mathbf{c}), 
        &\qquad &\beta(t_s) = -t_{s}, 
        &\qquad &\beta(x) \mapsto -x.
\intertext{\hspace{\parindent}(iii) \textbf{Compact complex conjugation:}}
    &\alpha(u, P, \mathbf{c}) = (u, P, \mathbf{c}), 
        &\qquad &\beta(t_s) = t_{w_0(s)}, 
        &\qquad &\beta(x) = -w_0(x).
\intertext{\hspace{\parindent}(iv) \textbf{Inversion of the compact part:}}
    &\alpha(u, P, \mathbf{c}) = (u^{-1}, P, \mathbf{c}), 
        &\qquad &\beta(t_s) = t_{s}, 
        &\qquad &\beta(x) = x.
\intertext{\hspace{\parindent}(v) \textbf{Chevalley involution:} (assuming $G^*$ is not a triality form of $D_4$)}
    &\alpha(u, P, \mathbf{c}) = (u^{-1}, P, \mathbf{c}^*), 
        &\qquad &\beta(t_s) = t_{w_0(s)}, 
        &\qquad &\beta(x) = -w_0(x).
\intertext{\hspace{\parindent}(vi) \textbf{Duality on local systems:}}
    &\alpha(u, P, \mathbf{c}) = (u, P, \mathbf{c}^*), 
        &\qquad &\beta(t_s) = t_{s}, 
        &\qquad &\beta(x) = x.
\end{alignat}
\end{theoremintro}
For Aubert--Zelevinsky duality, we prove the result above by first identifying the duality with the twisted Iwahori-Matsumoto involution of the affine Hecke algebra.
This is achieved by checking that all parabolic subgroups containing a given Levi subgroup supporting a unipotent supercuspidal representations are conjugate, which allows us to apply a classical result of Kato.
We then pass the twisted Iwahori--Matsumoto involution through the reduction to graded Hecke algebras.
This is a somewhat delicate process which we simplify by proving in Section \ref{sec:mod-rigid} that it essentially suffices to check what happens on weights. 
In Section \ref{sec:reduction} we compute precisely the effect of the reduction process on weights (Lemma \ref{lem:red-weights}) and identify certain hidden symmetries implied by the reduction theorem of Barbasch--Moy (Lemma \ref{lem:red-weights-at-different-z}).
Theorem \ref{thm:reduction} then relates the twisted Iwahori--Matsumoto involution under the reduction process to the description above.

For the Fourier transform the description above was essentially obtained in \cite{evens1997fourier}, though the effect on geometric support was not explicitly considered. The corresponding map on the graded Hecke algebra is called the (graded) Iwahori-Matsumoto involution in \cite{evens1997fourier}, but we remark that this terminology is inconsistent with the identically named involution of the affine Hecke algebra, since they do not correspond to each other under the reduction theorems (Theorem \ref{thm:reduction}).

The description of the Chevalley involution is fairly easy to obtain since it is induced by an automorphism of root data. The only non-trivial part is computing the effect on the cuspidal datum $\mathbf{c}$ which we do in Proposition \ref{prop:chevalley}.

For the compact complex conjugation and duality on local systems, we first analyse the effect of split complex conjugation $\tau$ on the graded Hecke algebras (Section \ref{sec:complex-conjugation}). 
Through the geometric realization of graded Hecke algebras we show that $\tau$ induces an isomorphism $\gH(H,P,\mathbf c)\to \gH(H,P,\mathbf c^*)$ which is essentially the identity on the presentation (up to a normalizing factor $\kappa$, see Corollary \ref{corollary: complex conj on graded Hecke}).
Note that while complex conjugation is a conjugate-linear map on $\mathfrak{h}$, the induced map on the graded Hecke algebra is nonetheless $\mathbb{C}$-linear.
We then compute the effect of pushing modules along this isomorphism on the parameter set $\underline\Phi(H)$.
Using the geometric construction of standard modules from \cite{lusztig1988cuspidal,lusztig1995cuspidal,lusztig2002cuspidal} and a careful analysis of the real structure of the parameter space, we show that the induced map on parameters is actually given by duality on local systems $(\sigma,N,\rho,r)\mapsto (\sigma,N,\rho^*,r)$. 
Finally, when $\sigma$ and $r$ are real, we show that this coincides with the naive expectation $(\sigma,N,\rho,r)\mapsto(\underline\tau(\sigma),\underline\tau(N),\tau_*\rho, \tau(r))$ (though this breaks down if $\sigma$ is not real since $\sigma$ and $\tau(\sigma)$ may not be $H$-conjugate).
This implies both our description of duality on local systems and compact complex conjugation using that $\tau_c  = \tau \circ C$.

\subsection{Microlocal A-packets and the Hiraga Conjecture}
One motivation for this work is the microlocal Hiraga conjecture \cite{Hiraga_2004} which predicts a simple relationship between the Aubert--Zelevinsky dual and microlocal $A$-packets.

Microlocal \(A\)-packets, introduced in \cite{Vogan1993,Cunningham}, provide a geometric candidate for Arthur's endoscopic $A$-packets.
Given an $A$-parameter
\begin{equation}
\psi \colon W_F \times \mathrm{SL}_2(\mathbb C) \times \mathrm{SL}_2(\mathbb C) \longrightarrow {}^LG,
\end{equation}
we denote by \(\Pi^{mic}_\psi\) the associated microlocal \(A\)-packet.
The microlocal Hiraga conjecture states that
\begin{equation}
\mathscr{AZ}(\Pi^{mic}_\psi) = \Pi^{mic}_{\psi^t},
\end{equation}
where \(\psi^t\) is obtained from \(\psi\) by transposing the two \(\mathrm{SL}_2(\mathbb C)\)-factors: $\psi^t(w,x,y) = \psi(w,y,x)$.
We prove as a simple consequence of Theorem \ref{thm:A} that this expectation holds for unipotent A-parameters of adjoint inner-to-split groups.
\begin{theoremintro}(Theorem \ref{thm:A-packets})\label{thm:D}
    The microlocal Hiraga conjecture holds for unipotent Arthur parameters of inner forms of adjoint split groups.
\end{theoremintro}


We note that this problem has previously been studied by several authors, see for example \cite{Cunningham,cunningham-g2,f4arthur} for computations in various cases.
For endoscopic (as opposed to microlocal) $A$-packets of the quasi-split classical groups $\mathrm{Sp}_{2n}(F)$, $\mathrm{SO}_{2n+1}(F)$, $O_{2n}(F)$, and $U_n(F)$, the Hiraga conjecture is known to hold unconditionally by the work of \cite{Hiraga_2004,Arthur2013,agikms}.



\subsection{Acknowledgements}
The authors would like to give special thanks to Ruben La for many helpful discussions. 
His input was particularly valuable in the early stages of this project.
We also wish to thank Dan Ciubotaru for helpful discussions and feedback on an earlier draft of this paper, and Wee Teck Gan for support of a visit to the National University of Singapore related to this work.
We also thank Anne-Marie Aubert, Maarten Solleveld, and Teruhisa Koshikawa for several helpful comments and for corrections to a previous version of this paper.

\section{Notation}
Let $F$ be a non-Archimedean local field with residue characteristic $p$ .
Let $q$ be the order of the residue field of $F$.
Fix an algebraic closure $\bar F$ of $F$.
For an algebraic group $G$ defined over $F$ and a field extension $L/F$, let $G_L$ denote the base change of $G$ to $L$.
We will also write $G$ for the $F$-points of $G$.

Let $G^*$ be a unipotent adjoint connected reductive group over $F$ and let $\Inn(G^*)$ denote the inner forms of $G^*$.
Let $G^\vee$ denote the complex dual group of $G^*$ and $\Lg G = G^\vee\rtimes W_F$ be the $L$-group of $G^*$.



Any $F$-rational automorphism $\phi:G\to G$ induces a pinned automorphism $\phi^\vee$ of $G^\vee$ that commutes with the Weil group action and hence defines a map $\Lg \phi:\Lg G\to \Lg G$ given by $\Lg \phi(g,w) = (\phi^\vee(g),w)$.

For reductive algebraic group $G$ over $F$ let $\mathfrak z(G)$ denote the set of inertial equivalence classes $[M,\tau]_G$ where $M$ is a Levi subgroup of $G$ and $\tau$ is a supercuspidal representation of $M$.
We write $\Rep^{\mathfrak s}(G)$ for the Bernstein component corresponding to $\mathfrak s$.
Let $\Rep^{fl}_{(M,\tau)}(G)$ denote the full subcategory of $\Rep(G)$ consisting of finite length representations whose composition factors have cuspidal support $(M,\tau)_G$.
For a Levi subgroup $M\subset G$, write $\mathcal Q(M)$ for the set of (rational) parabolic subgroups with Levi component $M$.
When we wish to emphasize the ambient group we write $\mathcal Q_G(M)$.

A parabolic subgroup $\Lg P$ of $\Lg G$ is a closed subgroup whose projection to $W_F$ is surjective and such that $P^\vee := \Lg P \cap G^\vee$ is a parabolic subgroup of $G^\vee$.
Every parabolic is conjugate to a parabolic of the form $P^\vee\rtimes W_F$ where $P^\vee$ is a $W_F$-stable parabolic of $G^\vee$.
We call parabolic subgroups of this form semi-standard.
A Levi subgroup of $\Lg G$ is the normalizer of a Levi factor of $P^\circ$ for some parabolic $\Lg P$ of $\Lg G$.
Given a Levi subgroup $\Lg M$ of $\Lg G$ we write $\mathcal P(\Lg M)$ for all the parabolic subgroups of $\Lg G$ with Levi-factor $\Lg M$. We write $M^\vee := \Lg M\cap G^\vee$.
Note that every rational parabolic $Q$ of $G$ determines a parabolic $\Lg Q$ of $\Lg G$.

For a complex algebraic torus let $X(T)$ denote the set of complex algebraic characters of $T$.
Similarly for any finite or diagonalizable group $\Gamma$ let $X(\Gamma)$ denote the set of homomorphisms $\Gamma\to \mathbb C^\times$.

For a complex reductive group $H$ let $\mathfrak h$ denote its Lie algebra.
For a map $\phi:H_1\to H_2$ between reductive groups let $\underline\phi:\mathfrak h_1\to \mathfrak h_2$ denote the resulting map on Lie algebras.
More generally, when $\phi$ is a smooth map between real groups, we write $\underline\phi$ fore its derivative at the identity. We will apply this convention to antiholomorphic maps of complex groups viewed as a smooth map of real groups.

Given $h\in H$ (resp. $x\in \mathfrak h$) define $H(h)$ (resp. $H(x)$) to be the centralizer of $h$ (resp. $x$).
Given a set $S$ of elements in $H$ or $\mathfrak h$ let $H(S) = \bigcap_{s\in S}H(s)$.
Given a homomorphism $\phi:\Gamma\to H$ (where $\Gamma$ is an abstract group, let $H(\phi) = H(\im\phi)$.

Let $H_{der}$ denote the derived group of $H$, $H_{ad}$ the associated adjoint group of $H_{der}$ and $H_{sc}$ the associated simply connected group.
If $H'\to H_{ad}$ is any isogeny, then $H'$ acts on $H$ by conjugation through this map, and we write $H'(h)$ for the centralizer of $h$ in $H'$ where $h$ is an element of $H$ or any of the variations considered in the previous paragraph.

Given and abelian category $\mathcal A$ and $M\in \mathcal A$ we denote by $[M]$ the corresponding element in the Grothendieck group $K_0(\mathcal A)$.

If $G$ acts on sets $X_1 ,X_2,...$ (usually by conjugation) then we denote by $(a,b,...)_G$ the orbit (usually conjugacy class) of $(a,b,...) \in X_1 \times X_2 \times ...$.
We will often omit this when it is clear from context.
\subsection{The polar decomposition}
Recall the polar decomposition
\begin{equation}
    \mathbb{C}^{\times} = S^1 \times \mathbb{R}^{\times}_+.
\end{equation}
For a (possibly disconnected) complex reductive group $H$ we say that $g\in H$ is compact (resp. hyperbolic) semisimple if for any $H$-representation $V$, the element $g$ acts semisimply on $V$ and all its eigenvalues lie in $S^1$ (resp. $\mathbb R_+^{\times}$). The following polar decomposition is well-known, but we include a proof since we couldn't find a reference that includes the disconnected setting. This will be relevant later since we want to be able to consider the polar decomposition in groups of the form $G^{\vee} \rtimes \langle \vartheta \rangle$ where $\vartheta$ is a finite order automorphism of $G^{\vee}$.
\begin{lemma}\label{lemma: polar decomposition}
    Let $H$ be a (possibly disconnected) complex algebraic group and $s \in H$ semisimple. Then there is a unique decomposition $s = s_h s_c$ such that $s_h$ is hyperbolic semisimple, $s_c$ is compact semisimple and $s_c$ commutes with $s_h$. Moreover, any element that commutes with $s$ also commutes with $s_c$ and $s_h$.
\end{lemma}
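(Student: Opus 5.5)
The plan is to reduce to the diagonalizable group generated by $s$. Let $A=\overline{\langle s\rangle}$ be the Zariski closure of the cyclic subgroup generated by $s$ in $H$. Since $s$ is semisimple, $A$ is a commutative diagonalizable group, i.e.\ $A\cong T\times F_0$ with $T=A^\circ$ a torus and $F_0$ finite. (To see this, embed $H\hookrightarrow \mathrm{GL}(V)$ faithfully; $s$ is diagonalizable there, so $A$ lies in a diagonal torus of $\mathrm{GL}(V)$.) For a diagonalizable group $A$ we have $A=\Hom(X(A),\mathbb C^\times)$. The decomposition $\mathbb C^\times=S^1\times\mathbb R_+^\times$ then induces a canonical splitting $A=A_c\times A_h$, where $A_c=\Hom(X(A),S^1)$ and $A_h=\Hom(X(A),\mathbb R^\times_+)$. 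This gives the decomposition $s=s_cs_h$ with $s_c,s_h\in A$; they commute since $A$ is commutative.

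Next I check that this decomposition has the required properties. Any $H$-representation $W$ restricts to $A$ and decomposes into characters $\chi\in X(A)$. On the $\chi$-isotypic part, $s_c$ acts by $\chi(s)/|\chi(s)|\in S^1$ and $s_h$ by $|\chi(s)|\in\mathbb R_+^\times$. So both elements are semisimple, and they are compact, respectively hyperbolic, in the sense of the definition.

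The commuting statement comes from a closure argument. If $g$ commutes with $s$, then the centralizer $H(g)$ is Zariski closed and contains $s$, hence contains $A$, and in particular $s_c$ and $s_h$.

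The main step is uniqueness. Suppose $s=s_c's_h'$ is another such decomposition with $s_c'$ and $s_h'$ commuting. Then $s_c'$ and $s_h'$ commute with $s$, hence with $s_c$ and $s_h$ by the previous paragraph. So $B=\overline{\langle s_c',s_h',s_c,s_h\rangle}$ is a commutative group consisting of semisimple elements, since it is generated by commuting semisimple elements. Hence $B$ is diagonalizable. Fix a faithful representation $V$ of $H$ and decompose it into $B$-weight spaces. On each weight line, $s_c'$ and $s_c$ act by scalars in $S^1$, and $s_h'$ and $s_h$ act by scalars in $\mathbb R_+^\times$, with both products equal to the eigenvalue of $s$. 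Uniqueness of the polar decomposition in $\mathbb C^\times$ forces equal scalars on every weight line. Faithfulness of $V$ then gives $s_c=s_c'$ and $s_h=s_h'$.

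The only subtlety is the disconnectedness of $H$, but it plays no role: all arguments take place inside the commutative diagonalizable subgroups $A$ and $B$ of a linear group, and the definitions are phrased via representations. The identity component of $H$ is never needed.
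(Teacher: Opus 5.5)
Your proof is correct and follows the paper's argument. Existence comes from the splitting $A=A_c\times A_h$ of the diagonalizable closure of $\langle s\rangle$, the commuting statement from the closure argument, and uniqueness from comparing the two decompositions among commuting semisimple elements. Your weight-space comparison inside $B$ is just an explicit form of the paper's observation that $(s_c')^{-1}s_c=s_h's_h^{-1}$ is both compact and hyperbolic, hence trivial.
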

\begin{proof}
    Let us first prove existence. Let $D:= \langle s\rangle  \subset H$ be the smallest closed subgroup containing $s$. Since $s$ is semisimple, $D$ is diagonalizable. Hence, there is a decomposition $D = D_c \times D_h$ where $D_c$ (resp. $D_h$) consists of all elements $t \in D$ such that $\chi(t) \in S^1$ (resp. $\chi(t) \in \mathbb{R}^{\times}_+$) for all characters $\chi \in X(D)$. In particular, we can write $s = s_c s_h$ where $s_c \in D_c$ and $s_h \in D_h$. Note that $s_c$ and $s_h$ commute since $D$ is abelian. Moreover, since $D$ is diagonalizable, we get that $s_c$ is compact semisimple and $s_h$ is hyperbolic semisimple. Note that if $g\in H$ commutes with $s$ then $g$ centralizes $D$. Hence, any such $g$ commutes with $s_c, s_h \in D$.

    To prove uniqueness, let us assume we have another decomposition $s =  s'_c s'_h$. Since $s_c'$ and $s_h'$ commute with $s$, the also commute with $s_c$ and $s_h$. Note that the product of two commuting compact (resp. hyperbolic) semisimple elements is again a compact (resp. hyperbolic) semisimple element. Thus $(s_c')^{-1}s_c = s_h's_h^{-1}$ is both a compact and a hyperbolic semisimple element. Hence it is the trivial element. This shows that $s_c = s_c'$ and $s_h = s_h'$.
\end{proof}
\subsection{Affine Hecke Algebras}\label{sec:affine-hecke}
\begin{definition}[Bernstein presentation, {\cite[Prop. 3.6, 3.7]{lusztig1989affine}}]\label{def:bernstein}
Let $\Phi = (X,X^\vee,R,R^\vee)$ be a root datum and $\Delta\subset R$ a set of simple roots. A parameter set $(\lambda,\lambda^*)$ consists of $W$-invariant functions $\lambda:R\to \mathbb Z_{\ge0}$ and $\lambda^*:\{\alpha\in R \mid \alpha^\vee\in 2X^\vee\}\to \mathbb Z_{\ge0}$. The associated \emph{affine Hecke algebra} $\bH = \bH^{\lambda,\lambda^*}(\Phi,\Delta)$ is defined to be the unique associative unital $\C[\mathbf v,\mathbf v^{-1}]$-algebra with basis $\{T_w \theta_x \colon w \in W, x \in X\}$ and multiplication rules
\begin{align}
&(T_{s} + 1)(T_{s} - \mathbf v^{2\lambda(s)}) = 0, &&\text{for all $s \in S$,}
\label{eq:bernsteinrelation1}
\\
&T_wT_{w'} = T_{ww'}, &&\text{for all $w,w' \in W$ with $\ell(ww') = \ell(w)+\ell(w')$,}
\label{eq:bernsteinrelation2}
\\
&\theta_x\theta_{x'} = \theta_{x+x'} &&\text{for all $x,x' \in X$,}
\label{eq:bernsteinrelation3}
\\
&\theta_x T_s - T_s\theta_{s(x)} = (\theta_x - \theta_{s(x)})(\cG(s) - 1) &&\text{for $x \in X, s \in \Delta$,}
\label{eq:bernsteinrelation4}
\end{align}
where for $s = s_\alpha$ with $\alpha \in \Delta$, we set
\begin{equation}
\cG(s)=
\cG(\alpha)= 
\begin{cases} 
\frac{\theta_\alpha \mathbf v^{2\lambda(\alpha)}-1}{\theta_\alpha-1}, 
&\text{if } \alphac\notin 2X^\vee,
\\
\frac{(\theta_\alpha \mathbf v^{\lambda(\alpha)+\lambda^*(\alpha)}-1)(\theta_\alpha \mathbf v^{\lambda(\alpha)-\lambda^*(\alpha)}+1)}{\theta_{2\alpha}-1}, 
&\text{if }\alphac\in 2X^\vee.
\end{cases}
\end{equation}
We call the $\C[\mathbf v,\mathbf v^{-1}]$-subalgebra of $\bfH$ generated by $\{\theta_x \colon x \in X\}$ the \emph{toral} subalgebra of $\bfH$ and denote it by $\bA$.

For $v\in \mathbb C^\times$ we write $\bfH_{\mathbf v = v}$ for the quotient $\bfH/\langle \mathbf v-v\rangle$.
\end{definition}
Let $T = \mathbb C^\times \otimes X^\vee$. The simple modules of $\bA$ are in bijection with points of $T\times \mathbb C^\times$ and we write $\bA_{(z,v)}$ for the simple module parameterized by $(z,v)\in T\times \mathbb C^\times$. If $\mathbf M\in \Mod^{fl}(\bfH)$ then we say that $(z,v)\in T\times \mathbb C^\times$ is a weight if $\mathbf M|_{\bA}$ contains $\bA_{(z,v)}$.

Let $\Mod^{fl}(\bfH)$ be the category of all finite length $\bfH$-modules. 
It is well-known that all irreducible $\bfH$-modules are finite-dimensional. Hence, $\Mod^{fl}(\bfH)$ is also the category of all finite-dimensional modules.

The center of $\bfH$ is equal to $\bA^W$ and so central characters are parameterized by $W$-orbits on $T \times \mathbb{C}^{\times}$.
Given such an orbit $\cO$ let $\Mod^{fl}_\cO(\bfH)$ denote the category of finite length modules whose composition factors have central character $\cO$.
By abuse of notation we also write $\Mod^{fl}_{(z,v)}(\bfH)$ for $\Mod_{\mathcal O}^{fl}(\bfH)$ where $\mathcal O = W\cdot(z,v)$.

    Given a subset $P\subset \Delta$ let $\Phi_P$ be the root datum $(X,X^\vee,R_P,R_P^\vee)$ where $R_P = R\cap \mathbb ZP$ and define $\bfH_P:=\bfH^{\lambda,\lambda^*}(\Phi_P,P)$, the parabolic subalgebra indexed by $P$.
    Let 
    \begin{align}
        &i_{\bfH_P}^\bfH:\Mod(\bfH_P)\to \Mod(\bfH), \quad && M\mapsto \bfH\otimes_{\bfH_P}M \\
        &'i_{\bfH_P}^\bfH:\Mod(\bfH_P)\to \Mod(\bfH), \quad && M\mapsto \Hom_{\bfH_P}(\bfH,M) \\
        &r_{\bfH_P}^\bfH:\Mod(\bfH)\to \Mod(\bfH_P), \quad && M\mapsto M|_{\bfH_P}.
    \end{align}
    
    Let $w_0$ be the longest element of $W$ and $w_0(s) := w_0sw_0^{-1}$.

\subsection{Graded Hecke Algebras}

\begin{definition}\label{def:root-system}
    Let $E$ be an $\mathbb R$-vector space equipped with a symmetric bilinear form $(-,-)$ and let $R\subset E$.
    We call the pair $\Sigma = (E,R)$ a (generalized) root system if $(-,-)$ restricts to an inner product on $\mathbb RR$, and the pair $(\mathbb RR,R)$ is a root system in the classical sense (i.e. in a generalized root systems, $E$ is not necessarily spanned by $R$).
    For $\alpha\in R$ let $\alpha^\vee \in E^*$ be defined by $\langle x, \alpha^{\vee} \rangle = 2 \frac{(x,\alpha)}{(\alpha, \alpha)}$ for any $x \in E$. Let $s_\alpha\in \GL(E)$ be the map
    \begin{equation}s_\alpha(x) = x-\langle x , \alpha^{\vee}\rangle \alpha.\end{equation}

    Let $W := \langle s_\alpha:\alpha\in R\rangle \subset \GL(E)$, and $E_{\mathbb C} := E\otimes_{\mathbb R}\mathbb C$.
    A parameter function for $\Sigma$ is a $W$-invariant function $\mu:R\to \mathbb Z_{\ge 0}$.
\end{definition}

\begin{definition}\label{def:graded-hecke}
    The graded Hecke algebra of $\Sigma =  (E,R)$ with parameter function $\mu$ and root basis $\Delta\subset R$ is the graded vector space
    \begin{equation}
        \gH^\mu(\Sigma,\Delta) = \mathbb{C}[W] \otimes  \mathbb{C}[\br] \otimes S(E_{\mathbb C})
    \end{equation}
    where $W$ is in degree $0$, $E_{\mathbb C}$ and $ \br$ are degree 1 with multiplication rules
    \begin{itemize}
        \item $\mathbb{C}[\mathbf r]$ is central;
        \item $\mathbb{C}[W]$ and $S(E_{\mathbb C})$ are embedded as subalgebras;
        \item $x t_{s_{\alpha}}- t_{s_{\alpha}} s_{\alpha}(x) = \mu(\alpha) \langle x, \alpha^{\vee} \rangle \br$ for all $\alpha \in \Delta$ and $x \in E_{\mathbb C}$.
    \end{itemize}
    We call $\ubA:=\mathbb{C}[\br] \otimes S(E_{\mathbb C})$ the toral part of $  \gH^\mu(\Sigma, R)$.

\end{definition}
Let $\mathfrak{t} := E_{\mathbb{C}}^*$. The simple modules of $\ubA$ are in bijection with points of $\mathfrak{t}\oplus \mathbb{C}$ and we write $\ubA_{(\sigma,r)}$ for the simple module parametrized by $(\sigma, r)\in \mathfrak{t} \oplus \mathbb{C}$. If $\ubM\in \Mod^{fl}(\gH)$ then we say that $(\sigma, r)\in  \mathfrak{t} \oplus \mathbb{C}$ is a weight if $\ubM|_{\ubA}$ contains $\ubA_{(\sigma, r)}$.

Let $\Mod^{fl}(\gH)$ be the category of all finite length $\gH$-modules.
It is well-known that all irreducible $\gH$-modules are finite-dimensional. Hence, $\Mod^{fl}(\gH)$ is also the category of all finite-dimensional modules.

The center of $\gH$ is equal to $\ubA^W$ and so central characters are parametrized by $W$-orbits on $ \mathfrak{t} \oplus \mathbb{C}$.
Given such an orbit $\underline \cO$ let $\Mod^{fl}_{\underline \cO}(\gH)$ denote the category of finite length modules whose composition factors have central character $\underline\cO$.
By abuse of notation we also write $\Mod^{fl}_{(\sigma,r)}(\gH)$ for $\Mod_{\underline{\mathcal O}}^{fl}(\bfH)$ where $\underline{\mathcal O} = W\cdot(\sigma,r)$.

Note that $E \subset E_{\mathbb{C}}$ defines a real structure on $\mathfrak{t} = E_{\mathbb{C}}^*$. We write $\Mod^{fl}_{\mathbb R}(\gH)$ for the full subcategory of finite length modules whose weights lie in $\mathfrak{t}_{\mathbb{R}} \oplus \mathbb R$, and $\Mod^{fl}_{\mathbb R,\br\ne0}(\gH)$ for the analogous category but with weights in $\mathfrak{t}_{\mathbb{R}} \oplus (\mathbb R-\{0\})$.

\begin{definition}\label{def:graded-involution}
    Let $\gH$ be a graded Hecke algebra. We define $\uAZ,\ubFT,\underline\iota:\gH\to \gH$ to be the $\mathbb C$-algebra homomorphisms
    \begin{align}
        &\uAZ(t_w) = (-1)^{l(w)}t_{w_0ww_0^{-1}}, &&\uAZ(x) = w_0(x), &&&\uAZ(\br) = \br\\
        &\underline{\mathbf {FT}}(t_w) = (-1)^{l(w)}t_w, &&\ubFT(x) = -x, &&&\ubFT(\br) = \br\\
        &\underline\iota(t_w) = (-1)^{l(w)}t_w, &&\underline\iota(x) = x, &&&\underline\iota(\br) = -\br
    \end{align}
    for $w\in W, x\in \mathfrak{t}^*$.

    For $v\in W$ we define $c_v:\gH^\mu(\Sigma,\Delta)\to \gH^\mu(\Sigma,v(\Delta))$ to be the $\mathbb C$-linear map
    \begin{align}\label{eq:cw}
        &c_v(t_w) = t_{vwv^{-1}}, &&c_v(x) = v(x), &&&c_v(\br) = \br.
    \end{align}
\end{definition}

\section{Rigidity and Reduction}\label{sec:rigidity}
In this section, we prove several rigidity results which roughly state that simple modules and isomorphisms of affine/graded Hecke algebras are uniquely determined by what happens on the toral part.

\subsection{Rigidity for Isomorphisms of Graded Hecke Algebras}\label{sec:rigid}
We start by proving that a graded isomorphism of graded Hecke algebras is determined by its restriction to the toral subalgebra $\ubA = \mathbb{C}[\br] \otimes S(\mathfrak{t}^*)$. Let us first consider the simpler case where we set $\br = 0$. This corresponds to studying the algebra
    \begin{equation}
        \gH^\mu(\Sigma,\Delta)/(\br) \cong \mathbb{C}[W] \ltimes S(\mathfrak{t}^*).
    \end{equation}

\begin{lemma}\label{lemma: automorphisms of semidirect product}
    Let 
    \begin{equation}\phi  : \mathbb{C}[W] \ltimes S(\mathfrak{t}^*)  \rightarrow \mathbb{C}[W] \ltimes S(\mathfrak{t}^*)\end{equation}
    be an algebra automorphism which is the identity on $S(\mathfrak{t}^*)$. Then for any $w \in W$ there is a $c_w \in \mathbb{C}^{\times}$ such that $\phi(w) = c_w w$.
\end{lemma}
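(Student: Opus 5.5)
Write an element of the semidirect product uniquely as $\phi(w) = \sum_{v\in W} v\, f_v$ with $f_v \in S(\mathfrak t^*)$. The plan is to use the commutation relation $w x = w(x) w$ in $\mathbb C[W]\ltimes S(\mathfrak t^*)$ and that $\phi$ fixes $S(\mathfrak t^*)$. This pins down the support of $\phi(w)$ in $W$; invertibility then pins down the coefficient.

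First, for any $x\in\mathfrak t^*$, applying $\phi$ to $w x = w(x)\, w$ gives
\begin{equation}
\sum_{v} v\, f_v\, x = w(x)\sum_v v\, f_v = \sum_v v\, v^{-1}(w(x))\, f_v .
\end{equation}
Comparing coefficients of $v$, which is legitimate because the algebra is free as a left $\mathbb C[W]$-module over $S(\mathfrak t^*)$, we get $f_v\,(x - v^{-1}w(x)) = 0$ in the domain $S(\mathfrak t^*)$. If $v\neq w$, then $v^{-1}w$ acts nontrivially on $\mathfrak t^*$, since $W$ acts faithfully; here $W$ is a reflection group in $\GL(E)$. So there is an $x$ with $x\ne v^{-1}w(x)$, and hence $f_v=0$. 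Therefore $\phi(w) = w f_w$ for some $f_w\in S(\mathfrak t^*)$.

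Second, we show $f_w$ is a nonzero constant. Since $\phi$ is an automorphism, $\phi(w)\phi(w^{-1}) = 1$, so
\begin{equation}
w f_w\, w^{-1} f_{w^{-1}} = w(f_w)\, f_{w^{-1}} = 1
\end{equation}
in $S(\mathfrak t^*)$, using $w f w^{-1} = w(f)$. The units of a polynomial ring are the nonzero constants, so $w(f_w)$ is a nonzero scalar. Hence $f_w = c_w \in\mathbb C^\times$, and $\phi(w)=c_w w$.

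The only point needing care is the coefficient comparison in the first step. It relies on $\{v\}_{v\in W}$ being a basis of the algebra over $S(\mathfrak t^*)$, which holds by the PBW-type decomposition $\mathbb C[W]\otimes S(\mathfrak t^*)$ (the $\br=0$ specialization of Definition~\ref{def:graded-hecke}), together with faithfulness of the $W$-action on $\mathfrak t^*$. Everything else is routine. Surjectivity of $\phi$ is not even needed beyond invertibility of $\phi(w)$, which holds for any unital homomorphism.
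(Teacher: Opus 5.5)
Your proof is correct and follows the paper's argument: you expand $\phi(w)$ in the $W$-basis, use $wx = w(x)w$ together with faithfulness of the $W$-action to kill every coefficient except $f_w$, and then show that $f_w$ is a unit in the polynomial ring. The only difference is cosmetic: you obtain invertibility from $\phi(w)\phi(w^{-1})=1$, whereas the paper takes $n$ with $w^n=1$ and writes $1=f_w\,w(f_w)\cdots w^{n-1}(f_w)$.
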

\begin{proof}
Let $w \in W$ and $x \in \mathfrak{t}^*$. Then we can write
\begin{equation}
    \phi(w) = \sum_{y \in W} f_y \cdot y
\end{equation}
for some $f_y \in S(\mathfrak{t}^*)$ and we get 
\begin{equation}
    \phi(w) \cdot x = \sum_{y \in W} f_y \cdot y \cdot x = \sum_{y \in W} f_y y(x) \cdot y.
\end{equation}
On the other hand, applying $\phi$ to the equation
\begin{equation}
    w \cdot x =  w(x) \cdot w  
\end{equation}
yields
\begin{equation}
    \phi(w) \cdot x = w(x) \cdot \phi(w) = \sum_{y \in W } w(x)f_y \cdot  y.
\end{equation}
Hence, we have $f_y  w(x) = f_y y(x)$ for all $y \in W$, $x \in \mathfrak{t}^*$. Since the $W$ action on $\mathfrak{t}^*$ is faithful, we can find for any $y \neq w$ an element $x \in V$ such that $w(x) \neq y(x)$. Hence, $f_y = 0$ for $y \neq w$, i.e. $\phi(w) = f_w \cdot w$. Let $n \ge 1$ such that $w^n =1$. Then 
\begin{equation}
    1 = \phi(w)^n = (f_w w)^n= f_ww(f_w)...w^{n-1}(f_w) \cdot w^n =  f_ww(f_w)...w^{n-1}(f_w).
\end{equation}
This is only possible if $f_w$ is in $\mathbb{C}^{\times}$.
\end{proof}
\begin{corollary}\label{cor:rigid-automorphisms}
    Suppose $\mu$ is nowhere zero. Let $\phi: \gH^\mu(\Sigma,\Delta) \rightarrow \gH^\mu(\Sigma,\Delta)$ be an automorphism of the graded Hecke algebra that preserves the grading. If $\phi$ is the identity on $\mathbb{C}[\br] \otimes S(\mathfrak{t}^*)$, then $\phi$ is the identity on all of $\gH^\mu(\Sigma,\Delta)$.
\end{corollary}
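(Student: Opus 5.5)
Since $\phi$ preserves the grading and fixes $\br$, it induces a graded automorphism $\bar\phi$ of $\gH^\mu(\Sigma,\Delta)/(\br)\cong \mathbb C[W]\ltimes S(\mathfrak t^*)$ that is the identity on $S(\mathfrak t^*)$. Lemma~\ref{lemma: automorphisms of semidirect product} then gives $\bar\phi(w)=c_w w$ with $c_w\in\mathbb C^\times$. We lift this back to $\gH$ by a degree argument. Every $t_w$ has degree $0$, so $\phi(t_w)$ lies in the degree-$0$ part of $\gH$. That part is $\mathbb C[W]$, because $\br$ and $\mathfrak t^*$ have degree $1$. The degree-$0$ part injects into the quotient by $(\br)$, since $(\br)$ lives in degrees $\ge 1$. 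Hence $\phi(t_w)=c_w t_w$ already in $\gH$.

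Next we pin down the constants using the deformed commutation relation. Fix a simple reflection $s=s_\alpha$, $\alpha\in\Delta$, and apply $\phi$ to
\begin{equation}
x t_s - t_s s(x) = \mu(\alpha)\langle x,\alpha^\vee\rangle \br .
\end{equation}
Because $\phi$ fixes $x$, $s(x)$ and $\br$, this becomes
\begin{equation}
c_s\bigl(x t_s - t_s s(x)\bigr)=\mu(\alpha)\langle x,\alpha^\vee\rangle\br ,
\end{equation}
so $(c_s-1)\mu(\alpha)\langle x,\alpha^\vee\rangle\br=0$. Choose $x$ with $\langle x,\alpha^\vee\rangle\neq 0$. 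Since $\mu(\alpha)\ne 0$ by hypothesis and $\br\neq0$ in $\gH$, which is free over $\ubA$ with PBW basis, we get $c_s=1$.

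The simple reflections generate $W$ and $w\mapsto c_w$ is multiplicative, so $c_w=1$ for all $w$. Thus $\phi$ fixes all of $\mathbb C[W]$ and $\ubA$, which together generate $\gH$ as an algebra, and so $\phi=\mathrm{id}$.

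The only step needing care is the lift from $\bar\phi$ to $\phi$, i.e.\ that the degree-$0$ component of $\gH$ is exactly $\mathbb C[W]$ and maps isomorphically onto its image mod $\br$. This follows from the PBW decomposition $\gH=\mathbb C[W]\otimes\mathbb C[\br]\otimes S(\mathfrak t^*)$ as graded vector spaces. Grading preservation is exactly what makes this lift work; without it $\phi(t_w)$ could pick up higher-degree corrections.
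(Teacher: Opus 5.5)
Your proof is correct and follows the paper's argument essentially step for step. You reduce modulo $\mathbf r$, invoke Lemma~\ref{lemma: automorphisms of semidirect product}, and use grading preservation to get $\phi(t_w)=c_w t_w$; then the commutation relation at a simple reflection together with $\mu(\alpha)\neq 0$ forces $c_{s_\alpha}=1$. Your added justifications (the PBW argument that the degree-zero part injects modulo $\mathbf r$, and the multiplicativity of $w\mapsto c_w$) simply make explicit what the paper leaves implicit.
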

\begin{proof}
$\phi$ induces an automorphism $\overline{\phi} : \gH^\mu(\Sigma,\Delta)/(\br) \rightarrow \gH^\mu(\Sigma,\Delta)/(\br)$. Hence, we get that $\overline{\phi}(w) = c_w w$ for some $c_w \in \mathbb{C}^{\times}$ by Lemma \ref{lemma: automorphisms of semidirect product}. Since $\phi$ preserves the grading and $\deg(\br) > 0 $, this also implies $\phi(t_w) = c_w t_w$. For any simple reflection $s_{\alpha}$, we have
\begin{equation}
\alpha \cdot t_{s_{\alpha}} - t_{s_{\alpha}} \cdot s_{\alpha}(\alpha) = 2\br \mu(\alpha).
\end{equation}
Hence, we get
\begin{align}
c_{s_{\alpha}} 2\br \mu(\alpha) & = c_{s_{\alpha}} (\alpha \cdot t_{s_{\alpha}} - t_{s_{\alpha}} \cdot s_{\alpha}(\alpha))  \\
& = \phi( \alpha \cdot t_{s_{\alpha}} - t_{s_{\alpha}} \cdot s_{\alpha}(\alpha)) \\
& = \phi( 2\br \mu(\alpha)) \\
&= 2\br \mu(\alpha) 
\end{align}
and thus $c_{s_{\alpha}} = 1$ (using $\mu(\alpha) \neq 0$). Hence, $\phi$ fixes $s_{\alpha}$. The $s_{\alpha}$ together with $S(\mathfrak{t}^*)$ generate $\gH^\mu(\Sigma,\Delta)$ as a $\mathbb{C}[\br]$-algebra so $\phi$ is the identity on all of $\gH^\mu(\Sigma,\Delta)$.
\end{proof}

\begin{corollary}\label{cor:rigid}
    Let 
    \begin{equation}\phi,\phi':\gH^\mu(\Sigma,\Delta)\to \gH^{\mu'}(\Sigma',\Delta')\end{equation}
    be graded isomorphisms and suppose $\mu$ is nowhere $0$. If $\phi = \phi'$ on $\mathbb C[\br]\otimes S(\mathfrak{t}^*)$ then $\phi = \phi'$.
\end{corollary}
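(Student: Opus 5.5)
The plan is to reduce to Corollary~\ref{cor:rigid-automorphisms} by forming the composite $\psi := \phi^{-1}\circ\phi'$. Since $\phi$ and $\phi'$ are graded isomorphisms $\gH^\mu(\Sigma,\Delta)\to\gH^{\mu'}(\Sigma',\Delta')$, the map $\psi$ is a graded algebra automorphism of $\gH^\mu(\Sigma,\Delta)$. The parameter function $\mu$ is nowhere zero by hypothesis, so Corollary~\ref{cor:rigid-automorphisms} will apply once we check that $\psi$ is the identity on the toral part $\ubA=\mathbb C[\br]\otimes S(\mathfrak t^*)$.

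For that check, take $a\in\ubA$. By assumption $\phi'(a)=\phi(a)$, hence
\begin{equation}
\psi(a)=\phi^{-1}(\phi'(a))=\phi^{-1}(\phi(a))=a .
\end{equation}
So $\psi$ restricts to the identity on $\ubA$. Corollary~\ref{cor:rigid-automorphisms} then gives $\psi=\id$, that is, $\phi'=\phi$.

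The one point needing care is that Corollary~\ref{cor:rigid-automorphisms} is stated for automorphisms preserving the grading. Since $\phi$ and $\phi'$ both preserve degrees, so do $\phi^{-1}$ and the composite $\psi$. Note that we never need to know whether $\phi$ carries the toral part of the source onto the toral part of the target: the hypothesis is only the pointwise equality $\phi|_{\ubA}=\phi'|_{\ubA}$, and that is all the argument above uses. The real content is in Lemma~\ref{lemma: automorphisms of semidirect product} and Corollary~\ref{cor:rigid-automorphisms}, which are already established, so I do not expect any serious obstacle here.
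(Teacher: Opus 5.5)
Your proposal is correct and is exactly the intended argument: the paper states this corollary without proof as an immediate consequence of Corollary~\ref{cor:rigid-automorphisms}, applied to the graded automorphism $\phi^{-1}\circ\phi'$, which fixes the toral part pointwise.
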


\subsection{Rigidity for Modules of Graded Hecke Algebras}\label{sec:mod-rigid}
Our next goal is to show that the semi-simplification of a module is uniquely determined by its weights. 
We will require the Langlands classification of modules of graded Hecke algebras due to Evens \cite{evens} which we now recall.

\begin{definition}
    We say that a module $\ubM$ is tempered if all the weights $(z,r)$ satisfy $\mathfrak{Re}(z) = \sum_{\alpha\in \Delta}a_\alpha\alpha^\vee$ for some $a_\alpha\le 0$.
    
    For $P\subset \Delta$ let $\Sigma^{der}_P = (\mathbb R P,R_P)$, $\mathfrak t_P = (\mathbb C P)^*$, $\Sigma_P = (E,R_P)$.
    Define $\gH^{der}_P:=\gH^\mu(\Sigma^{der}_P,P)$ and $\gH_P:=\gH^\mu(\Sigma_P,P)$.
    Note that $\ubA_P = \ubA^{der}_P \otimes \mathbb \ubA_P^\perp$ where $\ubA_P^\perp := \mathbb C[\mathfrak t^{W_P}]$.
    Let $\pi_P:\mathfrak t \to \mathfrak t_P$ be the restriction map. Note that there is a decomposition $\mathfrak t = \mathfrak t_P \oplus \mathfrak t^{W_P}$.
    Let 
    \begin{equation}
        \mathfrak a^+_P = \{z\in \mathfrak t: \alpha(z) = 0, \ \mathfrak{Re}(\beta(z))>0, \ \alpha\in P, \ \beta\in \Delta-P\}.
    \end{equation}

    Let $\{\omega_\alpha:\alpha\in \Delta\}\subset\mathbb R\Delta^{\vee} \subset \mathfrak{t}$ denote the elements characterized by $(\omega_\alpha,\beta^\vee) = \delta_{\alpha,\beta}$ for all $\alpha,\beta\in \Delta$.
    For $\sigma\in \mathfrak t$ define $\rho^\Delta(\sigma) = \sum_{\alpha\in \Delta} ( \omega_\alpha,\sigma) $. 
    For any $P\subset \Delta$ and $\gH_P$-module $\ubM$ let
    \begin{equation}
        \rho^\Delta(\ubM) = \max \{ \rho^\Delta(\mathfrak {Re}(\sigma)) \mid (\sigma,r) \text{ is a weight of } \ubM \}.
    \end{equation}
    We call a weight $(\sigma,r)$ of $\ubM$ $\Delta$-maximal if $\rho^\Delta(\sigma) = \rho(\ubM)$.

    For $\sigma \in \mathfrak t$ there is a unique subset $F(\sigma) \subset\Delta$ such that for all $\alpha\not\in F(\sigma)$ there is a $c_\alpha>0$ and for all $\alpha\in F(\sigma)$ there is a $d_\alpha \le 0$ such that
    \begin{equation}
        \pi_\Delta(\mathfrak{Re}(\sigma)) = \sum_{\alpha\not\in F(\sigma)} c_\alpha \omega_{\alpha} + \sum_{\alpha\in F(\sigma)}d_\alpha\alpha^\vee.
    \end{equation}
\end{definition}

\begin{theorem}\cite{evens}\label{thm:evens}
    \begin{enumerate}
        \item For any $P\subset \Delta$, $\ubU$ a tempered module of $\gH^{der}_P$, and $\nu\in\mathfrak a^+_P$ the module $\gH\otimes_{\gH^{der}_P\otimes \ubA_P^\perp}(\ubU\boxtimes\nu)$ has a unique simple quotient which we denote by $J(P,\ubU,\nu)$.
        \item Any simple module $\ubM$ is of the form $J(P, \ubU, \nu)$. 
        \item If $J=J(P,\ubU,\nu)$ then $P$ is equal to $F(\sigma)$ for any maximal weight $(\sigma,r)$ of $J$.
        \item If the restriction of $J=J(P, \ubU, \nu)$ to $\gH_P$ contains a simple module $\ubM$ as a composition factor, then $\rho^\Delta(\ubM) \le \rho^\Delta(J)$. This is an equality if and only if $\ubM \cong \ubU\boxtimes \nu$.
    \end{enumerate}
\end{theorem}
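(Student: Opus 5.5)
The plan is to follow the standard Langlands--Borel--Wallach argument, transported to $\gH$ with the toral subalgebra $\ubA$ playing the role of the Jacquet module. \textbf{Step 1 (geometric lemma).} Using the PBW decomposition $\gH=\bigoplus_{w\in W^P} t_w\,\gH_P$, with $W^P$ the minimal length coset representatives, I would filter $\mathrm{Ind}(\ubU\boxtimes\nu):=\gH\otimes_{\gH_P^{der}\otimes\ubA_P^\perp}(\ubU\boxtimes\nu)$ by Bruhat order on $W^P$. The commutation relation $xt_{s}-t_s s(x)=\mu\langle x,\alpha^\vee\rangle\br$ shows that the associated graded pieces are, as $\ubA$-modules, twists of $\ubU\boxtimes\nu$ by $w$. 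Hence the weights of the induced module are exactly $w(\sigma)$, where $w\in W^P$ and $\sigma=\sigma_U+\nu$ runs over the weights of $\ubU\boxtimes\nu$.

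\textbf{Step 2 (Langlands' combinatorial lemma).} This is the step I expect to be the main obstacle. Since $\ubU$ is tempered, $\mathfrak{Re}(\sigma_U)$ is a nonpositive combination of the coroots in $P$, and $\nu\in\mathfrak a_P^+$. I will show that $\rho^\Delta(\mathfrak{Re}\,w\sigma)<\rho^\Delta(\mathfrak{Re}\,\sigma)$ for $1\neq w\in W^P$, and that $F(\sigma)=P$. To do this I would write $\mathfrak{Re}(\sigma)-\mathfrak{Re}(w\sigma)$ as a nonnegative combination of positive coroots, nonzero on $\Delta-P$ whenever $w\ne 1$; here I use that $w\in W^P$ maps $P$ to positive roots and that $\nu$ is strictly dominant off $P$. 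Then I pair against $\sum_\alpha\omega_\alpha$. The identification $F(\sigma)=P$ is the uniqueness of the Langlands decomposition of $\pi_\Delta(\mathfrak{Re}\,\sigma)$, namely the closest-point projection onto the dominant cone.

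\textbf{Step 3 (parts (1) and (4)).} By Step 2, the $\rho^\Delta$-maximal generalized weight space of $\mathrm{Ind}(\ubU\boxtimes\nu)$ is precisely the image of $1\otimes(\ubU\boxtimes\nu)$, and this image generates the module. A proper submodule cannot meet that weight space. If it did, Frobenius reciprocity together with irreducibility of $\ubU\boxtimes\nu$ (taking $\ubU$ simple, as in the classification) would force it to contain the generating subspace. So the sum of all proper submodules is proper, which gives the unique simple quotient $J$. The same computation shows that the $\rho^\Delta$-maximal part of $J|_{\gH_P}$ is exactly $\ubU\boxtimes\nu$, while every other $\gH_P$-constituent has strictly smaller $\rho^\Delta$. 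This proves (4).

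\textbf{Step 4 (parts (2) and (3)).} Given a simple module $\ubM$, I pick a $\Delta$-maximal weight $\sigma$ and set $P=F(\sigma)$. Consider the sum of generalized $\ubA$-weight spaces of $\ubM$ whose weights $\sigma'$ satisfy $\pi^{W_P}$-part of $\mathfrak{Re}\,\sigma'$ equal to that of $\sigma$. Maximality of $\rho^\Delta$ shows this is an $\gH_P$-submodule. I then take a simple $\gH_P$-submodule of it. Since $\sigma$ is maximal, a Casselman-type criterion shows this submodule has the form $\ubU\boxtimes\nu$ with $\ubU$ tempered and $\nu\in\mathfrak a_P^+$: a non-tempered direction would produce a weight with larger $\rho^\Delta$. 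Frobenius reciprocity, in the contragredient form via ${}'i$ or after twisting by $w_0$, gives a nonzero map $\mathrm{Ind}(\ubU\boxtimes\nu)\to\ubM$. This map is surjective because $\ubM$ is simple, so $\ubM\cong J(P,\ubU,\nu)$. Finally, (3) follows from Steps 2 and 3: every maximal weight of $J$ lies in the $W_P$-orbit of weights of $\ubU\boxtimes\nu$, and these all have $F=P$.
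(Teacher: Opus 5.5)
The paper does not prove this itself: it cites Evens, Theorem 2.1 for (1)--(2) and \S\S2.6--2.7 there for (3)--(4). Your reconstruction of the Langlands-quotient argument has the right architecture, but Step 4 has a genuine gap. The claim that ``a non-tempered direction would produce a weight with larger $\rho^\Delta$'' does not follow from the $\rho^\Delta$-maximality of $\sigma$, for three reasons. First, $\rho^\Delta$ is a single linear functional. Second, the simple $\gH_P$-submodule $Y\subset X$ you pick need not contain $\sigma$. Third, a non-tempered $\gH^{der}_P$-module can have all its weights with $\rho^\Delta<0$: take $J(P',\ubU',\nu'')$ with $\ubU'$ a discrete series whose weights satisfy $\rho^\Delta\le-\delta<0$, and $0<\rho^\Delta(\mathfrak{Re}\,\nu'')<\delta$. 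If, say, the $\mathfrak t_P$-component of $\mathfrak{Re}\,\sigma$ is $0$, such a $Y$ is fully consistent with the maximality of $\sigma$, so your argument does not exclude it. The missing idea is Langlands' geometric lemma, which needs a norm rather than a linear functional. Choose the weight $\lambda$ of $\ubM$ maximizing $\|\lambda_0\|$, where $\lambda_0=\sum_{\alpha\notin F(\lambda)}c_\alpha\omega_\alpha$ is the dominant part of the decomposition defining $F(\lambda)$ (the closest-point projection of $\pi_\Delta(\mathfrak{Re}\,\lambda)$ to the dominant cone), and set $P=F(\lambda)$. Now let $\nu$ be dominant and orthogonal to $\mathfrak t_P$, and let $\tau'\in\mathfrak t_{P,\mathbb R}$ have a positive coefficient on $\beta^\vee$ for some $\beta\in P$. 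Then $\|(\nu+\tau')_0\|>\|\nu\|$. To see this, use $\|x_0\|=\max\{(x,k)\colon k\ \text{dominant},\ \|k\|\le 1\}$ with $k$ proportional to $\nu+\epsilon\omega_\beta$ for small $\epsilon>0$. Hence every weight of every $\gH_P$-submodule of your $X$ has tempered $\mathfrak t_P$-component. The rest of Step 4 (Frobenius reciprocity, surjectivity) then goes through and gives (2). Statements (3) and (4), which really are about $\rho^\Delta$, then follow from your Steps 2--3 applied to the standard module.

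Step 2 also needs repair, though its conclusion is right. The difference $\mathfrak{Re}\,\sigma-\mathfrak{Re}\,w\sigma$ is in general \emph{not} a nonnegative combination of coroots. Take type $A_2$, $P=\{\alpha_1\}$, $w=s_1s_2\in W^P$, and $\mathfrak{Re}\,\sigma=a\alpha_1^\vee+\nu$ with $a\le 0$, $\alpha_1(\nu)=0$, $\alpha_2(\nu)=c>0$. Then $\mathfrak{Re}\,\sigma-\mathfrak{Re}\,w\sigma=(a+c)\alpha_1^\vee+(c-a)\alpha_2^\vee$, which has a negative coefficient once $|a|>c$ (e.g.\ $\ubU$ a one-dimensional discrete series and $c$ small). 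What is true is that $\mathfrak{Re}\,w\sigma\le\mathfrak{Re}\,\nu$ in the cone order. Moreover, writing $\mathfrak{Re}\,\sigma=\mathfrak{Re}\,\nu+\sum_{\alpha\in P}a_\alpha\alpha^\vee$ with $a_\alpha\le 0$,
\[ \rho^\Delta(\mathfrak{Re}\,\sigma-\mathfrak{Re}\,w\sigma)=\rho^\Delta(\mathfrak{Re}\,\nu-w\,\mathfrak{Re}\,\nu)+\sum_{\alpha\in P}|a_\alpha|\bigl(\mathrm{ht}(w\alpha^\vee)-1\bigr), \]
where $\mathrm{ht}$ is the sum of the coefficients in the simple coroots. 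The first term is positive for $w\notin W_P$, and the second is nonnegative because $w\alpha^\vee$ is a positive coroot; this gives the strict inequality you need. Two smaller points. In Step 3, the $\rho^\Delta$-maximal generalized weight spaces of the standard module are \emph{contained in}, not equal to, $1\otimes(\ubU\boxtimes\nu)$, since $\rho^\Delta$ varies over the weights of a tempered module; containment is all your uniqueness argument uses. In (3), the maximal weights of $J$ are weights of $\ubU\boxtimes\nu$ itself, not merely of its $W_P$-orbit, whose other members need not have $F=P$.
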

\begin{proof}
    (1) and (2) are precisely the statement in \cite[Theorem 2.1]{evens}. Properties (3) and (4) are established in the proof of this result (see \cite[\S2.6]{evens} for (3), and (4) follows directly from the results in \cite[\S2.7]{evens}).
\end{proof}

\begin{proposition}\label{prop:module-rigid}
    The restriction map
    \begin{equation}
        K_0(\Mod^{fl}_{\mathbb R,r\ne0}(\gH))\to K_0(\Mod^{fl}(\ubA))
    \end{equation}
    is injective.
\end{proposition}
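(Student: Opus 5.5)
The strategy is the standard triangularity argument. The classes of simple modules form a basis of $K_0(\Mod^{fl}_{\mathbb R,r\ne0}(\gH))$. It therefore suffices to show that the images of these simple classes in $K_0(\Mod^{fl}(\ubA))$ are linearly independent. I will do this by induction on the rank of $\Delta$, proving the statement simultaneously for all parabolic subalgebras $\gH_P$. In the induction I allow $E$ to be larger than $\mathbb R\Delta$; the parabolic algebras $\gH_P$ have this form.

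First I will fix notation. Since $r\ne0$ is real and every weight lies in $\mathfrak t_{\mathbb R}\oplus\mathbb R$, each simple $\ubM$ is $J(P,\ubU,\nu)$ by Theorem~\ref{thm:evens}(2). Here $\nu\in\mathfrak a_P^+$ is real and $\ubU$ is tempered with real weights.

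Suppose $\sum_i a_i[\ubM_i]\mapsto 0$ with the $\ubM_i$ pairwise non-isomorphic simples and all $a_i\ne 0$. Choose $i_0$ with $\rho^\Delta(\ubM_{i_0})$ maximal among the $\ubM_i$. Let $(\sigma,r)$ be a $\Delta$-maximal weight of $\ubM_{i_0}$. By Theorem~\ref{thm:evens}(3) the set $P:=F(\sigma)$ is determined by the weight alone. So every $\ubM_i$ having $(\sigma,r)$ as a $\Delta$-maximal weight is of the form $J(P,\ubU_i,\nu_i)$ with this same $P$.

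Next I will restrict to $\gH_P$ and apply Theorem~\ref{thm:evens}(4). Consider the subset $\mathcal S$ of weights $(\sigma',r')$ with $\rho^\Delta(\sigma')=\rho^\Delta(\ubM_{i_0})$, together with the corresponding weight spaces. For each $i$ with $\rho^\Delta(\ubM_i)$ maximal, the part of $[\ubM_i|_{\gH_P}]$ supported on weights in $\mathcal S$ is exactly $[\ubU_i\boxtimes\nu_i]$. All other composition factors of $\ubM_i|_{\gH_P}$ have strictly smaller $\rho^\Delta$. The strictness implies that all of their weights have smaller $\rho^\Delta$. This uses that $\rho^\Delta$ of a weight is constant along $\mathfrak t_P$-directions up to a $\rho^\Delta$-shift of the $\gH_P$-module; I need to verify this carefully, and alternatively I will define the filtration directly by the projection to $\mathfrak t^{W_P}$. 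For the $\ubM_i$ with smaller $\rho^\Delta$, no weight lies in $\mathcal S$.

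Projecting the vanishing relation onto the weights in $\mathcal S$ therefore gives $\sum a_i[\ubU_i\boxtimes\nu_i]=0$ in $K_0(\Mod^{fl}(\ubA))$, where the sum runs over the maximal $i$. The $\ubU_i\boxtimes\nu_i$ are simple $\gH_P$-modules with real weights and $r\ne0$. They are pairwise non-isomorphic, because the Langlands data of the $\ubM_i$ are distinct by the uniqueness in Theorem~\ref{thm:evens}(1). If $P\ne\Delta$, the induction hypothesis for $\gH_P$ gives a contradiction.

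The main obstacle is the case $P=\Delta$, where all the maximal modules are tempered and no reduction in rank occurs. For this case I will instead induct on $\rho^\Delta$ more finely, or use a different ordering. One option is to compare the tempered modules by their real central characters, and within a fixed central character use the fact that they are distinguished by their weights. Another option is to apply the $\underline{\mathbf{AZ}}$-type twist $\ubFT\circ\uAZ$, which preserves the weight data up to an explicit transformation and sends tempered modules to modules $J(P,\ubU,\nu)$ with $P\ne\Delta$ when $r\ne0$. This would reduce the tempered case to the case already handled. I would try the second option first and check that the twist acts on weights in a way compatible with the restriction map.
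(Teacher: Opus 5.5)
There are two genuine gaps. The decisive one is in the tempered case $P=\Delta$, which is where the content of the proposition lies.

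\textbf{The tempered case.} Your second option cannot work. $\ubFT\circ\uAZ$ is $t_w\mapsto t_{w_0ww_0^{-1}}$, $x\mapsto -w_0(x)$, so it acts on weights by $\sigma\mapsto -w_0(\sigma)$. Since $-w_0$ permutes the simple coroots, it preserves the cone $\{\sum_{\alpha\in\Delta}a_\alpha\alpha^\vee : a_\alpha\le 0\}$, and so it sends tempered modules to tempered modules; the two sign changes cancel. Your first option is circular.

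The paper uses $\ubFT$ alone, which acts by $\sigma\mapsto-\sigma$. If the maximal weight $\sigma$ of $X$ has $F(\sigma)=\Delta$, take a maximal weight $\sigma'$ of $\ubFT_*X$. If $F(\sigma')\neq\Delta$, the non-tempered argument gives a common constituent of $\ubFT_*X$ and $\ubFT_*X'$, hence of $X$ and $X'$.

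Suppose also $F(\sigma')=\Delta$. First split off the $\ubA^\perp$-central character so that $\gH=\gH^{der}_\Delta$; your sketch omits this step. Then $\sigma=\sum a_\alpha\alpha^\vee$ and $\sigma'=\sum a'_\alpha\alpha^\vee$ with all $a_\alpha,a'_\alpha\le 0$. Since $-\sigma'$ is a weight of $X$ of minimal $\rho^\Delta$, we get $0\le\rho^\Delta(-\sigma')\le\rho^\Delta(\sigma)\le 0$, which forces $\sigma=0$.

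No twist acting linearly on weights can remove this residual case, because a module whose only weight is $0$ stays so. It is here, and only here, that $r\neq0$ is used: for $r\neq 0$ there is a unique simple module with central character $W\cdot(0,r)$. Hence the constituents of $X$ and $X'$ carrying the weight $(0,r)$ coincide. Your sketch never uses $r\neq0$ in a way that could be correct, and it must use it: at $r=0$ the trivial and sign representations of $W$ have the same weights.

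\textbf{The non-tempered step.} As written this also fails, for two reasons.
\begin{itemize}
\item $\rho^\Delta$ is a linear functional, so it is not constant on the weights of $\ubU_i\boxtimes\nu_i$; a tempered module with real weights generally has weights with different values of $\rho^\Delta$. The part of $[\ubM_i|_{\gH_P}]$ supported on $\mathcal S$ is therefore only the top layer of the weights of $\ubU_i\boxtimes\nu_i$, not the class of an $\gH_P$-module.
\item Other $\ubM_i$ attaining the same maximal $\rho^\Delta$ may have Langlands parabolic $P_i\neq P$. Then $\ubU_i\boxtimes\nu_i$ is not an $\gH_P$-module at all.
\end{itemize}
So the induction hypothesis for $\gH_P$ cannot be applied to your projected relation.

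The paper avoids projecting altogether:
\begin{itemize}
\item Write the relation as $X=X'$ with $X,X'$ nonnegative combinations of simple modules.
\item Apply the induction hypothesis to the full restrictions, giving $X|_{\gH_P}=X'|_{\gH_P}$ in $K_0(\Mod^{fl}(\gH_P))$.
\item Choose a simple $\gH_P$-constituent $\ubM$ carrying the maximal weight $(\sigma,r)$. Any constituent $J$ of $X$ or $X'$ whose restriction contains $\ubM$ has $(\sigma,r)$ as a maximal weight.
\item Theorem~\ref{thm:evens}(3),(4) then give $J=J(P,\ubU,\nu)$ with $\ubM\cong\ubU\boxtimes\nu$.
\end{itemize}
This produces a common simple constituent of $X$ and $X'$, and one concludes by induction on length. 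In your formulation, where $X$ and $X'$ can be taken with disjoint supports, this is a contradiction.
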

\begin{proof}
    Let $X,X' \in K_0(\Mod^{fl}_{\mathbb R,r\ne0}(\gH))$. We need to show that if $X|_\bA = X'|_\bA$ then $X = X'$. 

    We induct on $\dim \mathbb{R} R$.
    If it is $0$, then $\gH = \ubA$ and there is nothing to prove.

    Suppose the rank is $>0$.
    By rearranging the negative terms we may assume that $X$ and $X'$ are non-negative linear combination of simple modules. 
    Since $\gH = \gH_\Delta^{der} \otimes \ubA^{\perp}$ the algebra $\ubA^\perp$ acts by a character on simple modules. Thus, we get a decomposition $X = \sum_{\sigma\in \mathfrak t^W}X_\sigma$ according to central character and for each summand $X_\sigma$, the restriction $X_\sigma|_{\ubA}$ consists of those weights whose projection onto $\mathfrak t^W$ is $\sigma$.
    Therefore we may assume that the $\ubA^\perp$-central character is fixed and so we can further assume $\gH = \gH_\Delta^{der}$.
    
    We now induct on the length $X$.
    If the length is $0$ then the claim is clear.
    If the length is $>0$, let $(\sigma,r)$ be a maximal weight of $X$. Let $P = F(\sigma)$. 
    There are two cases to consider.
    
    If $P$ is a proper subset of $\Delta$, by the inductive hypothesis on rank, $X|_{\gH_P} = X'|_{\gH_P}$.
    Let $\ubM$ be a simple constituent of $X|_{\gH_P}$ with weight $(\sigma,r)$. Let $J, J'$ be simple constituents of $X,X'$ respectively containing $\ubM$ in their restriction.
    By Theorem \ref{thm:evens}(3), $J = J(P,\ubU,\nu)$ for some tempered $\ubU$, and $\nu\in \mathfrak a^+_P$.
    Since $\rho^\Delta(\ubM) = \rho^\Delta(J)$, by Theorem \ref{thm:evens}(4), $\ubM \cong \ubU\boxtimes\nu$.
    Similarly $J' = J(P,\ubU,\nu)$. We may subtract $J(P,\ubU,\nu)$ from $X, X'$, and apply the inductive hypothesis on length to deduce $X = X'$.
    
    If $P = \Delta$, then consider $\ubFT_*X, \ubFT_*X'$ and let $(\sigma',r')$ be a maximal weight for them both. 
    Again if $P':=F(\sigma')$ is a proper subset of $\Delta$, by the the same argument we may find a common constituent $J(P',\ubU,\nu)$ of $\ubFT_*X, \ubFT_*X'$, and hence $\ubFT_*J(P',\ubU,\nu)$ is a common constituent of $X,X'$.
    Therefore we may assume $P' = \Delta$ as well. 
    However, note that $\ubFT_*$ maps a weight to its negative and so $-\sigma'$ is a weight of $X,X'$ with $\rho^\Delta(-\sigma')$ minimal.
    As $\sigma$ is real, $F(\sigma) = \Delta$ and $\mathfrak t = \mathfrak t_{\Delta}$ we have that $\sigma = \pi_\Delta(\mathfrak{Re} (\sigma)) = \sum_{\alpha\in \Delta}a_\alpha\alpha^\vee$ where $a_\alpha\le 0$ for all $\alpha\in \Delta$.
    Similarly $\sigma' = \pi_{\Delta}(\mathfrak{Re}( \sigma'))= \sum_{\alpha\in \Delta}a'_\alpha\alpha^\vee$ where $a'_\alpha\le 0$ for all $\alpha\in \Delta$.
    But then $\rho^\Delta(\sigma) = \sum a_{\alpha} \le 0$ and $\rho^\Delta(-\sigma') = - \sum a_{\alpha}' \ge 0$.
    Since $\rho^\Delta(-\sigma')\le \rho^\Delta(\sigma)$ they must both be $0$.
    Therefore $\sigma = 0$.
    Let $J,J'$ be simple constituents of $X,X'$ with weight $(\sigma,r)$.
    Since $r\ne 0$, there is a unique simple module $J$ with a central character $(0,r)$ (see \cite[Theorem 2.10]{ram}). Hence $J = J'$. 
    Applying the inductive hypothesis on length we get $X=X'$ as required.
\end{proof}

\subsection{Reduction Theorems and Weights}
\label{sec:reduction}
We would like to deduce rigidity results for affine Hecke algebras from the corresponding rigidity results for graded Hecke algebra using the reduction theorems of Lusztig and Barbasch--Moy \cite{lusztig1989affine,barbasch1993reduction}. For this we need to compute the effect of the reduction theorems on weights. Let us first recall the reduction theorems.

\begin{definition}\cite[\S3]{barbasch1993reduction}
    \label{thm:realinficharacter}
    Let $\bfH$ be an affine Hecke algebra and $\mathcal O\subset T$ a finite $W$-stable subset. Consider the ideal
    \begin{align}
        \sI_{\mathcal O} = \{f \in \C[T\times \C^\times] \colon f(z,1) = 0 \text{ for all $z \in \cO$}\}
    \end{align}
    in $\mathbf A\cong\C[T\times \C^\times]$. Let $\tilde\sI_{\mathcal O}^k = \sI_{\mathcal O}^k\bfH$.
    The \emph{graded Hecke algebra $\bfH_{[\mathcal O]}$} is defined to be the associated graded algebra of $\bfH$ with respect to the filtration $\bfH\supset \tilde\sI_{\mathcal O} \supseteq \tilde\sI_{\mathcal O}^2 \supseteq\ldots$.
    
     For each $z\in \cO$, let $F_z \in \sA$ be an element such that $F_z - \delta_{z,z'}$ vanishes to order at least 2 for each $z' \in \cO$. Let
     \begin{equation}
         \br := (\mathbf v-1)+\tilde\sI_{\mathcal O}^2, \quad t_w := T_w +\tilde\sI_{\mathcal O}, \quad E_z := F_z + \tilde\sI_{\mathcal O}.
     \end{equation}

    Let $\mathbb C[\cO]$ denote the subalgebra of $\bfH_{[\mathcal O]}$ generated by $\{E_z:z\in\cO\}$. Note that $E_z E_{z'} = \delta_{z,z'}E_z$.
    
    There is an isomorphism of $\C$-vector spaces
    \begin{align}
        \mathbb C[W]\otimes_{\mathbb C}\mathbb C[\cO] \otimes_{\mathbb C}S(\mathfrak t^*)\otimes_{\mathbb C}\mathbb C[\br]
        &\to
        \bfH_{[\mathcal O]}\colon
        \\
        t_w &\mapsto t_w && w\in W
        \\
        E_z &\mapsto E_z && z\in \cO \\
        x\otimes u &\mapsto u\sum_{z \in \cO} \frac{\theta_{x} - \theta_x(z)}{\theta_x(z)} F_{z} + \tilde\sI_{\mathcal O}^2 && x\otimes u \in X \otimes_{\Z} \C = \ft^* \label{eq:poly-reduction}\\
        \br &\mapsto \br.
    \end{align}
    Let $\ubS = \mathbb C[\cO]\otimes S(\mathfrak t^*)$.
    We refer to \cite{barbasch1993reduction} for the multiplicative structure.
\end{definition}

\begin{proposition}\cite[Proposition 3.2]{barbasch1993reduction}
    Let $\cO \subset T$ be a $W$-orbit. For $z\in \mathcal O$ define $\gH_{[z]}:= E_z \bfH_{[\mathcal O]}E_z$.
    Let $W_z$ be the stabilizer of $z$ in $W$ and define
    \begin{align}
        R_z^+ &= R_z \cap R^+,\\
        \Gamma_z &= \{w\in W_z: w(R_z^+) = R_z^+\}.
    \end{align}
    Let $\Delta_z$ be the root basis of $R_z$ corresponding to $R^+_z$.
    Let $E = X \otimes_{\mathbb Z}\mathbb R$, $\Sigma_z = (E,R_z)$ and define $\mu_z :R_z\to \mathbb Z$ by
    \begin{equation}
        \mu_z(\alpha)
        =
        \begin{cases}
            0 &\text{if $s_\alpha z \neq z$,}
            \\
            2\lambda(\alpha) &\text{if $s_\alpha z = z$ and $\alpha^{\vee} \not\in 2 X^{\vee}$,}
            \\
            \lambda(\alpha)+\lambda^*(\alpha)\theta_{-\alpha}(z) &\text{if $s_\alpha z = z$ and $\alpha^{\vee} \in 2X^{\vee}$.}
        \end{cases}
    \end{equation}
    Then $\gH_{[z]} \simeq \gH^\mu(E,\Delta_z)\rtimes \Gamma_z$.
    We denote the toral part of $\gH_{[z]}$ by $$\ubA_{[z]}=E_z \mathbb C[\cO] \otimes_{\mathbb C}S(\mathfrak t^*)\otimes_{\mathbb C}\mathbb C[\br] \cong \mathbb C[(\mathfrak t \oplus \mathbb{C} )\times\{z\}]$$
    or simply by $\ubA$ if $z$ is understood. The center of $\gH_{[z]}$ is given by $\ubA^{W_z}$.
\end{proposition}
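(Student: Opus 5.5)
This is a restatement of \cite[Proposition 3.2]{barbasch1993reduction} in our notation, so I would reprove it by following Barbasch--Moy. The first step is to understand $\bfH_{[\mathcal O]}$ through the idempotents $E_z$. From the Bernstein relation \eqref{eq:bernsteinrelation4} and the vector space isomorphism of Definition \ref{thm:realinficharacter}, I would first show $t_s E_z = E_{s(z)} t_s + (\text{correction})$. The correction term comes from $\cG(s)-1$ applied to $F_z - F_{s(z)}$. When $s(z)\ne z$ the difference $\theta_x - \theta_{s(x)}$ does not vanish at points of $\mathcal O$, so $\cG(s)$ has a pole-free part there. I would then work modulo $\tilde\sI_{\mathcal O}^2$ and compute the leading terms.

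The key step is to produce, for each root $\alpha$ with $s_\alpha z\ne z$, a normalized intertwiner. I would take the Lusztig-type element $\tilde t_{s} = (\theta_\alpha -1)T_s - (\text{const})$, localized at $z$, which satisfies $\tilde t_s F_z = F_{s z}\tilde t_s$ exactly in the associated graded. Using these intertwiners, every $w\in W$ splits as $w = w' u$ with $u\in W_z$ minimal-length coset data. Here $W_z$ is generated by the reflections in $R_z$ together with $\Gamma_z$; this is the standard structure of stabilizers in $T$, where $W_z$ is a reflection group extended by $\Gamma_z$ because $T$ need not be simply connected. It follows that $E_z\bfH_{[\mathcal O]}E_z$ is spanned by $t_u$ for $u\in W_z$ (suitably modified) times $E_z\ubS$.

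Next I would compute the commutation relations in $E_z\bfH_{[\mathcal O]}E_z$. For $\alpha\in \Delta_z$ that is not simple in $R$, the element $t_{s_\alpha}$ must be replaced by a product of normalized intertwiners along a reduced word. The intertwiners attached to roots not fixing $z$ contribute no $\br$-term, because their $\cG$ factors are units near $z$; this is the source of $\mu_z=0$ there. For $\alpha$ with $s_\alpha z = z$, I would expand $\cG(\alpha)$ at $\theta_\alpha(z)=1$ (or $-1$ in the case $\alpha^\vee\in 2X^\vee$) to first order. Since $\mathbf v=1+\br+\dots$ and $\theta_\alpha = \theta_\alpha(z)(1+\alpha+\dots)$, the residue gives $\langle x,\alpha^\vee\rangle\mu_z(\alpha)\br$ with the stated values. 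In the doubled case the two factors give $\lambda\pm\lambda^*$, and which factor vanishes is governed by $\theta_{-\alpha}(z)=\pm1$, which yields $\lambda+\lambda^*\theta_{-\alpha}(z)$. The elements of $\Gamma_z$ act by diagram automorphisms of $(R_z,\Delta_z)$, which gives the semidirect product.

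The main obstacle is the non-simple roots of $\Delta_z$, that is, showing that the conjugated intertwiner products really satisfy the graded relation with the correct constant. For this I would reduce to rank-two computations via a braid argument. Once the relations hold, the spanning set together with a dimension count from the PBW basis of Definition \ref{thm:realinficharacter} gives injectivity. The center statement $\ubA^{W_z}$ then follows from the standard center computation for $\gH^\mu\rtimes\Gamma_z$.
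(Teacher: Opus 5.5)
The paper does not prove this statement; it imports \cite[Proposition 3.2]{barbasch1993reduction}. Your sketch therefore has to stand on its own. Its skeleton is sound: the corner $E_z\bfH_{[\cO]}E_z$ is spanned by $t_wE_z\ubS$ with $w\in W_z$, roots not fixing $z$ carry no $\br$-term, the first-order expansion of $\cG(\alpha)$ produces $\mu_z$, and the PBW basis gives injectivity.

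Your first step, however, misdescribes the degree-zero structure. There is no correction in $t_sE_z=E_{s(z)}t_s$. Since $\mathbf v-1\in\sI_{\cO}$, the degree-zero part $\bfH/\tilde\sI_{\cO}$ is the $\mathbf v=1$ specialization cut down to $\cO$, namely $\mathbb C[W]\ltimes\mathbb C[\cO]$. So $\mathbb C[W]$ is a subalgebra of $\bfH_{[\cO]}$ and $t_wE_z=E_{w(z)}t_w$ holds exactly. In degree one, for $s=s_\alpha$ simple, the Bernstein relation gives $xt_s-t_ss(x)=\sum_{z'\in\cO}\mu_{z'}(\alpha)\langle x,\alpha^\vee\rangle\br E_{z'}$. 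The $z'$-term vanishes when $sz'\neq z'$, because $\cG(s)$ is then regular at $(z',1)$ with value $1$. Hence $xt_sE_{z'}=t_s\,s(x)E_{z'}$ for such $z'$. The plain $t_s$ is already an exact intertwiner, and normalized intertwiners are unnecessary in this associated-graded setting.

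The genuine gap is the step you flag yourself. The relation for $\beta\in\Delta_z$ not simple in $R$ is essentially the whole content of the proposition, and ``reduce to rank two via a braid argument'' does not supply it. What is missing is the inversion-set bookkeeping.

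Put $R_z=\{\gamma\in R:s_\gamma z=z\}$. Take $w\in W_z$ with a reduced word $s_1\cdots s_k$, where $s_i=s_{\alpha_i}$, and push $x$ through $t_{s_1}\cdots t_{s_k}E_z$.
\begin{itemize}
\item In the $i$-th correction term, only the summand $E_{z_i}$ with $z_i=s_{i+1}\cdots s_kz$ survives against $E_z$.
\item That term is nonzero only if $s_iz_i=z_i$, i.e.\ iff $\gamma_i:=s_k\cdots s_{i+1}(\alpha_i)\in R_z$.
\item Its coefficient is $\mu_{z_i}(\alpha_i)=\mu_z(\gamma_i)$ by $W$-invariance.
\end{itemize}
Since $\{\gamma_1,\dots,\gamma_k\}=N(w)=\{\gamma>0: w\gamma<0\}$, one gets
\[
x\,t_wE_z-t_w\,w^{-1}(x)\,E_z=-\br\sum_{\gamma\in N(w)\cap R_z}\mu_z(\gamma)\,\langle x,w(\gamma)^\vee\rangle\,t_{ws_\gamma}E_z .
\]

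For $\beta\in\Delta_z$ one has $N(s_\beta)\cap R_z=\{\beta\}$, because $s_\beta$ preserves $R_z$ and $\beta$ is simple in $R_z^+$. So $t_{s_\beta}E_z$ itself, with no replacement, satisfies $xt_{s_\beta}-t_{s_\beta}s_\beta(x)=\mu_z(\beta)\langle x,\beta^\vee\rangle\br$ in the corner. For $w\in\Gamma_z$ the intersection is empty, so $xt_wE_z=t_ww^{-1}(x)E_z$, which gives the semidirect product with $\Gamma_z$. The paper uses the same mechanism in the proof of Lemma~\ref{lem:red-weights-at-different-z}. With this computation in place, your PBW comparison completes the proof.
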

In all our applications the group $\Gamma_z$ will turn out to be trivial thanks to the following result.
\begin{definition}
   An affine Hecke algebra is called \emph{quasi-simply connected} if $X^\vee$ is generated by $R^\vee\cup \{y\in X^\vee:2y\in R^\vee\}$.
\end{definition}

\begin{lemma}\label{lemma: gamma z trivial}
    \cite[Lemma 4.5]{Lu-unip1} If $\bfH$ is quasi-simply connected then for all $\cO\in T/W$ and $z \in \cO$, $\Gamma_z=\{1\}$.
\end{lemma}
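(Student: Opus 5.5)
The plan is to show that $W_z$ is generated by the reflections $s_\alpha$ with $\alpha\in R_z$, i.e.\ that $W_z=W(R_z)$. Granting this, the claim follows immediately. The reflection group $W(R_z)$ acts simply transitively on the positive systems of $R_z$, so an element of $W_z=W(R_z)$ that stabilises $R_z^+$ must be trivial, giving $\Gamma_z=\{1\}$. Here $R_z$ is the set of roots with $s_\alpha z=z$. Since $s_\alpha z=z\cdot\alpha^\vee(\theta_\alpha(z))^{-1}$, this condition says either $\theta_\alpha(z)=1$, or $\theta_\alpha(z)=-1$ and $\alpha^\vee\in 2X^\vee$.

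To prove $W_z=W(R_z)$, I would lift to the Lie algebra. Write $T=\mathbb C^\times\otimes X^\vee=(X^\vee\otimes\mathbb C)/X^\vee$ via $\lambda\mapsto \exp(2\pi i\lambda)$, and pick $\lambda$ lifting $z$. Then $w z=z$ if and only if $w\lambda-\lambda\in X^\vee$. So $W_z$ is the image in $W$ of the stabiliser of $\lambda$ in $\widetilde W:=W\ltimes X^\vee$, acting on $X^\vee\otimes\mathbb C$ by affine maps.

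The quasi-simply connected hypothesis now enters. Let $R''=R\cup\{2\alpha:\alpha^\vee\in 2X^\vee\}$; this is a possibly non-reduced root system with Weyl group $W$. Its coroot lattice is generated by $R^\vee$ and the elements $\alpha^\vee/2$, so by hypothesis it equals $X^\vee$. Hence $\widetilde W$ is the affine Weyl group of the reduced root system obtained from the coroots of $R''$. In particular $\widetilde W$ is generated by the affine reflections
\[
x\mapsto x-(\langle\beta,x\rangle-k)\beta^\vee,\qquad \beta\in R'',\ k\in\mathbb Z,
\]
and it acts on the real affine space $X^\vee\otimes\mathbb R$ as a Coxeter group.

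Next I handle the complex point $\lambda=\lambda_1+i\lambda_2$ with $\lambda_1,\lambda_2$ real. Translations are real, so $\widetilde w=(w,\mu)$ fixes $\lambda$ exactly when it fixes $\lambda_1$ as an affine map and $w\lambda_2=\lambda_2$. By the standard theory of affine Weyl groups, the stabiliser $\widetilde W_{\lambda_1}$ is a finite reflection group generated by the affine reflections through $\lambda_1$. Its linear parts form a reflection group $W'$, with reflections $s_\beta$ for those $\beta\in R''$ with $\langle\beta,\lambda_1\rangle\in\mathbb Z$. Intersecting further with the stabiliser of $\lambda_2$ is a parabolic subgroup of $W'$, which by Steinberg's theorem is generated by the reflections it contains. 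These are exactly the $s_\beta$ with $\langle\beta,\lambda\rangle\in\mathbb Z$.

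Finally I translate back to $z$. For $\beta=\alpha$, the condition $\langle\alpha,\lambda\rangle\in\mathbb Z$ means $\theta_\alpha(z)=1$. For $\beta=2\alpha$ with $\alpha^\vee\in2X^\vee$, it means $\theta_\alpha(z)=\pm1$. In both cases this is precisely $\alpha\in R_z$, so $W_z=W(R_z)$.

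I expect the main obstacle to be the identification of $\widetilde W$ with an honest affine Weyl group. This requires checking that $R''$, together with the coroots $\alpha^\vee/2$ attached to the doubled roots, really forms a root datum whose coroot lattice is $X^\vee$ and whose Weyl group is still $W$. It also requires keeping track of non-reducedness, which occurs when both $\alpha$ and $2\alpha$ lie in $R''$, as in type $C$ with the extra half-coroots.
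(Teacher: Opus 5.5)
Your proof is correct. It is also a different route from the paper, which gives no argument of its own and only cites Lusztig, so what you have written is a self-contained proof rather than a variant of one in the text.

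Each step checks out:
\begin{itemize}
\item the reduction of $\Gamma_z=\{1\}$ to $W_z=W(R_z)$;
\item your reading $R_z=\{\alpha\in R: s_\alpha z=z\}$, which is the right one, since with $\{\alpha:\theta_\alpha(z)=1\}$ the statement already fails for the root datum of $\mathrm{PGL}_2$ at the point where $\theta_\alpha=-1$;
\item the lift to $\lambda\in X^\vee\otimes\mathbb C$ and the splitting $\lambda=\lambda_1+i\lambda_2$;
\item the two stabiliser theorems, namely point stabilisers in affine reflection groups and Steinberg's theorem for finite reflection groups.
\end{itemize}

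The obstacle you anticipate is harmless. $W$ preserves $2X^\vee$, so $R''$ is $W$-stable. The pairings $\langle\beta,\alpha^\vee/2\rangle$ are integral because $\alpha^\vee/2\in X^\vee$. Moreover, each hyperplane $\langle\alpha,x\rangle=k$ with $\alpha^\vee\in 2X^\vee$ is already one of the hyperplanes $\langle 2\alpha,x\rangle=2k$. So you can work throughout with the reduced system $R'$, obtained from $R$ by replacing $\alpha$ by $2\alpha$ whenever $\alpha^\vee\in 2X^\vee$, and non-reducedness never arises.

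This also shows what the hypothesis means. $(X,X^\vee,R',R'^\vee)$ is a root datum, and quasi-simple-connectedness says exactly that it is simply connected, since $\mathbb Z R'^\vee$ is the lattice generated by $R^\vee\cup\{y: 2y\in R^\vee\}$. That gives a shorter argument. By Steinberg's connectedness theorem, the centraliser of $z$ in the simply connected group with this root datum is connected. Hence $W_z$ is generated by the $s_\beta$ with $\beta\in R'$ and $\beta(z)=1$, which unwinds to $\alpha\in R_z$ exactly as in your final step.

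Your argument is essentially the Weyl-group shadow of the proof of that theorem. It is longer but entirely elementary and avoids algebraic groups. It also makes visible that the hypothesis is used only to know that $W\ltimes X^\vee$ is generated by affine reflections.
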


The final result we need is that every simple module of an affine Hecke algebra corresponds to a simple module with real central character of a graded Hecke algebra. For any $W$-orbit $\cO \in (T\times\R_{>0})/W$ and $(z,v)\in \mathcal O$ let
\begin{equation}
    \underline{\mathcal O}_{[z_c]} = \{(\sigma,r)\in \mathfrak t_{\mathbb R}\oplus \mathbb R\mid (z_c\exp(\sigma),\exp(r))\in \mathcal O\}.
\end{equation}
This is a $W_{z_c}$-orbit in $\mathfrak{t}_{\mathbb{R}} \oplus \mathbb{R}$ and thus it corresponds to a central character of $\gH_{[z_c]}$. Note that there is a bijection
\begin{equation}
    \underline{\mathcal O}_{[z_c]} \rightarrow \{(s,v) \in \cO \mid s_c = z_c \}, \quad (\sigma, r) \mapsto (z_c \exp(\sigma), \exp(r)).
\end{equation}

Let us now state the main reduction theorem.
\begin{theorem}\cite[\S4]{barbasch1993reduction} \label{thm:red-real-inf}
    For any $W$-orbit $\cO \in (T\times\R_{>0})/W$ and $(z,v)\in \mathcal O$ there is an equivalence
    \begin{align}
        (-)_{[z_c]}:\Mod^{fl}_{\mathcal O}\bfH &\xrightarrow{\sim}\Mod^{fl}_{\underline{\mathcal O}_{[z_c]}}(\gH_{[z_c]}).
    \end{align}
\end{theorem}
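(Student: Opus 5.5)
The equivalence is built in three steps: a completion step that localizes $\bfH$ at the orbit $\cO$, a decomposition of the completed algebra as a matrix algebra over the piece at a single point of $\cO$, and Lusztig's exponential identification of the completions of $\bfH$ and $\gH_{[z_c]}$. Throughout, I write $z = z_c z_h$ for the polar decomposition of Lemma \ref{lemma: polar decomposition} applied in $T$.

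\textbf{Step 1: completion.} A finite-dimensional $\bfH$-module in $\Mod^{fl}_{\mathcal O}\bfH$ is killed by some power of the maximal ideal of $Z(\bfH)=\bA^W$ corresponding to $\cO$. So it is the same thing as a finite-dimensional module over the completion
$$\hat\bfH_{\cO} = \bfH\otimes_{\bA^W}\widehat{\bA^W}_{\cO},$$
on which that ideal acts topologically nilpotently. Similarly, a module in $\Mod^{fl}_{\underline{\mathcal O}_{[z_c]}}(\gH_{[z_c]})$ is a module over the completion of $\gH_{[z_c]}$ at the central character $\underline{\mathcal O}_{[z_c]}$.

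\textbf{Step 2: localizing to $W_{z_c}$.} I partition $\cO$ according to the compact part of the $T$-coordinate, and take the corresponding complete orthogonal family of idempotents in $\hat\bA_{\cO}$. Since $W$ permutes these blocks transitively and the stabilizer of the block containing $z$ is $W_{z_c}$, the standard argument (Lusztig \S8; Barbasch--Moy \S4) shows that $\hat\bfH_{\cO}$ is a matrix algebra of size $[W:W_{z_c}]$ over the subalgebra generated by $\hat\bA_{\cO_{z_c}}$ and the intertwiners for $W_{z_c}$. The intertwiners here are Lusztig's normalized elements $\tau_w$, built from $T_s$ and the rational functions $\cG(s)$; they are regular near the block because the singularities of $\cG(\alpha)$ with $\alpha\notin R_{z_c}$ avoid it. This gives a Morita equivalence between the completion at $\cO$ and the completion at $W_{z_c}\cdot(z,v)$ of the "local" algebra.

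\textbf{Step 3: exponential map.} I identify that local completion with the completion of $\gH_{[z_c]}$ at $\underline{\mathcal O}_{[z_c]}$ via $\theta_x\mapsto \theta_x(z_c)\exp(x)$ and $\mathbf v\mapsto \exp(\br)$. This map is an isomorphism near real points because $\exp:\mathfrak t_{\mathbb R}\to T_h$ is a bijection; this is exactly where the restriction to the hyperbolic direction is used. The defining relations must then be matched. For $\alpha\in R_{z_c}$, the relation involving $\cG(\alpha)$ becomes, after Lusztig's renormalization of $T_s$ by a suitable unit, the graded relation with parameter $\mu_{z_c}(\alpha)$. Here the $\lambda^*$ case uses $\theta_{-\alpha}(z_c)=\pm1$. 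Alternatively, one can invoke that the associated graded construction of Definition~\ref{thm:realinficharacter} together with \cite[Proposition 3.2]{barbasch1993reduction} yields the same algebra $\gH_{[z_c]}=E_{z_c}\bfH_{[\mathcal O']}E_{z_c}$ for the orbit $\mathcal O'=W z_c$; this matches the completed local algebra because both are flat deformations with the same associated graded.

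Composing Steps 1–3 gives the functor $(-)_{[z_c]}$: apply the idempotent $E_{z_c}$ to the completed module and transport along the exponential isomorphism. Its weights are obtained by $(z_c\exp\sigma,\exp r)\leftrightarrow(\sigma,r)$, consistent with the stated bijection.

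I expect the main obstacle to be the bookkeeping in Step 3 for the non-reduced ($\alpha^\vee\in 2X^\vee$) roots. There $\cG(\alpha)$ has poles at $\theta_\alpha=\pm1$, and one must check that the normalizing unit is invertible in the completion and produces exactly $\lambda(\alpha)+\lambda^*(\alpha)\theta_{-\alpha}(z_c)$. I would handle this by the explicit two-case computation of Lusztig \cite[\S4--5]{lusztig1989affine}.
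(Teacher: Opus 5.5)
Your outline is sound, but it follows Lusztig's two-step route rather than the Barbasch--Moy construction the paper relies on. The two steps are the first reduction theorem of \cite[\S8]{lusztig1989affine}, followed by the exponential map of \cite[\S9]{lusztig1989affine}.

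The paper's construction is recalled in the proof of Lemma~\ref{lem:red-weights}. It uses a single isomorphism of completions $\bfH\otimes_{\bA}\widehat{\bA}^{I_{\cO}}\cong \bfH_{[\cO_c]}\otimes_{\ubS}\widehat{\ubS}^{I_{\cO_h}}$ onto the associated graded algebra of the whole compact orbit $\cO_c=Wz_c$. On the toral part it is given by $f\mapsto\bigl((\sigma,r,s_c)\mapsto f(s_c\exp\sigma,\exp r)\bigr)$, for all blocks simultaneously. That graded algebra already contains the idempotents $E_z$, $z\in\cO_c$, and the $t_w$ permute them. So the passage to one block is made on the graded side, by $\mathbf M\mapsto E_{z_c}\phi_*\mathbf M$. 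In particular $\gH_{[z_c]}=E_{z_c}\bfH_{[\cO_c]}E_{z_c}$ is defined intrinsically, with no renormalised generators.

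You instead cut down on the affine side first, then exponentiate only the corner, matching relations root by root. You then need \cite[Proposition 3.2]{barbasch1993reduction} to recognise the result as $\gH_{[z_c]}$. Both routes give the same toral transport $(z_c\exp\sigma,\exp r)\leftrightarrow(\sigma,r)$, which is all Lemma~\ref{lem:red-weights} uses.
\begin{itemize}
\item The Barbasch--Moy packaging avoids the case-by-case bookkeeping for roots with $\alpha^\vee\in 2X^\vee$. It also turns the comparison between different $z_c$ (Lemma~\ref{lem:red-weights-at-different-z}) into conjugation by $t_w$ inside one algebra.
\item Your route gives an explicit presentation of the corner and shows where positivity of $v$ enters.
\end{itemize}

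Three points need tightening.

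First, as written, Step 2 misidentifies the corner. It is not generated by the block's completed toral part together with the intertwiners $\tau_w$, $w\in W_{z_c}$.
\begin{itemize}
\item For $\alpha\in R_{z_c}$, the normalised intertwiner for $s_\alpha$ involves $\cG(\alpha)^{-1}$. This has poles on the block wherever $\alpha(\sigma)=-2\lambda(\alpha)r$, e.g.\ at the central character of the trivial and Steinberg modules when $z_c=1$.
\item An algebra generated by toral elements and genuine intertwiners would in any case be a skew group algebra, with no $\br$-term in its relations.
\item Intertwiners are only usable for elements whose inversions lie outside $R_{z_c}$. These are the minimal coset representatives giving the matrix units, and $\Gamma_{z_c}$.
\item For reflections in $R_{z_c}$ you need Hecke-type generators satisfying the quadratic relation. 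The corner is then the completed affine Hecke algebra of $(X,X^\vee,R_{z_c},R_{z_c}^\vee)$ extended by $\Gamma_{z_c}$. This is what Lusztig's first reduction produces, and what your Step 3 implicitly assumes when it renormalises $T_s$.
\end{itemize}

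Second, positivity of $v$ is already essential in Step 2, not only in Step 3. What must avoid the block is the zero set of $\cG(\alpha)$, e.g.\ $\theta_\alpha\mathbf v^{2\lambda(\alpha)}=1$. For $\alpha\notin R_{z_c}$ it does so only because $v>0$ and $s_h$ hyperbolic force the compact part of $\theta_\alpha(s)v^{2\lambda(\alpha)}$ to be $\theta_\alpha(z_c)\neq 1$.

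Third, the ``alternative'' in Step 3 is not an argument. A filtered algebra and its associated graded need not have isomorphic completions. What makes it true here is the explicit comparison map of \cite[Proposition~4.1]{barbasch1993reduction}, so either cite that or drop the sentence.
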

The next lemma determines what happens to weights under this reduction.
\begin{lemma}\label{lem:red-weights}
    Let $\mathbf M\in \Mod_{\mathcal O}^{fl}(\bfH)$.
    If 
    \begin{equation}
        [\mathbf M|_{\bA}] = \sum_{(s,v)\in \mathcal O}m(s,v)[ \bA_{(s,v)}],
    \end{equation}
    and $(z,v)\in \mathcal O$ then
    \begin{equation}
        [\mathbf M_{[z_c]}|_{\ubA}] = \sum_{(\sigma, r) \in \underline{\cO}_{[z_c]}}m(z_c\exp(\sigma),\exp(r))[\ubA_{(\sigma,r)}]
    \end{equation}
    where $\ubA$ is the toral part of $ \gH_{[z_c]}$
\end{lemma}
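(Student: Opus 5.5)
We trace a weight of $\mathbf M$ through the two steps of the Barbasch--Moy reduction: first the passage to the associated graded algebra $\bfH_{[\mathcal O]}$, then the cut-down by the idempotent $E_{z_c}$. Since both sides are additive in $\mathbf M$ and the reduction functor is exact, it suffices to understand a module with a single generalized weight space. We therefore begin by decomposing $\mathbf M|_{\bA}=\bigoplus_{(s,v)}\mathbf M_{(s,v)}$ into generalized weight spaces. Each $\mathbf M_{(s,v)}$ has dimension $m(s,v)$, and on it $\theta_x$ acts as $\theta_x(s)$ plus a nilpotent.

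\textbf{Step 1: describe $\mathbf M_{[z_c]}$ concretely.} Following \cite[\S4]{barbasch1993reduction}, we realize $\mathbf M$ as a module over the completion of $\bfH$ at $\mathcal O$ (recentred at the real points). On this completion the elements $F_z$ become the idempotents projecting onto the generalized weight spaces with $s_c=z$. The element $\mathbf v$ acts by the scalar $v=\exp(r)$, and the Taylor-type element
\[
\sum_{z}\frac{\theta_x-\theta_x(z)}{\theta_x(z)}F_z
\]
of \eqref{eq:poly-reduction} is replaced in the equivalence by the logarithm $\log(\theta_x z^{-x})$. This replacement is the mechanism by which Barbasch--Moy pass from $\exp$-weights to Lie algebra weights: the logarithm series differs from the linear term by elements of $\tilde\sI^2_{\mathcal O}$, so it is invisible in the graded algebra. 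The module $\mathbf M_{[z_c]}$ is then the vector space $\bigoplus_{s_c=z_c}\mathbf M_{(s,v)}$, equivalently $F_{z_c}\mathbf M$. On it, $x\in\mathfrak t^*$ acts by $\log(\theta_x z_c^{-x})$ and $\br$ acts by $\log\mathbf v$.

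\textbf{Step 2: compute the weights.} On $\mathbf M_{(s,v)}$ with $s=z_c\exp(\sigma)$, the operator $\theta_x z_c^{-x}$ equals $\exp(\langle x,\sigma\rangle)$ times a unipotent. Its logarithm is therefore $\langle x,\sigma\rangle$ plus a nilpotent, and we use the real-logarithm branch, which is legitimate because $\sigma$ is real. Likewise $\br$ acts by $r$. Hence $\mathbf M_{(s,v)}$ contributes exactly $m(s,v)[\ubA_{(\sigma,r)}]$. Summing over the bijection $\underline{\mathcal O}_{[z_c]}\to\{(s,v)\in\mathcal O: s_c=z_c\}$ gives the formula.

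The main obstacle is Step 1, namely pinning down that the Barbasch--Moy equivalence really acts on the toral part via this logarithm, equivalently via the isomorphism \eqref{eq:poly-reduction} up to higher-order corrections, rather than through some other identification. Our first approach is to unwind their construction. The equivalence is given by $\mathbf M\mapsto F_{z_c}\hat{\mathbf M}$, where the action of the graded algebra is induced by the isomorphism between the completions of $\bfH$ and of $\gH_{[z_c]}$, and on the toral parts that isomorphism is the completion of $\exp$. We will then check that any two choices of the polynomial map agreeing modulo $\tilde\sI^2$ yield the same semisimplified weights. This holds because, on each generalized weight space, the higher-order terms of the series are nilpotent: they are built from powers of the nilpotent parts $\theta_x z_c^{-x}\exp(-\langle x,\sigma\rangle)-1$, so they do not change eigenvalues.
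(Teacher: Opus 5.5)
Your core argument is the paper's: $(-)_{[z_c]}$ is $\mathbf M\mapsto E_{z_c}\phi_*\mathbf M$, where $\phi$ is the isomorphism of completions whose toral part is $f\mapsto\bigl((\sigma,r,s_c)\mapsto f(s_c\exp(\sigma),\exp(r))\bigr)$ near $\cO_h$. Hence the generalized weight space at $(z_c\exp(\sigma),\exp(r))$ becomes the one at $(\sigma,r)$, and $E_{z_c}$ discards those with $s_c\neq z_c$. The paper records this as $\phi_*\bA_{(s,v)}=\ubS_{(\sigma,r,s_c)}$ plus exactness. Your Step 2, which takes logarithms of operators on generalized weight spaces, is the same computation.

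The check in your last paragraph is false, however, and should be dropped. Two lifts that agree modulo $\tilde\sI^2_{\cO_c}$ need not give the same weights. The reason is that $\sI_{\cO_c}$ is the ideal of $\cO_c\times\{1\}$, while the weights of $\mathbf M$ sit at $(z_c\exp(\sigma),\exp(r))$, typically with $v\neq1$ (e.g.\ $v=q^{1/2}$) and $\sigma\neq0$. For example, $(\mathbf v-1)^2\in\sI^2_{\cO_c}$ acts on $\mathbf M$ with the single eigenvalue $(v-1)^2\neq0$, so it is not nilpotent. Likewise, the naive lift $\mathbf v-1$ of $\br$ has eigenvalue $v-1$ rather than the correct value $r=\log v$. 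The nilpotent operators you invoke are powers of $\theta_x\theta_x(s)^{-1}-1$, i.e.\ elements of the maximal ideal at the actual weight $(s,v)$, which is a different ideal.

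For the same reason, $F_{z_c}$ does not act on $\mathbf M$ as the projector onto $\bigoplus_{s_c=z_c}\mathbf M_{(s,v)}$, since its values away from $\cO_c\times\{1\}$ are unconstrained. The projector is $\phi^{-1}(E_{z_c})\in\widehat{\bA}^{I_{\cO}}$. The associated graded algebra does not act on $\mathbf M$ at all, so there is no choice of lift to be independent of. $\gH_{[z_c]}$ acts on $\mathbf M_{[z_c]}$ only through $\phi$, and $x$ and $\br$ get eigenvalues $\langle x,\sigma\rangle$ and $r$ precisely because $\phi$ is the completion of $\exp$ at the real points $\cO_h$. With that paragraph removed, your Steps 1--2 prove the lemma.
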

\begin{proof}
    For the proof, we need to recall a few facts about how the equivalence $(-)_{[z_c]}:\Mod^{fl}_{\mathcal O}\bfH \rightarrow \Mod^{fl}_{\underline{\mathcal O}_{[z_c]}}(\gH_{[z_c]})$ is constructed. The $W$-orbit of $z_c \in T$ is $\cO_c = \{ s_c \mid (s,v) \in \cO \}$. Let $I_{\cO} \subset \bA = \mathbb{C}[T \times \mathbb{C}^{\times}]$ be the ideal of all functions vanishing on $\cO$.  Similarly, let $\ubS := \mathbb{C}[(\mathfrak{t} \oplus \mathbb{C}) \times \cO_c]$ and let $I_{\cO_h} \subset \ubS$ be the ideal of all functions vanishing on
    \begin{equation}
       \mathcal{O}_h:= \{ (\sigma, r, s_c) \in (\mathfrak{t}_{\mathbb{R}} \oplus \mathbb{R}) \times \cO_c \mid (s_c \exp(\sigma), \exp(r)) \in \cO\} 
    \end{equation}
    Given a ring $R$ and an ideal $I$ we write $\hat R^I$ for the $I$-adic completion of $R$. It is shown in \cite{lusztig1989affine}, \cite[Proposition 4.1]{barbasch1993reduction} that there is an isomorphism of completions
    \begin{equation}
        \phi : \widehat{\bA}^{I_{\mathcal{O}}} \rightarrow \widehat{\ubS}^{I_{\cO_h}} 
    \end{equation}
    which satisfies $\phi(f)(\sigma, r, s_c) = f(s_c \exp(\sigma), \exp(r))$ for all $(\sigma, r, s_c) \in \cO_h$. Moreover, the map $\phi$ extends to an isomorphism
    \begin{equation}
        \phi : \bfH \otimes_{\bA} \widehat{\bA}^{I_{\mathcal{O}}} \rightarrow \gH \otimes_{\ubS} \widehat{\ubS}^{I_{\cO_h}}
    \end{equation}
    which gives rise to the equivalence of module categories $(-)_{[z_c]}$ via $\mathbf M \mapsto E_{z_c} \phi_* \mathbf M$.
    
    Any $(s,v) \in \cO$ can be written uniquely as $(s,v) = (s_c\exp(\sigma), \exp(r))$ for some $(\sigma, r, s_c) \in \cO_h$. Hence, $\phi_* \bA_{(s,v)} = \ubS_{(\sigma, r, s_c)}$ where $\ubS_{(\sigma, r, s_c)}$ is the simple $\ubS$-module given by evaluation at $(\sigma, r, s_c)$. Since $E_{z_c} \in \mathbb{C}[\cO_c]$ is the indicator function on $z_c$ we have
    \begin{equation}
        E_{z_c} \ubS_{(\sigma, r, s_c)} = \begin{cases}
             \ubA_{\sigma, r} & z_c = s_c \\
             0 & z_c \neq s_c.
        \end{cases}
    \end{equation}
    Hence, if
    \begin{equation}
        [\mathbf M|_{\bA}] = \sum_{(s,v)\in \mathcal O}m(s,v) [\bA_{(s,v)}]
    \end{equation}
    then 
    \begin{align}
        [\ubM_{[z_c]}|_{\ubA}] &= [E_{z_c} \phi_*(\mathbf M) |_{\ubA}] \\
        &=  \sum_{(s,v)\in \mathcal O}m(s,v) [E_{z_c} \phi_* (\bA_{(s,v)})] \\
        &=\sum_{(\sigma, r, s_c) \in \cO_h} m(s_c \exp(\sigma), \exp(r))[ E_{z_c} \ubS_{(\sigma, r, s_c)}]\\
        &=\sum_{(\sigma, r) \in \underline{\cO}_{[z_c]}} m(z_c \exp(\sigma), \exp(r))[ \ubA_{(\sigma, r)}].
    \end{align}

\end{proof}

The next lemma relates the reductions at different choices of $z_c\in \mathcal O_c$.

\begin{lemma}\label{lem:red-weights-at-different-z}
    Let $z_c,z_c'\in \mathcal O_c$ and let $w\in W$ be of minimal length such that $w(z_c) = z_c'$.
    Then 
    \begin{equation}
        t_v\mapsto t_{wvw^{-1}}, \qquad x \mapsto w(x), \qquad \mathbf r\mapsto \mathbf r
    \end{equation}
    defines an isomorphism $f_w:\bfH_{[z_c]} \to \bfH_{[z_c']}$.
    If $\mathbf M\in \Mod^{fl}_{\mathcal O,v\ne1}(\bfH)$ then $\mathbf M_{[z_c]} \simeq f_w^*\mathbf M_{[z_c']}$ as $\bfH_{[z_c]}$-modules.
\end{lemma}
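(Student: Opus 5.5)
We need two things: that $f_w$ is an algebra isomorphism, and that $\mathbf M_{[z_c]} \simeq f_w^*\mathbf M_{[z_c']}$. Because $w$ has minimal length in the coset $wW_{z_c}$, it sends positive roots of $R_{z_c}$ to positive roots, so $w(R_{z_c}^+) = R_{z_c'}^+$ and $w(\Delta_{z_c}) = \Delta_{z_c'}$. This holds since $R_{z_c'} = w(R_{z_c})$ and $W_{z_c'} = wW_{z_c}w^{-1}$.

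\textbf{Step 1: $f_w$ is an isomorphism.} The parameter functions also match: $\mu_{z_c'}(w\alpha) = \mu_{z_c}(\alpha)$, because $\lambda,\lambda^*$ are $W$-invariant and $\theta_{-w\alpha}(wz_c) = \theta_{-\alpha}(z_c)$. By Lemma~\ref{lemma: gamma z trivial}, or in general by including $\Gamma_z$ in the same way, $\gH_{[z_c]} \simeq \gH^{\mu_{z_c}}(E,\Delta_{z_c})$. Hence $f_w$ is the map $c_w$ of \eqref{eq:cw}, transported through these identifications, and it respects the defining relations of Definition~\ref{def:graded-hecke}.

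\textbf{Step 2: the module isomorphism, via weights.} I would reduce to weights using Proposition~\ref{prop:module-rigid}. The hypothesis $v\neq 1$ gives $r\neq 0$, and the central characters $\underline{\mathcal O}_{[z_c]}$ are real, so both $\mathbf M_{[z_c]}$ and $f_w^*\mathbf M_{[z_c']}$ lie in $\Mod^{fl}_{\mathbb R,r\ne0}(\gH_{[z_c]})$. By Lemma~\ref{lem:red-weights}, the weight $(\sigma,r)$ of $\mathbf M_{[z_c']}$ has multiplicity $m(z_c'\exp(\sigma),\exp(r))$. Pulling back along $f_w$, which sends $x\mapsto w(x)$ on the toral part, turns this into the weight $(w^{-1}\sigma,r)$. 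Since $z_c'\exp(\sigma) = w\cdot(z_c\exp(w^{-1}\sigma))$, it suffices that $m(wz,v)=m(z,v)$. This identity fails for a general module, so I would not use it directly; instead I would compare simple modules.

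Concretely, Proposition~\ref{prop:module-rigid} only gives equality in $K_0$. To get an isomorphism I would upgrade the argument to one about functors. Both $\mathbf M\mapsto \mathbf M_{[z_c]}$ and $\mathbf M\mapsto f_w^*\mathbf M_{[z_c']}$ are equivalences $\Mod^{fl}_{\mathcal O}\bfH\to \Mod^{fl}_{\underline{\mathcal O}_{[z_c]}}(\gH_{[z_c]})$, so the composite $\Psi = (f_w^*(-)_{[z_c']})\circ (-)_{[z_c]}^{-1}$ is an autoequivalence. Note that $E_{z_c}\bfH_{[\mathcal O]}E_{z_c'}$ contains an invertible intertwiner, a lift of $t_w$ normalized using the elements $E_z$. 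This gives a Morita-type identification $\mathbf N\mapsto E_{z_c}\mathbf N \cong E_{z_c'}\mathbf N$ for $\bfH_{[\mathcal O]}$-modules $\mathbf N$. Conjugation by this intertwiner restricts to $f_w$, and this is what I would check by Corollary~\ref{cor:rigid}: it is graded and agrees with $f_w$ on $\ubA$, since it sends $x\mapsto w(x)$ on weights. Then $E_{z_c}\phi_*\mathbf M \simeq f_w^*E_{z_c'}\phi_*\mathbf M$ directly.

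\textbf{Main obstacle.} The hardest step is constructing this intertwiner in the completed algebra $\gH\otimes_{\ubS}\widehat{\ubS}$ and checking that it is invertible after completion at $\mathcal O_h$. I would try the normalized intertwiners $\tilde t_{s} = t_s + (\text{correction})\cdot$ for the simple reflections $s$ in a reduced expression of $w$. Each one maps $E_{z}$ to $E_{s z}$, and its denominators, of the form $\alpha$ with $\alpha\notin R_z$, are invertible on the completion. This invertibility is exactly where the minimality of $w$ is used.
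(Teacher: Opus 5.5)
Your Step 1 is fine. The overall shape of Step 2 is also the right one: realize $f_w$ as conjugation by an invertible degree-$0$ element carrying $E_{z_c}$ to $E_{z_c'}$, then transport modules by $m\mapsto(\text{that element})\cdot m$. But the step you defer as the ``main obstacle'' is the entire content of the second assertion, and the mechanism you sketch for it is not correct. In $\gH\otimes_{\ubS}\widehat{\ubS}$, a linear form $\alpha\in\mathfrak t^*$ on the $E_z$-component is invertible only if $\alpha(\sigma)\neq0$ for every $(\sigma,r,z)\in\mathcal O_h$. This fails, for instance, at $\sigma=0$ whenever $\mathcal O$ contains a point $(z,v)$ with $z$ compact, and it fails regardless of whether $\alpha\in R_z$. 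Moreover, an element built from inverted $\alpha$'s in a completion need not preserve $E_{z_c}\bfH_{[\mathcal O]}E_{z_c}$ or the grading. So you could not apply Corollary~\ref{cor:rigid} to conjugation by it without further work. (Minor: the intertwiner should lie in $E_{z_c'}\bfH_{[\mathcal O]}E_{z_c}$, not $E_{z_c}\bfH_{[\mathcal O]}E_{z_c'}$.)

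The missing idea is that no normalization or completion is needed. The plain element $t_w$ of $\bfH_{[\mathcal O]}$ (degree $0$, with inverse $t_{w^{-1}}$) already does the job, and this is exactly where minimality enters.

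In $\bfH_{[\mathcal O]}$ one has $t_vE_z=E_{v(z)}t_v$. Induction on a reduced word gives $t_wxt_w^{-1}=w(x)+\sum_{\ell(u)<\ell(w)}t_{uw^{-1}}b_u$ with $b_u\in\mathbb C[\mathcal O]\otimes\mathbb C[\mathbf r]$. Sandwich this between copies of $E_{z_c'}$. Each lower term then acquires the factor $E_{z_c'}E_{u(z_c)}$, which vanishes because $u(z_c)\neq z_c'$ whenever $\ell(u)<\ell(w)$. Hence $t_wE_{z_c}t_w^{-1}=E_{z_c'}$ and $t_wE_{z_c}x\,t_w^{-1}=E_{z_c'}w(x)$.

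Equivalently: minimality forces each simple reflection $s_\alpha$ in a reduced word for $w$ to move the point it acts on. In $\bfH_{[\mathcal O]}$ the commutator $xt_{s_\alpha}-t_{s_\alpha}s_\alpha(x)$ is $\mathbf r\langle x,\alpha^\vee\rangle$ times an element of $\mathbb C[\mathcal O]$ supported on $s_\alpha$-fixed points. So $t_{s_\alpha}E_z$ is already an exact intertwiner at every step, with no correction term.

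Conjugation by $t_w$ is then a graded isomorphism $\bfH_{[z_c]}\to\bfH_{[z_c']}$ agreeing with $f_w$ on $\ubA$. By your Step 1 and Corollary~\ref{cor:rigid} it equals $f_w$; the paper instead reads off $t_wt_vt_w^{-1}=t_{wvw^{-1}}$ directly. Finally, $m\mapsto t_wm$ gives the isomorphism $\mathbf M_{[z_c]}\simeq f_w^*\mathbf M_{[z_c']}$.
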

\begin{proof}
    Let $w\in W$, $x\in X$ and $w = s_{\alpha_1}\cdots s_{\alpha_k}$ be a reduced expression for $w$.
    We claim that
    \begin{equation}
        t_wxt_w^{-1} = w(x) + \sum_{w'<w}a_{w'}t_{w'w^{-1}}.
    \end{equation}
    for some $a_{w'}\in\mathbb C[\mathbf r]$.
    If $k=1$,
    \begin{equation}
        t_{s_{\alpha_1}}xt_{s_{\alpha_1}} = s_{\alpha_1}(x) + \mathbf r\mu(\alpha_1)(x,\alpha_1^\vee) t_{s_{\alpha_1}}
    \end{equation}
    as required.
    If $k>1$ set $w' = s_{\alpha_2}\cdots s_{\alpha_k}$.
    Then by the inductive hypothesis
    \begin{align}
        t_wxt_w^{-1} &= t_{s_{\alpha_1}}t_{w'}xt_{w'}^{-1}t^{-1}_{s_{\alpha_1}} = t_{s_{\alpha_1}}\left(w'(x) + \sum_{w''<w'}a_{w''}t_{w''w'^{-1}}\right)t^{-1}_{s_{\alpha_1}} \\
        &=w(x) + \mathbf r\mu(\alpha_1)(w'(x),\alpha_1^\vee) t_{w'w^{-1}} + \sum_{w''<w'}a_{w''}t_{s_{\alpha_1}w''w^{-1}}.
    \end{align}
    Since $w'<s_{\alpha_1}w'$ we have that $w''<w'\implies s_{\alpha_1}w''<s_{\alpha_1}w' = w$ and so the claim follows.

    Therefore, we see that
    \begin{align}
        t_w E_{z_c}xt_w^{-1} &= t_wE_{z_c}xE_{z_c} t_w^{-1} = E_{z_c'}\left(w(x) +\sum_{w'<w}a_{w'}t_{w'w^{-1}}\right)E_{z'_c} \\
        &= E_{z_c'}w(x) + \sum_{w'<w}a_{w'}E_{z'_c}E_{w'(z_c)}t_{w'w^{-1}}.
    \end{align}
    Since $l(w') < l(w)$, by the minimality of $w$ we have that $w'(z_c) \ne w(z_c)$. 
    Therefore $E_{z'_c}E_{w'(z_c)} = 0$ and so 
    \begin{align}
        t_w E_{z_c}xt_w^{-1} &= E_{z_c'}w(x) .
    \end{align}
    We also have that $t_wt_vt_w^{-1} = t_{wvw^{-1}}$ and so $f_w$ can be realized as conjugation by $t_w$ in $\bfH_{[\mathcal O]}$.

    Finally, conjugation by $t_w$ intertwines the isomorphism 
    \begin{equation}
        \mathbf M_{[\sigma]}\to \mathbf M_{[\sigma']}, \quad \mathbf m\mapsto t_w\mathbf m
    \end{equation}
    which completes the proof.
\end{proof}

Together with Lemma \ref{lem:red-weights} this implies the following symmetry on the weights.
\begin{corollary}\label{cor:weights-symmetry}
    Let $\mathbf M\in \Mod_{\cO}^{fl}(\bfH)$ and 
    \begin{equation}
        [\mathbf M|_{\bA}] = \sum_{(z,v)\in \mathcal O}m(z,v)[ \bA_{(z,v)}],
    \end{equation}
    Fix $z_c\in \mathcal O_c$. Let $(s,v) \in \cO$ and $w\in W$ of minimal length such that $w(z_c) = s_c$. Then
    \begin{equation}
        m(s,v) = m(w^{-1}(s),v).
    \end{equation}
\end{corollary}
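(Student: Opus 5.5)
The plan is to combine Lemma \ref{lem:red-weights} with Lemma \ref{lem:red-weights-at-different-z}. We compute the weights of $\mathbf M_{[z_c]}$ in two ways and compare.

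First, write $(s,v) = (s_c\exp(\sigma),\exp(r))$ with $(\sigma,r)\in\mathfrak t_{\mathbb R}\oplus\mathbb R$. Since $s_c = w(z_c)$ and $w$ acts compatibly with the polar decomposition, we have $w^{-1}(s) = z_c\exp(w^{-1}(\sigma))$. The same $w$, being of minimal length with $w(z_c)=s_c$, is the element used in Lemma \ref{lem:red-weights-at-different-z} with $z_c' = s_c$. That lemma gives $\mathbf M_{[z_c]}\simeq f_w^*\mathbf M_{[s_c]}$. There is a caveat here: the lemma is stated for $v\neq 1$, so either $v\ne1$ is assumed, or I check that the proof (conjugation by $t_w$ in $\bfH_{[\mathcal O]}$) does not actually use this. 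I expect the latter, since the argument is purely algebraic.

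Next, compute weights. By Lemma \ref{lem:red-weights} applied at $s_c$, the multiplicity of $\ubA_{(\sigma,r)}$ in $\mathbf M_{[s_c]}|_{\ubA}$ is $m(s_c\exp(\sigma),\exp(r)) = m(s,v)$. Pulling back along $f_w$, which sends $x\mapsto w(x)$ on the toral parts and fixes $\mathbf r$, transforms a weight $(\sigma,r)$ of $\mathbf M_{[s_c]}$ into the weight $(w^{-1}(\sigma),r)$ of $f_w^*\mathbf M_{[s_c]}$. Indeed, $x$ acts on the pullback by $w(x)$, which acts by $\langle w(x),\sigma\rangle = \langle x,w^{-1}\sigma\rangle$. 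So the multiplicity of $\ubA_{(w^{-1}\sigma,r)}$ in $\mathbf M_{[z_c]}|_{\ubA}$ equals $m(s,v)$.

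On the other hand, Lemma \ref{lem:red-weights} applied at $z_c$ says this multiplicity is $m(z_c\exp(w^{-1}\sigma),\exp(r)) = m(w^{-1}(s),v)$. Equating the two gives $m(s,v)=m(w^{-1}(s),v)$.

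The main point of care is the bookkeeping of the direction of $w$ versus $w^{-1}$ in the pullback of weights. One also needs that $(w^{-1}\sigma,r)$ lies in $\underline{\mathcal O}_{[z_c]}$, which is clear because $\mathcal O$ is $W$-stable.
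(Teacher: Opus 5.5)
Your proposal is correct and is exactly the paper's argument: the paper obtains the corollary by combining Lemma \ref{lem:red-weights-at-different-z} (with $z_c'=s_c$) with Lemma \ref{lem:red-weights} applied at $z_c$ and at $s_c$, and your bookkeeping that pulling back along $f_w$ sends the weight $(\sigma,r)$ to $(w^{-1}\sigma,r)$ is correct. The hypothesis $v\neq 1$ that you flag is implicitly inherited by the paper's corollary from Lemma \ref{lem:red-weights-at-different-z}, and it is harmless in every application in the paper, where $v=q^{1/2}$.
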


\subsection{Rigidity for Affine Hecke Algebras}
In this section we prove an analogue of Proposition \ref{prop:module-rigid} for affine Hecke algebras as well as a weak analogue of Corollary \ref{cor:rigid}.

\begin{corollary}\label{cor: rigidity for affine Hecke on modules}
    Let $\bfH$ be a quasi-simply connected affine Hecke algebra.
    Then the restriction map
    \begin{equation}
        K_0(\Mod_{\mathbf v\ne1}^{fl}(\bfH))\to K_0(\Mod^{fl}(\bA))
    \end{equation}
    is injective.
    
    In particular, if $\mathbf M,\mathbf M'\in \Mod_{\mathbf v \ne1}^{fl}(\bfH)$ are simple modules and $[\mathbf M|_{\bA}] = [\mathbf M'|_{\bA}]$ in $K_0(\Mod^{fl}(\bA))$ then $\mathbf M\simeq \mathbf M'$.
\end{corollary}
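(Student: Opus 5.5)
We reduce the statement to Proposition \ref{prop:module-rigid} using the reduction theorem, Theorem \ref{thm:red-real-inf}, together with the weight bookkeeping of Lemma \ref{lem:red-weights}.

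First, decompose by central character. Since the center of $\bfH$ is $\bA^W$, a finite length module splits as a direct sum over central characters $\mathcal O\in (T\times\mathbb C^\times)/W$, and its $\bA$-weights lie in the corresponding orbits. Hence the restriction to $\bA$ of the $\mathcal O$-summand of a class $X$ is exactly the part of $X|_{\bA}$ supported on $\mathcal O$. So it suffices to prove injectivity on $K_0(\Mod^{fl}_{\mathcal O}(\bfH))$ for a single orbit $\mathcal O$ with $v\neq 1$.

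Second, pass to real central character. Theorem \ref{thm:red-real-inf} is stated for $\mathcal O\subset T\times\mathbb R_{>0}$, so when $v\notin\mathbb R_{>0}$ we need an extra step. If $v\in\mathbb R_{>0}$ we use the theorem directly. Otherwise the safest route is to invoke the general Lusztig reduction \cite{lusztig1989affine}, which works for any $(z,v)$ by replacing the polar decomposition with a decomposition relative to $v$; I expect this to be the main obstacle. The alternative is to twist by the automorphism $\mathbf v\mapsto \mathbf v\cdot(v/|v|)^{-1}$ where it makes sense. Whichever route we take, the key point is an analogue of Lemma \ref{lem:red-weights}: the equivalence $(-)_{[z_c]}$ is obtained from an isomorphism of completions that is compatible with evaluation at points. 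We should therefore take care that a version of that lemma holds there, with weights transported by $(s,v)\mapsto (\sigma,r)$ under some exponential chart.

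Third, carry out the main argument for $\mathcal O\subset T\times\mathbb R_{>0}$ with $v\neq 1$. Fix $z_c\in\mathcal O_c$. By Lemma \ref{lem:red-weights}, if $X,X'\in K_0(\Mod^{fl}_{\mathcal O}(\bfH))$ have equal $\bA$-weight multiplicities $m(s,v)$, then $X_{[z_c]}$ and $X'_{[z_c]}$ have equal $\ubA$-weights. Lemma \ref{lemma: gamma z trivial} applies because $\bfH$ is quasi-simply connected, so $\Gamma_{z_c}=1$ and $\gH_{[z_c]}=\gH^{\mu_{z_c}}(E,\Delta_{z_c})$ is a graded Hecke algebra of the kind treated in Section \ref{sec:mod-rigid}. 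Its weights are real, and $r=\log v\neq 0$. There is one technical point: $\mu_{z_c}$ may vanish on some roots, namely those with $s_\alpha z\neq z$; these roots should simply be excluded from $R_{z_c}$, as in the root system of the stabilizer. With that adjustment, Proposition \ref{prop:module-rigid} gives $X_{[z_c]}=X'_{[z_c]}$ in $K_0$. Because $(-)_{[z_c]}$ is an equivalence of abelian categories, it induces an isomorphism on Grothendieck groups, and therefore $X=X'$.

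Finally, the statement about simple modules follows immediately. If $\mathbf M$ and $\mathbf M'$ are simple and $[\mathbf M|_{\bA}]=[\mathbf M'|_{\bA}]$, injectivity gives $[\mathbf M]=[\mathbf M']$. Since both classes are classes of simples, this forces $\mathbf M\simeq\mathbf M'$.
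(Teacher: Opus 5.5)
Your steps 1 and 3 are exactly the paper's proof. You split by central character, transport weights by Lemma~\ref{lem:red-weights}, get $\Gamma_{z_c}=1$ from Lemma~\ref{lemma: gamma z trivial}, apply Proposition~\ref{prop:module-rigid}, and pull back along the equivalence of Theorem~\ref{thm:red-real-inf}.

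Step 2, however, is a gap that cannot be closed, because the assertion is false when $v$ is a root of unity. At $\mathbf v=-1$ one has $\mathbf v^{2\lambda(s)}=1$. Since $\lambda(\alpha)+\lambda^*(\alpha)$ and $\lambda(\alpha)-\lambda^*(\alpha)$ have the same parity, $\cG(\alpha)=1$ in both cases of the Bernstein relation. Hence $\bfH_{\mathbf v=-1}\cong\mathbb C[X\rtimes W]$. If $W\neq 1$, the trivial and sign characters of $W$, with $X$ acting trivially, are non-isomorphic simple modules with the same restriction $[\bA_{(1,-1)}]$. So no reduction can work ``for any $(z,v)$'' as you suggest. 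Your twist $\mathbf v\mapsto\mathbf v\,(v/|v|)^{-1}$, even where it respects the relations, only moves $v=-1$ to $v=1$, which is the excluded case.

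The correct fix is to drop step 2 and prove the statement for $v\in\mathbb R_{>0}\setminus\{1\}$. That is all the paper's argument covers: it runs through Theorem~\ref{thm:red-real-inf}, which is stated only for $\mathcal O\subset T\times\mathbb R_{>0}$. It is also all that is used later, where $v=q^{1/2}$.

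Your ``technical point'' in step 3 is also incomplete. By the formula for $\mu_z$, a root can fix $z_c$ and still have $\mu_{z_c}(\alpha)=\lambda(\alpha)-\lambda^*(\alpha)=0$. This happens when $\alpha^\vee\in 2X^\vee$, $\theta_\alpha(z_c)=-1$ and $\lambda(\alpha)=\lambda^*(\alpha)$. Such an $s_\alpha$ lies in $W_{z_c}$, so it cannot be discarded. Proposition~\ref{prop:module-rigid} fails in its presence: its last step needs a unique simple module with central character $(0,r)$, which is already false in rank one with $\mu=0$.

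Concretely, take $X=\mathbb Z\alpha$, $X^\vee=\mathbb Z\tfrac12\alpha^\vee$ and $\lambda=\lambda^*=1$, i.e.\ the Iwahori--Hecke algebra of $\mathrm{SL}_2(F)$; it is quasi-simply connected. Here $(\theta_\alpha-\theta_{-\alpha})(\cG(\alpha)-1)=(\mathbf v^2-1)(\theta_\alpha+1)$, which vanishes at any $z$ with $\theta_\alpha(z)=-1$. So on $\bfH\otimes_{\bA}\mathbb C_{(z,v)}$ the algebra $\bA$ acts by the single weight $(z,v)$, while $T_s$ has the two eigenvalues $-1$ and $v^2$. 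This gives two non-isomorphic simple modules with the same restriction to $\bA$, already at $v=q^{1/2}$.

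So you need the extra hypothesis that $\mu_{z_c}$ is nowhere zero on $R_{z_c}$. The paper's proof relies on this tacitly as well when it applies Proposition~\ref{prop:module-rigid} to $\bfH_{[z_c]}$.
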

\begin{proof}
    Let $X,X' \in K_0(\Mod^{fl}_{v\ne1}(\bfH))$. We need to show that if $X|_\bA = X'|_\bA$ then $X = X'$. 
    We may assume that $X = [\mathbf M], X'=[\mathbf M']$ for $\mathbf M,\mathbf M'\in\Mod^{fl}_{v\ne1}(\bfH)$.

    The generalized eigenspace decomposition with respect to $Z(\bfH) = \bA^W$ decomposes $\mathbf M$ and $\mathbf M'$ as direct sums of $\bfH$-modules
    \begin{equation}
        \mathbf M = \bigoplus_{\cO\in T/W} \mathbf M_{\cO}, \qquad \mathbf M' = \bigoplus_{\cO\in T/W} \mathbf M'_{\cO}.
    \end{equation}
    The restriction $[\mathbf M_{\cO}|_{\bA}]$ consists of all the weights of $\mathbf M$ in $\cO$ and so we may assume that $\mathbf M = \mathbf M_{\cO}$ and $\mathbf M' = \mathbf M'_{\cO}$ for a fixed orbit $\cO\in T/W$.
    Fix $z\in \mathcal O$.
    By Lemma \ref{lem:red-weights}, $[\mathbf M_{[z_c]}|_{\ubA}] = [\mathbf M'_{[z_c]}|_{\ubA}]$ and both lie in $K_0(\Mod_{\mathbb R,r\ne0}^{fl}(\gH))$.
    Since $\bfH$ is quasi-simply connected, $\bfH_{[z_c]}$ is a graded Hecke algebra and so by Proposition \ref{prop:module-rigid}, $[\mathbf M_{[z_c]}] = [\mathbf M'_{[z_c]}]$. 
    By Theorem \ref{thm:red-real-inf}, the reduction process is an equivalence and so $[\mathbf M] = [\mathbf M']$.
\end{proof}

As a direct consequence of Corollary \ref{cor: rigidity for affine Hecke on modules} we obtain the following rigidity result for morphisms.
\begin{corollary}\label{cor:uniquess}
    Let $\phi,\phi':\bfH\to \bfH'$ be isomorphisms between affine Hecke algebras and suppose $\bfH$ is quasi-simply connected. 
    If $\phi|_\bA = \phi'|_\bA$ then the homomorphisms
    \begin{equation}
        \phi_*,\phi'_*:K_0(\Mod^{fl}(\bfH'))\to K_0(\Mod^{fl}(\bfH))
    \end{equation}
    agree.
\end{corollary}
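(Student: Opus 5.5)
The plan is to reduce to an automorphism that fixes $\bA$ pointwise and then treat the loci $\mathbf v\ne1$ and $\mathbf v=1$ separately. Put $\psi:=\phi^{-1}\circ\phi'$; this is an automorphism of $\bfH$ with $\psi|_{\bA}=\id$. Since $\phi'_*=\psi_*\circ\phi_*$ and $\phi_*$ is a bijection on $K_0$, it suffices to show that $\psi_*$ is the identity on $K_0(\Mod^{fl}(\bfH))$. This group splits along the central element $\mathbf v\in\bA^W$ into the part with $\mathbf v$ acting with generalized eigenvalue $\ne1$ and the part with eigenvalue $1$. For every finite-length $\mathbf M$ we have $[\psi_*\mathbf M|_{\bA}]=[\mathbf M|_{\bA}]$, because $\psi$ fixes $\bA$ and so the $\bA$-module structures coincide. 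On the part with $\mathbf v\ne1$, Corollary \ref{cor: rigidity for affine Hecke on modules} (which is where quasi-simple connectedness is used) then gives $[\psi_*\mathbf M]=[\mathbf M]$. What remains is the locus $\mathbf v=1$, where that corollary is not available.

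For the locus $\mathbf v=1$ I would first pin down $\psi$ on the generators $T_s$, in the spirit of Lemma \ref{lemma: automorphisms of semidirect product} and Corollary \ref{cor:rigid-automorphisms}. Write $\psi(T_s)=\sum_y T_y g_y$ with $g_y\in\bA$, and apply $\psi$ to \eqref{eq:bernsteinrelation4}, which $\psi$ preserves because it fixes every $\theta_x$. Comparing leading terms in the Bruhat order and using that $W$ acts faithfully on $X$, all $g_y$ with $y\notin\{1,s\}$ should vanish. The relation should then force $\psi(T_s)=g\,T_s+(g-1)(1-\cG(s))$ for some $g\in\bA$ with $s(g)=g$. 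The quadratic relation \eqref{eq:bernsteinrelation1}, together with the fact that units of $\bA$ are monomials $c\,\mathbf v^k\theta_x$, should leave only $g=\pm1$ up to such monomial twists, and the braid relations should then constrain those twists. The expected outcome is that $\psi$ is the identity composed with sign-type twists of the form $T_s\mapsto \cG(s)-1-T_s$ on some $W$-orbits of simple reflections.

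Finally I would specialize at $\mathbf v=1$. There $\cG(s)=1$ and $\bfH_{\mathbf v=1}=\mathbb C[W]\ltimes\mathbb C[X]$. By Clifford theory the simple modules with central character $W\cdot z$ are induced from $W_z\ltimes\bA$, with an irreducible $W_z$-representation on the $z$-weight space. The constrained $\psi$ from the previous step then acts on these simples by tensoring the $W_z$-representation with a character of $W_z$. I would show that this character is trivial, i.e.\ that $\psi_*$ fixes each simple module at $\mathbf v=1$, by a continuity argument from the $\mathbf v\ne1$ case. For $h\in\bfH$, consider traces on the principal series $\bfH\otimes_{\bA}\bA_{(z,v)}$, which are free of rank $|W|$ over the torus of weights and vary algebraically in $(z,v)$. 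The identity $\tr(\psi(h)-h)=0$ holds for $v\ne1$, because there the classes of $\psi_*\mathbf M$ and $\mathbf M$ agree for every finite-length $\mathbf M$, and hence it extends to $v=1$ by Zariski density. Principal series at $v=1$ contain every simple with multiplicity given by the dimension of its $W_z$-part (Frobenius reciprocity), so vanishing of these trace differences on all principal series should force the twisting character to be trivial.

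The main obstacle is this last step. Traces on principal series do not a priori separate all simple modules at $\mathbf v=1$. If the density argument is too weak, I would fall back on directly checking that the explicit forms of $\psi$ allowed by the second step restrict to the identity on each $\mathbb C[W_z]$ at $\mathbf v=1$, using quasi-simple connectedness through Lemma \ref{lemma: gamma z trivial} to control $W_z$.
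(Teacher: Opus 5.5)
Your first paragraph is the paper's proof. The paper derives the corollary in one line from Corollary \ref{cor: rigidity for affine Hecke on modules}: $\phi^*\mathbf M'$ and $\phi'^*\mathbf M'$ have literally the same $\bA$-module structure, and your passage to $\psi=\phi^{-1}\circ\phi'$ is an equivalent repackaging. Strictly, this only gives agreement on the summand where $\mathbf v$ acts with eigenvalues $\neq 1$. The paper says nothing about $\mathbf v=1$ and only uses the corollary at $\mathbf v=q^{1/2}$.

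So you are right that the literal statement needs more, but your argument at $\mathbf v=1$ has a genuine gap: the trace step carries no information. For \emph{any} $\psi$ fixing $\bA$ pointwise, $h\otimes 1\mapsto\psi(h)\otimes 1$ is an isomorphism $\bfH\otimes_{\bA}\bA_{(z,v)}\xrightarrow{\sim}\psi^*(\bfH\otimes_{\bA}\bA_{(z,v)})$. Hence $\operatorname{tr}(\psi(h)-h)=0$ holds on every principal series at every $v$, with no density argument needed. Concretely, at $\mathbf v=1$ the automorphism $w\mapsto\operatorname{sgn}(w)w$ of $\mathbb C[W]\ltimes\mathbb C[X]$ fixes $\mathbb C[X]$ and every principal series, yet it exchanges the trivial and sign modules at $z=1$. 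What rules such twists out is that they do not lift to the generic algebra, which is an integrality input.

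Your second step points that way but predicts the wrong outcome. Your formula $\psi(T_s)=gT_s+(g-1)(1-\cG(s))$ is correct. Its $g=-1$ case, however, is $2(\cG(s)-1)-T_s$, not $\cG(s)-1-T_s$; the latter makes the left side of \eqref{eq:bernsteinrelation4} vanish. Moreover $2(\cG(s)-1)-T_s$ does not lie in $\bfH$ once $\lambda(s)\neq 0$, so the sign-type twists you expect do not exist.

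What closes the case $\mathbf v=1$ is your fallback, made precise.
\begin{itemize}
\item $\psi$ rescales the intertwiner $T_s-(\cG(s)-1)$ by $g$, so the quadratic relation forces $g\,s(g)=1$. Hence $g$ is a unit of the form $\pm\theta_x$.
\item $\cG(s)-1$ has a genuine pole along $\{z: s(z)=z\}$ whenever the graded parameters $\mu_z$ are nonzero, i.e.\ $\lambda(\alpha)\neq 0$, and also $\lambda(\alpha)\neq\lambda^*(\alpha)$ if $\alpha^\vee\in 2X^\vee$, where $s=s_\alpha$. This hypothesis is already implicit in Proposition \ref{prop:module-rigid}.
\item Requiring $(1-g)(\cG(s)-1)\in\bA$ then forces $g=1$ on that locus. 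So $g=\theta_{x_s}$ with $\theta_{x_s}(z)=1$ for every $s$-fixed $z$.
\item At $\mathbf v=1$ the correction term vanishes, and $\psi(w)=\theta_{x_w}w$ for a $1$-cocycle $x\colon W\to X$.
\item Consequently $\psi^*$ sends $\operatorname{Ind}_{W_z\ltimes X}^{W\ltimes X}(\sigma\otimes\mathbb C_z)$ to the same induction with $\sigma$ twisted by the character $w\mapsto\theta_{x_w}(z)$ of $W_z$.
\item Write a reflection of $W_z$ as $usu^{-1}$ with $s$ simple. The cocycle identity gives this character the value $\theta_{x_s}(u^{-1}z)$, which equals $1$ because $u^{-1}z$ is $s$-fixed.
\item So the character is trivial, since $W_z$ is generated by reflections in the quasi-simply connected case (Lemma \ref{lemma: gamma z trivial}).
\end{itemize}
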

\begin{remark}
    Corollary \ref{cor:uniquess} can be considered a weak analogue of Corollary \ref{cor:rigid} which states that an isomorphism of the graded Hecke algebra is uniquely determined by its effect on the toral part. Note that the direct analogue of Corollary \ref{cor:rigid} does not hold for affine Hecke algebras, i.e. an isomorphism of affine Hecke algbera is not uniquely determined by what it does on the toral part. For example, conjugation by any $\theta_x$ is an automorphism of the affine Hecke algebra which is the identity on the toral part but not in general the identitiy on the whole affine Hecke algebra.
\end{remark}

\section{The unipotent Local Langlands Correspondence}\label{sec:llc}
\subsection{The Affine Hecke Algebra Associated to $\mathfrak s$}
Let $\mathfrak s$ be a unipotent inertial class for $G$.

Let $(M,\tau_0)_G\in \mathfrak s$.
Let $\mathfrak r:=[M,\tau_0]_M$, $Q\in \mathcal Q(M)$, and denote the corresponding parabolic induction functor by $i_{Q,M}^G$.
Let $\tau^\circ$ be an irreducible submodule of $\tau_0|_{\cc M}$ and define 
\begin{equation}
    \Sigma = \mathrm{ind}_{\cc M}^M (\tau^\circ), \quad \Pi = i_{Q,M}^G\Sigma
\end{equation}
where $\cc M = \bigcap_{\nu\in X_{\un}(M)}\ker(\nu)$.
The endomorphism algebra $\mathcal H = \End(\Pi)$ comes with a distinguished subalgebra $\mathcal A$ given by the image of $i_{Q,M}^G:\End(\Sigma)\to\End(\Pi)$.

\begin{lemma}\label{lem:mult-free}
    If $\tau_0$ is a unipotent supercuspidal representation then the restriction $\tau_0|_{\cc M}$ is multiplicity-free.
\end{lemma}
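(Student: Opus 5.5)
We reduce multiplicity-freeness to a statement about unitary characters of the finite abelian group $M/\cc M$, and then use the explicit structure of unipotent supercuspidals. By Clifford theory for the open normal subgroup $\cc M\subset M$ with $M/\cc M$ finitely generated abelian, the restriction $\tau_0|_{\cc M}$ is a finite direct sum of $M$-conjugates of $\tau^\circ$. Let $M_{\tau^\circ}$ be the stabilizer of the isomorphism class of $\tau^\circ$. Then $\tau_0\simeq \mathrm{ind}_{M_{\tau^\circ}}^M \tilde\tau$, where $\tilde\tau$ is an irreducible representation of $M_{\tau^\circ}$ whose restriction to $\cc M$ is $\tau^\circ$-isotypic, say $\tilde\tau|_{\cc M}\simeq (\tau^\circ)^{\oplus e}$. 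The multiplicity of each conjugate in $\tau_0|_{\cc M}$ is exactly $e$, so it suffices to show $e=1$.

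The standard criterion is that $e$ equals the dimension of an irreducible projective representation of the finite abelian group $M_{\tau^\circ}/\cc M$ with respect to a certain $2$-cocycle. In particular $e=1$ whenever $\tau^\circ$ extends to $M_{\tau^\circ}$, and this holds if the cocycle is trivial.

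Next we use the structure of unipotent supercuspidals due to Lusztig and Morris. After conjugation, there is a maximal parahoric $K=\mathbb P_{\mathfrak f}$ of $M$ with $\tau_0=\mathrm{ind}_{N_M(K)}^M \tilde\sigma$. Here $\tilde\sigma$ extends the inflation $\sigma$ of a cuspidal unipotent representation of the finite reductive quotient $\overline{\mathbb P}_{\mathfrak f}$. We also have $\cc M\cap N_M(K)=K$, and $N_M(K)/K\cdot Z$ is identified with the stabilizer $\Omega_{\mathfrak f}$ of $\mathfrak f$ in the abelian group $\Omega\simeq M/\cc M$ (a quotient of the Kottwitz group). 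Applying Mackey to restrict $\tau_0$ to $\cc M$, we see that $\tau^\circ$ can be taken to be $\mathrm{ind}_K^{\cc M}\sigma$. Its multiplicity in $\tau_0|_{\cc M}$ is controlled by how $\sigma$ appears in $\tilde\sigma|_K$. That restriction is $\sigma$ with multiplicity $\dim\tilde\sigma/\dim\sigma$, which is $1$ because $\tilde\sigma$ is an extension of $\sigma$. Different double cosets $\cc M\, g\, N_M(K)$ contribute pairwise non-isomorphic constituents $\mathrm{ind}_{gKg^{-1}}^{\cc M}{}^g\sigma$, since the pairs $(gKg^{-1},{}^g\sigma)$ are not $\cc M$-conjugate. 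This gives multiplicity one for each constituent.

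The main obstacle is the existence of the extension $\tilde\sigma$ of $\sigma$ from $K$ to $N_M(K)$, or equivalently the triviality of the cocycle. The first thing to try is Lusztig's result that cuspidal unipotent representations of finite reductive groups are stable under the relevant diagram automorphisms and admit extensions: $\Omega_{\mathfrak f}$ acts on $\overline{\mathbb P}_{\mathfrak f}$ through automorphisms of the Dynkin diagram, and unipotent cuspidal characters are fixed by them. Extension from $K$ to $N_M(K)$ then follows because $N_M(K)/K$ is abelian and, modulo the compact center, cyclic-by-finite. In the cases where $\Omega_{\mathfrak f}$ is cyclic, a cocycle of a cyclic group is always trivial. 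For the remaining non-cyclic $\Omega_{\mathfrak f}$, which can occur only in type $D$ with $\Omega\simeq(\mathbb Z/2)^2$, we would fall back on checking, via Lusztig's classification in \cite{Lu-unip1}, that the cuspidal unipotent pieces there are extendable.
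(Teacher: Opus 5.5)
Your skeleton is the same as the paper's: realize $\tau_0=\mathrm{ind}_{N_M(K)}^M\tilde\sigma$ (Morris), show $\tilde\sigma|_K\cong\sigma$, then use $K\subset\cc M$. The gap is that the one nontrivial step is never established. Morris only gives an irreducible $\tilde\sigma$ whose restriction to $K$ \emph{contains} $\sigma$. That the restriction \emph{is} $\sigma$ is the whole content of the lemma, yet you build it into the setup (``Here $\tilde\sigma$ extends\dots'') before conceding it is the main obstacle. Your sketch for it does not close, for three reasons.

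First, $N_M(K)/K$ is not finite. It contains the image of $A_M$, and the centre of $M$ is not compact for a proper Levi. A cyclic-cocycle argument therefore only applies to the finite group $N_M(K)/KA_M$, after first extending $\sigma$ to $KA_M$ by the character through which $A_M$ acts on $\tilde\sigma$. The paper does this by twisting with a character $\nu''$ of $N_M(P)$.

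Second, you defer the non-cyclic case to a table check you do not carry out, and this case does occur. An example: a Levi of type $D_8$ in an adjoint group of type $D$, with $\mathfrak f$ the middle vertex of $\tilde D_8$ and reductive quotient of type $D_4\times D_4$. There $N_M(K)/KA_M\cong(\mathbb Z/2)^2$, and $H^2((\mathbb Z/2)^2,\mathbb C^\times)\neq 0$.

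Third, the existence of \emph{some} extension is not yet the claim. You also need that every irreducible representation of $N_M(K)$ containing $\sigma$ is then an extension twisted by a character of the abelian quotient, so that the particular $\tilde\sigma$ is one. The paper obtains the extension uniformly from \cite[Lemma 15.7]{feng-opdam-solleveld}. It then runs exactly this argument over the finite abelian group $N_M(P)/A_MP$.

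Your Mackey step also contains errors, though these are repairable. $M/\cc M$ is free abelian of rank $\dim A_M$, not finite. Also, $\cc M\cap N_M(K)$ can be strictly larger than $K$. Take a Levi with an $SO_9$ factor in $SO_{2n+1}$, with $\mathfrak f$ the $D_4$-vertex of $\tilde B_4$. The nontrivial element of $\Omega_{SO_9}\cong\mathbb Z/2$ lies in $\cc M$, fixes $\mathfrak f$, and acts by the graph automorphism of $D_4$. So $\mathrm{ind}_K^{\cc M}\sigma$ splits into two non-isomorphic pieces and is not $\tau^\circ$. The correct Mackey summands are $\mathrm{ind}^{\cc M}_{\cc M\cap gN_M(K)g^{-1}}({}^g\tilde\sigma)$. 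These are irreducible and pairwise non-isomorphic once $\tilde\sigma|_K\cong\sigma$, by Morris's intertwining result. Alternatively, as in the paper, use $\dim\Hom_K(\sigma,\tau_0)=1$ together with the fact that all $\cc M$-constituents of $\tau_0$ occur with equal multiplicity.
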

\begin{proof}
    By \cite{morris-sc}, every depth-0 supercuspidal representation $\pi$ of $M$ can be realized as $\mathrm{ind}_{N_M(P)}^M\sigma$ where $P$ is a maximal parahoric subgroup of $M$, and $\sigma$ is an irreducble $N_M(P)$-representation containing in $\sigma|_P$ the lift of a cuspidal representation of the reductive quotient of $P$.

    Now suppose $\sigma|_P$ contains the lift of a \emph{unipotent} cuspidal representation $\rho$.
    We claim that $\dim\Hom_{P}(\rho,\mathrm{ind}_{N_M(P)}^M\sigma) = 1$.
    By \cite[Proposition 1.4]{morris-sc}, $\dim\End(\mathrm{ind}_{N_M(P)}^M\sigma) =1$ and so 
    \begin{equation}
        \dim\Hom_{N(P)}(\sigma,\pi|_{N(P)})=1.
    \end{equation}
    Therefore it suffices to show that $\sigma$ is an extension of $\rho$.
    Since $A_M$ is central in $N_M(P)$ it acts by a character $\nu:A_M\to \mathbb C^\times$ on $\sigma$.
    Since $\rho$ is unipotent, $\nu$ is trivial on $A_M\cap P$, and so we can extend it trivially on $P$ to a character $\nu'$ of $A_MP$.
    As $PA_M$ has finite index in $N_M(P)$ we can extend $\nu'$ further to a character $\nu''$ of $N_M(P)$.
    Then $A_M$ acts trivially on $\sigma\otimes\nu''^{-1}$.
    By \cite[Lemma 15.7]{feng-opdam-solleveld}, $\rho\otimes \nu'^{-1}$ admits an extension from $PA_M$ to $N_M(P)$. 
    Since $N_M(P)/A_MP$ is finite abelian, every irreducible representation of $N_M(P)$ containing $\rho\otimes\nu'^{-1}$ in its restriction to $PA_M$ is an extension.
    In particular $\sigma\otimes \nu''^{-1}$ is an extension of $\rho\otimes\nu'^{-1}$ and so $\sigma|_P = \rho$.

    Since all of the constituents of $\pi|_{\cc M}$ have the same multiplicity and $P\subset \cc M$ we see that $\pi|_{\cc M}$ must be multiplicity-free.
\end{proof}

\begin{proposition}\cite[Remark 1.6.4.2]{roche}\label{prop:roche}
    Let $(\tau_0,W)$ be a supercuspidal representation of $M$ such that $\tau_0|_{\cc M}$ is multiplicity-free.
    Let
    \begin{equation}
        X_{\un}(\tau_0) = \{\nu\in X_{\un}(M):\tau_0\simeq\tau_0\otimes\nu\}, \qquad J = \bigcap_{\nu\in X_{\un}(\tau_0)}\ker(\nu).
    \end{equation}
    Then
    \begin{equation}
        \mathcal A = Z(\mathcal A) = \left\{f:J\to \End(W)\Bigm| \begin{aligned}&f(m_1mm_2) = \tau^\circ(m_1)f(m)\tau^\circ(m_2),\\ &m_1,m_2\in \cc M, \ m\in M, \\ &f \text{ has compact support mod } \cc M\end{aligned}\right \}.
    \end{equation}
    Let $\mathbb C[J/\cc M]$ be the group algebra of $J/\cc M$.
    There is an isomorphism $\psi_{\tau_0}:\mathbb C[J/\cc M] \xrightarrow{\sim } \mathcal A$ given by 
    \begin{align}
        \psi_{\tau_0}(c)(m) = \begin{cases}
            \tau_0(m) & \mbox{ for } m^{-1}\in c \\
            0 & \mbox{otherwise}
        \end{cases}, \qquad c\in J/\cc M, \ m\in J.
    \end{align}

    Finally, let $T$ be the complex torus $X_{\un}(M)/X_{\un}(\tau_0)$.
    The evaluation map $T\times \mathbb C[J/\cc M] \to \mathbb C$
    \begin{equation}
        (\chi,t) \mapsto \chi(t)
    \end{equation}
    induces an isomorphism $\mathbb C[J/\cc M] \xrightarrow{\sim}\mathbb C[T]$ where $\mathbb C[T]$ denotes the ring of regular functions on $T$.
    The composition with $\psi_{\tau_0}$ gives rise to an isomorphism
    \begin{equation}
        \phi_{\tau_0}:\mathcal A\xrightarrow{\sim} \mathbb C[T].
    \end{equation}
\end{proposition}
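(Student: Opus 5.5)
We follow Roche's argument; the main tools are Frobenius reciprocity for compact induction from the open normal subgroup $\cc M$, together with the multiplicity-freeness hypothesis.

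\textbf{Reduction to $\End(\Sigma)$.} First note that $\mathcal A$ is the image of $\End_M(\Sigma)$ under $i_{Q,M}^G$. Normalized parabolic induction is faithful, so this map is injective. It therefore suffices to describe $\End_M(\Sigma)$ and show it is commutative.

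\textbf{Describing $\End_M(\Sigma)$.} Since $\cc M$ is open and normal with $M/\cc M$ a lattice, Frobenius reciprocity and Mackey theory give
\begin{equation}
\End_M(\mathrm{ind}_{\cc M}^M\tau^\circ)\cong \Hom_{\cc M}\bigl(\tau^\circ,(\mathrm{ind}_{\cc M}^M\tau^\circ)|_{\cc M}\bigr)\cong\bigoplus_{m\in M/\cc M}\Hom_{\cc M}(\tau^\circ,{}^m\tau^\circ).
\end{equation}
In convolution form these are exactly the bi-equivariant functions $f$ on $M$ with compact support modulo $\cc M$. The support of such an $f$ lies in the stabilizer $M_{\tau^\circ}=\{m: {}^m\tau^\circ\simeq\tau^\circ\}$, and each coset contributes at most a one-dimensional space by Schur's lemma.

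\textbf{The key step: $M_{\tau^\circ}=J$.} This is the step I expect to be the main obstacle. Since $\tau_0|_{\cc M}$ is multiplicity-free and of finite length, it equals $\bigoplus_{m\in M/M_{\tau^\circ}}{}^m\tau^\circ$ with pairwise non-isomorphic summands. Consequently $M_{\tau^\circ}$ has finite index in $M$.

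Because $M_{\tau^\circ}/\cc M$ is a lattice (free abelian), $\tau^\circ$ extends to $M_{\tau^\circ}$. In fact $\tau_0|_{M_{\tau^\circ}}$ contains a unique such extension $\tilde\tau$, and $\tau_0\simeq\mathrm{ind}_{M_{\tau^\circ}}^M\tilde\tau$.

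\begin{itemize}
\item If $\nu$ is trivial on $M_{\tau^\circ}$, then $\tau_0\otimes\nu\simeq\tau_0$.
\item Conversely, suppose $\tau_0\otimes\nu\simeq\tau_0$. Then $\nu|_{M_{\tau^\circ}}$ must preserve the extension $\tilde\tau$, since it is the unique extension appearing in the restriction. It is therefore trivial on $M_{\tau^\circ}$, by a central-character and Schur argument.
\end{itemize}

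Hence $X_{\un}(\tau_0)$ is the annihilator of $M_{\tau^\circ}/\cc M$ in $X_{\un}(M)=\Hom(M/\cc M,\mathbb C^\times)$. Since $M_{\tau^\circ}/\cc M$ is a finite-index sublattice of a lattice, it equals its double annihilator, so $J=M_{\tau^\circ}$.

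\textbf{The isomorphism $\psi_{\tau_0}$.} For each coset $c\in J/\cc M$, the function $m\mapsto\tau_0(m)$ for $m^{-1}\in c$ (extended by zero) is bi-equivariant and nonzero. Here $\tau_0(m)$ means $\tilde\tau(m)$ acting on $W$, and the inverse $m^{-1}$ records the convention for convolution versus the group-algebra product. Since each coset contributes at most one dimension, these functions span $\End(\Sigma)$.

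A direct convolution computation shows that these basis elements multiply like the elements of $J/\cc M$. Thus $\psi_{\tau_0}$ is an algebra isomorphism. Because $J/\cc M$ is abelian, $\mathcal A$ is commutative, so $\mathcal A=Z(\mathcal A)$.

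\textbf{The isomorphism $\phi_{\tau_0}$.} Finally, $T=X_{\un}(M)/X_{\un}(\tau_0)$ is precisely $\Hom(J/\cc M,\mathbb C^\times)$, the character torus of the lattice $J/\cc M$. The evaluation pairing therefore identifies $\mathbb C[J/\cc M]$ with the coordinate ring $\mathbb C[T]$. Composing with $\psi_{\tau_0}$ gives $\phi_{\tau_0}$.
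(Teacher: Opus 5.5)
The paper gives no proof of this statement; it simply quotes Roche. Your reconstruction follows the standard Clifford-theory argument: Frobenius reciprocity and Mackey give bi-equivariant functions supported on the stabilizer $M_{\tau^\circ}$, annihilators identify $M_{\tau^\circ}=J$, and the explicit functions $f_c$ multiply like $J/\cc M$. The argument is correct except for one step whose justification is wrong.

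The sentence ``Because $M_{\tau^\circ}/\cc M$ is a lattice, $\tau^\circ$ extends to $M_{\tau^\circ}$'' is false as a general principle. The obstruction to extending a stable irreducible representation across a quotient $\mathbb Z^r$ is a class in $H^2(\mathbb Z^r,\mathbb C^\times)$, and this group is nonzero for $r\ge 2$. For example, a nontrivial character of the centre of the discrete Heisenberg group is stable but does not extend, since every character of the group kills the commutator subgroup, which is that centre.

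Your own computation shows freeness cannot be the reason. Any extension $\tilde\tau$ of $\tau^\circ$ to $M_{\tau^\circ}$ gives basis functions $f_c$ with $f_c*f_d=f_{cd}$. Hence $\End_M(\Sigma)\cong\mathbb C[M_{\tau^\circ}/\cc M]$ is commutative. Since $\Sigma$ is a progenerator of its block, $\Hom_M(\Sigma,\tau_0)\cong\Hom_{\cc M}(\tau^\circ,\tau_0)$ is a simple module over this commutative algebra, so it is one-dimensional. Thus existence of the extension is equivalent to multiplicity one.

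The correct argument uses the hypothesis directly:
\begin{itemize}
\item By multiplicity-freeness, the $\tau^\circ$-isotypic subspace $W^\circ\subset W$ is a single copy of $\tau^\circ$.
\item $W^\circ$ is preserved by $\tau_0(M_{\tau^\circ})$, because $M_{\tau^\circ}$ fixes the isomorphism class of $\tau^\circ$.
\item The resulting action of $M_{\tau^\circ}$ on $W^\circ$ is the extension $\tilde\tau$. This also gives your uniqueness claim and $\tau_0\simeq\mathrm{ind}_{M_{\tau^\circ}}^M\tilde\tau$.
\end{itemize}
This is exactly where multiplicity-freeness is needed; without it one only gets a twisted group algebra. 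With this replacement, the rest of your proof goes through.
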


Let 
\begin{align}
    N^{\mathfrak r}&=\{g\in N_G(M):g\cdot\mathfrak r = \mathfrak r\},\\
    W^{\mathfrak r}&=N^{\mathfrak r}/M.
\end{align}
The torus $T$ acquires an action of $W^{\mathfrak r}$ from $\phi_{\tau_0}$.

Recall that we have assumed that $G^*$ is adjoint.
This assumption is necessary in the proof of the following proposition.

\begin{proposition}\label{prop:bernstein}
    There exists $\nu\in X_{\un}(M)$ such that $\tau_0\otimes\nu$ is unitary and $W^{\mathfrak r}$-invariant.
    
    For such $\nu$, the isomorphism $\phi_{\tau_0\otimes\nu}$ extends to an isomorphism $\mathcal H\to \bfH_{\mathbf v =q^{1/2}} (\mathfrak r, Q)$ where $\bfH(\mathfrak r, Q)$ is an (quasi-simply connected) affine Hecke algebra with toral part $\mathbb C[T\times \mathbb C^\times]$ and Weyl group $W^{\mathfrak r}$.

    All extensions induce the same map on Grothendieck groups
    \begin{equation}
        K_0(\Mod^{fl}(\mathcal H)) \to K_0(\Mod^{fl}_{\mathbf v = q^{1/2}}(\bfH(\mathfrak r, Q))).
    \end{equation}
\end{proposition}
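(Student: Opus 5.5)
The proof has three parts. First we find a unitary, $W^{\mathfrak r}$-invariant base point. Then we invoke the known Bernstein-presentation theorems for $\End(\Pi)$. Finally we deduce uniqueness on Grothendieck groups from the rigidity results of Section~\ref{sec:rigidity}.

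\textbf{Step 1: the twist $\nu$.} Since $\tau_0$ is supercuspidal, its central character restricted to the maximal split central torus $A_M$ differs from a unitary one by an unramified positive character. So some $\nu_1\in X_{\un}(M)$ makes $\tau_0\otimes\nu_1$ unitary. The unitary members of $\mathfrak r$ form a torsor under the compact torus $X_{\un}(M)_u/X_{\un}(\tau_0)_u$, and $W^{\mathfrak r}$ acts on this torsor by affine transformations. A fixed point exists if the associated $W^{\mathfrak r}$-cohomology class vanishes. This is where adjointness enters. For unipotent $\tau_0$, Lusztig's classification \cite{Lu-unip1} (equivalently, the depth-zero description of \cite{morris-sc} and \cite{feng-opdam-solleveld}) shows the cuspidal is attached to a maximal parahoric of $M$. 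For adjoint $G$ one can choose the extension to $N_M(P)$ so that its restriction to the centre is trivial on a $W^{\mathfrak r}$-stable sublattice. Concretely, one checks that the representation $\sigma$ of $N_M(P)$ appearing in the proof of Lemma~\ref{lem:mult-free} can be chosen $N^{\mathfrak r}$-stable. I would try to settle this with an averaging argument over the finite group $W^{\mathfrak r}$ acting on the compact torus. If a genuine obstruction class appears, I would kill it using the fact that $X^*(Z(M^\vee)_\vartheta)$ is a direct summand when $G^\vee$ is simply connected, which is dual to $G$ adjoint.

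\textbf{Step 2: existence of the extension.} By Lemma~\ref{lem:mult-free}, Proposition~\ref{prop:roche} applies to $\tau_0\otimes\nu$. It gives $\mathcal A\cong\mathbb C[T]$, and the $W^{\mathfrak r}$-invariance of $\tau_0\otimes\nu$ makes this isomorphism $W^{\mathfrak r}$-equivariant, with $W^{\mathfrak r}$ acting on $T$ linearly rather than affinely. Solleveld's and Heiermann's results on the structure of $\End(\Pi)$ give a Bernstein presentation of $\mathcal H$ over $\mathcal A$. Its generators are normalized intertwining operators $T_w$, and it is of the form of Definition~\ref{def:bernstein}, provided the cocycle twisting $\mathbb C[W^{\mathfrak r}]$ is trivial and $W^{\mathfrak r}$ is a Weyl group of $T$. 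For unipotent $\mathfrak s$ with $G$ adjoint, these facts and the identification of the parameters $\lambda,\lambda^*$ are in \cite{Lu-unip1,solleveld-unip}. There one also reads off that the root datum is quasi-simply connected, via Lusztig's tables. The Hecke relations fix the scalar normalization of each $T_s$ up to the two-element choice forced by the quadratic relation, and one fixes it by positivity. Together these give the isomorphism $\mathcal H\to\bfH_{\mathbf v=q^{1/2}}(\mathfrak r,Q)$ extending $\phi_{\tau_0\otimes\nu}$.

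\textbf{Step 3: uniqueness on $K_0$.} Let $\Phi,\Phi'$ be two extensions. Then $\Phi'\circ\Phi^{-1}$ is an automorphism of $\bfH_{\mathbf v=q^{1/2}}$ that is the identity on $\bA$. Apply Corollary~\ref{cor:uniquess}, or directly Corollary~\ref{cor: rigidity for affine Hecke on modules}, since $q^{1/2}\neq1$ and $\bfH$ is quasi-simply connected. Pushing a simple module forward along this automorphism preserves its $\bA$-weights, so the pushforward and the original module are isomorphic, and the two induced maps on $K_0$ agree.

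\textbf{Expected obstacle.} I expect Step~1 to be the main obstacle: producing a \emph{simultaneously} unitary and $W^{\mathfrak r}$-fixed point and showing precisely where adjointness is used. Steps~2 and~3 are essentially citations plus the rigidity results already proved.
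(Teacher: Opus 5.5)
Your Step~3 is the paper's argument: quasi-simple connectedness from Lusztig's tables, then Corollary~\ref{cor:uniquess}. Step~1, however, has a genuine gap. You correctly reduce the existence of $\nu$ to the vanishing of a class in $H^1(W^{\mathfrak r},T_u)$. This class describes the affine action of $W^{\mathfrak r}$ on the torsor of unitary members of $\mathfrak r$, where $T_u$ is the compact torus. Averaging over $W^{\mathfrak r}$ only kills cohomology with uniquely divisible coefficients, such as the real vector space of positive unramified characters. It does not kill $H^1(W^{\mathfrak r},T_u)$, which is $|W^{\mathfrak r}|$-torsion and can be nonzero: already $\mathbb Z/2\mathbb Z$ acting on $S^1$ by $z\mapsto -z$ has no fixed point. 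Your fallbacks are not arguments either. Elements of $N^{\mathfrak r}$ need not normalize $N_M(P)$, so choosing $\sigma$ ``$N^{\mathfrak r}$-stable'' up to conjugacy just restates the claim. The remark that $X^*(Z(M^\vee)_\vartheta)$ is a direct summand is not connected to the obstruction class. The paper does not build the fixed point by hand. It deduces the existence of $\nu$ from \cite[Condition 3.2]{solleveld}, using that there is no $R$-group in this situation. So the logical order is the reverse of yours: the base point is a consequence of the structural input of Step~2, not a preliminary to it.

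Your Step~2 is also missing a link in that structural input. \cite{solleveld} presents $\End(\Pi)$ only as a possibly twisted affine Hecke algebra. Lusztig's tables \cite{Lu-unip1,Lu-unip2} compute the Hecke algebra of the type $(\hat P,\hat\sigma)$, which is a different algebra. To read off ``trivial cocycle, no $R$-group, parameters $\lambda,\lambda^*$, quasi-simply connected'' for $\End(\Pi)$, compatibly with $\mathcal A$ and $\phi_{\tau_0\otimes\nu}$, you need a theorem identifying the two presentations. The paper uses \cite[Theorem 7.15]{ohara} for this. It must also check that the key consequence of \cite[Assumption 7.1]{ohara}, which can fail for adjoint $G$, still holds: by \cite[\S1.6]{morris-depth-zero}, $\hat P$ is the stabilizer of a facet in ${}^\circ G$. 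Once that comparison is in place, the extension exists and, via \cite[Condition 3.2]{solleveld}, so does $\nu$. Your Step~3 then finishes the proof as in the paper.
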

\begin{proof}
    By Lemma \ref{lem:mult-free}, \cite[Working Hypothesis 10.2]{solleveld} is satisfied and so we obtain from \cite{solleveld} a presentation for $\mathcal H$ as a possibly twisted affine Hecke algebra.
    The existence of the $\nu$ is a consequence of \cite[Condition 3.2]{solleveld} because as we will see there is no $R$-group in this situation.
    To see that $\mathcal H$ is a genuine affine Hecke algebra of the form defined in Section \ref{sec:affine-hecke} with no $R$-group, we will now compare it to the presentations computed by Lusztig in \cite{Lu-unip1}. 
    
    By \cite{Lu-unip1,Lu-unip2,solleveld-unip}, $\mathfrak s$ admits a type $(\hat P, \hat \sigma)$, where $\hat P$ is a compact open subgroup equal to the full centralizer of a facet in the extended Bruhat--Tits building of $G$, and $\hat \sigma$ is an extension of a lift of a unipotent cuspidal representation of the connected component of the reductive quotient of $\hat P$.
    These are the types considered in \cite{Lu-unip1,Lu-unip2}, and the structure of each associated Hecke algebra is computed in \cite[\S7]{Lu-unip1} and \cite[\S11]{Lu-unip2}.
    In particular, these Hecke algebras are affine Hecke algebras of the form considered in Section \ref{sec:affine-hecke}.
    By \cite[Theorem 7.15]{ohara}, the presentation of $\mathcal H$ coming from \cite{Lu-unip1,Lu-unip2} agrees with the one obtained in \cite{solleveld}.
    Note, however, that since $G$ is adjoint, \cite[Assumption 7.1]{ohara} may fail.
    Nevertheless, having replaced $P$ with $\hat P$, \cite[\S1.6]{morris-depth-zero} shows that $\hat P$ is the stabilizer of a facet in $^\circ G$.
    Therefore the key consequence of \cite[Assumption 7.1]{ohara}, namely that $K_M$ is the stabilizer of a facet in $M^1$ (in the notation of \cite{ohara}), still holds, and so we may apply the main result of \cite{ohara}.

    Finally, by the tables in \cite{Lu-unip1,Lu-unip2}, $\bfH(\mathfrak r,Q)$ is quasi-simply connected, and so the final statement follows from Corollary \ref{cor:uniquess}.
\end{proof}

\begin{theorem}\cite[Theorem 1.8.1.1]{roche}\label{thm:roche}
    The representation $\Pi$ is a progenerator and gives rise to an equivalence
    \begin{align}
        \mathbf M_{\mathfrak r,Q}: \Rep^{\mathfrak s}(G)&\to \Mod(\mathcal H^{op}) \cong \Mod_{\mathbf v =q^{1/2}}(\bfH(\mathfrak r,  Q )^{op}), \\
        \pi&\mapsto \mathbf \Hom_G(\Pi,\pi).
    \end{align}
\end{theorem}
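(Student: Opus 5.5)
The statement is the Bernstein--Roche progenerator theorem, and the plan is to follow the standard argument of Bernstein as presented in \cite{roche}, with $\Pi = i_{Q,M}^G\Sigma$ and $\Sigma=\mathrm{ind}_{\cc M}^M\tau^\circ$.

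\emph{Step 1: $\Sigma$ is a progenerator of $\Rep^{\mathfrak r}(M)$.} Since $\cc M$ is open in $M$ and $\tau^\circ$ is a finite-dimensional smooth representation of $\cc M$ (compactly supported modulo the centre, because $\tau_0$ is supercuspidal), compact induction is left adjoint to restriction. We therefore have $\Hom_M(\Sigma,\sigma)=\Hom_{\cc M}(\tau^\circ,\sigma|_{\cc M})$. Supercuspidal representations restricted to $\cc M$ are semisimple, and all representations in $\Rep^{\mathfrak r}(M)$ restrict to $\cc M$ as sums of $M$-conjugates of $\tau^\circ$. Hence this functor is exact, so $\Sigma$ is projective. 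It is also nonzero on every nonzero object, so $\Sigma$ is a generator. Finite generation of $\Sigma$ is clear.

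\emph{Step 2: transfer to $G$.} Parabolic induction $i_{Q,M}^G$ has the exact right adjoint $\bar r$, the Jacquet module with respect to the opposite parabolic, by Bernstein's second adjointness. Hence $\Hom_G(\Pi,\pi)=\Hom_M(\Sigma,\bar r_{\bar Q}\pi)$ is exact in $\pi$, so $\Pi$ is projective. Parabolic induction preserves finite generation, so $\Pi$ is finitely generated.

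\emph{Step 3: generation.} For generation inside $\Rep^{\mathfrak s}(G)$, take a nonzero $\pi$ there. Some irreducible subquotient of $\pi$ has supercuspidal support in $\mathfrak s$. By the theory of cuspidal supports, the Jacquet module $\bar r\pi$ then has a nonzero component in the $W^{\mathfrak r}$-translates of $\mathfrak r$, and we project it to the $\mathfrak r$-component. Here one must handle that different parabolics and $N_G(M)$-conjugates of $\mathfrak r$ may appear. For this I would use that conjugating by $N_G(M)$ permutes components and that $\Sigma$ sees the whole inertial orbit, as $\Sigma\otimes\nu\cong\Sigma$ for unramified $\nu$. The conclusion is that $\Hom_G(\Pi,\pi)\neq 0$. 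I expect this step to be the main obstacle, since it is where the geometric lemma and Bernstein's decomposition enter.

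\emph{Step 4: conclusion.} With $\Pi$ a finitely generated projective generator of the Bernstein block, Morita theory gives the equivalence $\pi\mapsto\Hom_G(\Pi,\pi)$ onto right $\End(\Pi)$-modules, i.e. $\Mod(\mathcal H^{op})$. Composing with the isomorphism $\mathcal H\cong \bfH_{\mathbf v=q^{1/2}}(\mathfrak r,Q)$ of Proposition~\ref{prop:bernstein} yields the stated form.
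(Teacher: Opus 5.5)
Your Steps 1, 2 and 4 follow the standard Bernstein--Roche route and need only two small repairs. First, $\tau^\circ$ is not finite-dimensional in general. It is a compact representation of $\cc M$, and it is infinite-dimensional e.g.\ whenever $M_{der}$ is isotropic; Frobenius reciprocity for compact induction from the open subgroup $\cc M$ does not need finite dimension anyway. Second, in Step 2 you need $\Hom_M(\Sigma,-)$ to be exact on all of $\Rep(M)$, not only on $\Rep^{\mathfrak r}(M)$, because $r_{\bar Q}^G\pi$ need not lie in the $\mathfrak r$-block. This follows from projectivity of irreducible compact representations of $\cc M$.

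The genuine gap is in Step 3. Reducing to irreducible $\pi$ is fine. For such $\pi$, second adjointness, Frobenius reciprocity and Step 1 give, for the \emph{fixed} $Q$,
\[
\Hom_G(\Pi,\pi)\neq 0 \iff (r_{\bar Q}^G\pi)^{\mathfrak r}\neq 0 \iff \pi\hookrightarrow i_{\bar Q}^G(\tau') \text{ for some } \tau'\in\mathfrak r .
\]
Jacquet's subrepresentation theorem and uniqueness of cuspidal support give less than this. They give $\pi\hookrightarrow i_{Q'}^G(\tau')$ with $\tau'\in\mathfrak r$ and some $Q'\in\mathcal Q(M)$ that depends on $\pi$. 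That is, they only show $\Hom_G(i_{\overline{Q'}}^G\Sigma,\pi)\neq 0$ for a possibly different parabolic. Neither of your devices moves $Q'$ to $\bar Q$:
\begin{itemize}
\item Unramified twisting ($\Sigma\otimes\nu\cong\Sigma$) never changes the parabolic.
\item Conjugation by $N_G(M)$ only moves $Q'$ within its $N_G(M)$-orbit in $\mathcal Q(M)$, and that orbit need not contain $\bar Q$. For example, for $M=\mathrm{GL}_2\times\mathrm{GL}_1\subset\mathrm{GL}_3$ the two parabolics with Levi $M$ are not even $G$-conjugate.
\item Even when some $n\in N_G(M)$ does carry $Q'$ to $\bar Q$, it replaces $\tau'$ by ${}^n\tau'\in{}^n\mathfrak r$. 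The representation $\Sigma$ does not see ${}^n\mathfrak r$ unless $n\in N^{\mathfrak r}$.
\end{itemize}
In the present setting Lemma~\ref{lem:leviclass} removes the orbit obstruction, but not the ${}^n\mathfrak r$ obstruction.

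What is missing is \emph{independence of the parabolic}. For the fixed $Q$, every irreducible object of $\Rep^{\mathfrak s}(G)$ must satisfy $(r_{\bar Q}^G\pi)^{\mathfrak r}\neq 0$. Equivalently, the subcategory generated by $i_{Q'}^G\Sigma$ must be the same for every $Q'\in\mathcal Q(M)$.

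As written, and with the repairs above, your argument proves only that $\bigoplus_{Q'\in\mathcal Q(M)} i_{Q'}^G\Sigma$ is a progenerator of $\Rep^{\mathfrak s}(G)$. That yields a Morita equivalence with $\End_G\bigl(\bigoplus_{Q'} i_{Q'}^G\Sigma\bigr)^{op}$, not with $\mathcal H^{op}$ where $\mathcal H=\End(\Pi)\cong\bfH_{\mathbf v=q^{1/2}}(\mathfrak r,Q)$.

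Independence of the parabolic is a genuine theorem of Bernstein's theory, not bookkeeping. It is exactly the nontrivial input the paper imports by citing \cite[Theorem 1.8.1.1]{roche}, so you must either prove it or invoke it explicitly. You also use tacitly that $\Pi$ lies in $\Rep^{\mathfrak s}(G)$. This is quick once the Bernstein decomposition is available, but it should be stated.
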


\begin{definition}\label{def:bullet}
    Let $\bfH$ be an affine Hecke algebra. Define the anti-involution $\bullet:\bfH\to \bfH$ by
    \begin{align}
        &T_w^\bullet = T_{w^{-1}}, &&\theta_x^\bullet = \theta_{x},
    \end{align}
    for $s\in S, w\in W, x\in X$.
    It induces an equivalence between $\Mod(\bfH)$ and $\Mod(\bfH^{op})$.
    Define 
    \begin{equation}
        \mathbf M^{\bullet}_{\mathfrak r,Q}:\Rep^{\mathfrak s}(G)\to \Mod(\bfH(\mathfrak r,Q))
    \end{equation}
    to be the composition of $\mathbf M_{\mathfrak r,Q}$ with $\bullet$.
\end{definition}

\begin{lemma}\label{lem:ind-res}
    Let $Q_0 \subset Q\subset P \subset G$ be parabolic subgroups and $M\subset N$ be Levi factors of $Q$ and $P$ respectively.
    Let $\bar P$ denote the opposite parabolic of $P$ with respect to $Q_0$.
    Let $\mathfrak r = [M,\tau]_M$ be a cuspidal inertial class of $M$,  
    \begin{equation}\Sigma = \mathrm{ind}_{\cc M}^M \tau|_{\cc M}, \quad \Pi^N = i_{Q\cap N,M}^N\Sigma, \quad \Pi^G = i_{Q,M}^G\Sigma\end{equation}
    and
    \begin{equation}\bfH^N = \End(\Pi^N), \quad \bfH^G = \End(\Pi^G).\end{equation}
    Let $\mathfrak r^N = [M,\tau]_N$ and $\mathfrak r^G = [M,\tau]_G$.
    The parabolic induction functor defines an injective map
    \begin{equation}\bfH^N\xrightarrow{i_{P,N}^G} \bfH^G\end{equation}
    whose image is a parabolic subalgebra of $\bfH^G$.

    The following diagrams commute
    \begin{equation}
        \begin{tikzcd}[column sep=large]
            \Rep^{\mathfrak r^G}(G) \arrow[r,"(-)^{\mathfrak r^N}\circ r_{\bar P,N}^G"] \arrow[d,"\mathbf M_{\mathfrak r,Q}^\bullet"] & \Rep^{\mathfrak r^N}(N) \arrow[d,"\mathbf M_{\mathfrak r, Q\cap N}^\bullet"] &
            \Rep^{\mathfrak r^N}(N) \arrow[r,"i_{\bar P,N}^G"] \arrow[d,"\mathbf M_{\mathfrak r, Q\cap N}^\bullet"] & \Rep^{\mathfrak r^G}(G) \arrow[d,"\mathbf M_{\mathfrak r,Q}^\bullet"]\\
            \Mod(\bfH^G) \arrow[r,"r_{\bfH^N}^{\bfH^G}"] & \Mod(\bfH^N) &\Mod(\bfH^N) \arrow[r,"'i_{\bfH^N}^{\bfH^G}"] & \Mod(\bfH^G).
        \end{tikzcd}
    \end{equation}
    where $(-)^{\mathfrak r^N}$ denotes projection onto the $\mathfrak r^N$ Bernstein component.
\end{lemma}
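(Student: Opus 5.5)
The plan is to realize everything through the progenerators $\Pi^N = i_{Q\cap N,M}^N\Sigma$ and $\Pi^G = i_{Q,M}^G\Sigma$, together with transitivity of parabolic induction, $\Pi^G \cong i_{P,N}^G\Pi^N$. This isomorphism holds because $Q = (Q\cap N)U_P$.

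\textbf{Step 1: the embedding.} Functoriality of $i_{P,N}^G$ gives an algebra map $\End(\Pi^N)\to\End(\Pi^G)$. On the toral part it is compatible with the distinguished subalgebras: both $\mathcal A$'s are images of $\End(\Sigma)$, and $i_{P,N}^G\circ i_{Q\cap N,M}^N = i_{Q,M}^G$. Injectivity follows from the geometric lemma applied to $\Hom_G(i_{P,N}^G\Pi^N, i_{P,N}^G\Pi^N)$: the double coset $P\backslash G/P$ contributing the identity coset gives a summand $\End_N(\Pi^N)$, into which the functorial map lands injectively. To identify the image as the parabolic subalgebra, I would compare Bernstein presentations. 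The elements $T_w$ for $w\in W^{\mathfrak r}\cap N$ are built from intertwining operators $J_{\bar Q'|Q'}$ internal to $N$, and parabolic induction carries these to the corresponding intertwiners for $G$. The rigidity statements (Corollary \ref{cor:uniquess}, or Corollary \ref{cor:rigid} after reduction) then show it suffices to check this on $\mathcal A$ and the quadratic relations. A dimension count over the toral subalgebra, namely rank $|W^{\mathfrak r}_N|$ on both sides, finishes it.

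\textbf{Step 2: the restriction diagram.} Second adjointness (Bernstein) gives
\[
\Hom_G(i_{P,N}^G\Pi^N,\pi)\cong \Hom_N(\Pi^N, r_{\bar P,N}^G\pi).
\]
Since $\Pi^N$ lies in $\Rep^{\mathfrak r^N}(N)$, the right-hand side only sees $(r_{\bar P,N}^G\pi)^{\mathfrak r^N}$. This yields $\mathbf M_{\mathfrak r,Q}(\pi)|_{\bfH^N}\cong \mathbf M_{\mathfrak r,Q\cap N}((r_{\bar P,N}^G\pi)^{\mathfrak r^N})$ as right modules. One must check that the adjunction isomorphism is natural in the first variable, so that it intertwines the $\End(\Pi^N)$-actions via the map of Step 1. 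Composing with $\bullet$, which restricts compatibly because $\bullet$ preserves parabolic subalgebras, gives the left square.

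\textbf{Step 3: the induction diagram.} For $\sigma\in\Rep^{\mathfrak r^N}(N)$, $\Hom_G(\Pi^G, i_{\bar P,N}^G\sigma)$ must be identified with $\Hom_{\bfH^N}(\bfH^G,\mathbf M(\sigma))$. By Step 2 applied with the exact functor, the functor $\mathbf M^\bullet\circ i_{\bar P,N}^G$ is right adjoint to $r_{\bfH^N}^{\bfH^G}\circ \mathbf M^\bullet$, since $i_{\bar P,N}^G$ is right adjoint to $r_{\bar P,N}^G$ (Frobenius reciprocity) and the projection $(-)^{\mathfrak r^N}$ is adjoint to the inclusion. The coinduction $'i$ is the right adjoint of restriction. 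Uniqueness of adjoints, combined with the equivalences $\mathbf M^\bullet$ of Theorem \ref{thm:roche}, then gives the right square.

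\textbf{Main obstacle.} I expect the hardest part to be Step 1's claim that the image is exactly the standard parabolic subalgebra, and not a conjugate or twisted version. This requires the opposite-parabolic bookkeeping with $\bar P$ relative to $Q_0$ to match the normalization of the $T_w$ in Proposition \ref{prop:bernstein}. I would handle it by checking compatibility on $\mathcal A$ and on simple reflections in $W^{\mathfrak r}_N$ directly from the intertwiner construction in \cite{solleveld}.
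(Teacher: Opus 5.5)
Your proposal is correct and follows essentially the paper's route. The parabolic-subalgebra claim is read off from Solleveld's intertwiner presentation, the restriction square is Bernstein's second adjointness for $(i_{P,N}^G, r_{\bar P,N}^G)$ made natural in $\Pi^N$, and the induction square follows by passing to right adjoints using $(r_{\bar P,N}^G,i_{\bar P,N}^G)$ and $(r_{\bfH^N}^{\bfH^G},{}'i_{\bfH^N}^{\bfH^G})$, exactly as the paper does following Roche; the only thing to drop is the appeal to Corollary~\ref{cor:uniquess}/\ref{cor:rigid} and the rank count in Step~1, since these do not identify the image as a subalgebra, whereas the direct check that the embedding sends $\mathcal A$ and the generators $T_s$ ($s$ simple in the Weyl group of $N$) to the corresponding generators of $\bfH^G$ does the work.
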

\begin{proof}
    The fact that the image of $\bfH^N$ in $\bfH^G$ is a parabolic subalgebra of $\bfH^G$ follows from the presentation of $\bfH^G$ in \cite{solleveld}.
    The proof that the diagrams commute is the same as in \cite[Theorem 5.3]{roche-induction}, but using the fact that $(r_{\bar P,N}^G,i_{\bar P,N}^G)$ and $(r_{\bfH^N}^{\bfH^G}, \hphantom{ }'i_{\bfH^N}^{\bfH^G})$ are adjoint pairs.
\end{proof}
\begin{remark}\label{rmk:restriction}
    Note that $\mathbf M_{\mathfrak r, Q\cap N}^\bullet = \mathbf M_{\mathfrak r, Q\cap N}^\bullet\circ (-)^{\mathfrak r^N}$ since $\Pi^N$ is a progenerator for $\mathfrak r^N$ and therefore 
    \begin{equation}
        \begin{tikzcd}[column sep=large]
            \Rep^{\mathfrak r^G}(G) \arrow[r,"r_{\bar P,N}^G"] \arrow[d,"\mathbf M_{\mathfrak r,Q}^\bullet"] & \Rep(N) \arrow[d,"\mathbf M_{\mathfrak r, Q\cap N}^\bullet"] \\
            \Mod(\bfH^G) \arrow[r,"r_{\bfH^N}^{\bfH^G}"] & \Mod(\bfH^N)
        \end{tikzcd}
    \end{equation}
    also commutes.
\end{remark}

We take note of the following useful consequence of this lemma and Corollary \ref{cor: rigidity for affine Hecke on modules}.

\begin{corollary}
    Let $\mathfrak s=[M,\tau]_G$ be a unipotent inertial class.
    The homomorphism
    \begin{equation}
        r_{Q,M}^G:K_0(\Rep^{\mathfrak s,fl}(G))\to K_0(\Rep^{fl}(M))
    \end{equation}
    is injective. 
    In other words, the semisimplification of a finite length representation $\pi$ in $\Rep^{\mathfrak s}(G)$ is determined by its Jacquet module $r_{Q,M}^G(\pi)$. 
\end{corollary}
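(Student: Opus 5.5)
The plan is to transport the question to the affine Hecke algebra via the progenerator $\Pi$ and then invoke Corollary \ref{cor: rigidity for affine Hecke on modules}. Let $X\in K_0(\Rep^{\mathfrak s,fl}(G))$ satisfy $r_{Q,M}^G(X)=0$; we must show $X=0$. Apply the equivalence $\mathbf M^\bullet_{\mathfrak r,Q}$ of Theorem \ref{thm:roche} (composed with $\bullet$, Definition \ref{def:bullet}) and Proposition \ref{prop:bernstein}. This identifies $K_0(\Rep^{\mathfrak s,fl}(G))$ with $K_0(\Mod^{fl}_{\mathbf v=q^{1/2}}(\bfH(\mathfrak r,Q)))$. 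By that proposition, the resulting map on Grothendieck groups is independent of the chosen extension.

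Next we apply Lemma \ref{lem:ind-res}, in the form of Remark \ref{rmk:restriction}, with $P=Q$, $N=M$ and $Q_0=Q$. Then $\bar P=\bar Q$ is the opposite parabolic, and the square says that $\mathbf M^\bullet_{\mathfrak r,M}\circ r_{\bar Q,M}^G = r_{\bfH^M}^{\bfH^G}\circ \mathbf M^\bullet_{\mathfrak r,Q}$. Here $\bfH^M=\End(\Sigma)=\mathcal A$, which is the toral subalgebra $\bA$ specialized at $\mathbf v=q^{1/2}$ (Proposition \ref{prop:roche}). So restriction of modules to $\bA$ corresponds to the Jacquet functor $r_{\bar Q,M}^G$ followed by the equivalence $\mathbf M^\bullet_{\mathfrak r,M}$ for $M$. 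The latter is faithful on $K_0$ of the $\mathfrak r$-component. Moreover, Remark \ref{rmk:restriction} shows that passing to other Bernstein components of $M$ loses nothing, since $\mathbf M^\bullet_{\mathfrak r,M}$ kills them, and $r$ of an object in $\mathfrak s$ lies only in $W$-conjugates of $\mathfrak r$. Combining this with injectivity of $K_0(\Mod^{fl}_{\mathbf v\ne1}(\bfH))\to K_0(\Mod^{fl}(\bA))$ from Corollary \ref{cor: rigidity for affine Hecke on modules}, which applies because $\bfH(\mathfrak r,Q)$ is quasi-simply connected and $q^{1/2}\neq 1$, we get that $r_{\bar Q,M}^G$ is injective on $K_0$.

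The main obstacle is that the statement uses $r_{Q,M}^G$ rather than $r_{\bar Q,M}^G$. To handle this, I would use that on Grothendieck groups of finite-length representations the two Jacquet modules are related by the Weyl element conjugating $\bar Q$ to a parabolic containing $M$. Concretely, the Bernstein geometric lemma / second adjointness gives $[r_{\bar Q,M}^G\pi]=[w_0\cdot r_{Q',M}^G\pi]$ for a suitable $Q'\in\mathcal Q(M)$ (Casselman's pairing: $r_{\bar Q}(\pi)$ is contragredient-dual to $r_Q(\tilde\pi)$, up to semisimplification). Alternatively, and more simply, the whole argument above can be rerun with $Q$ replaced by $\bar Q$ as the chosen parabolic. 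Lemma \ref{lem:ind-res} with $Q$ and $\bar Q$ interchanged, using the progenerator $i_{\bar Q,M}^G\Sigma$ for $\mathfrak s$, yields exactly $r_{Q,M}^G$. Proposition \ref{prop:bernstein} holds for any choice of parabolic in $\mathcal Q(M)$. I would take this second route, which avoids any duality bookkeeping. It then only remains to check that the injectivity statement is insensitive to projecting onto the $\mathfrak r$-component, which follows from Remark \ref{rmk:restriction}.
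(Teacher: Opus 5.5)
Your proposal is correct and is the paper's own argument. The paper just cites Theorem \ref{thm:roche}, Lemma \ref{lem:ind-res}, Proposition \ref{prop:bernstein} and Corollary \ref{cor: rigidity for affine Hecke on modules}, using that restriction to the toral subalgebra corresponds to the Jacquet functor. Your extra care about $r_{\bar Q,M}^G$ versus $r_{Q,M}^G$, resolved by rerunning the argument with $\bar Q$ as the chosen parabolic, is a legitimate point that the paper glosses over.
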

\begin{proof}
    This follows immediately from Theorem \ref{thm:roche}, Lemma \ref{lem:ind-res}, Proposition \ref{prop:bernstein}, and Corollary \ref{cor: rigidity for affine Hecke on modules}.
\end{proof}

Given an unramified character $\nu\in X_{\un}(M)$, by abuse of notation we denote its image in $T$ by $\nu$ as well.

\begin{lemma}\label{lem:central-char}
    The functor $\mathbf M^\bullet_{\mathfrak r,Q}$ restricts to an equivalence
    \begin{equation}
        \mathbf M^\bullet_{\mathfrak r, Q}:\Rep^{fl}_{(M,\tau_0\otimes\nu)}(G) \to \Mod^{fl}_{(\nu,q^{1/2})}(\bfH(\mathfrak r, Q))
    \end{equation}
    for every $\nu\in X_{\un}(M)$.
\end{lemma}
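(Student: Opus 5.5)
The plan is to show that $\mathbf M^\bullet_{\mathfrak r,Q}$ carries the central character of the cuspidal support $(M,\tau_0\otimes\nu)_G$ to the central character $W^{\mathfrak r}\cdot(\nu,q^{1/2})$ of $Z(\bfH(\mathfrak r,Q))=\bA^{W^{\mathfrak r}}$. Since $\mathbf M^\bullet_{\mathfrak r,Q}$ is already an equivalence $\Rep^{\mathfrak s}(G)\to\Mod_{\mathbf v=q^{1/2}}(\bfH(\mathfrak r,Q))$ by Theorem~\ref{thm:roche} and Definition~\ref{def:bullet}, and both sides decompose as direct sums over central characters, restriction then gives the claimed equivalence.

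The first step is the case $G=M$, i.e.\ $N=M$ in Lemma~\ref{lem:ind-res}. Here $\Pi=\Sigma=\mathrm{ind}_{\cc M}^M\tau^\circ$ and $\mathcal H=\mathcal A\cong\mathbb C[T]$ via $\phi_{\tau_0}$ of Proposition~\ref{prop:roche}; I take $\tau_0$ to be the unitary $W^{\mathfrak r}$-invariant base point fixed in Proposition~\ref{prop:bernstein}. By Frobenius reciprocity,
\begin{equation}
\Hom_M(\Sigma,\tau_0\otimes\nu)=\Hom_{\cc M}(\tau^\circ,\tau_0\otimes\nu|_{\cc M}),
\end{equation}
which is one-dimensional by Lemma~\ref{lem:mult-free}. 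I then compute how $\psi_{\tau_0}(c)$, for $c\in J/\cc M$, acts on this line: unwinding the formula for $\psi_{\tau_0}(c)$ as convolution by a function supported on $c^{-1}$, it should act by the scalar $\nu(c)^{\pm1}$. Under the evaluation pairing $T\times\mathbb C[J/\cc M]\to\mathbb C$, this is the character of $\bA$ at the point $\nu^{\pm1}\in T$. The anti-involution $\bullet$ fixes the $\theta_x$, so it does not change the weight.

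The main obstacle is this sign: whether one gets $\nu$ or $\nu^{-1}$. That depends on the convention in $\psi_{\tau_0}$ (support on $m^{-1}\in c$) and on the passage from $\mathcal H^{op}$ to $\mathcal H$. I expect the $m^{-1}$ in the definition of $\psi_{\tau_0}$ to be placed exactly so that it cancels the inversion coming from the right action of $\End(\Pi)$ on $\Hom(\Pi,\pi)$, yielding $\nu$. I would verify this by a direct one-line convolution computation.

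For general $G$, take $\pi$ irreducible with cuspidal support $(M,\tau_0\otimes\nu)$. Then $r_{\bar Q,M}^G(\pi)$ is a nonzero representation whose subquotients have the form $(\tau_0\otimes\nu)^w$, i.e.\ $\tau_0\otimes w(\nu)$ up to $X_{\un}(\tau_0)$ for $w\in W^{\mathfrak r}$; here I use the $W^{\mathfrak r}$-invariance of $\tau_0$. By Lemma~\ref{lem:ind-res} together with Remark~\ref{rmk:restriction}, applied with $P=Q$ and $N=M$, restricting $\mathbf M^\bullet_{\mathfrak r,Q}(\pi)$ to $\bA$ gives $\mathbf M^\bullet_{\mathfrak r,M}$ of this Jacquet module. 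By the $G=M$ case, its weights therefore lie in $W^{\mathfrak r}\cdot(\nu,q^{1/2})$. The $\bA$-weights determine the central character in $\bA^{W^{\mathfrak r}}$, so simple objects of $\Rep_{(M,\tau_0\otimes\nu)}(G)$ land in $\Mod^{fl}_{(\nu,q^{1/2})}$.

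Conversely, every simple module with this central character is the image of some irreducible $\pi$ by essential surjectivity. The weight computation then forces $\pi$ to have cuspidal support $(M,\tau_0\otimes\nu)$, because distinct $W^{\mathfrak r}$-orbits of $\nu$ give distinct supports. Finally, since the equivalence preserves finite length and both subcategories are Serre subcategories defined by conditions on composition factors, the functor restricts to an equivalence between them.
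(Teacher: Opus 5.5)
Your proposal is correct and follows essentially the same route as the paper: restrict to the toral subalgebra via Lemma~\ref{lem:ind-res} with $N=M$, identify the constituents of the $\mathfrak r$-part of the Jacquet module as $\tau_0\otimes{}^{w}\nu$ with $w\in W^{\mathfrak r}$ using the $W^{\mathfrak r}$-invariance of the base point, and read off the central character $W^{\mathfrak r}\cdot(\nu,q^{1/2})$ through $\phi_{\tau_0}$. The sign you flag does come out as $\nu$: with the standard convolution action, composing a map $\Sigma\to\tau_0\otimes\nu$ with $\psi_{\tau_0}(c)$ multiplies it by $\nu(c)$, and your explicit treatment of the converse direction only spells out what the paper leaves implicit.
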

\begin{proof}
    Let $\pi\in \Irr^{\mathfrak s}(G)$ with cuspidal support $(M,\tau_0 \otimes \nu)_G$.
    By Lemma \ref{lem:ind-res} there is an isomorphism of $\mathcal A$-modules
    \begin{equation}
        \Hom_G(\Pi,\pi)|_{\mathcal A} \simeq \Hom_M(\Sigma,r_{Q,M}^G\pi) = \Hom_M(\Sigma,(r_{Q,M}^G\pi)^{\mathfrak r})
    \end{equation}
    where $\sigma^{\mathfrak r}$ denotes the $\mathfrak r$-summand of an $M$-module $\sigma$. Pick a simple submodule $\tau$ of $(r_{Q,M}^G\pi)^{\mathfrak r}$ which yields an $\mathcal A$-submodule $\Hom_M(\Sigma,\tau) \subset  \Hom_M(\Sigma,(r_{Q,M}^G\pi)^{\mathfrak r})$. Note that $ \Sigma$ is a progenerator for $\mathfrak r$  and thus $\Hom_M(\Sigma,\tau)$ is a simple $\mathcal{A}$-module. 
    Since $\pi$ has cuspidal support $(M,\tau_0 \otimes \nu)_G$, there is a $g\in N_G(M)$ such that ${ }^{g^{-1}} \tau = \tau_0\otimes \nu$.
    As $(M,\tau)\in \mathfrak r$ and the set of unramified characters of $M$ is $N_G(M)$-invariant, $g\in W^{\mathfrak r}$.
    Hence, $\tau = \tau_0\otimes \hphantom{ }^{g}\nu$.
    The toral subalgebra of $\bfH(\mathfrak r,Q)$ acts through $\phi_{\tau_0}^{-1}:\mathbb C[T]\to\mathcal A$ and so $\Hom_M(\Sigma,\tau)$ corresponds to the simple module $\bA_{(^g\nu,q^{1/2})}$.
    Therefore $\mathbf M_{\mathfrak r,Q}^\bullet(\pi)$ has central character $W^{\mathfrak r}\cdot (\nu,q^{1/2})$.
\end{proof}
\begin{remark}
    We note that Lemma \ref{lem:central-char} together with Corollary \ref{cor:weights-symmetry} implies a hidden symmetry on the composition factors of minimal Jacquet modules.
\end{remark}

\subsection{Geometric Realizations of Graded Hecke Algebras}
\label{sec:geometric-graded}
For any complex algebraic group $H$ acting on a variety $X$ denote by $D^b_H(X)$ the $H$-equivariant constructible derived category. We denote by $\mathbf{1}_X \in D^b_H(X)$ the constant sheaf. Let
\begin{equation}
    \hat{H} := H \times \mathbb{C}^{\times}.
\end{equation}
If $V$ is an $H$-representation, we can view $V$ is an $\hat{H}$-representation where $(g,\lambda) \cdot v = \lambda^{-2} gv$.

Let us now assume that $H$ is reductive. Let $P \subset H$ be a parabolic subgroup and let $\mathbf c = (L, \mathcal{C}, \mathcal{F})$ such that $L \subset P$ is a Levi factor, $\mathcal{C}$ is a nilpotent orbit in $\mathfrak{l}$ and $\mathcal{F}$ is a cuspidal local system on $\mathcal{C}$. There is a parabolic induction functor
\begin{equation}
    \Ind_{L \subset P}^H : D^b_{\hat{L}} (\mathfrak{l}) \rightarrow D^b_{\hat{H}} ( \mathfrak{h})
\end{equation}
which is defined as the composition
\begin{equation}\label{eq: description of par ind functor}
    D^b_{\hat{L}} (\mathfrak{l}) \overset{\Infl^{\hat{P}}_{\hat{L}}}{\longrightarrow} D^b_{\hat{P}}(\mathfrak{l}) \overset{\pi_P^*}{\longrightarrow}   D^b_{\hat{P}}(\mathfrak{p} ) \cong D^b_{\hat{H}}(H \times^P \mathfrak{p}) \overset{(\mu_P)_!}{\longrightarrow} D^b_{\hat{H}}(\mathfrak{h}).
\end{equation}
Let
\begin{equation}
    K_{H, P, \mathbf{c}} := \Ind_{M \subset P}^H \IC(\overline{\mathcal{C}}, \mathcal{F} ).
\end{equation}

Let us now associate a (generalized) root datum to the cuspidal local system $(L,\mathcal C,\mathcal F)$.
Let $\mathfrak z_L:= \Lie(Z_{L}^\circ)$, $\mathfrak h := \Lie(H)$, and $E := \mathfrak z_{L,\mathbb R}^*$ (where we take the split real structure) equipped with the symmetric bilinear form induced by the Killing form of $H$. 
Let $R\subset \mathfrak z_{L,\R}^*$ be the set of reduced roots which is a subset of the set of relative roots $R(\mathfrak z,\mathfrak h)$.
Then $\Sigma = (E,R)$ is a generalized root system in the sense of Definition \ref{def:root-system}, and $R^+ := \{  \alpha \in R \mid \mathfrak{h}_{\alpha} \subset \mathfrak{n} \}$ defines a system of positive roots. 
Denote the corresponding set of simple roots by $\Delta$. 

We can associate a parameter function as follows. Pick a base point $N_0 \in \mathcal{C}$. For each $i = 1,...,m$, let $\mathfrak l_i$ be the smallest Levi containing $\mathfrak l$ and $\mathfrak h_{\alpha_i}$, and let $\mu(\alpha_i)\ge 2$ be the smallest integer such that $\ad(N_0)^{\mu(\alpha_i)-1} : \mathfrak{l}_i \cap \mathfrak{n} \rightarrow \mathfrak{l}_i \cap \mathfrak{n} \text{ is } 0$. If $s_i, s_j \in W$ are conjugate, then $\mu(\alpha_i) = \mu(\alpha_j)$. Thus, $\mu$ extends to a $W$-invariant parameter function $\mu : R \rightarrow \mathbb{N}_0$.

The associated graded Hecke algebra is then defined as
\begin{equation}
    \gH(H,P,\mathbf{c}):=\bfH^\mu(\Sigma,\Delta).
\end{equation}
Note that the toral part of $ \gH(H,P,\mathbf{c})$ is $\mathbb{C}[\br] \otimes_{\mathbb{C}} S(\mathfrak{z}_L^*)$ and the degree $0$ part is the group algebra of the relative Weyl group $W := N_H(L)/L$.
\begin{theorem}\cite[\S 8.13]{lusztig1995cuspidal}
    There is an isomorphism of algebras
    \begin{equation}
        \gH(H,P,\mathbf{c}) \cong \Hom^*_{D^b_{\hat{H}} ( \mathfrak{h})} ( K_{H, P, \mathbf{c}}, K_{H, P, \mathbf{c}}).
    \end{equation}
\end{theorem}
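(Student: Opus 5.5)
The plan follows Lusztig's strategy in \cite{lusztig1988cuspidal,lusztig1995cuspidal}: compute the graded algebra $\Hom^*_{D^b_{\hat H}(\mathfrak h)}(K_{H,P,\mathbf c},K_{H,P,\mathbf c})$ geometrically, and check that it has the generators and relations of Definition~\ref{def:graded-hecke}. There are three steps.

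\textbf{Step 1: the Ext algebra and its two pieces.} Write $\dot{\mathfrak g} = H\times^P(\overline{\mathcal C}+\mathfrak z_L+\mathfrak n)$, with its proper map $\mu$ to $\mathfrak h$. By base change along $\mu\times\mu$ and adjunction, $\Hom^*(K,K)$ is the $\hat H$-equivariant Borel--Moore homology of the fibre product $\dot{\mathfrak g}\times_{\mathfrak h}\dot{\mathfrak g}$, with coefficients in $\mathcal F\boxtimes\mathcal F^*$. Convolution makes this an algebra.
\begin{enumerate}
\item Stratify the fibre product by relative position $w\in W=N_H(L)/L$. Cleanness of the cuspidal local system $\mathcal F$ makes the resulting spectral sequence degenerate, so the Ext algebra is a free $H^*_{\hat L}(\mathfrak z_L)=S(\mathfrak z_L^*)\otimes\mathbb C[\br]$-module of rank $|W|$.
\item Restricting to the closed stratum $w=1$ embeds $S(\mathfrak z_L^*)\otimes\mathbb C[\br]$ as a subalgebra. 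Here $\br$ is the generator coming from the $\mathbb C^\times$-factor of $\hat H$.
\item The degree-$0$ part comes from the open locus over the regular semisimple part of $\mathfrak z_L$. There $K$ restricts to a local system that is a $W$-Galois pushforward. Its endomorphisms form a twisted group algebra of $W$, and Lusztig shows the $2$-cocycle is trivial: $N_H(L)$ stabilises $(\mathcal C,\mathcal F)$, and the normalisation is fixed using $\mathbf c$. This gives the elements $t_w$ in degree $0$.
\end{enumerate}

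\textbf{Step 2: the commutation relation in rank one.} For a simple root $\alpha\in\Delta$, reduce to the Levi $\mathfrak l_i$ generated by $\mathfrak l$ and $\mathfrak h_{\alpha}$, where $W=\{1,s_\alpha\}$. Parabolic induction is transitive, and the Ext algebra of the induced sheaf from $\mathfrak l_i$ to $\mathfrak h$ contains that of $\mathfrak l_i$. So it suffices to show that
\begin{equation}
x t_{s_\alpha}-t_{s_\alpha}s_\alpha(x)=c\,\langle x,\alpha^\vee\rangle\br
\end{equation}
with $c=\mu(\alpha)$.
\begin{enumerate}
\item The form of the relation follows from equivariant localisation to the $Z_L^\circ$-fixed points: the commutator is a difference quotient and is divisible by $\alpha$.
\item The constant $c$ is an equivariant Euler class, i.e.\ a weight of $\hat L$ on the normal direction $\mathfrak l_i\cap\mathfrak n$ twisted by $N_0$.
\item Using the $\mathfrak{sl}_2$-grading of $\mathfrak l_i\cap\mathfrak n$ given by $\ad(N_0)$, this weight equals $\mu(\alpha)$, the nilpotency order defined just before the theorem.
\end{enumerate}

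\textbf{Step 3: conclusion.} Step 2 gives a graded algebra map $\gH(H,P,\mathbf c)\to\Hom^*(K,K)$. It is surjective because the images of $t_w$ and $S(\mathfrak z_L^*)\otimes\mathbb C[\br]$ generate. It is injective by comparing graded ranks: both sides are free of rank $|W|$ over $S(\mathfrak z_L^*)\otimes\mathbb C[\br]$ with the same Poincaré series.

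The main obstacle is Step 2, specifically pinning down the constant as exactly $\mu(\alpha)$, including the normalisation of $\br$ via the $\lambda^{-2}$ scaling. I would first work out the computation for the principal series case $L=T$, where the constant should come out as $2$, and then transport it to general $\mathbf c$ using Lusztig's rank-one classification of cuspidal pairs on $\mathfrak l_i$.
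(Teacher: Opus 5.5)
The paper does not prove this statement. It quotes \cite[\S 8.13]{lusztig1995cuspidal} and only records how the toral part enters: through $H^*_{\widehat{Z_L^\circ}}(\mathrm{pt})\cong H^*_{\hat L(N_0)}(\mathrm{pt})\to\operatorname{Ext}^*_{\hat L}(\IC(\overline{\mathcal C},\mathcal F),\IC(\overline{\mathcal C},\mathcal F))$, followed by $\Ind_{L\subset P}^H$. Measured against Lusztig's argument, your outline has the right shape, but several steps fail as written.

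\textbf{Wrong complex.} $K_{H,P,\mathbf c}$ is induced from $\IC(\overline{\mathcal C},\mathcal F)$ on $\mathfrak l$. It is therefore the pushforward from $H\times^P(\overline{\mathcal C}+\mathfrak n)$ and is supported on the nilpotent cone. Your $\dot{\mathfrak g}$ carries an extra $\mathfrak z_L$, so you are computing the Ext algebra of a different complex $K^\natural$. For $K_{H,P,\mathbf c}$ itself there is no regular semisimple locus, so your Step 1(3) does not construct the $t_w$. To use $K^\natural$ you must show two things:
\begin{enumerate}
\item $K_{H,P,\mathbf c}\cong K^\natural|_{\mathcal N_{\mathfrak h}}$. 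This follows by proper base change, since $y+z+n$ with $y\in\overline{\mathcal C}$, $z\in\mathfrak z_L$, $n\in\mathfrak n$ is nilpotent only if $z=0$.
\item Restriction $\operatorname{Ext}^*_{\hat H}(K^\natural,K^\natural)\to\operatorname{Ext}^*_{\hat H}(K_{H,P,\mathbf c},K_{H,P,\mathbf c})$ is an algebra isomorphism. For example, check it stratum by stratum, where it is a Thom isomorphism in the $\mathfrak z_L$-direction.
\end{enumerate}

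\textbf{Wrong base ring and index set.} $H^*_{\hat L}(\mathfrak z_L)=H^*_{\hat L}(\mathrm{pt})=S(\mathfrak l^*)^L\otimes\mathbb C[\mathbf r]$, which is strictly larger than $S(\mathfrak z_L^*)\otimes\mathbb C[\mathbf r]$ unless $L$ is a torus. The diagonal stratum actually contributes $H^*_{\hat L}(\mathcal C)\cong H^*_{\hat L(N_0)}(\mathrm{pt})$. This equals $S(\mathfrak z_L^*)\otimes\mathbb C[\mathbf r]$ only because a cuspidal orbit is distinguished, so the reductive part of $\hat L(N_0)$ is $\widehat{Z_L^\circ}$; this is exactly the input in the paper's construction of the toral part. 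Likewise, the strata of the fibre product are indexed by $P\backslash H/P$, not by $W=N_H(L)/L$. That only the double cosets meeting $N_H(L)$ contribute is a consequence of cuspidality, and it is what makes the rank equal to $|W|$.

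\textbf{Step 3 is circular.} ``The images of $t_w$ and the toral part generate'' is the surjectivity claim itself. Equal Poincar\'e series only convert surjectivity into injectivity, not conversely. You need an independent reason. For instance, in the $K^\natural$ picture $t_w$ is the fundamental class of the closure of the $w$-stratum, so it generates the $w$-th graded piece of the stratification filtration over the toral part.

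\textbf{Step 2 is left open.} This is the real content of the theorem. The localisation/Euler-class heuristic does not say which weight of $\mathfrak l_i\cap\mathfrak n$ is meant, nor why the commutator has no $t_{s_\alpha}$- or $\mathfrak z_L^*$-component. Your fallback of transporting $c=2$ from $L=T$ cannot work: $\mu(\alpha)$ genuinely depends on the cuspidal datum, which is where unequal parameters come from. Lusztig obtains $c=\mu(\alpha)$ uniformly \cite{lusztig1988cuspidal}. He uses the structure of $\mathfrak l_i\cap\mathfrak n$ as a module over $\phi(\mathfrak{sl}_2)$ and $\mathfrak z_L$, including the $2\alpha$-part when $2\alpha$ is a relative root, followed by a rank-one geometric computation. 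You would also need that induction from $L_i$ to $H$ sends the rank-one $t_{s_\alpha}$ to the $t_{s_\alpha}$ normalised in Step 1(3).

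\textbf{Smaller point.} Triviality of the $2$-cocycle does not follow from $N_H(L)$ stabilising $(\mathcal C,\mathcal F)$; that only gives a twisted group algebra. It needs the Coxeter structure of $W$ and a normalisation of each $t_s$ \cite{lusztig1984intersection}.

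As it stands, the proposal defers exactly the nontrivial points to Lusztig. It therefore amounts to the same citation the paper makes, not a proof.
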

For computational purposes, it will be useful to know how this isomorphism is constructed, at least for the toral part. Let us briefly recall this. Note that there is an isomorphism of graded algebras
\begin{equation}
\mathbb{C}[\br] \otimes S(\mathfrak{z}_L^*) = H^*_{\widehat{Z_L^{\circ}}}(\pt).
\end{equation}
By Jacobson-Morozov, we can pick a homomorphism of algebraic groups $\phi : \SL_2(\mathbb C) \rightarrow L$ such that $\underline{\phi}(e) = N_0 \in \mathcal{C}$. It can be shown that
\begin{align}\label{eq: maximal connected subgroup of centralizer}
\widehat{Z_L^{\circ}} & \hookrightarrow \hat{L}(N_0) = \{ (g, \lambda) \in \hat{L} \mid gN_0 = \lambda^2 N_0 \} \\
(g, \lambda) & \mapsto (g\phi\left( \begin{pmatrix}\lambda & 0 \\ 0 & \lambda^{-1} \end{pmatrix} \right), \lambda)
\end{align}
is a maximal connected reductive subgroup.
Hence, $H^*_{\widehat{Z_L^{\circ}}} (\pt)  \cong H^*_{\hat{L}(N_0)}(\pt)$. Using this, we get a homomorphisms of graded algebras
\begin{equation}\label{eq: construction of polynomial part of geometric graded Hecke algebra}
\begin{aligned}
\mathbb{C}[\br] \otimes S(\mathfrak{z}_L^*) & = H^*_{\widehat{Z_L^{\circ}}} (\pt) \\
& \cong H^*_{\hat{L}(N_0)} (\pt) \\
& = \Hom_{D^b_{\hat{L}(N_0)}(\pt)}^*(\Sbf_{\pt}, \Sbf_{\pt}) \\
\overset{-\boxtimes \mathcal{F}_{N_0}}&{\rightarrow} \Hom^*_{D^b_{\hat{L}(N_0)}(\pt)}(\mathcal{F}_{N_0}, \mathcal{F}_{N_0}) \\
& \cong \Hom^*_{D^b_{\hat{L}} ( \mathcal{C})} ( \mathcal{F}, \mathcal{F} ) \\
& \rightarrow \Hom^*_{D^b_{\hat{L}} ( \mathfrak{l})} ( \IC(\overline{\mathcal{C}}, \mathcal{F} ) , \IC(\overline{\mathcal{C}}, \mathcal{F} )) \\
 \overset{\Ind^{H}_{L \subset P}}&{\rightarrow} \Hom^*_{D^b_{\hat{H}} ( \mathfrak{h})} ( K_{H, P, \mathbf{c}}, K_{H, P, \mathbf{c}}).
\end{aligned}
\end{equation}
It is easy to check that this map is independent of the chosen base point $N_0$ and morphism $\phi$.

\begin{definition}
    \label{def:graded-parameters}
    Let $\underline{\mathfrak z}(H)$ denote the set of conjugacy classes of triples $(L,\mathcal C,\mathcal F)$ consisting of cuspidal local systems on Levi subgroups.
    Let 
    \begin{equation}
        \underline \Phi(H) := \left\{(\sigma, N,\rho,r) \Bigm| \begin{aligned}  \sigma \in \mathfrak h \text{ semisimple}, N \in \mathfrak h \text{ nilpotent},\\  r\in \mathbb C, \rho\in \Irr(A(\sigma,N)), [\sigma,N] = 2 r N \end{aligned}\right\}/H.
    \end{equation}
    
\end{definition}

\begin{theorem}\cite[\S8]{lusztig1995cuspidal}\label{thm:sigma-correspondence}

    There is an injection
    \begin{equation}
        \Irr(\gH(H,P,\mathbf{c}))  \hookrightarrow \underline\Phi(H) 
    \end{equation}
    We denote the image of this map by $\underline\Phi^\mathbf{c}(H)$ which is independent of the choice of $P$. Moreover, there is a decomposition
    \begin{equation}\underline\Phi(H) = \bigsqcup_{\mathbf{c}\in \underline{\mathfrak z}(H)}\underline\Phi^\mathbf{c}(H).\end{equation}
    We say that an element of $\underline\Phi(H) $ has cuspidal support $\mathbf{c}$ if it lies in $\underline\Phi^\mathbf{c}(H)$.

\end{theorem}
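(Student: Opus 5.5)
This result is Lusztig's generalized Springer theory for graded Hecke algebras, and the paper cites it from \cite[\S8]{lusztig1995cuspidal}. The plan is to recall how the injection is built and check the three claims against it: injectivity, independence of $P$, and the disjoint decomposition.

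\emph{Construction of the map.} For a pair $(\sigma,r)$ with $\sigma\in\mathfrak h$ semisimple, let $M^{\sigma,r}$ be the subgroup of $\hat H$ generated by $(\exp(t\sigma),\exp(tr))$. Pass to the fixed points $\mathfrak h^{\sigma,r}=\{N:[\sigma,N]=2rN\}$. The localization theorem identifies the specialization of $\Hom^*(K_{H,P,\mathbf c},K_{H,P,\mathbf c})$ at the central character $(\sigma,r)$ with an Ext-algebra of the restriction of $K_{H,P,\mathbf c}$ to $\mathfrak h^{\sigma,r}$, equivariant for $H(\sigma)$.

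For each nilpotent $N\in\mathfrak h^{\sigma,r}$, form the costalk of this restriction at $N$. It carries two commuting actions: the graded Hecke algebra acts, and $A(\sigma,N)$ acts. Decomposing it under $A(\sigma,N)$ gives the standard module $\ubM_{\sigma,N,\rho,r}$. This module is nonzero exactly when $\rho$ appears in $H^*_{\{N\}}(K)$, that is, when $(N,\rho)$ lies in the $\mathbf c$-series of the generalized Springer correspondence for $H(\sigma)$ acting on $\mathfrak h^{\sigma,r}$.

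\emph{Irreducibility and injectivity.} First, show that each nonzero standard module has a unique simple quotient $\ubL_{\sigma,N,\rho,r}$. Second, show that every simple module arises this way. The tool for both is the decomposition theorem applied to the pushforward defining $K_{H,P,\mathbf c}$, restricted to $\mathfrak h^{\sigma,r}$. By purity this pushforward is a semisimple sum of shifted $\IC$ sheaves. Hence the specialized Ext-algebra is a Borho--MacPherson style algebra, and its simple modules correspond bijectively to the simple summands $\IC(\mathcal O_N,\rho)$ appearing in it.

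Injectivity then follows. Fixing the central character fixes $(\sigma,r)$ up to $H$-conjugacy. Distinct simple summands have distinct pairs $(\mathcal O_N,\rho)$ up to $H(\sigma)$-conjugacy, which gives distinct elements of $\underline\Phi(H)$.

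\emph{Independence of $P$ and the decomposition.} Replacing $P$ by another parabolic with Levi factor $L$ changes $K_{H,P,\mathbf c}$ only by an isomorphism, because $\Ind_{L\subset P}^H$ is independent of $P$ up to isomorphism (Lusztig's transitivity and independence for parabolic induction). So the image is unchanged.

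For the decomposition, the generalized Springer correspondence applied to the group $H(\sigma)$ on $\mathfrak h^{\sigma,r}$ shows that every pair $(N,\rho)$ occurs in exactly one induced complex $K_{H,P,\mathbf c}$, as $\mathbf c$ ranges over $\underline{\mathfrak z}(H)$. Disjointness comes from orthogonality of the different $\mathbf c$-blocks in $D^b_{\hat H}(\mathfrak h)$. Exhaustion comes from the block decomposition of $H(\sigma)$-equivariant perverse sheaves on the graded piece.

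\emph{Main obstacle.} The hardest step is the comparison between cuspidal data of $H$ and cuspidal data of the graded pieces for $H(\sigma)$. That is, we must show that the restriction of $K_{H,P,\mathbf c}$ to $\mathfrak h^{\sigma,r}$ decomposes according to $\mathbf c$ with no cross-contamination. This is where the restriction-to-fixed-points and localization arguments of \cite{lusztig1995cuspidal,lusztig2002cuspidal} carry the weight. I would invoke those results directly rather than reprove them.
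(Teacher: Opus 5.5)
Your outline is a sound roadmap and, like the paper, it ultimately rests on Lusztig. The paper gives no argument of its own for this statement. It cites \cite[\S8]{lusztig1995cuspidal} and afterwards recalls the standard modules built from $H^*_{\hat H(N)^\circ}((K_{H,P,\mathbf c})_N)$, quoting the unique-simple-quotient property from \cite[Theorem 1.15]{lusztig2002cuspidal}.

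You instead get the classification by localizing to $\mathfrak h^{\sigma,r}$ and analysing the specialized Ext-algebra in the style of Borho--MacPherson and Chriss--Ginzburg. This is closer to how Lusztig actually argues, and it makes injectivity transparent. It does, however, rely on two inputs that you gloss over:
\begin{enumerate}
\item The specialization must be taken at a point of $\mathrm{Spec}\,H^*_{\hat H}(\mathrm{pt})$, not at a central character of $\gH(H,P,\mathbf c)$. A central character is a $W$-orbit $(\sigma',r)$ in $\mathfrak z_L\oplus\mathbb C$, and the semisimple part of the resulting parameters is the shifted element $\sigma'+r\underline\phi(h)$ of Remark \ref{rmk:ss-shift}. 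So your sentence ``fixing the central character fixes $(\sigma,r)$'' is true only through this shift.
\item Semisimplicity of $K_{H,P,\mathbf c}|_{\mathfrak h^{\sigma,r}}$ does not follow formally from the decomposition theorem for $K_{H,P,\mathbf c}$, because restriction to a closed subvariety need not preserve purity. It is a genuine geometric input and belongs on the list of results you import from Lusztig.
\end{enumerate}

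For independence of $P$ you use $K_{H,P,\mathbf c}\cong K_{H,P',\mathbf c}$. That suffices for the statement as written, since the image is the set of parameters whose standard module is nonzero, and this depends only on the complex. But it does not tell you which isomorphism $\gH(H,P,\mathbf c)\to\gH(H,P',\mathbf c)$ is being transported, and different isomorphisms can act differently on simple modules.

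The paper's Lemma \ref{lem:parabolics} goes the other way. It writes $P'=w(P)$ for some $w\in N_H(L)/L$, which fixes $\mathbf c$. Functoriality together with the rigidity result Corollary \ref{cor:rigid} then shows that the two labelings are intertwined by the explicit map $c_w$. That sharper form is what the paper needs later, for instance in Proposition \ref{prop:chevalley} and in checking that $\mathbf I$ is well defined.
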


\begin{remark}\label{rmk:ss-shift}
    We can also read off the central character of a simple $\gH(H, P, \mathbf c)$-module from its parameter. Recall for this that the central characters of $\gH(H, P, \mathbf c)$ correspond to the $W$-orbits in $\mathfrak{z}_L \oplus \mathbb{C}$. It can be shown that there is an injective map
    \begin{equation}
        \mathfrak{z}_L/W \oplus \mathbb{C} \rightarrow \{\text{Semisimple conjugacy classes in $\mathfrak{h} \oplus \mathbb{C}$}\}, \quad (\sigma', r) \mapsto (\sigma' + r \underline \phi (h) , r).
    \end{equation}
    If $(\sigma, N, \rho, r) \in  \underline \Phi(H)^{\mathbf c}$ then $(\sigma ,r)$ lies in the image of this map and the central character of the associated simple module corresponds to the preimage of $(\sigma, r)$ under this map. For more details see \cite[\S 8.13]{lusztig1995cuspidal}.
\end{remark}

Let us briefly sketch how the injection from Theorem \ref{thm:sigma-correspondence} is constructed. We have
\begin{equation}
    \gH(H,P,\mathbf{c}) \cong \Hom_{\hat{H}}^*(K_{H, P, \mathbf{c}},K_{H, P, \mathbf{c}}).
\end{equation}
Let $N \in \mathfrak{h}$ nilpotent and denote the stabilizer of $N$ in $\hat{H}$ by
\begin{equation}
    \hat{H}(N) = \{ (g, \lambda) \in \hat{H}  \mid gN = \lambda^2 N\}.
\end{equation}
There is an algebra homomorphism
\begin{equation}
    \Hom_{\hat{H}}^*(K_{H, P, \mathbf{c}}, K_{H, P, \mathbf{c}}) \rightarrow \Hom^*_{\hat{H}(N)^{\circ}} ( (K_{H, P, \mathbf{c}})_N, (K_{H, P, \mathbf{c}})_N)
\end{equation}
given by restriction. This equips
\begin{equation}
    \ubM_N := H^*_{\hat{H}(N)^{\circ}} ( (K_{H, P, \mathbf{c}})_N)
\end{equation}
with the structure of an $\gH(H,P,\mathbf{c})$-module. Moreover, there is a canonical $H^*_{\hat{H}(N)^{\circ}}(\pt)$-module structure on $\ubM_N$ that commutes with the $\gH(H,P,\mathbf{c})$-module structure. Recall that $H^*_{\hat{H}(N)}(\pt) = S(\hat{\mathfrak{h}}(N)^*)^{\hat{H}(N)}$ where
\begin{equation}
    \hat{\mathfrak{h}}(N) = \{ (\sigma, r) \in \mathfrak{h} \oplus \mathbb{C} \mid [\sigma, N] = 2 r N \} .
\end{equation}
Hence, any $(\sigma, r) \in \hat{\mathfrak{h}}(N)$, defines a one-dimensional $H^*_{\hat{H}(N)}(\pt)$-module $\mathbb{C}_{(\sigma, r)}$ via evaluation. We can thus consider the $\gH(H,P,\mathbf{c})$-module
\begin{equation}
    \ubM_{(\sigma, N, r)} := \ubM_N \otimes_{H^*_{\hat{H}(N)}(\pt)} \mathbb{C}_{(\sigma, r)}.
\end{equation}
There also is a canonical action of
\begin{equation}
    A(N) := H(N) / H(N)^{\circ}
\end{equation}
on $\ubM_N$ that commutes with the $\gH(H,P,\mathbf{c})$-module structure and intertwines the $H^*_{\hat{H}(N)^{\circ}}(\pt)$-action via the canonical action of $A(N)$ on $H^*_{\hat{H}(N)^{\circ}}(\pt)$. Hence, there is an induced action of
\begin{equation}
    A(\sigma ,N) := H(\sigma, N)/ H(\sigma, N)^{\circ}
\end{equation}
on $\ubM_{(\sigma, N, r)}$ that commutes with the $\gH(H,P,\mathbf{c})$-module structure. In particular, we can consider for any $\rho \in \Irr(A(\sigma, N))$ the $\gH(H,P,\mathbf{c})$-module
\begin{equation}
    \ubM_{(\sigma, N, \rho, r)} := \Hom_{A(\sigma, N)}(\rho, \ubM_{(\sigma, N, r)}).
\end{equation}

This is known as a standard module and it was shown in \cite[Theorem 1.15]{lusztig2002cuspidal} that $\ubM_{(\sigma, N, \rho, r)}$ has a unique simple quotient $\ubL_{(\sigma, N, \rho, r)}$ when it is non-zero. The map sending $\ubL_{(\sigma, N, \rho, r)}$ to $(\sigma, N, \rho, r)$ is precisely the bijection between 
$\Irr(\gH(H,P,\mathbf{c}))$ and $\underline\Phi^\mathbf{c}(H)$. When we want to emphasize the cuspidal datum and the ambient group $H$, we will use also the notation $\ubM_N^{H,P, \mathbf c} = \ubM_N$, $\ubM_{(\sigma, N, r)}^{H,P, \mathbf c} = \ubM_{(\sigma, N, r)}$, $\ubM_{(\sigma, N, \rho, r)}^{H,P, \mathbf c} = \ubM_{(\sigma, N, \rho, r)}$.

\subsection{Functoriality for Isomorphisms}
Let $\phi : H_1 \rightarrow H_2$ be an isomorphism of algebraic groups, $\mathbf{c} = (L, \mathcal{C}, \mathcal{F})\in \underline{\mathfrak z}(H_1)$ a cuspidal datum and $P \in \mathcal Q(L)$ a parabolic with Levi $L$. The pushforward along the the corresponding map on Lie algebras $\phi : \mathfrak{h}_1 \rightarrow \mathfrak{h}_2$ yields an isomorphism
\begin{equation}
    \phi_* K_{H_1, P, \mathbf c} \cong K_{H_2, \phi(P), \phi( \mathbf c)}
\end{equation}
where $\phi (\mathbf{c}) = (\phi(L), \phi(\mathcal{C}), \phi_* \mathcal{F})$.
We need the following basic functoriality result which is a direct consequence of the fact that all constructions in \cite{lusztig1988cuspidal,lusztig1995cuspidal,lusztig2002cuspidal} are compatible with isomorphisms of algebraic groups.
\begin{proposition}
    \label{prop:functoriality}
    If $\phi:H_1\to H_2$ is an isomorphism then the induced map
    \begin{equation}\underline{\mathbf F}(\phi):\gH(H_1,P,L,\mathcal C,\mathcal F)\to \gH(H_2,\phi(P),\phi(L),\phi(\mathcal C),\phi_*\mathcal F)\end{equation}
    is characterized by $z\mapsto \underline\phi(z)$ on $\mathfrak z_L$ and
    \begin{equation}
        \begin{tikzcd}
            \underline\Phi^\mathbf{c}(H_1) \arrow[r,"\phi"] \arrow[d] & \underline\Phi^{\phi(\mathbf{c})}(H_2) \arrow[d] \\
            \Irr(\gH(H_1,P,\mathbf{c})) \arrow[r,"\ubF(\phi)_*"] & \Irr(\gH(H_2,\phi(P),\phi(\mathbf{c}))
        \end{tikzcd}
    \end{equation}
    commutes.
\end{proposition}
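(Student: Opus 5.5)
We prove Proposition~\ref{prop:functoriality} by transport of structure, following the geometric construction of $\gH(H,P,\mathbf c)$ and of the standard modules step by step. Write $\hat\phi=\phi\times\mathrm{id}:\hat H_1\to\hat H_2$. Since $\underline\phi:\mathfrak h_1\to\mathfrak h_2$ is $\hat\phi$-equivariant (the scaling $\lambda^{-2}$ commutes with the linear map $\underline\phi$), pushforward gives an equivalence $\underline\phi_*:D^b_{\hat H_1}(\mathfrak h_1)\to D^b_{\hat H_2}(\mathfrak h_2)$. The same holds on $\mathfrak l$, $\mathfrak p$ and $H\times^P\mathfrak p$.

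First we check that $\underline\phi_*$ commutes with each step of \eqref{eq: description of par ind functor}: inflation, $\pi_P^*$, the induction equivalence and $(\mu_P)_!$. Each is compatible with isomorphisms by base change along isomorphisms. It also sends $\IC(\overline{\mathcal C},\mathcal F)$ to $\IC(\overline{\phi(\mathcal C)},\phi_*\mathcal F)$. This gives the isomorphism $\underline\phi_*K_{H_1,P,\mathbf c}\cong K_{H_2,\phi(P),\phi(\mathbf c)}$, and conjugating by it defines a graded algebra isomorphism $\ubF(\phi)$ on the Ext algebras.

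Second, we identify $\ubF(\phi)$ on the toral part. We choose base point $\phi(N_0)$ and Jacobson--Morozov map $\phi\circ\phi_{\mathrm{JM}}$ on the $H_2$ side. Every arrow in \eqref{eq: construction of polynomial part of geometric graded Hecke algebra} is then intertwined by $\hat\phi$. In particular, the isomorphism $H^*_{\widehat{Z_{L}^\circ}}(\pt)\cong H^*_{\widehat{Z_{\phi(L)}^\circ}}(\pt)$ induced by $\hat\phi$ is $x\mapsto x\circ\underline\phi^{-1}$ on $\mathfrak z_L^*$, i.e.\ it is induced by $\underline\phi:\mathfrak z_L\to\mathfrak z_{\phi(L)}$, and it fixes $\br$. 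Since the construction is independent of the base point, this is exactly the claimed description on the toral part. The root data and parameter functions $\mu$ visibly match under $\underline\phi$, because $\mu$ is defined via $\ad(N_0)$ and $\phi$ respects brackets. Corollary~\ref{cor:rigid} then shows that $\ubF(\phi)$ is the unique graded isomorphism with this restriction, which is the characterization in the proposition. This requires $\mu$ nowhere zero, which holds since $\mu\ge2$.

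Third, we handle modules. For nilpotent $N$, $\hat\phi$ maps $\hat H_1(N)^\circ$ isomorphically onto $\hat H_2(\underline\phi N)^\circ$, and proper base change identifies the stalk $(K_1)_N$ with $(K_2)_{\underline\phi N}$. Hence $\ubF(\phi)_*\ubM_N\cong\ubM_{\underline\phi N}$, compatibly with the equivariant cohomology actions: evaluation at $(\sigma,r)$ corresponds to evaluation at $(\underline\phi\sigma,r)$. The $A(\sigma,N)$-actions correspond through $A(\sigma,N)\cong A(\underline\phi\sigma,\underline\phi N)$. Taking $\rho$-isotypic parts gives $\ubF(\phi)_*\ubM_{(\sigma,N,\rho,r)}\cong\ubM_{(\underline\phi\sigma,\underline\phi N,\phi_*\rho,r)}$. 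Unique simple quotients correspond, so the diagram commutes.

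The main obstacle is bookkeeping rather than any single conceptual step. Each identification (the Lusztig isomorphism of \cite[\S8.13]{lusztig1995cuspidal}, and the commuting actions on $\ubM_N$) must be checked to be natural in $H$. This follows because every such map is built from six-functor operations and equivariant cohomology, all of which commute with transport along $\hat\phi$.
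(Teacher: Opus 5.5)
Your proposal is correct and takes essentially the same approach as the paper, which gives no detailed argument and simply asserts the proposition as a direct consequence of the compatibility of Lusztig's constructions with isomorphisms of algebraic groups. You spell out this transport of structure step by step, and, as the paper does when it applies the proposition (e.g.\ in Lemma~\ref{lem:parabolics}), you use Corollary~\ref{cor:rigid} to turn the description on the toral part into a characterization.
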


We can use this result to describe how the classification of simple modules behaves with respect to different choices of parabolics.
\begin{lemma}\label{lem:parabolics}
    Let $\mathbf c=(L,\mathcal C,\mathcal F)\in \underline{\mathfrak z}(H)$, and $P,P'\in \mathcal Q(L)$.
    Then there exists a unique element $w\in W = N(L)/L$ such that $w(P) = P'$. Moreover, the following diagram commutes
    \begin{equation}
        \begin{tikzcd}
            \Irr(\gH(H,P,\mathbf{c})) \arrow[rr,"(c_w)_*"] \arrow[dr] & & \Irr(\gH(H,P',\mathbf{c})) \arrow[dl]\\
            & \underline\Phi^\mathbf{c}(H)
        \end{tikzcd}
    \end{equation}
    where $c_w$ is defined as in Definition \ref{def:graded-involution}.
\end{lemma}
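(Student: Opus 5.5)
Existence of $w$: the parabolics with Levi factor $L$ form a torsor under $W=N_H(L)/L$. Here $L$ supports a cuspidal local system, so by Lusztig \cite{lusztig1988cuspidal} $W$ is a Coxeter group acting simply transitively on the chambers of $\mathfrak z_{L,\mathbb R}$ cut out by $R$. Parabolics in $\mathcal Q(L)$ correspond bijectively to these chambers, via the positive system $R^+_P=\{\alpha:\mathfrak h_\alpha\subset\mathfrak n_P\}$. So there is a unique $w\in W$ with $w(R^+_P)=R^+_{P'}$, and then $w(P)=P'$, since a parabolic containing $L$ is determined by its roots in $\mathfrak z_L$. Uniqueness follows because an element of $N_H(L)$ normalizing $P$ lies in $P\cap N_H(L)=N_P(L)=L$, which is connected as a Levi of a connected group.

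For commutativity, I would lift $w$ to $n\in N_H(L)$ and apply Proposition~\ref{prop:functoriality} to the inner automorphism $\phi=\Ad(n):H\to H$. It sends $(P,\mathbf c)$ to $(nPn^{-1},n(\mathbf c))=(P',\mathbf c')$, and $\mathbf c'$ is isomorphic to $\mathbf c$. The reason: $n$ normalizes $L$ and preserves $\mathcal C$ (the cuspidal orbit is distinguished, hence unique with given data, and $N_H(L)$ stabilizes $(\mathcal C,\mathcal F)$ by \cite{lusztig1988cuspidal}). We fix an identification $\Ad(n)_*\mathcal F\cong\mathcal F$, and any choice induces the same map on Hecke algebras up to scalar, which doesn't matter below. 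The map on parameters $\underline\Phi^{\mathbf c}(H)\to\underline\Phi^{\mathbf c}(H)$ induced by an inner automorphism is the identity on $H$-conjugacy classes. Hence the diagram of Proposition~\ref{prop:functoriality} becomes the triangle, with $\underline{\mathbf F}(\Ad n)$ in place of $c_w$.

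It then remains to show $\underline{\mathbf F}(\Ad n)=c_w$. By Proposition~\ref{prop:functoriality}, $\underline{\mathbf F}(\Ad n)$ acts on $\mathfrak z_L$, hence on $S(\mathfrak z_L^*)$, by $x\mapsto w(x)$, and it fixes $\mathbf r$. The map $c_w$ does the same on the toral part. Both are graded isomorphisms $\gH^\mu(\Sigma,\Delta_P)\to\gH^\mu(\Sigma,w(\Delta_P))$. Here the target presentation is that of $\gH(H,P',\mathbf c)$: conjugation by $n$ carries $\mathfrak n_P$ to $\mathfrak n_{P'}$, so $\Delta_{P'}=w(\Delta_P)$, and $\mu$ is $W$-invariant. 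One checks that $c_w$ is an algebra map by verifying the defining relation: applying $c_w$ to $xt_{s_\alpha}-t_{s_\alpha}s_\alpha(x)=\mu(\alpha)\langle x,\alpha^\vee\rangle\mathbf r$ gives the relation for $w\alpha\in w(\Delta_P)$ and $w(x)$.

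The parameter $\mu$ is nowhere zero, since $\mu\ge2$ by construction. Corollary~\ref{cor:rigid} therefore gives $\underline{\mathbf F}(\Ad n)=c_w$, and the diagram commutes.

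The main obstacle I expect is the bookkeeping in identifying the target algebra. The geometric algebra $\gH(H,P',\mathbf c')$ obtained by transport must be literally identified with $\gH(H,P',\mathbf c)$, which is where the isomorphism $\Ad(n)_*\mathcal F\cong\mathcal F$ enters. This identification is only canonical up to scalar, and the scalar acts trivially on Hom algebras, so it should cause no problem.
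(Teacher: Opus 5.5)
Your proposal is correct and follows essentially the same route as the paper. You lift $w$ to $n$, use that $N_H(L)$ fixes $\mathbf c$ \cite[2.3(a)]{lusztig1988cuspidal}, apply Proposition~\ref{prop:functoriality} to $\Ad(n)$, identify $\underline{\mathbf F}(\Ad n)$ with $c_w$ via Corollary~\ref{cor:rigid}, and observe that an inner automorphism acts trivially on $\underline\Phi^{\mathbf c}(H)$. Your chamber argument just unpacks the conjugacy of parabolics in $\mathcal Q(L)$ that the paper cites from \cite[Lemma 6.8(c)]{lusztig1995cuspidal}, and your normalizer argument supplies the uniqueness of $w$ that the paper leaves implicit; only the parenthetical ``distinguished, hence unique'' is not a valid reason on its own, and it is the citation to Lusztig that actually does the work there.
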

\begin{proof}
    By \cite[Lemma 6.8 (c)]{lusztig1995cuspidal} all parabolic subgroups with Levi factor $L$ are conjugate. 
    Therefore there exists a $w\in W$ such that $w(P) = P'$.
    Since $w(\mathbf c) = \mathbf c$ (by \cite[2.3 a)]{lusztig1988cuspidal}), $\Ad(w)$ induces an isomorphism
    \begin{equation}
        \underline{\mathbf F}(\Ad(w)):\gH(H,P,\mathbf c) \to \gH(H,w(P),w(\mathbf c)) = \gH(H,P',\mathbf c).
    \end{equation}
    By Proposition \ref{prop:functoriality}, this map is characterized by sending $x\mapsto \Ad(w)(x)$ and thus it coincides with $c_w$ by Corollary \ref{cor:rigid}. Note that $\Ad(w)$ induces the identity on $\Phi^{\mathbf{c}}(H)$. The result thus follows from Proposition \ref{prop:functoriality}
\end{proof}

\subsection{Cuspidal $L$-parameters and Geometric Affine Hecke Algebras}\label{sec:geometric-affine}
Let $G^\vee$ be the dual group of an unramified group $G$, and $\Lg G$ the $L$-group.
In this section we recall the definition of the geometric affine Hecke algebras attached to unipotent inertial classes of $L$-parameters.

\subsubsection{$L$-parameters}
Let $G^\vee_{sc}$ denote the simply connected cover of the derived subgroup $G^\vee_{der}$ of $G^\vee$.
An enhanced $L$-parameter $(\phi,N,\rho)_{G^\vee}$ is a $G^\vee$-conjugacy class of triples consisting of a continuous homomorphism $\phi:W_F\to \Lg G$ with semisimple image such that the composition with projection $\Lg G\to W_F$ is the identity, $\Ad(\phi(w))N = ||w||N$, and $\rho$ is an irreducible representation of $\pi_0(G^\vee_{sc}(\phi,N))$.

If $\phi$ is trival on $I_F$ we say the parameter $(\phi,N,\rho)_{G^\vee}$ is unipotent and we represent it by $(\phi(\Fr),N,\rho)_{G^\vee}$.
Since we are assuming $G$ is inner to unramified, the action of $W_F$ on $G^\vee$ factors through a finite cyclic group $\langle \vartheta\rangle$ where $\vartheta$ is the image of $\Fr$ and acts on $G^\vee$ by a pinned group automorphism.
We may replace the usual definition of $\Lg G$ with $G^\vee\rtimes \langle \vartheta\rangle$, and in this setup a unipotent parameter of an inner form of an unramified group corresponds to a triple $(s,N,\rho)_{G^\vee}$ where $s\in G^\vee\vartheta$ is semisimple, $\Ad(s)N = q^{-1}N$ and $\rho\in \Irr(\pi_0(G_{sc}^\vee(s,N)))$.
We denote the set of such triples by $\Phi^u(\Lg G)$.

Note that since $\vartheta$ has finite order, if $g = s \vartheta$ then $g_c = s_c\vartheta$ and $g_h = s_h$.

\subsubsection{Graded Cuspidal Parameters}
Let $H$ be a complex connected reductive group.
Let $\underline{\mathfrak z}^{cusp}(H)$ denote the set of conjugacy classes of triples of the form $(H,\mathcal C,\mathcal F)\in \underline{\mathfrak z}(H)$, i.e. conjugacy classes of cuspidal data with $L = H$.
Let 
\begin{equation}
    \underline\Phi^{cusp}(H) = \bigsqcup_{\mathbf c\in \underline{\mathfrak z}^{cusp}(H)}\underline{\Phi}^{\mathbf c}(H).
\end{equation}
\begin{proposition}\label{prop:cusp}
    Let $\mathbf c = (H,\mathcal C,\mathcal F) \in \underline{\mathfrak z}^{cusp}(H)$.
    Let $e\in \mathcal C$ and extend it to an $\mathfrak {sl}_2$-triple $e,h,f\in \mathfrak h$.
    Then $\underline\Phi^{\mathbf{c}}(H)$ consists of the conjugacy classes of triples of the form $(z+rh,e,\rho)$ where $z\in \mathfrak z_H$, and $\rho$ corresponds to $\mathcal F$ under the equivalence $\Rep(A_H(e,h,f))\leftrightarrow \mathrm{Loc}_H(\mathcal C)$.
\end{proposition}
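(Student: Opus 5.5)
When $L=H$ the Hecke algebra is just its toral part, and I would read off the parameters directly from the standard modules. Here $W=N_H(H)/H$ is trivial, so $\gH(H,H,\mathbf c)=\mathbb C[\br]\otimes S(\mathfrak z_H^*)$ is commutative. Its simple modules are therefore exactly the characters $\ubA_{(z,r)}$ for $(z,r)\in\mathfrak z_H\oplus\mathbb C$. By Theorem \ref{thm:sigma-correspondence} the set $\underline\Phi^{\mathbf c}(H)$ is in bijection with these characters. It remains to identify the parameter $(\sigma,N,\rho,r)$ attached to $\ubA_{(z,r)}$.

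First I determine the $\sigma$-coordinate. By Remark \ref{rmk:ss-shift}, a parameter whose module has central character $(z,r)$ satisfies $(\sigma,r)\sim(z+r\underline\phi(h),r)$. Choosing $\phi$ with $\underline\phi(e)=e$ and $\underline\phi(h)=h$, this gives $\sigma=z+rh$ up to $H$-conjugacy.

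Next I identify $N$. Here $K_{H,H,\mathbf c}=\IC(\overline{\mathcal C},\mathcal F)$. The standard module $\ubM_N=H^*_{\hat H(N)^\circ}((K)_N)$ is therefore nonzero only for $N\in\overline{\mathcal C}$. Cleanness of cuspidal local systems \cite{lusztig1988cuspidal} says the IC sheaf has zero stalks on $\overline{\mathcal C}\setminus\mathcal C$, so in fact $N\in\mathcal C$, and we may conjugate to $N=e$.

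Then I identify $\rho$. The stalk of $K$ at $e$ is $\mathcal F_e$, and $A(e)$ acts on it through the representation corresponding to $\mathcal F$. The triple $(z+rh,e,r)$ has centralizer in $H$ whose component group $A(z+rh,e)$ is $A(e)$, at least for generic $r$. The reason is that $z$ is central and $H(h,e)$ is reductive with the same component group as $H(e)$, since $H(e)=H(e,h,f)\ltimes U$; this also identifies $A(e)$ with $A_H(e,h,f)$. Hence $\Hom_{A}(\rho,\ubM_{(\sigma,e,r)})$ is nonzero exactly when $\rho\leftrightarrow\mathcal F$, and in that case it is one-dimensional. It is then equal to its own simple quotient.

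I expect the component-group identification for special $r$ (for example $r=0$, where $H(\sigma,e)$ may be larger) to be the main obstacle. I would handle it by noting that $A(\sigma,e)\to A(e)$ is still an isomorphism because $H(\sigma,e)\supset H(e,h,f)(z)=H(e,h,f)$ as a Levi-type reductive part. Failing that, I would invoke the bijectivity in Theorem \ref{thm:sigma-correspondence} to conclude that each $(z,r)$ yields exactly the listed parameter.
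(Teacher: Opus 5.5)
The paper does not prove this proposition. It records it as a consequence of Lusztig's construction and uses it only to match the definition of cuspidal parameters with \cite{AMSol}. Your direct computation in the case $L=H$ is a reasonable way to supply a proof and is essentially correct, but two points need fixing.

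First, the step you single out as the main obstacle is not one. For every $r\neq0$, not just generic $r$, one has $H(z+rh,e)=H(h,e)=H(e,h,f)$, because $z$ is central and $f$ is determined by $e$ and $h$. For $r=0$ the element $\sigma=z$ is central, so $H(z,e)=H(e)=H(e,h,f)\ltimes U$ with $U$ connected unipotent. In all cases $A(z+rh,e)\cong A_H(e,h,f)\cong A(e)$.

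The point that actually needs an argument is one you pass over in Steps 2--3. Knowing separately that $\sigma$ is $H$-conjugate to $z+rh$ and that $N$ is $H$-conjugate to $e$ does not show that $(\sigma,N)$ is jointly conjugate to $(z+rh,e)$. So after ``we may conjugate to $N=e$'' you are not entitled to set $\sigma=z+rh$ in the next step.

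One fix uses that a nilpotent orbit carrying a cuspidal local system is distinguished \cite{lusztig1984intersection}, i.e.\ $\mathfrak h(e,h,f)=\mathfrak z_H$. By \cite[2.4(g)]{kazhdan1987proof} and the $H(e)$-conjugacy of $\mathfrak{sl}_2$-triples through $e$, any semisimple $\sigma$ with $[\sigma,e]=2re$ is $H(e)$-conjugate to $rh+\sigma_0$ with $\sigma_0\in\mathfrak h(e,h,f)=\mathfrak z_H$. Comparing components in $\mathfrak z_H$ then gives $\sigma_0=z$.

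A more economical fix reverses the logic. Write $\rho_{\mathcal F}$ for the representation attached to $\mathcal F$.
\begin{enumerate}
\item Your stalk computation shows $\Hom_{A(z+rh,e)}(\rho_{\mathcal F},\ubM_{(z+rh,e,r)})\neq0$, so $(z+rh,e,\rho_{\mathcal F},r)$ is the parameter of some simple module.
\item By Remark \ref{rmk:ss-shift} that module has central character $(z,r)$, hence it is $\ubA_{(z,r)}$.
\item Every simple module is of this form, and the map of Theorem \ref{thm:sigma-correspondence} is a bijection onto $\underline\Phi^{\mathbf c}(H)$. This gives exactly the claimed set.
\end{enumerate}
In this form you no longer need the cleanness argument, the separate determination of $\sigma$ and $N$, or the one-dimensionality of the standard module.
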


\subsubsection{Cuspidal $L$-paramters}
Let $G$ be an inner form of $G^*$.
a unipotent parameter $(s,N,\rho)\in \Phi^u(\Lg G)$ is called \emph{cuspidal} if $(\log(s_h),N,\rho)\in \underline{\Phi}^{cusp}(G^\vee_{sc}(s_c))$ and $G^\vee(s_c)$ is not contained in any proper Levi of $\Lg G$. 
Let $\Phi^{u,cusp}(G)$ denote the set of cuspidal unipotent parameters.
By Proposition \ref{prop:cusp} this definition is the same as the one given in \cite{AMSol}.

For $\xi = (s,N,\rho) \in \Phi^{u,cusp}(\Lg G)$ and $z\in Z(G^\vee)$ define $z\otimes \xi = (zs,N,\rho)_{G^\vee}$.
Note that since $s \in G^\vee\vartheta$, this $Z(G^\vee)$ action factors through $Z(G^\vee)_\vartheta$.
Further to that, we note that the map $Z(G^\vee)^{\vartheta,\circ}\to Z(G^\vee)_\vartheta$ is an isogeny of tori and so the Lie algebra of $Z(G^\vee)_\vartheta$ may be identified with $\mathfrak z_{G^\vee}^\vartheta$.

\begin{lemma}\label{lem:center}
    If $(s,N,\rho)\in \Phi^{u,cusp}(\Lg G)$ then $Z(G^\vee(s_c))^\circ = Z(G^\vee)^{\vartheta,\circ}$.
\end{lemma}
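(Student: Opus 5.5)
We prove the two inclusions between the connected centers separately, using the two hypotheses in the definition of a cuspidal parameter. Throughout, write $s = s_c s_h$ with $s_c\in G^\vee\vartheta$ compact; note $G^\vee(s_c)$ is the centralizer in $G^\vee$ of the element $s_c$ of the disconnected group $G^\vee\rtimes\langle\vartheta\rangle$.

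\emph{Inclusion $Z(G^\vee)^{\vartheta,\circ}\subseteq Z(G^\vee(s_c))^\circ$.} Write $s_c = g\vartheta$ with $g\in G^\vee$. For $z\in Z(G^\vee)^\vartheta$ we have $z\, g\vartheta\, z^{-1} = g\, z\vartheta(z)^{-1}\vartheta = g\vartheta$, so $Z(G^\vee)^\vartheta\subseteq G^\vee(s_c)$. Since it is central in $G^\vee$, it is central in $G^\vee(s_c)$. Its identity component is a connected subgroup of $Z(G^\vee(s_c))$, hence lies in $Z(G^\vee(s_c))^\circ$.

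\emph{Inclusion $Z(G^\vee(s_c))^\circ\subseteq Z(G^\vee)^{\vartheta,\circ}$.} Let $S := Z(G^\vee(s_c))^\circ$, a torus in $G^\vee$ commuting with $s_c$. Set $M^\vee := G^\vee(S)$ (its identity component is what matters), a Levi subgroup of $G^\vee$. Because $s_c$ centralizes $S$, the element $s_c$ normalizes $M^\vee$; in fact it normalizes the whole data. Choose a cocharacter $\lambda$ of $S$ that is generic, so that $G^\vee(\lambda) = M^\vee$. The group $\mathcal M := G^\vee(S)\rtimes\langle s_c\rangle$, or more precisely the normalizer in $\Lg G$ of $M^\vee$ generated by $M^\vee$ and $s_c$, is then a Levi subgroup of $\Lg G$ in the sense of Section~2. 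It is the Levi factor of the parabolic attached to $\lambda$, which is $s_c$-stable because $s_c$ fixes $\lambda$. This group contains $G^\vee(s_c)$, since $G^\vee(s_c)$ centralizes $S$. The second cuspidality condition forces $\mathcal M$ not to be proper, so $M^\vee = G^\vee$. Hence $S\subseteq Z(G^\vee)$, and since $S$ commutes with $s_c = g\vartheta$ and is central, $\vartheta$ fixes $S$ pointwise. Being connected, $S\subseteq Z(G^\vee)^{\vartheta,\circ}$.

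\emph{Main obstacle.} The delicate step is checking that $\mathcal M$ really is a Levi subgroup of $\Lg G$ under the paper's definition. That definition requires a parabolic $\Lg P$ whose projection to $W_F$ (here to $\langle\vartheta\rangle$) is surjective, with $\mathcal M$ the normalizer of a Levi of $P^\circ$. We would take $\Lg P$ to be the subgroup generated by the $\lambda$-parabolic $P_\lambda\subset G^\vee$ and $s_c$. This surjects onto $\langle\vartheta\rangle$ because $s_c\in G^\vee\vartheta$. We also need it to be closed, which follows since $s_c$ has finite order modulo $P_\lambda$ up to a central compact torus; if that fails, we pass to a suitable power. We would then check that $\mathcal M$ is the normalizer of $M^\vee$ in $\Lg P$, or at least is contained in a proper Levi when $M^\vee\neq G^\vee$.

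The first cuspidality condition, $(\log s_h,N,\rho)\in\underline\Phi^{cusp}(G^\vee_{sc}(s_c))$, is not needed for this argument. It could serve as a sanity check via Proposition~\ref{prop:cusp}, which shows that $\log s_h$ has central part in $\mathfrak z_{G^\vee(s_c)}$.
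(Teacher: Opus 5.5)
Your proof is correct. Its core idea matches the paper's: a torus in $Z(G^\vee(s_c))^\circ$ that is not central in $G^\vee$ yields a proper Levi subgroup of $\Lg G$ containing $G^\vee(s_c)$, contradicting the second cuspidality condition. Neither argument uses the first condition.

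The execution differs, however. The paper proceeds in four steps:
\begin{itemize}
\item it conjugates so that $s=t\vartheta$ with $t\vartheta=\vartheta t$ (Lusztig's Lemma 6.12);
\item it uses $t_c\in G^\vee(s_c)$ to show that $\vartheta$ itself fixes $Z(G^\vee(s_c))$ pointwise, which places $Z(G^\vee(s_c))^\circ$ inside $(G^\vee)^{\vartheta,\circ}$;
\item assuming a strict inclusion, it extracts a non-trivial torus in the derived group of $(G^\vee)^{\vartheta,\circ}$;
\item it invokes Borel's Lemma 3.5 for a torus centralized by all powers of $\vartheta$.
\end{itemize}

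You instead use directly that $s_c\in G^\vee\vartheta$ already centralizes all of $S=Z(G^\vee(s_c))^\circ$. You build the $s_c$-stable parabolic by hand from a generic cocharacter of $S$, and you recover $\vartheta$-invariance of $S$ only at the end, from centrality. This makes the argument self-contained. It removes the need for the conjugation lemma and for identifying the connected centre of $(G^\vee)^{\vartheta,\circ}$. The price is that you must verify the definition of a Levi subgroup of $\Lg G$ yourself, which the paper delegates to Borel.

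That verification is simpler than your last paragraph suggests. Let $k=\mathrm{ord}(\vartheta)$. Then $s_c^k$ lies in $G^\vee$ and centralizes $S$, so $s_c^k\in M^\vee\subseteq P_\lambda$. Consequently:
\begin{itemize}
\item $\Lg P=\bigsqcup_{n=0}^{k-1}P_\lambda s_c^n$ is a finite union of closed cosets, hence closed;
\item $\Lg P\cap G^\vee=P_\lambda$;
\item $N_{\Lg P}(M^\vee)=\bigsqcup_{n=0}^{k-1}M^\vee s_c^n=\mathcal M$, because $N_{P_\lambda}(M^\vee)=M^\vee$.
\end{itemize}
You should delete the hedge about ``passing to a suitable power''. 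A power of $s_c$ need not surject onto $\langle\vartheta\rangle$, and in any case it is never needed.
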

\begin{proof}
    The $(\supseteq)$ inclusion holds trivially.
    Suppose the inclusion is proper.
    By replacing $(s,N,\rho)$ by a conjugate we may assume that $s=t\vartheta$ where $t\vartheta = \vartheta t$ \cite[Lemma 6.12]{Lu-unip2}.
    Let $z \in Z(G^\vee(s_c))$.
    Since $t_c\in G^\vee(s_c)$ we have that $\vartheta z \vartheta^{-1} = (t_c^{-1}s_c)z(t_c^{-1}s_c)^{-1} = z$.
    Thus the conjugation action of $\vartheta$ on $Z(G^\vee(s_c))^\circ$ is trivial.
    Note moreover that $(G^\vee)^{\vartheta,\circ}$ is a connected reductive group with connected center $Z(G^\vee)^{\vartheta,\circ}$.
    Therefore, if $Z(G^\vee(s_c))^\circ$ strictly contains $Z(G^\vee)^{\vartheta,\circ}$ it contains a non-trivial torus $S$ of $(G^{\vartheta})_{der} \subset G_{der}$.
    Since $S$ lies in $G^{\vartheta}$ its centralizer contains all powers of $\vartheta$.
    By \cite[Lemma 3.5]{Borel1979}, $G^\vee(s_c)$ lies in a proper Levi of $\Lg G$ contradicting cuspidality of the original parameter.
\end{proof}

\subsubsection{Parametric inertial classes}
Consider the set of $G^\vee$-conjugacy classes $(\Lg M,\xi)_{G^\vee}$ where $\Lg M$ is a Levi subgroup of $\Lg G$ and $\xi = (s,N,\rho)\in \Phi^{u,cusp}(\Lg M)$.
Consider the equivalence relation on this set generated by
\begin{equation}
    (\Lg M,\xi)_{G^\vee}\sim (\Lg M,\xi\otimes z)_{G^\vee}, \quad z\in Z(M^{\vee})_{\vartheta}.
\end{equation}
Let $\mathfrak z(\Lg G)$ denote the set of equivalence classes for this relation, and write $[\Lg M,\xi]_{\Lg G}$ for the equivalence class of $(\Lg M,\xi)_{G^\vee}$.
We will refer to an element $\Lg \mathfrak s\in \mathfrak z(\Lg G)$ as a unipotent parametric inertial class.
Define
\begin{align}
    \mathbf c(\Lg M,\xi) &= (M^\vee(s_c),N,\rho)_{M^\vee(s_c)}.
\end{align}

Suppose $G^\vee_{der}$ is simply connected.
Then $M^\vee_{der}$ is also simply connected and so 
\begin{equation}
    \pi_0(M^\vee_{sc}(s_c,N)) = \pi_0(M^\vee(s_c,N)).
\end{equation}
Therefore $\mathbf c(\Lg M,\xi)\in \underline{\mathfrak z}(G^\vee(s_c))$ and we can consider the associated graded Hecke algebra
\begin{equation}
    \gH(\xi,\Lg Q):=\gH(G^\vee(s_c),Q^\vee(s_c),\mathbf c(\Lg M,\xi))
\end{equation}
for any $\Lg Q\in \mathcal Q(\Lg M)$.
By Lemma \ref{lem:center} the toral subalgebra can be identified with $\mathbb C[\mathfrak {z}_{M^\vee,\vartheta}]$, the regular functions on the Lie algebra of $Z(M^\vee)_\vartheta$.

\subsubsection{Geometric affine Hecke algebras}
Let $G^\vee_{der}$ be simply connected and $\Lg \mathfrak s\in \mathfrak z(\Lg G)$.

Let $(\Lg M,\xi_0)_{G^\vee}\in \Lg \mathfrak s$ and $\Lg \mathfrak r = [\Lg M,\xi_0]_{\Lg M} \in \mathfrak z(\Lg M)$.
Let $\Lg Q\in \mathcal Q(\Lg M)$.
The inertial class $\Lg \mathfrak r$ has the structure of a torus: $Z(M^{\vee})_\vartheta$ acts transitively on $\Lg \mathfrak r$ and $(\Lg M,\xi_0)_{M^\vee}$ has (finite) stabilizer
\begin{equation}
    C(\Lg M,\xi_0) := \{z\in Z(M^{\vee})^{\vartheta,\circ}:(\Lg M,\xi_0)_{M^\vee} = (\Lg M,\xi_0\otimes z)_{M^\vee}\}.
\end{equation}
Therefore we may identify $\Lg \mathfrak r$ with the complex torus $T' = Z(M^{\vee})_\vartheta/C(\Lg M,\xi_0)$ via $z\mapsto (\Lg M,\xi_0\otimes z)_{M^\vee}$.
Let $p:Z(M^{\vee})_{\vartheta}\to T'$ be the projection map. 
Let 
\begin{align}
    N^{\Lg \mathfrak r} &:= \{g\in N_{G^\vee}(\Lg M): g\cdot \Lg \mathfrak r = \Lg \mathfrak r\}, \\
    W^{\Lg \mathfrak r} &:= N^{\Lg \mathfrak r}/M^\vee.
\end{align}

\begin{lemma}
    There exists a $z\in Z(M^\vee)_{\vartheta}$ such that $W^{\mathfrak r^\vee}$ fixes $(\Lg M,\xi_0\otimes z)_{M^\vee}$. 
\end{lemma}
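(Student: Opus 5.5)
We want a $z\in Z(M^\vee)_\vartheta$ such that every $w\in W^{\Lg\mathfrak r}$ fixes the point $p(z)\in T'$. This is the parametric analogue of the first claim of Proposition~\ref{prop:bernstein}. The idea is to linearize the action of $W^{\Lg\mathfrak r}$ on the torus $T'\cong\Lg\mathfrak r$ and find a fixed point among the ``unitary'' points. We do this first after passing to the compact/hyperbolic decomposition, and then by an averaging argument on the hyperbolic part.

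\textbf{Step 1: the action is affine.} Each $g\in N^{\Lg\mathfrak r}$ normalizes $\Lg M$, so it acts on $Z(M^\vee)_\vartheta$ by a group automorphism $z\mapsto gzg^{-1}$. It also satisfies
\[
g\cdot(\xi_0\otimes z)=(g\cdot\xi_0)\otimes(gzg^{-1}).
\]
Since $g\cdot\Lg\mathfrak r=\Lg\mathfrak r$, we have $g\cdot\xi_0\sim_{M^\vee}\xi_0\otimes z_g$ for some $z_g$. Under the identification $T'\cong\Lg\mathfrak r$, $w=gM^\vee$ therefore acts by
\[
p(z)\mapsto p(z_g\cdot{}^g z),
\]
which is a translation composed with a finite-order group automorphism of $T'$. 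So $W^{\Lg\mathfrak r}$ acts on $T'$ by affine transformations whose linear parts factor through a finite group. This action also preserves the polar decomposition $T'=T'_c\times T'_h$: conjugation by $g$ maps compact (resp.\ hyperbolic) elements to compact (resp.\ hyperbolic) ones, and so does $\vartheta$-coinvariance.

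\textbf{Step 2: reduce to the compact part.} Choose a base point with $s_{0,h}$ central hyperbolic, i.e.\ twist $\xi_0$ by the inverse of the hyperbolic part of $s_0$ in $Z(M^\vee)_\vartheta$. This uses Lemma~\ref{lem:center}: $Z(M^\vee(s_c))^\circ=Z(M^\vee)^{\vartheta,\circ}$ and cuspidality force $s_h\in Z(M^\vee)^{\vartheta,\circ}\cdot\exp(\text{component along }h)$. After this twist, $s$ is compact modulo the $\mathfrak{sl}_2$-part. The conjugates $g\cdot\xi_0$ are then again of this ``unitary'' form, so $z_g$ can be taken compact. Hence the affine action restricts to an action on the compact torus $T'_c$, and on the hyperbolic part it is linear. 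In particular $1\in T'_h$ is fixed there, so it suffices to find a fixed point in $T'_c$.

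\textbf{Step 3: fixed point on the compact part (main obstacle).} The affine action of a finite group on a compact torus need not have a fixed point; the obstruction is a class in $H^1(W^{\Lg\mathfrak r},T'_c)$. I would first try to show that this class vanishes by using the Langlands correspondence, since this argument is the one available to us. We have the bijection of \cite{feng-opdam-solleveld} between cuspidal unipotent parameters and supercuspidal representations. It is equivariant for $N_{G^\vee}(\Lg M)\leftrightarrow N_G(M)$ and for twisting by $Z(M^\vee)_\vartheta\leftrightarrow X_{\un}(M)$ via the Kottwitz isomorphism. Under it, $\Lg\mathfrak r$ corresponds to $\mathfrak r$ and $W^{\Lg\mathfrak r}$ to $W^{\mathfrak r}$, and the affine actions match. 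Proposition~\ref{prop:bernstein} (which uses that $G^*$ is adjoint) supplies $\nu$ with $\tau_0\otimes\nu$ fixed by $W^{\mathfrak r}$. Taking $z=\kott(\nu)$ then gives the claim. If one prefers a purely dual-side argument, the fallback is a case check. For each unipotent cuspidal $\xi_0$, in the tables of \cite{Lu-unip1,Lu-unip2}, $W^{\Lg\mathfrak r}$ is a Weyl group of a quasi-simply connected root datum, generated by reflections in $\Lg\mathfrak r$. One then exhibits a point fixed by all simple reflections, using that each $z_{g}$ for a simple reflection is killed by an appropriate square root; this is where the adjointness and simply-connectedness of $G^\vee_{der}$ enter.
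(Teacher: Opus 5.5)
Your Step~3 argument is correct in substance, but it takes a different route from the paper.

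\textbf{The paper's route.} The paper proves the lemma purely on the dual side. It observes that a $W^{\Lg\mathfrak r}$-fixed point of $\Lg\mathfrak r$ is exactly one of Lusztig's special points, whose existence is established in \cite[\S5.10]{Lu-unip1} and \cite[\S8.1]{Lu-unip2}, essentially from his explicit tables.

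\textbf{Your route.} You import the fixed point from the $p$-adic side. Proposition~\ref{prop:bernstein} gives a unitary, $W^{\mathfrak r}$-invariant $\tau_0\otimes\nu$. Theorem~\ref{thm:sc-llc}(1),(2), Lemma~\ref{lem:abps} and Corollary~\ref{cor:sc-llc-equivariance} then show that $\mathbf r^{cusp}_M(\tau_0\otimes\nu)$ is fixed by $W^{\Lg\mathfrak r}=\mathbf w_{G,M}(W^{\mathfrak r})$. This point lies in $\Lg\mathfrak r$, so it equals $\xi_0\otimes z$ for some $z$. None of these results uses the lemma, so the argument is not circular.

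\textbf{What your route buys.} It shows directly that the base point $\xi_0=\mathbf r^{cusp}_M(\tau_0)$ chosen later in Proposition~\ref{prop:a-g-hecke-isom} is itself $W^{\Lg\mathfrak r}$-fixed. The paper uses this tacitly when it builds $\bfH(\Lg\mathfrak r,\Lg Q)$ at that point.

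\textbf{What it costs.} First, it only reaches parametric classes of the form $\mathbf r^{cusp}_M(\mathfrak r)$ with $M$ an $F$-Levi of some $G\in\Inn(G^*)$. The lemma, however, is stated for every $\Lg\mathfrak s\in\mathfrak z(\Lg G)$. To cover all of them you need one more input: every $\xi_0\in\Phi^{u,cusp}(\Lg M)$ must arise this way. That means surjectivity of the supercuspidal correspondence taken over all inner forms, together with relevance of $\Lg M$ for the inner form singled out by the central character of $\rho_0$ (compare Theorem~\ref{thm:sc-llc}(3)). Your sentence ``under it, $\Lg\mathfrak r$ corresponds to $\mathfrak r$'' assumes this silently. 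Second, it derives a statement about the dual group alone from $p$-adic type theory and the chosen $\mathbf r^{cusp}_M$. The paper's citation keeps the geometric affine Hecke algebra intrinsic to $\Lg G$.

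\textbf{Steps 1--2 and the fallback.} Steps~1 and~2 are not needed once you transport, since the transported point is already unitary and fixed. Several claims in Step~2 are asserted rather than proved, for instance that $z_g$ can be taken compact. The fallback case check is only a sketch of what Lusztig's special-point analysis actually does. You are right, though, that some genuine input is unavoidable, because an affine action of a finite group on a compact torus need not have a fixed point.
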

\begin{proof}
    This is equivalent to the existence of a special point in \cite[\S5.10]{Lu-unip1} and \cite[\S8.1]{Lu-unip2}.
\end{proof}

Let us replace our basepoint $\xi_0$ with $\xi_0\otimes z$ from the Lemma.
Let $\Lg Q \in \mathcal Q(\Lg M)$.
Following \cite{AMSol} or \cite{Lu-unip1} we can associate to the datum $(\Lg \mathfrak r,\xi_0,\Lg Q)$ an affine Hecke algebra $\bfH(\Lg \mathfrak r,\Lg Q)$ called the associated \emph{geometric affine Hecke algebra}.

\begin{proposition}\label{prop:geom-hecke}
    The Hecke algebra $\aH(\Lg \mathfrak r,\Lg Q)$ has the following properties:
    \begin{enumerate}
        \item The Weyl group of $\aH(\Lg \mathfrak r,\Lg Q)$ is $W^{\Lg \mathfrak r}$, and $\bA^\vee = \mathbb C[T']$.
        \item For all $z\in T_c'$, the isomorphism 
        \begin{equation}
            \label{eq:poly-part}
            \underline p^{*}: \mathbb C[\mathfrak t'] \to \mathbb C[\mathfrak z_{M^\vee,\vartheta}]
        \end{equation}
        extends uniquely to a graded isomorphism
        \begin{equation}
            \label{eq:graded-part}
            \Phi_z:\mathbf H(\Lg \mathfrak r,\Lg Q)_{[z]} \xrightarrow{\sim} \gH(\xi_0\otimes z,\Lg Q).
        \end{equation}
    \end{enumerate}
\end{proposition}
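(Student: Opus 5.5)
Part (1) is essentially built into the construction in \cite{Lu-unip1,Lu-unip2,AMSol}, so I will only sketch it. The geometric affine Hecke algebra $\aH(\Lg \mathfrak r,\Lg Q)$ is defined with root datum and parameters read off from the pair $(N^{\Lg\mathfrak r},\xi_0)$, with toral part $\mathbb C[T']$. After replacing $\xi_0$ by the special point $\xi_0\otimes z$ of the preceding lemma, the group $W^{\Lg\mathfrak r}$ fixes the base point, so it acts on $T'$ by group automorphisms. Hence $W^{\Lg\mathfrak r}$ is the Weyl group and $\bA^\vee=\mathbb C[T']$.

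For part (2), uniqueness is immediate. Any two graded isomorphisms $\aH(\Lg\mathfrak r,\Lg Q)_{[z]}\to \gH(\xi_0\otimes z,\Lg Q)$ that agree with $\underline p^*$ on the toral part coincide by Corollary~\ref{cor:rigid}. The hypothesis of that corollary holds because the parameter function $\mu$ of the geometric graded Hecke algebra satisfies $\mu(\alpha)\ge 2$, hence is nowhere zero. Also, $\Gamma_z$ is trivial by Lemma~\ref{lemma: gamma z trivial}, since the geometric algebra is quasi-simply connected; I would check this against the tables in \cite{Lu-unip1,Lu-unip2}. So the target of Corollary~\ref{cor:rigid} really is a graded Hecke algebra of the form $\gH^\mu(\Sigma,\Delta)$.

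For existence I would carry out the following steps in order.
\begin{enumerate}
\item Compute the reduction $\aH_{[z]}$ via Barbasch--Moy: it is $\gH^{\mu_z}(E,\Delta_z)$ with $R_z=\{\alpha : s_\alpha z=z\}$.
\item Identify $E$ with $\mathfrak z_{M^\vee,\vartheta,\mathbb R}^*$ via $\underline p$. This uses Lemma~\ref{lem:center}, which identifies $Z(M^\vee(s_c))^\circ$ with $Z(M^\vee)^{\vartheta,\circ}$ and hence the toral parts of both sides.
\item Show that under this identification the roots of $\aH$ fixed by $z$ are exactly the reduced relative roots of $\mathfrak z_{M^\vee(s_c)}$ in $\mathfrak g^\vee(s_c)$, where $s_c$ is the compact part attached to $\xi_0\otimes z$.
\item Show that $\Delta_z$ coincides with the simple roots determined by $Q^\vee(s_c)$.
\item Check the equality of parameters $\mu_z(\alpha)=\mu(\alpha)$, where $\mu(\alpha)$ is defined by nilpotency of $\ad N_0$ as in \S\ref{sec:geometric-graded}.
\end{enumerate}
Once root systems, bases and parameters agree, the presentation of Definition~\ref{def:graded-hecke} gives a graded isomorphism sending $t_{s_\alpha}\mapsto t_{s_\alpha}$ and $x\mapsto \underline p^*(x)$. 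Its restriction to the toral part is $\underline p^*$, as required.

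The main obstacle is step 5, the parameter comparison, together with the matching of $\Delta_z$ with the parabolic in step 4. The parameters $\lambda,\lambda^*$ of $\aH$ are themselves defined geometrically, via the graded Hecke algebras at the various points of $T'$ (Lusztig's construction, or \cite{AMSol}). Their reduction must therefore be compared with Lusztig's cuspidal-support parameters at $z$, including the case $\alpha^\vee\in 2X^\vee$, where $\mu_z(\alpha)=\lambda(\alpha)\pm\lambda^*(\alpha)$ depends on the sign $\theta_{-\alpha}(z)$. I would first try to invoke the comparison theorem of \cite[\S2--3]{AMSol}, which is essentially this statement; there the affine algebra is built so that its reductions are the geometric graded algebras. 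Failing that, I would verify the equality of parameters case by case from \cite[\S7]{Lu-unip1} and \cite[\S11]{Lu-unip2}, and use Lemma~\ref{lem:parabolics} to move between choices of parabolic.
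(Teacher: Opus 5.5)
Your proposal is correct and follows the paper's proof: uniqueness comes from quasi-simple-connectedness (so $\Gamma_z$ is trivial by Lemma~\ref{lemma: gamma z trivial}) together with Corollary~\ref{cor:rigid}, while part (1) and existence in part (2) are taken from the construction, being by definition in \cite{AMSol}; for Lusztig's version, the existence of the extension is exactly \cite[\S5.16]{Lu-unip1} and \cite[\S9.3]{Lu-unip2}, so your five-step outline and case-by-case fallback only unpack what those sections already prove. One soft spot is your aside that $W^{\Lg\mathfrak r}$ fixing the special point makes it the Weyl group, since that alone does not rule out an $R$-group, but the paper likewise defers this point to the construction.
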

\begin{proof}
    Using the definition of geometric affine Hecke algebras in \cite{AMSol}, parts (1) and (2) are by definition. 
    Using the definitions in \cite{Lu-unip1,Lu-unip2}, the toral subalgebra is $\mathbb C[T']$ and for the sake of completeness we will supply a proof statement about Weyl groups in a separate paper.
    The existence of the extension in part (2) is proved in \cite[\S5.16]{Lu-unip1} and \cite[\S9.3]{Lu-unip2}.
    Inspecting the tables in \cite{Lu-unip1,Lu-unip2}, all the geometric affine Hecke algebras are quasi-simply connected and so the uniqueness follows from Lemma \ref{lemma: gamma z trivial} and Corollary \ref{cor:rigid}.


\end{proof}

\subsection{Supercuspidal Correspondence}
In this section we recall the properties of the unipotent supercuspidal local Langlands correspondence from \cite{feng-opdam-solleveld} and how it induces an isomorphism of affine Hecke algebras.

First we recall some facts about Galois cohomology and unramified characters.

\begin{proposition}\cite[Proposition 6.4]{kottwitz}
    Let $G$ be a connected reductive group over $F$ and $\Gamma =\mathrm{Gal}(\bar F/F)$.
    Recall that $\Gamma$ acts on the absolute root datum for $G$ and hence on $G^\vee$.
    
    There is an isomorphism
    \begin{equation}
        H^1(F,G)\to X(\pi_0(Z(G^\vee)^\Gamma)).
    \end{equation}
    In particular we obtain an isomorphism
    \begin{equation}
        \chi:\Inn(G^*)\to X(\pi_0(Z(G_{sc}^\vee)^\Gamma)).
    \end{equation}
\end{proposition}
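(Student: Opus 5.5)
The statement just quoted is Kottwitz's Proposition 6.4, so the task is to recall how it is proved and how the ``in particular'' follows. I will treat the first isomorphism as the cited result. The first step is to define the map: for a torus $T$ one uses Tate--Nakayama to identify $H^1(F,T)\cong X(\pi_0(\hat T^\Gamma))$. For a general $G$ one passes to the case $G_{der}$ simply connected via a $z$-extension, and then uses $H^1(F,G)\cong H^1(F,D)$ for $D=G/G_{der}$. That isomorphism comes from Kneser's vanishing $H^1(F,G_{sc})=0$, plus surjectivity onto $H^1(F,D)$ for non-archimedean $F$. Since $\hat D=Z(G^\vee)^\circ$ in that case, the torus statement yields the claim, and one checks independence of the $z$-extension. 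I would simply cite this.

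For the ``in particular'', the plan is to apply the proposition to the adjoint group $G^*_{ad}=G^*$ itself. Inner forms of $G^*$ are classified by $H^1(F,G^*_{ad})$, because $\mathrm{Inn}(G^*)\cong G^*_{ad}$ and inner twists are parametrised by $H^1(F,\mathrm{Inn}(G^*))$. The dual group of $G^*_{ad}$ is $G^\vee_{sc}$, the simply connected cover of $G^\vee_{der}$; since $G^*$ is adjoint, this is just $G^\vee$. The Galois action is transported through the identification of root data. Substituting gives $\Inn(G^*)\cong H^1(F,G^*_{ad})\cong X(\pi_0(Z(G^\vee_{sc})^\Gamma))$.

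Two points need care. The first is the distinction between isomorphism classes of inner forms and of inner twists: $\Inn(G^*)$ must mean the set $H^1(F,G^*_{ad})$, so I would read the notation $\Inn(G^*)$ in that sense. The second is that $Z(G^\vee_{sc})$ is finite. Then $\pi_0(Z(G^\vee_{sc})^\Gamma)=Z(G^\vee_{sc})^\Gamma$, so the target is simply the character group of the $\Gamma$-invariants of the centre.

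I expect the only genuine obstacle to be the non-abelian cohomology in the general proposition, namely the reduction via $z$-extensions and the Kneser vanishing. Since this is a quoted result, the proof in the paper should just cite Kottwitz and spell out the adjoint specialisation above.
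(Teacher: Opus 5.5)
Your proposal is correct and matches the paper, which gives no argument beyond the citation to Kottwitz. It implicitly specialises to the adjoint group $G^*=G^*_{ad}$, whose dual group is $G^\vee_{sc}=G^\vee$, exactly as you do. Your caveat that $\Inn(G^*)$ must be read as the set $H^1(F,G^*_{ad})$ of inner twists, rather than as isomorphism classes of groups, is a worthwhile precision that the paper leaves implicit.
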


Note that for any $L$-parameter $\xi=(s,N,\rho)$ we obtain a map 
\begin{equation}
    \zeta(\xi):\pi_0(Z(G^\vee_{sc})^\vartheta)\to S^1
\end{equation}
from the central character of the action of $Z(G_{sc}^\vee)^\vartheta$ on $\rho$.


\begin{corollary}\cite[\S3.3.1]{haines}
    Let $G$ be a reductive group over $F$ and let $I\subset \Gamma$ be the inertia subgroup of $\Gal(\bar F/F)$.
    Then there is an isomorphism
    \begin{equation}\kott:X_{\un}(G)\xrightarrow{\sim} ((Z(G^\vee)^I)_\Fr)^\circ.\end{equation}
\end{corollary}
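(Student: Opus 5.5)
The plan is to realize both sides as characters of the same finitely generated abelian group, using the Kottwitz homomorphism.

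Recall from \cite[\S7]{kottwitz} the Kottwitz map
\begin{equation}
    \kappa_G : G(\breve F)\to X^*(Z(G^\vee)^I),
\end{equation}
where $\breve F$ is the completion of the maximal unramified extension of $F$. It is surjective and $\Fr$-equivariant. Taking $\Fr$-invariants gives a surjection
\begin{equation}
    \kappa_G : G(F)\twoheadrightarrow X^*(Z(G^\vee)^I)^{\Fr}.
\end{equation}
Surjectivity on $F$-points follows from Lang's theorem applied to the kernel $G_1$, which is the $\breve F$-points of a connected (parahoric-type) group scheme. I will take this as the input from \cite{kottwitz} and \cite{haines}. I expect it to be the main technical point, but it is genuinely quoted rather than proved here.

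Next, recall that
\begin{equation}
    X_{\un}(G)=\Hom(G/\cc G,\mathbb C^\times), \qquad \cc G=\bigcap_{\chi\in X^*_F(G)}\ker|\chi|_F .
\end{equation}
I will show that $\kappa_G^{-1}$ of the torsion subgroup of $X^*(Z(G^\vee)^I)^{\Fr}$ equals $\cc G$. For the inclusion $\kappa_G^{-1}(\mathrm{tors})\subseteq \cc G$, one checks that $\kappa_G$ is functorial and that for an $F$-rational character $\chi$ one has $\mathrm{val}(\chi(g))=\langle\kappa_G(g),\chi^\vee\rangle$ up to the natural pairing. Hence torsion elements have $|\chi(g)|=1$ for all $\chi$. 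For the reverse inclusion, I reduce to the abelianization $G\to D=G/G_{der}$. There both groups modulo torsion have the same rank, namely the $F$-split rank of the maximal central torus, so an element of $\cc G$ has torsion Kottwitz image. This gives
\begin{equation}
    G/\cc G\;\cong\;X^*(Z(G^\vee)^I)^{\Fr}/\mathrm{tors},
\end{equation}
which is a free abelian group of finite rank.

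Finally, for the diagonalizable group $D:=Z(G^\vee)^I$ with its $\Fr$-action, consider the coinvariants $D_{\Fr}=D/\{\Fr(d)d^{-1}\}$. Their character group is $X^*(D)^{\Fr}$. Characters of the identity component $(D_{\Fr})^\circ$ are exactly the characters of $X^*(D)^{\Fr}$ modulo torsion, since passing to the identity component kills precisely the torsion in the character lattice. Dualizing, we get
\begin{equation}
    \Hom\bigl(X^*(D)^{\Fr}/\mathrm{tors},\mathbb C^\times\bigr)\cong (D_{\Fr})^\circ .
\end{equation}
Composing with the previous step yields $\kott: X_{\un}(G)\xrightarrow{\sim}((Z(G^\vee)^I)_{\Fr})^\circ$.

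I would also note the naturality of $\kott$ in $G$ for Levi subgroups. This will be what is actually used later when differentiating $\kott$ in Theorem~\ref{thm:B}.
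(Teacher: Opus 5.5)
Your proposal is correct and is essentially the argument of \cite[\S3.3.1]{haines}, which the paper cites without proof: the Kottwitz homomorphism identifies $G/{}^{\circ}G$ with $X^*(Z(G^\vee)^I)^{\Fr}$ modulo torsion, and dualizing via $X^*\bigl(((Z(G^\vee)^I)_{\Fr})^\circ\bigr)=X^*(Z(G^\vee)^I)^{\Fr}/\mathrm{tors}$ yields $\kott$. One small correction to your quoted justification of surjectivity on $F$-points: the kernel of $\kappa_G$ on $G(\breve F)$ is not the points of a connected group scheme, and Lang's map is in general not surjective on it; instead one applies Lang's theorem to the Iwahori subgroup of a $\Fr$-stable alcove (the kernel of $\kappa_G$ on that alcove's stabilizer), and since you only quote this fact it does not affect the rest of your argument.
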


\begin{theorem}\cite[Theorem 2]{feng-opdam-solleveld}\label{thm:sc-llc}
    Let $G$ be an inner form of an unramified group.
    Then there exists a bijection
    \begin{equation}
        \mathbf r_G^{cusp}:\Irr^{u,cusp}(G) \to \Phi^{u,cusp}(\Lg G)
    \end{equation}
    with the following properties:
    \begin{enumerate}
        \item $\mathbf r_G^{cusp}(\tau\otimes \nu) = \mathbf r_G^{cusp}(\tau)\otimes \kott(\nu)$ for all $\tau\in \Irr^{u,cusp}(G), \nu\in X_{\un}(G)$,
        \item $\mathbf r_G^{cusp}(\phi^*\tau) = (\Lg \phi)\circ \mathbf r_G^{cusp}(\tau)$ for all $F$-rational automorphisms $\phi$ of $G$,
        \item $\chi(G) = \zeta\circ \mathbf r_G^{cusp}(\tau)$ for all $\tau \in \Irr^{u,cusp}(G)$.
    \end{enumerate}
\end{theorem}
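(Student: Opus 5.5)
This is a result quoted from \cite{feng-opdam-solleveld}, so the plan is not to reprove the classification of unipotent supercuspidals. Instead, I would check that the bijection constructed there, stated in their conventions, becomes exactly the map $\mathbf r_G^{cusp}$ with properties (1)--(3) in the conventions of this paper.

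\textbf{Step 1: the two parameter sets.} I would first show that the target set of \cite[Theorem 2]{feng-opdam-solleveld} equals $\Phi^{u,cusp}(\Lg G)$ as defined above. Feng--Opdam--Solleveld use the notion of cuspidal enhanced $L$-parameter from \cite{AMSol}, with enhancements taken as representations of $\pi_0(G^\vee_{sc}(\phi,N))$. By Proposition \ref{prop:cusp} and the remark following the definition of $\Phi^{u,cusp}(\Lg G)$, their notion agrees with ours. Two further reductions are needed:
\begin{itemize}
\item For inner forms of unramified groups, $\Lg G$ may be replaced by $G^\vee\rtimes\langle\vartheta\rangle$.
\item A unipotent $\phi$ is determined by $s=\phi(\Fr)$.
\end{itemize}
Together these turn their parameters into our triples $(s,N,\rho)_{G^\vee}$.

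\textbf{Step 2: normalization.} Next I would reconcile the condition $\Ad(s)N=q^{-1}N$. If \cite{feng-opdam-solleveld} uses the opposite convention $\|\Fr\|=q$, I would compose their bijection with the involution $(s,N,\rho)\mapsto(s^{-1},N,\rho)$, or an equivalent fixed operation (see Section \ref{sec:normalization}). This involution commutes with conjugation, so it preserves cuspidality. I would then check that it does not break properties (1)--(3):
\begin{itemize}
\item For (1), twisting by $\kott(\nu)$ becomes twisting by $\kott(\nu)^{-1}=\kott(\nu^{-1})$, where one may need $\nu\mapsto\nu^{-1}$ or the dual normalization of $\kott$ to restore (1) literally.
\item Property (2) survives, since $\Lg\phi$ commutes with inversion.
\item Property (3) only involves $\rho$ restricted to $Z(G^\vee_{sc})^\vartheta$, so it is untouched.
\end{itemize}

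\textbf{Step 3: the three properties.} I would then read the properties off \cite{feng-opdam-solleveld}:
\begin{itemize}
\item \emph{Property (1)} is their equivariance under $X_{\un}(G)$. Here the only issue is identifying their isomorphism $X_{\un}(G)\cong (Z(G^\vee)^I)_{\Fr}^\circ$ with the one in \cite[\S3.3.1]{haines}. I would check this on the generators $\nu$ given by characters of $G/\cc G$.
\item \emph{Property (2)} is their equivariance under $F$-rational automorphisms, which act on $\Lg G$ through the pinned automorphism $\phi^\vee$.
\item \emph{Property (3)} is their compatibility of $\zeta$ with the Kottwitz invariant $\chi(G)$ from \cite[Proposition 6.4]{kottwitz}.
\end{itemize}

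\textbf{Main obstacle.} The step I expect to be hardest is the sign in Step 2. Both (1) and (3) are sensitive to choices of sign and duality: the normalization of $\kott$, whether $\zeta$ pairs with $\chi(G)$ or with $\chi(G)^{-1}$, and the convention for $\|\Fr\|$. If a residual sign remains in (3), I would try to absorb it by composing with the contragredient on enhancements. Before doing so, I would verify on a known example, such as the supercuspidal unipotent representations of inner forms of $\mathrm{PGL}_n$, that this composition gives the normalization we want.
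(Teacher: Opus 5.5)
The paper gives no argument of its own for this statement: it imports \cite[Theorem 2]{feng-opdam-solleveld} as a black box. So a cite-and-reconcile plan has the right shape, but two of your reconciliation steps would fail as written.

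\emph{Step 1.} You propose to identify the target of \cite{feng-opdam-solleveld} with all of $\Phi^{u,cusp}(\Lg G)$. This cannot work. Property (3) forces every $\mathbf r_G^{cusp}(\tau)$ to have $\zeta=\chi(G)$, while $\Phi^{u,cusp}(\Lg G)$ contains parameters with other values of $\zeta$.

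For example, split $\mathrm{PGL}_2$ has no unipotent supercuspidal representations at all. Yet $\Phi^{u,cusp}(\Lg G)$ contains the parameter of the trivial representation of $D^\times/F^\times$, with $D$ the quaternion division algebra: take $s=\mathrm{diag}(q^{-1/2},q^{1/2})$, $N$ the standard upper triangular nilpotent, and $\rho$ the sign character of $\pi_0(G^\vee(s,N))=\{\pm1\}$. This $\rho$ is the cuspidal local system on the regular orbit of $\mathfrak{sl}_2$.

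The theorem therefore has to be read, as Feng--Opdam--Solleveld formulate it, as a bijection onto the $G$-relevant parameters $\{\xi:\zeta(\xi)=\chi(G)\}$. Equivalently, it is a bijection $\bigsqcup_{G\in\Inn(G^*)}\Irr^{u,cusp}(G)\to\Phi^{u,cusp}(\Lg G)$. Your matching of definitions must carry this relevance condition, which is really what (3) encodes.

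\emph{Step 2.} Inverting all of $s$ is the wrong renormalization, for three reasons.

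First, for inner forms of ${}^3D_4$ it is not even a self-map of $\Phi^u(\Lg G)$. If $s\in G^\vee\vartheta$ then $s^{-1}\in G^\vee\vartheta^{-1}\neq G^\vee\vartheta$; this is exactly the obstruction of Remark \ref{rmk:quadratic-restriction}.

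Second, it turns (1) into $\mathbf r(\tau\otimes\nu)=\mathbf r(\tau)\otimes\kott(\nu)^{-1}$. You cannot repair this by replacing $\nu$ by $\nu^{-1}$ or by re-normalizing $\kott$, since both are fixed by the statement.

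Third, if the discrepancy were only a matter of which Frobenius is evaluated, inversion would not be an involution of $\Phi^u(\Lg G)$ at all. It would just relabel the same homomorphism $W_F\to\Lg G$, with $\vartheta$ and $\kott(\nu)$ relabeled alongside.

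If the condition genuinely changes from $\Ad(s)N=q^{\pm1}N$ to its opposite, reflect only the $\mathrm{SL}_2$-part. Write $s=s_0\,\phi_N(d)$, where $\phi_N:\mathrm{SL}_2(\mathbb C)\to G^\vee$ is a Jacobson--Morozov map through $N$ centralized by $s_0$ and $d=\mathrm{diag}(q^{-1/2},q^{1/2})$. Then replace $s$ by $s_0\,\phi_N(d)^{-1}$.

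This map has the properties you need:
\begin{itemize}
\item it stays in the coset $G^\vee\vartheta$;
\item it commutes with central twists and with $\Lg\phi$;
\item it leaves $\rho$ unchanged, since both component groups are $\pi_0$ of the centralizer of $(s_0,\phi_N)$.
\end{itemize}
Hence (1)--(3) all survive. On cuspidal modules it is exactly the effect of the paper's normalizing involution $\underline\iota$ (Section \ref{sec:normalization}).

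Your contragredient fallback for (3) is harmless, since $\mathbb D^u$ commutes with central twists and with $\Lg\phi$.
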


Now fix $G\in \Inn(G^*)$, $\mathfrak s=[(M,\tau)]\in \mathfrak z(G)$ and $\mathfrak r = [(M,\tau)]\in \mathfrak z(M)$.

\begin{lemma}\cite[Proposition 3.1]{abps}\label{lem:abps}
    Let $M$ be the Levi subgroup of a $F$-rational parabolic of $G$.
    There is a natural isomorphism
    \begin{equation}
        \mathbf w_{G,M}:N_G(M)/M \to N_{G^\vee}(\Lg M)/M^\vee.
    \end{equation}
    Moreover, if $g\in N_G(M)$ then $c(g)\in \Aut_F(M)$ and $\Lg c(g) = c(\mathbf w_{G,M}(g))$.
\end{lemma}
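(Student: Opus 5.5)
The statement is Lemma~\ref{lem:abps}: a natural isomorphism $\mathbf w_{G,M}\colon N_G(M)/M\to N_{G^\vee}(\Lg M)/M^\vee$, with $\Lg c(g)=c(\mathbf w_{G,M}(g))$. The plan is to reduce to a maximal split torus on the $p$-adic side and to the relative Weyl group on the dual side, and match the two.

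First, fix a minimal $F$-parabolic $P_0$ with Levi $M_0\subset M$ and a maximal $F$-split torus $A_0\subset M_0$. Let $A_M$ be the split part of the centre of $M$. Then $N_G(M)/M=N_G(A_M)/M$, and this group is the stabilizer of $M$ in the relative Weyl group $W(G,A_0)$ modulo $W(M,A_0)$. Via the Kottwitz/Haines identification, $X_{\un}(M)$ is dual to $Z(M^\vee)_\vartheta$ (more precisely $Z(M^\vee)^{I}_{\Fr}$). Hence $A_M$ corresponds to the torus $(Z(M^\vee)^{\vartheta})^\circ$. Moreover $\Lg M$ is the centralizer in $\Lg G$ of this torus, as for any Levi of $\Lg G$.

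Next, on the dual side, $N_{G^\vee}(\Lg M)/M^\vee$ acts faithfully on $Z(M^\vee)^{\vartheta,\circ}$. I will identify both sides with the stabilizer of $M$ (resp. $\Lg M$) in the relative Weyl group modulo the Levi's Weyl group. On the $G$ side this is the rational Weyl group $W(G,A_0)$. On the dual side it is $W(G^\vee,T^\vee)^\vartheta$ acting on $(T^{\vee})^{\vartheta,\circ}$. These are canonically isomorphic, since the relative root system of $G$ with respect to $A_0$ is dual to the restricted root system of $(G^\vee)^{\vartheta}$. For inner forms one passes through the quasi-split form $G^*$: the inner twist identifies $N_G(M)/M$ with $N_{G^*}(M^*)/M^*$, because these Weyl groups are computed over $\bar F$ with Galois-compatible actions. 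So $\mathbf w_{G,M}$ is the composite of these canonical identifications, and it is natural because every map used is.

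Finally, for the compatibility $\Lg c(g)=c(\mathbf w_{G,M}(g))$: conjugation by $g\in N_G(M)$ is an $F$-automorphism of $M$. It induces a pinned automorphism of $M^\vee$ only after composing with an inner automorphism, so both sides agree as outer classes. The content of the claim is that they agree on the nose modulo $M^\vee$-conjugacy. I will check this on the based root datum of $M$: $c(g)$ acts on $X^*(T)$ through $w=\bar g\in W$, and its dual acts on $X_*(T^\vee)$ by the same $w$. A representative $n_w\in N_{G^\vee}(T^\vee)^\vartheta$ (say a Tits representative) then realizes $\Lg c(g)$ up to $M^\vee$-conjugacy, and this forces $n_w\in N_{G^\vee}(\Lg M)$. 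The main obstacle is exactly this step: arranging that the pinned automorphism $\Lg c(g)$ and $\Ad(n_w)$ differ by an element of $M^\vee$ that commutes with $\vartheta$, i.e. that the representative can be chosen $\vartheta$-fixed. I would handle this using Tits representatives of $\vartheta$-stable Weyl elements, which are automatically $\vartheta$-fixed because $\vartheta$ preserves the pinning. This is essentially the argument of \cite[Proposition 3.1]{abps}, to which one may ultimately defer.
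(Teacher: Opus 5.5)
Your outline is consistent with the paper, which gives no argument of its own here and simply invokes \cite[Proposition 3.1]{abps}. Your route is the standard one for that result: pass to the quasi-split inner form, compare both sides with the $\vartheta$-stable Weyl elements $w$ satisfying $w(\Delta_M)=\Delta_M$, and then defer to the same source. If you write the compatibility step out, $\vartheta$-fixedness of the Tits representative $n_w$ is not by itself enough, since a correction $t\in T^\vee$ with $\Ad(n_w t)=c(g)^\vee$ on $M^\vee$ is a priori only $\vartheta$-fixed modulo $Z(M^\vee)$. You also need the standard fact that $\Ad(n_w)X_\alpha=X_{w\alpha}$ for $\alpha\in\Delta_M$ (check it in the simply connected cover via $n_wn_{s_\alpha}=n_{s_{w\alpha}}n_w$), which gives $\Ad(n_w)=\Lg c(g)$ on $\Lg M$ exactly. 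Finally, Kottwitz gives an isomorphism $X_{\un}(M)\cong ((Z(M^\vee)^I)_{\mathrm{Fr}})^\circ$, not a duality, though nothing later in your argument depends on that remark.
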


Combining with Theorem \ref{thm:sc-llc} we immediately obtain the following corollary.

\begin{corollary}\label{cor:sc-llc-equivariance}
    For all $g\in N_G(M)$, and $\tau \in \mathfrak r$,
    \begin{equation}
        \mathbf r_M^{cusp}(c(g)^*\tau) = c(\mathbf w_{G,M}(g))\circ \mathbf r_M^{cusp}(\tau).
    \end{equation}
    Moreover, $\mathbf w_{G,M}$ restricts to an isomorphism $W^{\mathfrak r}\to W^{\Lg \mathfrak r}$.
\end{corollary}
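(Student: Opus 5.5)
The first assertion should follow by combining Theorem \ref{thm:sc-llc}(2) with Lemma \ref{lem:abps}. The second assertion, that $\mathbf w_{G,M}$ maps $W^{\mathfrak r}$ onto $W^{\Lg\mathfrak r}$, then follows from the first together with Theorem \ref{thm:sc-llc}(1) and the bijectivity of $\mathbf r^{cusp}_M$.

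\textbf{Equivariance.} Fix $g\in N_G(M)$. By Lemma \ref{lem:abps}, $c(g)$ is an $F$-rational automorphism of $M$ and $\Lg c(g)=c(\mathbf w_{G,M}(g))$. Here the right-hand side is conjugation by a representative in $N_{G^\vee}(\Lg M)$, well defined up to $M^\vee$, so it is well defined on $M^\vee$-conjugacy classes of parameters. Theorem \ref{thm:sc-llc}(2), applied to the group $M$ (an inner form of an unramified group, being a Levi of one) and the automorphism $\phi=c(g)$, then gives the displayed identity directly.

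\textbf{Restriction to stabilizers.} Let $g\in N_G(M)$ and write $w=\mathbf w_{G,M}(g)$.

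\emph{Step 1.} The element $g$ lies in $N^{\mathfrak r}$ exactly when $c(g)^*\tau\simeq\tau\otimes\nu$ for some $\nu\in X_{\un}(M)$. This is because $\mathfrak r$ is the $X_{\un}(M)$-orbit of $\tau$, and conjugation by $g$ carries $X_{\un}(M)$ to itself.

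\emph{Step 2.} Apply $\mathbf r^{cusp}_M$ to both sides of that condition. Using the equivariance and Theorem \ref{thm:sc-llc}(1), the condition becomes
\[
c(w)\circ\mathbf r^{cusp}_M(\tau)=\mathbf r^{cusp}_M(\tau)\otimes\kott(\nu).
\]

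\emph{Step 3.} Since $\kott$ maps onto $Z(M^\vee)_\vartheta^\circ$, which acts transitively on $\Lg\mathfrak r$, the condition in Step 2 says exactly that $w$ preserves $\Lg\mathfrak r=[\Lg M,\mathbf r^{cusp}_M(\tau)]_{\Lg M}$. Here one needs the elements $\kott(\nu)$ to exhaust the twists defining $\Lg\mathfrak r$; the definition uses $Z(M^\vee)_\vartheta$, but only its identity component matters because the orbit is a torus.

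\emph{Step 4.} Because $\mathbf r^{cusp}_M$ is a bijection, the equivalence in Steps 1 to 3 runs in both directions. It follows that $\mathbf w_{G,M}(W^{\mathfrak r})=W^{\Lg\mathfrak r}$. Injectivity is inherited from $\mathbf w_{G,M}$.

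The only delicate point is this matching of twisting groups in Step 3: checking that the image of $\kott$ coincides with the torus of twists defining $\Lg\mathfrak r$. This comes down to the identification $((Z(M^\vee)^I)_{\Fr})^\circ=(Z(M^\vee)_\vartheta)^\circ$ for unramified $M$.
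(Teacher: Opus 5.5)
Your proposal is correct and matches the paper. The paper simply says the corollary follows immediately from Lemma~\ref{lem:abps} and Theorem~\ref{thm:sc-llc}: part (2) gives the equivariance, and part (1) together with bijectivity of $\mathbf r^{cusp}_M$ matches the two stabilizers. Your Step~3 makes explicit the one point the paper leaves tacit, namely that the twists defining $\Lg\mathfrak r$ are exactly the image $(Z(M^\vee)_\vartheta)^\circ$ of the Kottwitz isomorphism, as the paper assumes when it identifies $\Lg\mathfrak r$ with a torus and writes $\Lg\mathfrak r=\mathbf r^{cusp}_M(\mathfrak r)$.
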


Therefore we see that $\mathbf r_M^{cusp}(-)$ is equivariant with respect to relative Weyl group actions.
The main result of this section states that $\mathbf r_M^{cusp}(-)$ may be extended essentially uniquely (in a sense made precise in the statement of the theorem) to an isomorphism of affine Hecke algebras.

\begin{proposition}\label{prop:a-g-hecke-isom}
    Let $(M,\tau_0)$ be the base point of $\mathfrak r$ in Proposition \ref{prop:bernstein}, $\Lg \mathfrak r = r_M^{cusp}(\mathfrak r)$, $\xi_0 = r_M^{cusp}(\tau_0)$ and $Q\in \mathcal Q(M)$.
    The map
    \begin{equation}
        f:T \xrightarrow{\tau_0\otimes-} \mathfrak r \xrightarrow{\mathbf r_M^c(-)} \Lg \mathfrak r \xrightarrow{(\xi_0\otimes-)^{-1}} T'
    \end{equation}
    is $W^{\mathfrak r}$-equivariant.
    It extends to an isomorphism 
    \begin{equation}\mathbf \Gamma_{\mathfrak r}:\bfH(\Lg \mathfrak r,\Lg Q)\xrightarrow{\sim} \bfH(\mathfrak r,Q)\end{equation}
    and all extensions induce via pullback the same map $K_0(\mathbf \Gamma_{\mathfrak r}^*)$ on Grothendieck groups.
\end{proposition}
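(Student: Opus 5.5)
Equivariance of $f$ will follow from Corollary~\ref{cor:sc-llc-equivariance} together with the choice of base points. The isomorphism will be built by matching both Hecke algebras to a common presentation, and uniqueness on Grothendieck groups will come from Corollary~\ref{cor:uniquess}.

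\emph{Equivariance of $f$.} For $w\in W^{\mathfrak r}$ represented by $g\in N^{\mathfrak r}$ and $\nu\in T$, the $W^{\mathfrak r}$-action on $T$ is transported via $\phi_{\tau_0}$. Since $\tau_0$ was chosen $W^{\mathfrak r}$-invariant (Proposition~\ref{prop:bernstein}), we have $c(g)^*(\tau_0\otimes\nu)\simeq\tau_0\otimes{}^g\nu$. Corollary~\ref{cor:sc-llc-equivariance} gives $\mathbf r^{cusp}_M(\tau_0\otimes{}^g\nu)=c(\mathbf w_{G,M}(g))\circ\mathbf r_M^{cusp}(\tau_0\otimes\nu)$. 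Theorem~\ref{thm:sc-llc}(1) gives $\mathbf r_M^{cusp}(\tau_0\otimes\nu)=\xi_0\otimes\kott(\nu)$, so $f$ is essentially $\kott$ modulo the finite stabilisers.

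It remains to check that $\xi_0=\mathbf r_M^{cusp}(\tau_0)$ is a $W^{\Lg\mathfrak r}$-fixed base point, so that the $W^{\Lg\mathfrak r}$-action on $T'$ fixes the identity. This follows by applying the same corollary to $\tau_0$ itself. Hence $f$ is a group isomorphism of tori intertwining the two actions through $\mathbf w_{G,M}$. One point to note: $\kott$ is equivariant for the natural $N_G(M)$-action, because $\Lg c(g)$ acts on $Z(M^\vee)_\vartheta$ compatibly; this is the content of Lemma~\ref{lem:abps}.

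\emph{Existence of the extension.} Both algebras are quasi-simply connected affine Hecke algebras with Weyl group $W^{\mathfrak r}\cong W^{\Lg\mathfrak r}$ (Propositions~\ref{prop:bernstein} and~\ref{prop:geom-hecke}), and $f^*:\mathbb C[T']\to\mathbb C[T]$ identifies the toral parts equivariantly. It remains to show that the root data and parameter functions $(\lambda,\lambda^*)$ coincide under $f$. Given such a match, the Bernstein presentation (Definition~\ref{def:bernstein}) produces the isomorphism $T_w\mapsto T_{\mathbf w(w)}$, $\theta_x\mapsto f^*\theta_x$.

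For the comparison I would use the explicit tables: Lusztig's computation of $\bfH(\mathfrak r,Q)$ via types in \cite{Lu-unip1,Lu-unip2}, identified with $\mathcal H$ by \cite{ohara} in the proof of Proposition~\ref{prop:bernstein}, and the geometric affine Hecke algebras in \cite{Lu-unip1,Lu-unip2,AMSol}. The key point is that $f$ sends the roots of $\bfH(\Lg\mathfrak r,\Lg Q)$ to those of $\bfH(\mathfrak r,Q)$. Both root systems are the reduced roots of $W^{\mathfrak r}$ acting on the unramified-character torus, determined by the reflections in $W^{\mathfrak r}$ together with the fixed-point loci. The parameters are then read off from the rank-one Levis $N\supset M$. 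Via Lemma~\ref{lem:ind-res}, this reduces the comparison to rank-one reducibility points. On the $p$-adic side these are the reducibility points of $i_{Q\cap N}^N(\tau_0\otimes\nu)$. On the geometric side they are the points where the rank-one graded algebra has non-generic central character, read off from the parameter function of Section~\ref{sec:geometric-graded}.

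The matching of these rank-one parameters is the step I expect to be the main obstacle. Theorem~\ref{thm:sc-llc} only controls supercuspidals, so I would fall back on a case-by-case inspection of Lusztig's tables. In those tables both Hecke algebras were computed with identical parameters, and Lusztig's arithmetic/geometric matching is compatible with \cite{feng-opdam-solleveld}. What has to be verified is that the identification of tori used there agrees with $f$, up to $W^{\mathfrak r}$-equivariant automorphisms preserving the root datum.

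\emph{Uniqueness on $K_0$.} Any two extensions $\mathbf\Gamma,\mathbf\Gamma'$ agree on $\bA$ because both restrict to $f^*$. Since $\bfH(\Lg\mathfrak r,\Lg Q)$ is quasi-simply connected, Corollary~\ref{cor:uniquess} (applied to $\mathbf\Gamma^{-1}$ and $\mathbf\Gamma'^{-1}$, or directly) shows that the induced maps on $K_0(\Mod^{fl})$ coincide.
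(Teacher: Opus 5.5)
Your arguments for equivariance and for independence on $K_0$ are fine and match the paper. Equivariance comes from Corollary~\ref{cor:sc-llc-equivariance} and Theorem~\ref{thm:sc-llc}(1). Independence comes from Corollary~\ref{cor:uniquess}, using quasi-simple connectedness.

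The gap is the existence of $\mathbf\Gamma_{\mathfrak r}$. You correctly reduce it to showing that, under $f$, the root data and the parameters $(\lambda,\lambda^*)$ of $\bfH(\Lg\mathfrak r,\Lg Q)$ and $\bfH(\mathfrak r,Q)$ coincide. You do not prove this. You defer it to a case-by-case reading of Lusztig's tables and say that ``what has to be verified is that the identification of tori used there agrees with $f$''. That verification is the entire content of the existence claim. Your supporting sentence, that Lusztig's arithmetic/geometric matching is compatible with \cite{feng-opdam-solleveld}, restates the missing statement rather than supplying it. Nothing earlier in the paper gives it to you: Theorem~\ref{thm:sc-llc} only controls supercuspidals and says nothing about rank-one reducibility points.

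This step is also not a formal consequence of equivariance. Knowing that $f$ is $W^{\mathfrak r}$-equivariant and that $\xi_0=\mathbf r_M^{cusp}(\tau_0)$ is $W^{\Lg\mathfrak r}$-fixed does not determine the Hecke algebra, for two reasons:
\begin{itemize}
\item The reflection action on the torus does not distinguish a root $\alpha$ with $\alpha^\vee\in 2X^\vee$ from $2\alpha$. That ambiguity is resolved only by the parameters.
\item Moving the base point by a $W$-fixed $z$ with $\alpha(z)=-1$ (possible only when $\alpha^\vee\in 2X^\vee$) replaces $\theta_\alpha$ by $-\theta_\alpha$. This interchanges the roles of $\lambda(\alpha)+\lambda^*(\alpha)$ and $\lambda(\alpha)-\lambda^*(\alpha)$ in the Bernstein relation.
\end{itemize}
This is why the geometric side needs Lusztig's special points. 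One must know that the supercuspidal correspondence carries the base point $\tau_0$ of Proposition~\ref{prop:bernstein} to a base point at which the two presentations agree.

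The paper does not re-derive this. It gets the extension from Proposition~\ref{prop:bernstein} together with \cite[Theorem 4.4]{solleveld-unip}, which supplies the compatibility of the supercuspidal correspondence on tori with an isomorphism of the affine Hecke algebras. Replace your deferred table check with that input and the argument is complete.
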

\begin{proof}
    The equivariance of $f$ is the content of Corollary \ref{cor:sc-llc-equivariance}.
    The existence of an extension follows from Proposition \ref{prop:bernstein} together with \cite[Theorem 4.4]{solleveld-unip}.
    The independence of $K_0(\mathbf \Gamma_{\mathfrak r}^*)$ from the choice of extension follows from Corollary \ref{cor:uniquess}.
\end{proof}

\subsection{From $p$-adic groups to graded Hecke algebras}
\label{sec:construction}

Let $\mathfrak s = [(M,\tau)]_G\in \mathfrak z(G)$ be a unipotent inertial class, and $\mathfrak r = [(M,\tau)]_M\in \mathfrak z(M)$.
Let $(M,\tau_0)$ be a base point of $\mathfrak r$ (Proposition \ref{prop:bernstein}) and let $\xi_0 := \mathbf r_M^{cusp}(\tau_0) = (s_0,N_0,\rho_0)$ be its $L$-parameter.
\begin{theorem}\label{thm:kott-compatibility}
    The Kottwitz isomorphism $\kott:X_{\un}(M)\to Z(M^\vee)_\vartheta^{\circ}$ induces an isomorphism
    \begin{equation}
        \underline\kott^*: \mathbb C[\mathfrak z_{M^\vee,\vartheta}] \to \mathbb C[\mathfrak t].
    \end{equation}
    For every $\nu\in T_c$, there is a unique $\mathbb C[\mathbf r]$-linear extension to an isomorphism 
    \begin{equation}
         \underline{\mathbf k}^*_\nu:\gH(\xi_0\otimes \kott(\nu), Q^\vee) \xrightarrow{\sim} \bfH(\mathfrak r,Q)_{[\nu]}.
    \end{equation}
\end{theorem}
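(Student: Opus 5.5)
Uniqueness is immediate from the rigidity results of Section~\ref{sec:rigidity}, so the real content is existence, which I would obtain by composing isomorphisms already available and checking that the composite restricts to $\underline\kott^*$ on the toral part.

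\textbf{Uniqueness.} By Proposition~\ref{prop:bernstein}, $\bfH(\mathfrak r,Q)$ is quasi-simply connected. Lemma~\ref{lemma: gamma z trivial} then gives $\Gamma_\nu=\{1\}$, so $\bfH(\mathfrak r,Q)_{[\nu]}$ is a genuine graded Hecke algebra $\gH^{\mu_\nu}(\Sigma_\nu,\Delta_\nu)$. Any two graded $\mathbb C[\br]$-linear isomorphisms extending $\underline\kott^*$ agree on $\ubA=\mathbb C[\br]\otimes S(\mathfrak z^*)$. When $\mu$ is nowhere zero, Corollary~\ref{cor:rigid} makes them equal. The roots with $\mu_\nu(\alpha)=0$ need separate treatment: for quasi-simply connected algebras at a compact point they should not occur (one discards them, or checks this via the reduced root system $R_\nu$). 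In any case the geometric side has $\mu\ge 2$ everywhere by construction, so Corollary~\ref{cor:rigid} applies to the inverse maps.

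\textbf{Existence.} I would build the map as the composite
\begin{equation}
\gH(\xi_0\otimes\kott(\nu),Q^\vee)\xrightarrow{\Phi_{f(\nu)}^{-1}}\bfH(\Lg\mathfrak r,\Lg Q)_{[f(\nu)]}\xrightarrow{(\mathbf\Gamma_{\mathfrak r})_{[\nu]}}\bfH(\mathfrak r,Q)_{[\nu]},
\end{equation}
where $\Phi_z$ is the graded isomorphism of Proposition~\ref{prop:geom-hecke}(2) and $\mathbf\Gamma_{\mathfrak r}$ is the affine isomorphism of Proposition~\ref{prop:a-g-hecke-isom}, passed through the reduction of Definition~\ref{thm:realinficharacter}. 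The steps are:
\begin{enumerate}
\item $\mathbf\Gamma_{\mathfrak r}$ restricts on toral parts to the pullback along $f:T\to T'$. It therefore carries the ideal $\sI_{W\cdot\nu}$ to $\sI_{W\cdot f(\nu)}$ and induces an isomorphism of associated graded algebras. Cutting down by $E_\nu$ and $E_{f(\nu)}$ gives a graded isomorphism $\bfH(\Lg\mathfrak r,\Lg Q)_{[f(\nu)]}\to\bfH(\mathfrak r,Q)_{[\nu]}$ whose toral part is $\underline f^*$, using formula~\eqref{eq:poly-reduction}.
\item $f(\nu)$ lies in $T'_c$. This holds because $\kott$ respects polar decompositions and $\xi_0\otimes-$ is compatible with them, so Proposition~\ref{prop:geom-hecke}(2) applies at $z=f(\nu)$.
\item Compare toral parts. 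Using Theorem~\ref{thm:sc-llc}(1),
\begin{equation}
\mathbf r_M^{cusp}(\tau_0\otimes\nu')=\xi_0\otimes\kott(\nu'),
\end{equation}
so $f=p\circ\kott$ as maps $T\to T'$ (after descending $\kott$ to $T=X_{\un}(M)/X_{\un}(\tau_0)$). Differentiating gives $\underline f^*=\underline\kott^*\circ\underline p^{*}$ under the identifications of Lie algebras. Combined with $\Phi_{f(\nu)}|_{\ubA}=\underline p^*$, the composite restricts to $\underline\kott^*$ on $S(\mathfrak z^*)$, and it fixes $\br$.
\end{enumerate}

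\textbf{Main obstacle.} I expect step (1) to be the hardest: showing that an arbitrary affine isomorphism $\mathbf\Gamma_{\mathfrak r}$ (which is only determined up to its effect on $K_0$) really induces a graded isomorphism of the reductions with the stated toral part. Two things need care. First, the reduced algebra depends only on the filtration by $\tilde\sI^k$, which is defined purely in terms of $\bA$, so any algebra isomorphism matching $\bA$'s and $\mathbf v$ preserves it. Second, $\mathbf\Gamma_{\mathfrak r}$ may not send $\mathbf v-1$ to itself if it shifts $\mathbf v$; I would first check that the extension can be taken $\mathbb C[\mathbf v^{\pm1}]$-linear. A related subtlety is the identification of $\mathfrak t$ with $\mathfrak t'$ via the isogeny $p$ together with the normalization of $\kott$ on $X_{\un}(\tau_0)$. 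If these factors of finite covering do not match exactly, the derivative still does, since finite quotients do not change Lie algebras. That is why the statement is phrased infinitesimally.
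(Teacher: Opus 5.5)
Your proposal is correct and follows the paper's argument. Existence comes from composing $\Phi_{f(\nu)}^{-1}$ of Proposition~\ref{prop:geom-hecke}(2) with the map on reductions induced by $\mathbf\Gamma_{\mathfrak r}$ from Proposition~\ref{prop:a-g-hecke-isom}, where $f=p\circ\kott$ by Theorem~\ref{thm:sc-llc}(1) makes the toral part $\underline\kott^*$; uniqueness comes from Corollary~\ref{cor:rigid}, and your write-up fills in the toral bookkeeping the paper leaves implicit. One small correction: Corollary~\ref{cor:rigid} applies to the maps $\underline{\mathbf k}^*_\nu$ themselves, not to their inverses, because their source is the geometric algebra with $\mu\ge 2$, so possible zeros of $\mu_\nu$ on the $p$-adic side never need separate treatment.
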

\begin{proof}
    First note that $X_{\un}(M)\to T$ is an isogeny so induces an isomorphism $\mathfrak X_{\un}(M)\simeq \mathfrak t$.
    Composing with $\underline \kott$ gives the first isomorphism.
    The existence of an extension then follows from Proposition \ref{prop:geom-hecke}(2) and Proposition \ref{prop:a-g-hecke-isom}.
    Uniqueness follows from Corollary \ref{cor:rigid}.
\end{proof}

\begin{lemma}
    Let $\nu \in T$ and $\nu = \nu_c\nu_h$ be its polar decomposition.
    The functor $\mathbf {M}_{\nu_c} := \underline\iota_* \circ (\underline{\mathbf k}_{\nu_c}^*)^*\circ (-)_{[\nu_c]}\circ\mathbf M^\bullet_{\mathfrak r, Q}$ is an equivalence of categories
    \begin{equation}\label{eq:thm-pt-1}
        \mathbf {M}_{\nu_c}:\Rep^{fl}_{\tau_0\otimes\nu_c,\mathbb R_+^\times}(G) \xrightarrow{\sim} \Mod^{fl}_{\mathbb R,\mathbf r=-r}(\gH(\xi_0\otimes \kott(\nu_c),Q^\vee)).
    \end{equation}
    where $r = \frac 12\log(q)$.
    It restricts to an equivalence
    \begin{equation}
        \mathbf {M}_{\nu_c}:\Rep^{fl}_{(M,\tau_0\otimes\nu)}(G) \xrightarrow{\sim} \Mod^{fl}_{(\sigma',-r)}(\gH(\xi_0\otimes \kott(\nu_c),Q^\vee)).
    \end{equation}
    where $\sigma' := \log(\kott(\nu_h))$.
\end{lemma}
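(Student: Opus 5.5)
The functor $\mathbf M_{\nu_c}$ is a composite of four functors, and the plan is to show that each is an equivalence between the appropriate subcategories, then track central characters to get the finer statement. Concretely we factor
\begin{equation}
\Rep^{fl}_{\tau_0\otimes\nu_c,\mathbb R_+^\times}(G)\xrightarrow{\mathbf M^\bullet_{\mathfrak r,Q}}\bigoplus_{\mathcal O}\Mod^{fl}_{\mathcal O}(\bfH(\mathfrak r,Q))\xrightarrow{(-)_{[\nu_c]}}\Mod^{fl}_{\mathbb R,\mathbf r=r}(\bfH(\mathfrak r,Q)_{[\nu_c]})\xrightarrow{(\underline{\mathbf k}^*_{\nu_c})^*}\Mod^{fl}_{\mathbb R,\mathbf r=r}(\gH)\xrightarrow{\underline\iota_*}\Mod^{fl}_{\mathbb R,\mathbf r=-r}(\gH).
\end{equation}
Here $\mathcal O$ runs over $W^{\mathfrak r}$-orbits of pairs $(\nu_c\nu',q^{1/2})$ with $\nu'$ hyperbolic, and $\gH=\gH(\xi_0\otimes\kott(\nu_c),Q^\vee)$.

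\textbf{Step 1.} By Theorem~\ref{thm:roche}, Definition~\ref{def:bullet} and Proposition~\ref{prop:bernstein}, $\mathbf M^\bullet_{\mathfrak r,Q}$ is an equivalence $\Rep^{\mathfrak s}(G)\simeq\Mod_{\mathbf v=q^{1/2}}(\bfH(\mathfrak r,Q))$. It preserves finite length. Lemma~\ref{lem:central-char} then shows that it maps $\Rep^{fl}_{(M,\tau_0\otimes\nu)}(G)$ onto $\Mod^{fl}_{(\nu,q^{1/2})}$. Summing over $\nu=\nu_c\nu_h$ with $\nu_h$ ranging over hyperbolic elements of $T$ gives the source category. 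Note that $\tau_0\otimes\nu_c$ is unitary, so hyperbolic twists exhaust the class $\tau_0\otimes\nu_c\otimes\mathbb R_+^\times$.

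\textbf{Step 2.} Apply Theorem~\ref{thm:red-real-inf} orbit by orbit. Each central character $W^{\mathfrak r}\cdot(\nu,q^{1/2})$ with $\nu_c$ fixed goes to the central character $\underline{\mathcal O}_{[\nu_c]}$. Lemma~\ref{lem:red-weights} shows that the resulting weights are real with $\mathbf r=\log q^{1/2}=r$, so the image is the real subcategory with $\mathbf r=r$. We also need the converse: every real central character of $\bfH_{[\nu_c]}$ with $\mathbf r=r$ arises from an orbit meeting $\nu_c\cdot(\text{hyperbolic})$. This follows from the bijection $(\sigma,r)\mapsto(\nu_c\exp\sigma,e^r)$ noted before Theorem~\ref{thm:red-real-inf}. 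Hence the direct sum over orbits is an equivalence onto $\Mod^{fl}_{\mathbb R,\mathbf r=r}$.

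\textbf{Step 3.} By Theorem~\ref{thm:kott-compatibility}, pullback along $\underline{\mathbf k}^*_{\nu_c}$ is an equivalence. Because the map is the derivative of $\kott$, it is linear and fixes $\mathbf r$, and because it is the derivative of an algebraic isogeny it preserves real structures. Consequently a weight $\log(\nu_h)\in\mathfrak t_{\mathbb R}$ is carried to $\underline\kott(\log\nu_h)=\log(\kott(\nu_h))$.

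\textbf{Step 4.} $\underline\iota$ is an involution that fixes $\mathfrak t^*$ and negates $\mathbf r$. Pushing forward along it is therefore an equivalence that sends weight $(\sigma,r)$ to $(\sigma,-r)$.

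Composing the four steps, the central character $W^{\mathfrak r}\cdot(\nu,q^{1/2})$ becomes $W\cdot(\sigma',-r)$ with $\sigma'=\log\kott(\nu_h)$, which is the restricted statement.

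The main obstacle is the bookkeeping in Step 3, namely checking that the real structure is preserved. The Kottwitz map restricted to the compact and hyperbolic parts, and the identification $X_{\un}(M)\to T$ as an isogeny, must commute with polar decomposition. I would check this by noting that algebraic homomorphisms of tori preserve polar decompositions, since characters pull back to characters. A second point needing care is that Theorem~\ref{thm:red-real-inf} needs $v>0$ real, which holds here because $v=q^{1/2}$.
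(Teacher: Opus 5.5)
Your proposal is correct and follows essentially the same route as the paper: it decomposes by central character via Lemma~\ref{lem:central-char} and applies the reduction equivalence of Theorem~\ref{thm:red-real-inf} over all orbits with compact part $W\cdot\nu_c$. It then transports along $\underline{\mathbf k}^*_{\nu_c}$ (Theorem~\ref{thm:kott-compatibility}) and $\underline\iota$. Your extra bookkeeping makes explicit what the paper leaves implicit: that algebraic homomorphisms of tori respect polar decompositions and real structures, and that every real central character with $\mathbf r=r$ is reached.
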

\begin{proof}
    The generalized eigenspace decomposition with respect to the action of $Z(\bfH(\mathfrak r,Q))$ gives a decomposition
    \begin{equation}
        \Mod^{fl}(\bfH(\mathfrak r,Q)) = \bigoplus_{\mathcal O\in T\times \mathbb C^\times}\Mod^{fl}_{\mathcal O}(\bfH(\mathfrak r, Q)).
    \end{equation}
    For a $W$-orbit $\mathcal O\subset T$ let $\mathcal O_c = \{z_c:z\in \mathcal O\}$.
    By Lemma \ref{lem:central-char} we obtain an equivalence
    \begin{equation}
        \Rep^{fl}_{\tau_0\otimes \nu,\mathbb R_+^\times}(G) = \bigoplus_{\mathcal O\in T:\mathcal O_c = W\cdot \nu_c}\Mod^{fl}_{\mathcal O\times \{q^{1/2}\}}(\bfH(\mathfrak r, Q)).
    \end{equation}
    By Theorem \ref{thm:red-real-inf} we have an equivalence
    \begin{equation}
        \bigoplus_{\mathcal O\in T/W: \mathcal O_c = W\cdot \nu_c} \Mod^{fl}_{\mathcal O\times \{q^{1/2}\}}(\bfH(\mathfrak r, Q))\cong \Mod_{\mathbb R,\mathbf r = r}^{fl}(\bfH(\mathfrak r,Q)_{[\nu_c]}).
    \end{equation}
    Pulling back along $\underline\iota$ replaces $r$ by $-r$.
    Composing these three equivalences along with Theorem \ref{thm:kott-compatibility} gives Equation \ref{eq:thm-pt-1}.
    The last part follows from Lemma \ref{lem:central-char} and Theorem \ref{thm:red-real-inf}.
\end{proof}

\begin{lemma}\label{lem:unitary}
    Let $(M,\tau)\in \mathfrak r$ and $\nu\in T$ be the unique element such that $\tau = \tau_0\otimes \nu$. 
    Then $\tau_u := \tau_0\otimes \nu_c\in \mathfrak r$ is the unique unitary supercuspdial representation such that $\tau \in \Irr_{\tau_u,\mathbb R_+^\times}(G)$.
\end{lemma}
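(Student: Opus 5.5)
We have to show two things: that $\tau_u:=\tau_0\otimes\nu_c$ is a unitary supercuspidal representation in $\mathfrak r$ with $\tau\in\Irr_{\tau_u,\mathbb R_+^\times}(G)$, and that no other unitary $\tau'\in\mathfrak r$ has this property. Read literally, "$\tau\in\Irr_{\tau_u,\mathbb R_+^\times}(G)$" means $\tau=\tau_u\otimes\chi$ for some $\mathbb R_+^\times$-valued unramified character $\chi$ of $M$ (as a representation of $M$ with cuspidal support $(M,\tau_u\otimes\chi)$). Everything rests on two facts: the torus $T=X_{\un}(M)/X_{\un}(\tau_0)$ is the parameter space of $\mathfrak r$, and $\tau_0$ is unitary (Proposition \ref{prop:bernstein}).

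\textbf{Existence.} The unramified character group $X_{\un}(M)\cong(\mathbb C^\times)^d$ has the polar decomposition $X_{\un}(M)=X_{\un}(M)_c\times X_{\un}(M)_h$. Here the compact part consists of the unitary characters, and the hyperbolic part consists of the $\mathbb R_+^\times$-valued characters. The quotient map $X_{\un}(M)\to T$ is an isogeny of tori with finite kernel $X_{\un}(\tau_0)$. The polar decomposition is compatible with homomorphisms (Lemma \ref{lemma: polar decomposition}), so lifting $\nu$ to $\tilde\nu$ gives $\nu_c=\overline{\tilde\nu_c}$ and $\nu_h=\overline{\tilde\nu_h}$. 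Therefore $\tau_0\otimes\nu_c=\tau_0\otimes\tilde\nu_c$ is the tensor product of a unitary representation with a unitary character, hence unitary. Moreover $\tau=\tau_u\otimes\tilde\nu_h$ with $\tilde\nu_h$ an $\mathbb R_+^\times$-valued character, so $\tau\in\Irr_{\tau_u,\mathbb R_+^\times}$.

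\textbf{Uniqueness.} Let $\tau'=\tau_0\otimes\mu\in\mathfrak r$ be unitary, with $\tau=\tau'\otimes\chi$ and $\chi$ positive. The first step is to show $\mu\in T_c$. Any $\tilde\mu\in X_{\un}(M)$ with $\tau_0\otimes\tilde\mu$ unitary must itself be unitary. Indeed, $\tau_0$ and $\tau_0\otimes\tilde\mu$ are both unitary supercuspidals, so their central characters restricted to the split centre $A_M$ are unitary. Hence $\tilde\mu|_{A_M}$ is unitary. Since $A_M\cap\cc M$ is compact and $A_M/(A_M\cap \cc M)$ has finite index in $M/\cc M$, $\tilde\mu$ has finite-index-unitary values and is therefore unitary. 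So $\mu\in T_c$.

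The second step compares the two factorisations. Passing to $T$ we have $\nu=\mu\bar\chi$ with $\mu\in T_c$ and $\bar\chi\in T_h$ (since $\chi$ is positive). By the uniqueness in Lemma \ref{lemma: polar decomposition}, $\mu=\nu_c$, and hence $\tau'=\tau_u$. The subtle point is that cuspidal support is only defined up to $N_G(M)$-conjugacy. One could instead ask $\tau$ to equal a $W^{\mathfrak r}$-conjugate of $\tau'\otimes\chi$. This case is handled by the same argument, because the $W^{\mathfrak r}$-action on $T$ preserves the polar decomposition. The main obstacle is the unitarity-of-$\tilde\mu$ step; the central character argument on $A_M$ above is what I would use there.
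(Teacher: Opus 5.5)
Your proof is correct and is essentially the paper's argument, which also detects unitarity through the central character: the paper obtains existence from ``a supercuspidal with unitary central character is unitary'' rather than from twisting the unitary $\tau_0$ by the unitary character $\tilde\nu_c$, and its one-line uniqueness claim (every other $\tau_u\otimes\chi$ with $\chi$ positive has non-unitary central character) is exactly your finite-index argument on $A_M$ combined with uniqueness of the polar decomposition. One caveat concerns your closing remark: if $\tau$ were only required to be an $N_G(M)$-conjugate of $\tau'\otimes\chi$, the same argument shows only that $\tau'$ is a $W^{\mathfrak r}$-conjugate of $\tau_u$, which in general differs from $\tau_u$, so uniqueness genuinely needs the $M$-level reading that you (and the paper) use.
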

\begin{proof}
    As $\tau_u$ has unitary central character and is supercuspidal it is unitary.
    Every other element of $\Irr_{\tau_u,\mathbb R_+^\times}$ does not have unitary central character.
\end{proof}

For $\pi'\in \Rep^{fl}(G)$ and $\tau'\in \Irr(M)$ let $m(\pi':\tau') = [r_{Q,M}^G(\pi') : \tau']$ denote the composition multiplicity of $\tau'$ in the Jacquet module $r_{Q,M}^G(\pi')$.

\begin{corollary}\label{cor:weights-for-LLC-module}
    Let $\pi\in \Irr_{(M,\tau_0\otimes\nu)}(G)$.
    The module $\underline{\mathbf L} = \mathbf M_{\nu_c}(\pi)$ is the unique module of $\gH(\xi_0\otimes\kott(\nu_c),Q^\vee)$ with weights
    \begin{equation}
        \sum_{\sigma'\in \mathfrak z_{M^\vee,\mathbb R}}m(\pi:\tau_u\otimes\kott^{-1}(\exp(\sigma')))\ubA_{(\sigma',-r)}.
    \end{equation}
\end{corollary}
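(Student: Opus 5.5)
The plan is to split the statement into two parts. First, $\underline{\mathbf L}$ has the displayed weights. Second, it is the unique module with those weights. Uniqueness is immediate from Proposition \ref{prop:module-rigid}. By the preceding lemma, $\underline{\mathbf L}$ lies in $\Mod^{fl}_{\mathbb R,\mathbf r=-r}$, and $r=\frac12\log(q)\neq 0$. Moreover, $\underline\iota$ commutes with restriction to $\ubA$ up to $r\mapsto -r$. Hence a simple module in $\Mod^{fl}_{\mathbb R,\mathbf r\ne0}$ is determined, up to isomorphism, by the class of its restriction to $\ubA$. So the real content is the weight computation.

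For the weights, I would track $\pi$ through the chain of functors defining $\mathbf M_{\nu_c}$.

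The first step computes the weights of $\mathbf M^\bullet_{\mathfrak r,Q}(\pi)$ on $\bA$. By Lemma \ref{lem:ind-res} and Remark \ref{rmk:restriction} (with $P=Q$, $N=M$), the restriction to $\bA$ corresponds to $\mathbf M^\bullet_{\mathfrak r,M}(r_{Q,M}^G\pi)$. In the proof of Lemma \ref{lem:central-char} we saw that $\Hom_M(\Sigma,\tau_0\otimes\mu)$ is the simple $\bA$-module $\bA_{(\mu,q^{1/2})}$. Since $\Hom_M(\Sigma,-)$ is exact and $\Sigma$ is a progenerator of $\mathfrak r$, we get
\[
[\mathbf M^\bullet_{\mathfrak r,Q}(\pi)|_{\bA}]=\sum_{\mu\in T} m(\pi:\tau_0\otimes\mu)\,[\bA_{(\mu,q^{1/2})}].
\]
Here one must check that the anti-involution $\bullet$ fixes $\theta_x$, so it does not change the $\bA$-weights.

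The second step applies Lemma \ref{lem:red-weights} at $z_c=\nu_c$. The weights of the reduced module are then $m(\pi:\tau_0\otimes\nu_c\exp(\sigma))$ at $(\sigma,r)$ for real $\sigma\in\mathfrak t_{\mathbb R}$. Next, pull back along $\underline{\mathbf k}^*_{\nu_c}$. Its restriction to the toral part is $\underline\kott^*$, so a weight $\sigma\in\mathfrak t$ becomes $\sigma'=\underline\kott(\sigma)\in\mathfrak z_{M^\vee,\vartheta,\mathbb R}$. On real points $\exp$ intertwines $\underline\kott$ and $\kott$, which gives $\exp(\sigma)=\kott^{-1}(\exp(\sigma'))$. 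Finally, $\underline\iota_*$ fixes $x$ and negates $\mathbf r$, so it sends the weight $(\sigma',r)$ to $(\sigma',-r)$. Combining these, and using $\tau_u=\tau_0\otimes\nu_c$ from Lemma \ref{lem:unitary}, gives exactly the displayed sum.

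The main obstacle I expect is the first step. One has to be careful that the identification of $\mathcal A$ with $\mathbb C[T]$ via $\phi_{\tau_0}$ (Proposition \ref{prop:roche}) matches $\Hom_M(\Sigma,\tau_0\otimes\mu)$ with the weight $\mu$ and not $\mu^{-1}$. The inverse appears in the definition of $\psi_{\tau_0}$, so the opposite-algebra convention and $\bullet$ have to be reconciled with that inverse. I would settle this by directly evaluating $\psi_{\tau_0}(c)$ on a homomorphism $\Sigma\to\tau_0\otimes\mu$. The proof of Lemma \ref{lem:central-char} already uses this convention, so I would take it as established there.

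A secondary point concerns the passage from $T$ to the unramified characters and the isogeny $X_{\un}(M)\to T$. Different preimages give isomorphic $\tau_0\otimes\mu$, so the multiplicities are well defined, and the isogeny is an isomorphism on Lie algebras.
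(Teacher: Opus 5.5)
Your proposal is correct and is essentially the paper's proof. You read off the $\bA$-weights of $\mathbf M^\bullet_{\mathfrak r,Q}(\pi)$ from Lemma~\ref{lem:ind-res}, push them through Lemma~\ref{lem:red-weights}, Theorem~\ref{thm:kott-compatibility} and $\underline\iota$ (using $\tau_u=\tau_0\otimes\nu_c$ from Lemma~\ref{lem:unitary}), and get uniqueness from Proposition~\ref{prop:module-rigid}.

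One bookkeeping point is shared with the paper (compare the proof of Lemma~\ref{lem:central-char}): by second adjointness, Lemma~\ref{lem:ind-res} with $P=Q$ literally produces $r_{\bar Q,M}^G\pi$, not $r_{Q,M}^G\pi$, and in general the two differ by conjugation by an element of $N_G(M)$ carrying $Q$ to $\bar Q$, so to match the statement's $m(\pi:\cdot)$ you must either take $m$ with respect to $\bar Q$ or account for that twist.
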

\begin{proof}
    From Lemma \ref{lem:ind-res}
    \begin{equation}
        [\mathbf M^\bullet_{\mathfrak r,Q}(\pi)|_{\bA}] = \sum_{\nu\in T}m(\pi,\tau_0\otimes\nu)[\bA_{(\nu,q^{1/2})}].
    \end{equation}
    The formula for the weights of $\ubL$ then follows from Lemma \ref{lem:unitary}, Lemma \ref{lem:red-weights} and Theorem \ref{thm:kott-compatibility}.
    Uniqueness of $\ubL$ follows from Proposition \ref{prop:module-rigid}.
\end{proof}

\begin{corollary}
    The functor $\mathbf M_{\nu_c}$ is independent of the base point $\tau_0$ in $\mathfrak r$.
\end{corollary}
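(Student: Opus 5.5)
The key input is Corollary \ref{cor:weights-for-LLC-module}. It characterizes $\mathbf M_{\nu_c}(\pi)$ by data that never mention $\tau_0$: the composition multiplicities $m(\pi:\tau_u\otimes\kott^{-1}(\exp(\sigma')))$, where $\tau_u$ is the unitary representation of Lemma \ref{lem:unitary}. The plan is therefore to show that, after a second base point $\tau_0'$ is chosen, both functors land in the \emph{same} graded Hecke algebra and produce modules with the same weights. Uniqueness in Corollary \ref{cor:weights-for-LLC-module}, which ultimately rests on Proposition \ref{prop:module-rigid}, then identifies the two modules.

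\textbf{Step 1: the target algebra is independent of the base point.} Let $\tau_0'=\tau_0\otimes\mu$ be another base point, with $\mu\in T$. Write $\tau=\tau_0\otimes\nu=\tau_0'\otimes\nu'$, where $\nu'=\mu^{-1}\nu$. Both base points are unitary, so $\mu$ is compact, and Lemma \ref{lem:unitary} gives $\tau_0\otimes\nu_c=\tau_u=\tau_0'\otimes\nu'_c$. By Theorem \ref{thm:sc-llc}(1),
\begin{equation}
\xi_0\otimes\kott(\nu_c)=\mathbf r_M^{cusp}(\tau_u)=\xi_0'\otimes\kott(\nu'_c).
\end{equation}
Hence both functors target $\gH(\mathbf r_M^{cusp}(\tau_u),Q^\vee)$. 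Each source category is also the same, namely $\Rep^{fl}_{\tau_u,\mathbb R_+^\times}(G)$.

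\textbf{Step 2: the weights agree.} Let $\pi$ be irreducible. Corollary \ref{cor:weights-for-LLC-module}, applied to each base point, says that both $\mathbf M_{\nu_c}(\pi)$ and $\mathbf M'_{\nu'_c}(\pi)$ are simple modules with weights
\begin{equation}
\sum_{\sigma'} m(\pi:\tau_u\otimes\kott^{-1}(\exp(\sigma')))\,\ubA_{(\sigma',-r)}.
\end{equation}
This expression involves only $\pi$ and $\tau_u$. The rigidity result (Proposition \ref{prop:module-rigid}) then gives $\mathbf M_{\nu_c}(\pi)\simeq\mathbf M'_{\nu'_c}(\pi)$ on simple objects.

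\textbf{Step 3: promote agreement on objects to agreement of functors.} For this, compare the two functors directly. The two progenerators $\Pi$ and $\Pi'$ are built from $\Sigma=\mathrm{ind}_{\cc M}^M\tau^\circ$, and $\tau_0|_{\cc M}\simeq\tau_0'|_{\cc M}$ because the two representations differ by an unramified character. So the progenerators can be taken to be literally equal, and $\mathcal H$ is the same algebra in both cases. Only the identification $\phi_{\tau_0}$ versus $\phi_{\tau_0'}$ differs: it changes by translation by $\mu$ on $T$. Correspondingly, $\xi_0$ versus $\xi_0'$ translates $T'$ by $\kott(\mu)$.

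The map $f$ of Proposition \ref{prop:a-g-hecke-isom} is therefore unchanged. The reductions at $\nu_c$ and at $\nu'_c=\mu^{-1}\nu_c$ are taken at corresponding points. On toral parts, the composite isomorphisms
\begin{equation}
\gH\to\bfH_{[\nu_c]}\quad\text{and}\quad \gH\to\bfH'_{[\nu'_c]}
\end{equation}
are both induced by the derivative $\underline\kott$. By Corollary \ref{cor:rigid}, whose uniqueness statement is Theorem \ref{thm:kott-compatibility}, they agree. It follows that the two functors differ at most by an automorphism of $\mathcal H$ that is the identity on $\mathcal A$ up to translation. Such an automorphism acts trivially on Grothendieck groups by Corollary \ref{cor:uniquess}.

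I expect Step 3 to be the main obstacle. Making precise that the two extensions $\mathcal H\to\bfH$ coming from Proposition \ref{prop:bernstein} are compatible under translation by $\mu$ is delicate, because Corollary \ref{cor:uniquess} only gives equality on $K_0$ and not an isomorphism of functors. If functor-level agreement is out of reach, I would settle for the statement on objects and Grothendieck groups obtained in Step 2, which is what is used later.
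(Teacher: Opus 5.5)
Your Steps 1--2 are exactly the paper's argument: the weights in Corollary \ref{cor:weights-for-LLC-module} involve only $\pi$ and $\tau_u$, so Proposition \ref{prop:module-rigid} forces the two simple modules to coincide. Your Step 1 simply makes explicit the identification of source categories and target algebras that the paper leaves implicit. Step 3 is unnecessary and you can drop it: the paper only claims this object-level (equivalently $K_0$-level) independence, and your Step 3 as written also only reaches $K_0$ via Corollary \ref{cor:uniquess}.
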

\begin{proof}
    By Corollary \ref{cor:weights-for-LLC-module} the weights of the module do not depend on the choice of $\tau_0$.
    The result therefore follows from Proposition \ref{prop:module-rigid}.
\end{proof}

\begin{remark}
\begin{enumerate}
    \item We expect that the correspondence also does not depend on the choice of $\mathfrak r$ and $Q$.
    However we will not address this point in this paper and we will assume that a consistent choice for the whole Bernstein component has been made throughout.
    \item While the functor $\mathbf M_{\nu_c}$ does not depend on the base point, it does depend on $r_M^{cusp}$, and there are several choices for $r_M^{cusp}$ in \cite{feng-opdam-solleveld}.
\end{enumerate}
\end{remark}

\subsection{Geometric Parameters and $L$-parameters}
In this section we describe the image of the functor $\mathbf M_{\nu_c}$ more carefully.

\begin{definition}
    Let $\Phi'(\Lg G)$ denote the set of conjugacy classes of quadruples $(u,P,\mathbf c,\ubL)$ where $u$ is a compact semisimple element in $G^\vee\vartheta$, $\mathbf c=(L,\mathcal C,\mathcal F)\in \underline {\mathfrak z}(G^\vee(u))$, $P\in \mathcal Q_{G^\vee(u)}(L)$, and $\ubL\in \Irr(\Mod_{\mathbb R, \mathbf r=-r}(\gH(G^\vee(u),Q,\mathbf c)))$.

    We say that $(u,P,\mathbf c,\ubL)$ and $(u',P',\mathbf c',\ubL')$ are conjugate if there is a $g\in G^\vee$ such that $c(g)(u)
     = u'$, $c(g)(P) = P'$, $c(g)(\mathbf c) = \mathbf c'$ and $\underline{\mathbf F}(c(g))_*\ubL = \ubL'$ where $\ubF(c(g))$ is as in Proposition \ref{prop:functoriality} applied to the map $c(g):G^\vee(u)\to G^\vee(u')$.
\end{definition}

Let $\mathfrak s, \mathfrak r, Q, \tau_0,\xi_0=(s_0,N_0,\rho_0), \nu$ be as in the previous section.
We obtain a map 
\begin{align}
    \mathbf r_G': \Irr_{(M,\tau_0\otimes \nu)}(G)&\to \Phi'(\Lg G) \\
    \pi&\mapsto (u,Q^\vee(u),\mathbf c(M^\vee,\xi_0\otimes\kott(\nu)), \mathbf M_{\nu_c}(\pi)) \label{eq:r-prime}
\end{align}
where $u = (s_0\kott(\nu))_c$.

\begin{lemma}
    Define
    \begin{align}
        \mathbf I:\Phi(\Lg G)&\to\Phi'(\Lg G), \\
        (s,N,\rho) &\mapsto (s_c,P,\mathbf c, \ubL_{(\log(s_h),N,\rho,-r)}),
    \end{align}
    where $\mathbf c=(L,\mathcal C,\mathcal F)\in \underline{\mathfrak z}(G^\vee(s_c))$ is the cuspidal support of $(\log(s_h),N,\rho)\in \underline{\Phi}(G^\vee(s_c))$ and $P\in \mathcal Q_{G^\vee(s_c)}(L)$, and
    \begin{align}
        \mathbf{J}:\Phi'(\Lg G)&\to \Phi(\Lg G) \\
        (u,P,\mathbf c,\ubL)&\mapsto (u \cdot  \exp(\sigma), N, \rho),
    \end{align}
    where $(\sigma,N,\rho,-r)\in \underline{\Phi}(G^\vee(u))$ is the unique quadruple such that $\ubL = \ubL_{(\sigma,N,\rho,-r)}$. 
    Then $\mathbf I,\mathbf J$ are mutually inverse.
\end{lemma}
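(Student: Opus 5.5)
The plan is to check directly that $\mathbf J\circ\mathbf I=\id$ and $\mathbf I\circ\mathbf J=\id$. Before that, we verify that both maps are well defined on conjugacy classes.

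\textbf{Well-definedness of $\mathbf I$.} Let $(s,N,\rho)$ be given, with polar decomposition $s=s_cs_h$ from Lemma~\ref{lemma: polar decomposition}; here $s_c\in G^\vee\vartheta$ is compact and $s_h\in G^\vee(s_c)^\circ$ is hyperbolic. The following facts are needed.
\begin{itemize}
\item Every element commuting with $s$ commutes with $s_c$ and $s_h$. Hence $G^\vee(s,N)=G^\vee(s_c)(\log(s_h),N)$, and $A(s,N)$ agrees with the component group $A(\log s_h,N)$ computed inside $G^\vee(s_c)$. (We use the simply connected conventions as in \S\ref{sec:geometric-affine}.)
\item The relation $\Ad(s)N=q^{-1}N$ forces $N\in\mathfrak g^\vee(s_c)$ and $\Ad(s_h)N=q^{-1}N$. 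Indeed, the eigenvalue $q^{-1}$ is real positive, so $N$ lies in the $1$-eigenspace of the compact part and the $q^{-1}$-eigenspace of the hyperbolic part.
\item Taking logarithms of the hyperbolic element gives $[\log s_h,N]=-\log(q)N=2(-r)N$ with $r=\tfrac12\log q$.
\end{itemize}
Thus $(\log s_h,N,\rho,-r)\in\underline\Phi(G^\vee(s_c))$. By Theorem~\ref{thm:sigma-correspondence} it has a cuspidal support $\mathbf c$ and corresponds to a simple module $\ubL$ of $\gH(G^\vee(s_c),P,\mathbf c)$. Since $\log s_h$ and $-r$ are real, this module has real weights by Remark~\ref{rmk:ss-shift}.

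Choosing a different $P$ changes $\ubL$ by $(c_w)_*$, by Lemma~\ref{lem:parabolics}. This is realised by conjugation by an element of $N(L)\subset G^\vee(s_c)$, which fixes $s_c$ and $\mathbf c$, so the class in $\Phi'(\Lg G)$ is unchanged. Conjugating $(s,N,\rho)$ by $g\in G^\vee$ moves all the data compatibly, by Proposition~\ref{prop:functoriality} applied to $c(g)$ together with the uniqueness of the polar decomposition.

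\textbf{Well-definedness of $\mathbf J$.} Given $(u,P,\mathbf c,\ubL)$, Theorem~\ref{thm:sigma-correspondence} provides a unique $(\sigma,N,\rho,-r)$ with $\ubL=\ubL_{(\sigma,N,\rho,-r)}$. The main point is that $\sigma$ is real, i.e.\ $\exp(\sigma)$ is hyperbolic. This follows from Remark~\ref{rmk:ss-shift}: $\sigma$ is conjugate to $\sigma'-r\,\underline\phi(h)$ with $\sigma'$ a real weight, and $\underline\phi(h)$ is real semisimple.

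Set $s=u\exp(\sigma)$. Since $u$ commutes with $\exp(\sigma)\in G^\vee(u)$, and $u$ is compact while $\exp(\sigma)$ is hyperbolic, the uniqueness in Lemma~\ref{lemma: polar decomposition} gives $s_c=u$ and $s_h=\exp(\sigma)$. Consequently $\Ad(s)N=\Ad(\exp\sigma)N=e^{-2r}N=q^{-1}N$, and by the centralizer identification above $\rho$ is an irreducible representation of $A(s,N)$. Conjugation invariance of $\mathbf J$ again follows from Proposition~\ref{prop:functoriality}.

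\textbf{Inverse relations.}
\begin{itemize}
\item $\mathbf J\circ\mathbf I$: starting from $(s,N,\rho)$, $\mathbf I$ records $\sigma=\log s_h$, and $\mathbf J$ returns $s_c\exp(\log s_h)=s$. Hence $\mathbf J\circ\mathbf I=\id$.
\item $\mathbf I\circ\mathbf J$: starting from $(u,P,\mathbf c,\ubL)$, we have $s_c=u$ and $\log s_h=\sigma$, since $\sigma$ is real and $\log$ is injective on hyperbolic elements. The cuspidal support of $(\sigma,N,\rho,-r)$ is $\mathbf c$ by the disjoint decomposition in Theorem~\ref{thm:sigma-correspondence}, and the simple module recovered is $\ubL$, up to the choice of $P$, which is handled as before. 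Hence $\mathbf I\circ\mathbf J=\id$.
\end{itemize}

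\textbf{Main obstacle.} The delicate step is the centralizer bookkeeping in the disconnected group $G^\vee\rtimes\langle\vartheta\rangle$. One must check that the component group $\pi_0(G^\vee_{sc}(s,N))$ matches the one used in $\underline\Phi(G^\vee(s_c))$. This includes the fact that $G^\vee(s_c)$ is connected, which holds when $G^\vee_{der}$ is simply connected by Steinberg's theorem applied to the twisted setting. Beyond that, the proof is mostly a consequence of Lemma~\ref{lemma: polar decomposition}.
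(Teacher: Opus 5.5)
Your proposal is correct and takes the same approach as the paper, whose proof only observes that $\mathbf I$ is independent of the choice of $P\in\mathcal Q_{G^\vee(s_c)}(L)$ by (the proof of) Lemma~\ref{lem:parabolics} and calls mutual inverseness obvious. You additionally spell out the checks the paper leaves implicit: reality of $\sigma$ via Remark~\ref{rmk:ss-shift}, recovery of $u$ and $\exp(\sigma)$ from uniqueness of the polar decomposition, and the identification of $A(s,N)$ with the component group computed in the connected group $G^\vee(s_c)$.
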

\begin{proof}
    First note that $\mathbf I$ depends on a choice of $P\in \mathcal Q(G^\vee(s_c))$ and so we must check that it is well-defined.
    This follows from (the proof of) Lemma \ref{lem:parabolics}.
    It is obvious that the maps are mutually inverse.
\end{proof}

\subsection{Construction}
The unipotent local Langlands correspondence is the composition 
\begin{align}
    \mathbf r_G:\Irr^u(G)&\to \Phi(\Lg G), \\
    \mathbf r_G &= \mathbf J\circ \mathbf r'_G.
\end{align}

Explicitly, given $\pi\in \Irr^u(G)$, let $(M,\tau)$ be the supercuspidal support of $\pi$, and $Q\in \mathcal Q(M)$.
Let $\mathfrak s,\mathfrak r$ be the inertial classes of $(M,\tau)$ in $G$ and $M$ respectively.
Let $(M,\tau_0)$ be a base point of $\mathfrak r$ and $\nu\in T$ be the unique element such that $\tau = \tau_0\otimes\nu$.
Let $\xi_0 = r_M^{sc}(\tau_0)$ and $s$ be the semisimple part of $\xi_0\otimes\kott(\nu)$.
There exists $(\sigma,N,\rho,-r)\in \underline\Phi(G^\vee(s_c))$ such that $\mathbf M_{\nu_c}(\pi) = \ubL_{(\sigma,N,\rho,-r)}$.
The $L$-parameter of $\pi$ is 
\begin{equation}
    \mathbf r_G(\pi):= (s_c\exp(\sigma),N,\rho) \qquad \in \Phi(\Lg G).
\end{equation}

\begin{remark}
    Note that $\sigma$ and $\sigma' = \log(\kott(\nu_h))$ are related via the formula in Remark \ref{rmk:ss-shift}.
\end{remark}

\subsection{A Note on Normalization}\label{sec:normalization}
Our normalization of the local Langlands correspondence in Section~\ref{sec:construction} differs from Lusztig's original normalization by the involution $\ui$ appearing in $\mathbf M_{\nu_c}$. The reason for this renormalization is twofold: 

Firstly, Lusztig's normalization interchanges the conventional notions of tempered and co-tempered representations.

Secondly, in Lusztig's normalization a unipotent parameter $(s,N,\rho)$ satisfies the condition $Ad(s) N = qN$ whereas we use the condition $Ad(s)N = q^{-1} N$. Note that by the usual conventions of local class field theory, Artin's reciprocity map $W_F^{ab}\to F^\times$ maps $\Fr$ to the uniformizer and so $||\Fr|| = q^{-1}$ which leads to the condition $\Ad(s)N=||\Fr||N=q^{-1}N$. This convention is also compatible with the Satake correspondence, which assigns to the unramified character $|-|:F^\times\to \mathbb C^\times$ the unramified parameter $q^{-1}\in (F^\times)^\vee = \mathbb C^\times$.

The involution $\ui$ corrects both the temperedness/co-temperedness and the difference between $q$ and $q^{-1}$. A similar renormalization is applied in \cite{AMSol} using $\ubFT$ instead of $\ui$. However, the normalization in \cite{AMSol} inverts the real part of the Satake parameter. Renormalizing by $\ui$ on the other hand ensures that a spherical representation with Satake parameter $s$ has $L$-parameter $(s,0,\triv)$ (see also \cite[\S3]{solleveld-normalisation}).

\section{The Aubert--Zelevinsky Dual}
Let $G$ be an inner form of an unramified group defined over $F$, $Q_0$ be a minimal parabolic subgroup, and $S$ be a maximal split torus in $Q_0$.
Let $W = N_G(S)/G(S)$, $\Phi(G,S)$ denote the set of roots relative to $S$, and $\Delta$ the set of simple reduced roots determined by $Q_0$.

Given a parabolic subgroup $Q\supset Q_0$ there is a unique Levi factor $M$ containing $S$.
Call the Levi subgroups obtained in this manner standard Levi subgroups.
Every Levi subgroup of $G$ is conjugate to a standard Levi subgroup and there is a bijection between the standard Levi subgroups and subsets $I\subseteq \Delta$.
If $M,M'$ correspond to $I,I'\subseteq \Delta$ respectively, then $M$ and $M'$ are conjugate if and only if there exists a $w\in W$ such that $w(I) = I'$.
In this section we wish to prove the following rigidity property of Levi subgroups of $G$ affording unipotent supercuspidal representations.
\begin{lemma}
\label{lem:leviclass}
    Let $Q,Q'$ be parabolic subgroups with Levi factors $M,M'$.
    If $M$ affords a unipotent supercuspidal representation and $M$ and $M'$ are conjugate then $Q$ and $Q'$ are conjugate.
\end{lemma}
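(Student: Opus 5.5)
Conjugating, we may assume $Q\supseteq Q_0$ and $Q'\supseteq Q_0$ are standard, with standard Levi factors $M$ and $M'$ corresponding to subsets $I,I'\subseteq\Delta$. Standard parabolics are conjugate exactly when they are equal, so the claim becomes: if $M$ affords a unipotent supercuspidal representation and $w(I)=I'$ for some $w\in W$, then $I=I'$. Equivalently, the $W$-conjugacy class of such an $I$ contains only one subset of $\Delta$. I would reduce this to a combinatorial property of the relative (affine) root system, using the classification of Levi subgroups carrying unipotent cuspidal data.

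\textbf{Step 1.} Determine the possible $I$. By \cite{morris-sc} and \cite{Lu-unip1,Lu-unip2}, $M$ affords a unipotent supercuspidal representation only if some maximal parahoric of $M$ has a reductive quotient with a unipotent cuspidal representation. Hence every simple factor of $M_{ad}$ is an inner form whose local index supports a unipotent cuspidal; the anisotropic part is unconstrained.

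\textbf{Step 2.} Apply the following criterion. The standard Levi $M_I$ has the property that all parabolics with Levi $M_I$ are conjugate if and only if $N_W(W_I)$ acts on $\Delta\setminus I$ so that the relative Weyl group $N_W(M_I)/W_I$ acts on the relative root system $\Phi(G,A_{M_I})$ with a unique chamber orbit. This is Howlett's criterion: the $W$-class of $I$ meets $\Delta$ in exactly $\{I\}$ iff $N_W(W_I)/W_I$ is the full Weyl group of the reduced relative root system. I would verify it using the tables in \cite{Lu-unip1,Lu-unip2}. There, for each unipotent cuspidal pair on a Levi, the Weyl group $W^{\mathfrak r}$ of the associated Hecke algebra is computed. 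Its finite part is exactly a reflection group on $\mathfrak a_M$ acting simply transitively on chambers, i.e. on $\mathcal Q(M)$. Since $|\mathcal Q(M)|$ equals the number of chambers of $\Phi(G,A_M)$, $N_G(M)/M$ acts transitively on $\mathcal Q(M)$. Alternatively, Lusztig's \cite[Lemma 6.8(c)]{lusztig1995cuspidal} gives the analogous statement for cuspidal local systems on the dual side; I would transfer it through the bijection $\mathbf w_{G,M}$ of Lemma \ref{lem:abps}, since the corresponding Levi of $G^\vee(s_c)$ supports a cuspidal local system.

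\textbf{Step 3.} Conclude. Given $Q,Q'$ with conjugate Levis, first conjugate so that $M'=M$. Then $Q,Q'\in\mathcal Q(M)$, and transitivity of $N_G(M)$ on $\mathcal Q(M)$ gives the result.

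The main obstacle is Step 2: showing that $N_G(M)/M$ is large enough. The dual-side argument needs two things: that $\mathbf w_{G,M}$ identifies $\mathcal Q(M)$ with the parabolics of $\Lg G$ having Levi $\Lg M$, and that the centralizer $G^\vee(s_c)$ does not shrink the relative Weyl group. If that transfer fails, I would fall back on a case-by-case check of the tables, including the non-split inner forms.
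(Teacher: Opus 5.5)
Your reduction is correct and matches the paper's: once $Q,Q'$ are standard, the lemma says that the $W$-class of $I$ contains no other subset of $\Delta$. Equivalently, $N_G(M)/M$, which acts freely on $\mathcal Q(M)$, acts transitively on it. The gap is Step~2, where all the content lies, and neither mechanism you propose establishes it.

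\textbf{The Hecke-algebra route.} The finite Weyl group $W^{\mathfrak r}$ of Lusztig's algebra is the Weyl group of that algebra's own root system $R$. It is simply transitive on the chambers cut out by $R$. In general, however, the reflecting hyperplanes of $R$ are only some of the hyperplanes $\ker\alpha$ for $\alpha\in\Phi(G,A_M)$, so the step ``chambers, i.e.\ $\mathcal Q(M)$'' is unjustified. For example, for a supercuspidal of $GL_2\times GL_1\subset GL_3$, $W^{\mathfrak r}$ is trivial while $|\mathcal Q(M)|=2$. Because the action on $\mathcal Q(M)$ is free, the claim $|W^{\mathfrak r}|=|\mathcal Q(M)|$ is a strengthening of the lemma itself. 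It therefore cannot be read off the tables without computing $\Phi(G,A_M)$ and its chamber count in each case, and that requires the explicit list of possible $I$, which your Step~1 does not produce. A smaller point: ``Howlett's criterion'' as you state it presupposes that $\Phi(G,A_M)$ is a root system. The correct general statement is the orbit count $\#\{J\subseteq\Delta: J\in W\cdot I\}=|\mathcal Q(M)|/|N_G(M)/M|$.

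\textbf{The dual route.} The obstruction you anticipate is real. Lemma~6.8(c) of \cite{lusztig1995cuspidal} concerns parabolics of $G^\vee(s_c)$ with Levi $M^\vee(s_c)$. The relative roots of $G^\vee(s_c)$ on $\mathfrak z_{M^\vee,\vartheta}$ are a subset of those of $G^\vee$, and for non-special $s_c$ the relative Weyl group is only an isotropy group. So transitivity on this coarser set of parabolics says nothing about $\mathcal P({}^L M)\cong\mathcal Q(M)$. Even at a special point you would still need to show that no hyperplane is lost, which is the same case check again.

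\textbf{What closes the gap.} It is your fallback, carried out in its simplest form, which is what the paper does. Use the tables of unipotent types in \cite{Lu-unip1,Lu-unip2} together with the tables of indices in \cite{tits} to list every $I$ that can occur:
\begin{itemize}
\item $I=\emptyset$ in type $A$;
\item $I=X_k$ inside a relative system of type $X_n$ in the classical cases, split or not;
\item a short explicit list in the exceptional and non-split exceptional cases.
\end{itemize}
Then observe directly on the relative Dynkin diagram that each such $I$ is the unique subset of $\Delta$ of its type, with long and short roots distinguished. Hence $w(I)\subseteq\Delta$ forces $w(I)=I$. This needs no information about Hecke-algebra Weyl groups or chamber counts, only the list of $I$.
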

\begin{proof}
We may assume that $Q,Q'$ are standard parabolics corresponding to $I,I'\subset \Delta$ respectively.
Their Levi factors are conjugate if and only if there is a $w\in W$ such that $w(I) = I'$.
Thus, the lemma follows if we can show that for any $w \in W$ with $w(I) \subset \Delta$ we have $w(I) = I$.

For this, we determine all the possible $I\subseteq \Delta$ corresponding to Levi subgroups affording unipotent supercuspidal representations. 
The tables in \cite[\S 7]{Lu-unip1} enumerate all the possible types for the unipotent inertial classes.
Using the tables in \cite[\S4]{tits} it is then straightforward to determine the Levi subgroups associated to the type.
We now summarize all the possibilities.

Suppose $G$ is inner to split.
If $G$ is of type $A$ then $I$ must be the empty set.
If $G$ if of type $B,C,D$, then $\Phi$ is of type $X_n$ (where $X=B,C,BC$ or $D$), and $I$ corresponds a subset of type $X_k$ for some $k\le n$. Clearly, if $w(X_k) \subset \Delta$, then $w(X_k) = X_k$.

If $\bfG$ is exceptional the following table enumerates the possibilities for $I$. It is clear for all of these $I$ that if $w(I) \subset \Delta$ then $w(I) = I$.

\begin{small}
\begin{longtable}{|c|c|c|c|c|c|}

    \hline
    $\Phi_0$ & $\ord(\omega_{\bfG})$ & $\Phi$ & $I$ \\ \hline\hline
    $G_2$ & $1$ & $G_2$ & $\emptyset,G_2$ \\ \hline
    $F_4$ & $1$ & $F_4$ & $\emptyset,B_2,F_4$ \\ \hline
    $E_6$ & $1$ & $E_6$ & $\emptyset,D_4,E_6$ \\ 
    & $3$ & $G_2$ & $\emptyset,A_1$ \\ \hline
    $E_7$ & $1$ & $E_7$ & $\emptyset,D_4,E_6,E_7$ \\ 
    & $2$ & $F_4$ & $\emptyset,C_3,B_3,F_4$ \\ \hline
    $E_8$ & $1$ & $E_8$ & $\emptyset,D_4,E_6,E_7,E_8$ \\ \hline

\caption{The Levi subgroups of $\bfG$ supporting unipotent supercuspidal representations.}\label{ta:exc-fin}
\label{table:cupsidalexceptional}

\end{longtable}
\end{small}
Suppose $G$ is inner to a non-split unramified group.
If $G$ is of type $A$ then $\Phi$ is of type $X_n=C_n$ or $BC_n$ and $I$ is $X_k$ for $k\le n$.
If $G$ is of type $D$, $d=2$ then $\Phi$ is of type $X_n=B_n$ or $BC_n$ and $I$ is $X_k$ for $k\le n$.
If $G$ is of type $D_4$ then $\Phi = G_2$, and $I$ is of type $\emptyset$ or $G_2$.
If $G$ is of type $E_6$ then $\Phi = F_4$, and $I$ is of type $\emptyset, C_3$ or $F_4$.
\end{proof}

\begin{proposition}\label{prop:az}
    Let $\pi\in \Irr^u(G)$, $(M,\tau)_G$ be the cuspidal support of $\pi$, $\mathfrak r = [M,\tau]_M$, and $Q\in \mathcal Q(M)$.
    Let $\bfH = \bfH(\mathfrak r,Q) = \bfH^{\lambda,\lambda^*}(\Phi,\Delta)$.
    Then
    \begin{equation}
        \mathbf M^\bullet_{\mathfrak r, Q}(\AZ([\pi])) = (-1)^d\sum_{J\subset \Delta} (-1)^{|J|}  \cdot \hphantom{ }'i_{\bfH_J}^{\bfH} \circ r_{\bfH_J}^{\bfH}(\mathbf M^\bullet_{\mathfrak r, Q}([\pi]))
    \end{equation}
    where $d$ is the dimension of the split center of $M$.
\end{proposition}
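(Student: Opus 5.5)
The plan is to start from the defining formula of $\AZ$ and apply the exact functor $\mathbf M^\bullet_{\mathfrak r,Q}$ term by term. For a standard parabolic $P\supseteq Q_0$ with Levi $N$, the composite $i_P^G\circ r_P^G$ sends $\Rep^{\mathfrak s}(G)$ into sums of Bernstein components. Only the components whose cuspidal support lies in $\mathfrak s$ survive on the $G$ side, since $i_P^G$ preserves cuspidal supports up to $G$-conjugacy. So in $r_P^G[\pi]$ only the summands lying in Bernstein components $[M',\tau']_N$ with $(M',\tau')$ $G$-conjugate to $(M,\tau)$ contribute.

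\textbf{Step 1 (which $P$ contribute).} Each surviving summand has cuspidal support $(M',\tau')$ with $M'\subseteq N$ a standard Levi conjugate to $M$. I will invoke Lemma \ref{lem:leviclass}: the parabolics $Q_0M'$ and $Q$ are then conjugate. Combined with standardness, this should let me replace each such contribution, via conjugation by an element of $W$ (which does not change $i_P^G r_P^G$ on the Grothendieck group), by one where $M\subseteq N$ and $Q\cap N$ is the relevant parabolic of $N$. Thus the nonzero terms are indexed by standard $P\supseteq Q$. The standard parabolics containing $Q$ correspond to subsets $J$ of $\Delta$, the simple roots of $\bfH(\mathfrak r,Q)$, with $P_J$ having Levi $N_J$. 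I must check this correspondence is a bijection, and that $r(P_J)=\dim$ of the split centre of $N_J$ equals $d-|J|$, because the relative root system of $(G,M)$ has rank equal to $d-\dim A_G$ and $G$ is adjoint (so $A_G$ is trivial). I expect this step to be the main obstacle: one must confirm that the parabolics $P\supseteq Q_0$ not containing $Q$ contribute exactly the conjugates and no double counting occurs. The cleanest route I see is to use the Bernstein--Zelevinsky geometric lemma to compare the $\mathfrak r^N$-projections, and Lemma \ref{lem:leviclass} to ensure the $W$-orbit of $I$ meets each standard Levi in a single subset.

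\textbf{Step 2 (transport).} For $P=P_J\supseteq Q$, I will apply Lemma \ref{lem:ind-res} with the opposite parabolic $\bar P$. Remark \ref{rmk:restriction} handles the restriction, giving $\mathbf M^\bullet_{\mathfrak r,Q}(i_{\bar P}^G r_{\bar P}^G[\pi]) = {}'i_{\bfH_J}^{\bfH}r_{\bfH_J}^{\bfH}\mathbf M^\bullet_{\mathfrak r,Q}[\pi]$. In $K_0$, $i_P^Gr_P^G$ and $i_{\bar P}^Gr_{\bar P}^G$ agree, since $i_P^G\sigma$ and $i_{\bar P}^G\sigma$ have the same semisimplification and $r_{\bar P}$ is the contragredient-twisted $r_P$ up to the standard $K_0$ identity. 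I can also use the form with $P$ replaced by $\bar P$ throughout, because the sum over all standard $P$ can be taken equivalently over opposite parabolics after conjugating by $w_0$. Finally, I identify $\bfH_J$ with the image of $\bfH^{N_J}$ via Lemma \ref{lem:ind-res}.

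Collecting the signs gives $(-1)^{r(P_J)}=(-1)^{d}(-1)^{|J|}$, which yields the stated formula.
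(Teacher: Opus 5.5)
Your outline follows the paper's proof. You discard the parabolics whose Levi contains no conjugate of $M$, use Lemma~\ref{lem:leviclass} to index the survivors by $P\supseteq Q$ (equivalently $J\subseteq\Delta$) with $r(P)=d-|J|$, and push $i_{\bar P}^G r_{\bar P}^G$ through $\mathbf M^\bullet_{\mathfrak r,Q}$ using Lemma~\ref{lem:ind-res} and Remark~\ref{rmk:restriction}. Two points need correcting.

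\textbf{The termwise claim is false.} You assert that $i_P^G r_P^G$ and $i_{\bar P}^G r_{\bar P}^G$ agree term by term in $K_0$. This fails in general. Take the trivial representation of $\mathrm{PGL}_3$ and $P$ the $(2,1)$ parabolic. Then $\bar P$ is conjugate to the $(1,2)$ parabolic. Each composite has length two: the trivial representation plus a second constituent. Their Borel exponents show the second constituents differ. They are
\[
\{(\nu^{-1},\nu,1),(\nu,\nu^{-1},1)\}\quad\text{versus}\quad\{(1,\nu^{-1},\nu),(1,\nu,\nu^{-1})\}.
\]
Casselman duality $r_{\bar P}(\pi)\simeq r_P(\pi^\vee)^\vee$ does not give termwise equality either.

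\textbf{Use only the global re-indexing.} Since $i_{gPg^{-1}}^G r_{gPg^{-1}}^G\simeq i_P^G r_P^G$, the defining sum can be taken over parabolics standard with respect to $\bar Q_0$, that is, over all $\bar P$ with $P\supseteq Q_0$. This is exactly what the paper does when it writes $\sum_{P\supset Q}(-1)^{r(P)} i_{\bar P}^G r_{\bar P}^G([\pi])$ from the start.

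\textbf{The anticipated obstacle does not arise.} Choose $Q_0\subseteq Q$. If a standard Levi $N$ contains a conjugate of $M$, then it contains a standard Levi $M'$ that is $G$-conjugate to $M$. Lemma~\ref{lem:leviclass} makes the standard parabolics $Q_0M'$ and $Q$ conjugate, hence equal, so $M'=M$ and $P\supseteq Q$. Parabolics not containing $Q$ therefore contribute zero. Nothing has to be moved by elements of $W$, and there is no double counting, so the geometric lemma is not needed.
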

\begin{proof}
    Let $(M,\tau)$ be the cuspidal support of $\pi$, and $\mathfrak s$ be the inertial class of $(M,\tau)\in \mathfrak s$.
    Let $\nu\in T$ be such that $\tau = \tau_0\otimes\nu$, and $Q\in \mathcal Q(M)$.
    
    If $P$ is a parabolic subgroup of $G$, and $r_P^G(\pi)\ne 0$, then $P$ must contain a conjugate of $M$.
    By Lemma \ref{lem:leviclass}, it must contain a conjugate of $\bar Q$.
    Therefore
    \begin{equation}
        \AZ([\pi]) = \sum_{P\supset Q} (-1)^{r(P)} \, i_{\bar P}^G \circ r_{\bar P}^G([\pi]).
        \end{equation}
    Let $d$ be the dimension of $A_M$.
    If $P\supset Q$ and $\bfH^P = \bfH_J$, then $r(P) = d -|J|$.
    Applying the functor $\mathbf M^\bullet_{\mathfrak r, Q}$ we obtain 
    \begin{align}
        \mathbf M^\bullet_{\mathfrak r, Q}(\AZ([\pi])) &= \sum_{P\supset Q} (-1)^{r(P)}  \mathbf M^\bullet_{\mathfrak r, Q}\circ i_{\bar P}^G \circ r_{\bar P}^G([\pi]) \\
        &= \sum_{P\supset Q} (-1)^{r(P)}  \hphantom{ }'i_{\bfH^P}^{\bfH^G} \circ \mathbf M^\bullet_{\mathfrak r, Q\cap N_P}\circ r_{\bar P}^G([\pi]) && \text{Lemma \ref{lem:ind-res}}\\
        &= \sum_{P\supset Q} (-1)^{r(P)}  \hphantom{ }'i_{\bfH^P}^{\bfH^G} \circ r_{\bfH^P}^{\bfH^G}(\mathbf M^\bullet_{\mathfrak r, Q}([\pi])) && \text{Remark \ref{rmk:restriction}}\\
        &=  (-1)^d\sum_{J\subset \Delta} (-1)^{|J|}  \cdot \hphantom{ }'i_{\bfH_J}^{\bfH} \circ r_{\bfH_J}^{\bfH}(\mathbf M^\bullet_{\mathfrak r, Q}([\pi]))
    \end{align}
    where $N_P$ denotes the Levi factor of $P$.
\end{proof}

\begin{definition}
    Let $\bfH$ be an affine Hecke algebra. Define $\mathbb C[\mathbf v,\mathbf v^{-1}]$-linear involutions $\IM,\tIM:\bfH\to \bfH$ 
    \begin{align}
        &\IM(T_s) = -\mathbf v^{2\lambda(s)} T_s^{-1}, &&\IM(\theta_x) = T_{w_0}\theta_{w_0(x)}T^{-1}_{w_0},\\
        &\tIM(T_s) = -\mathbf v^{2\lambda(s)} T_{w_0(s)}^{-1}, &&\tIM(\theta_x) = \theta_{w_0(x)},
    \end{align}
    for $s\in S, w\in W, x\in X$.
    The map $\IM$ is called the Iwahori--Matsumoto involution and $\tIM$ is the twisted Iwahori--Matsumoto involution.

    Recall now the involution $\bullet$ defined in Definition \ref{def:bullet}.
    Given $\mathbf M\in \Mod(\bfH)$ let $\mathbf M^*:=\Hom_{\mathbb C}(\mathbf M,\mathbb C)$ be the $\bfH$-module with structure
    \begin{equation}
        (hf)(m) = f(h^\bullet m), \quad f\in M^*,h\in \bfH, m\in \mathbf M.
    \end{equation}
\end{definition}
    Since ${}^{\bullet}$ is an anti-involution, we have $(\mathbf M^*)^* \cong \mathbf M$ for any $\mathbf M \in \Mod^{fl}(\bfH)$.
\begin{lemma}\label{lem:conj-IM}
    For all $h\in \bfH$, 
    ${\tIM}(h)=T_{w_0}^{-1} \IM(h) T_{w_0}.$
    In particular $[\IM^*\mathbf M] = [\tIM^*\mathbf M]$ for all $\mathbf M\in \Mod^{fl}(\bfH)$.
\end{lemma}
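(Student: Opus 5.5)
Since both $\IM$ and $\tIM$ are algebra homomorphisms that are $\mathbb C[\mathbf v,\mathbf v^{-1}]$-linear, and $\bfH$ is generated as a $\mathbb C[\mathbf v,\mathbf v^{-1}]$-algebra by the $T_s$ ($s\in S$) and the $\theta_x$ ($x\in X$), the plan is to check the identity ${\tIM}(h)=T_{w_0}^{-1}\IM(h)T_{w_0}$ on these generators only. Conjugation $h\mapsto T_{w_0}^{-1}hT_{w_0}$ is an algebra automorphism, so $h\mapsto T_{w_0}^{-1}\IM(h)T_{w_0}$ is again a homomorphism, and agreement on generators gives agreement everywhere.

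For $\theta_x$ the check is immediate from the definition:
\begin{equation}
T_{w_0}^{-1}\IM(\theta_x)T_{w_0} = T_{w_0}^{-1}T_{w_0}\theta_{w_0(x)}T_{w_0}^{-1}T_{w_0} = \theta_{w_0(x)} = \tIM(\theta_x).
\end{equation}
For $T_s$ we need $T_{w_0}^{-1}T_s^{-1}T_{w_0} = T_{w_0(s)}^{-1}$, which is equivalent to $T_sT_{w_0} = T_{w_0}T_{w_0(s)}$. Since $w_0(s)=w_0sw_0^{-1}$, we have $sw_0 = w_0\,w_0(s)$, and $\ell(sw_0)=\ell(w_0)-1 = \ell(w_0\,w_0(s))$. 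Writing $w_0 = s\,(sw_0)$ with lengths adding, relation \eqref{eq:bernsteinrelation2} gives $T_{w_0}=T_sT_{sw_0}$. Similarly $w_0 = (w_0w_0(s))\,w_0(s)$ with lengths adding gives $T_{w_0} = T_{w_0w_0(s)}T_{w_0(s)}$. Because $sw_0 = w_0w_0(s)$, both expressions share the middle factor $T_{sw_0}$, so $T_s^{-1}T_{w_0} = T_{sw_0} = T_{w_0}T_{w_0(s)}^{-1}$. The scalar $-\mathbf v^{2\lambda(s)}$ matches because $\lambda$ is $W$-invariant, so $\lambda(s)=\lambda(w_0(s))$. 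The only mildly delicate point is this length bookkeeping: one must use that $w_0(s)$ is again a simple reflection, which holds since $-w_0$ permutes $\Delta$.

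For the consequence, $\tIM = c\circ\IM$ where $c$ is conjugation by the unit $T_{w_0}$. For any module $\mathbf M$, the map $m\mapsto T_{w_0}m$ is an isomorphism $\IM^*\mathbf M\xrightarrow{\sim}(c\circ\IM)^*\mathbf M$ or its inverse, depending on the direction conventions; either way it is an $\bfH$-module isomorphism. Hence $\IM^*\mathbf M\cong\tIM^*\mathbf M$, and in particular their classes in the Grothendieck group agree.
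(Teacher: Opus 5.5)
Your proof is correct and follows the paper's argument: you check $\theta_x$ directly and $T_s$ via the length-additive factorizations $T_{w_0}=T_sT_{sw_0}=T_{sw_0}T_{w_0(s)}$, and the module statement follows because twisting by conjugation by the unit $T_{w_0}$ gives an isomorphic module, with intertwiner $m\mapsto T_{w_0}m$ from $\tIM^*\mathbf M$ to $\IM^*\mathbf M$. One small remark: you do not need $W$-invariance of $\lambda$, since $\tIM(T_s)$ is defined with the same scalar $-\mathbf v^{2\lambda(s)}$ that appears in $\IM(T_s)$.
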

\begin{proof}
    It clearly holds on $\bA$.
    Note that for any $w \in W$ we have $l(w)+ l(w^{-1} w_0) = l(w_0)$ and thus $T_w T_{w^{-1} w_0 }  = T_{w_0}$. Using this, we get
    \begin{equation}
        T_{w_0}^{-1} \IM(T_s) T_{w_0} = -\mathbf v^{2\lambda(s)} T_{w_0}^{-1}  T_s^{-1}T_{w_0} = -\mathbf v^{2\lambda(s)} T_{w_0}^{-1}  T_{sw_0}= -\mathbf v^{2\lambda(s)} T_{w_0sw_0}^{-1} = \tIM (T_s).
    \end{equation}
\end{proof}

\begin{lemma}\label{lem:kato}
    For $\mathbf M\in \Mod^{fl}(\bfH)$
    \begin{align}
    [\IM_*\mathbf M] = \sum_{P\subseteq \Delta} (-1)^{|P|}\hphantom{ }'i_{\bfH_P}^{\bfH}\circ r_{\bfH_P}^{\bfH}([\mathbf M]) \quad \in K_0(\Mod^{fl}(\bfH)).
\end{align}
\end{lemma}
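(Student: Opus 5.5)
This is Kato's theorem on the Iwahori--Matsumoto involution. The plan is to reduce it to an identity of characters on the toral subalgebra and then invoke the rigidity results of Section~\ref{sec:rigidity}. By Corollary~\ref{cor: rigidity for affine Hecke on modules}, and using that $\bfH$ is quasi-simply connected in our applications, it suffices to check that both sides have the same restriction to $\bA$ in $K_0(\Mod^{fl}(\bA))$; modules at $\mathbf v=1$ can be handled separately or excluded. By Lemma~\ref{lem:conj-IM} we may also replace $\IM$ by $\tIM$, which acts on $\bA$ simply by $\theta_x\mapsto\theta_{w_0(x)}$. So the left side has weights $\{(w_0 z, v)\}$, where $(z,v)$ runs over the weights of $\mathbf M$ with multiplicity.

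For the right side, I compute the $\bA$-character of ${}'i_{\bfH_P}^{\bfH}\circ r_{\bfH_P}^{\bfH}(\mathbf M)$ via a Mackey/Bernstein-type filtration. As an $\bA$-module, $\Hom_{\bfH_P}(\bfH,N)$ has a filtration indexed by the minimal length coset representatives $w\in W^P$ of $W/W_P$. Its associated graded pieces are twists of $N|_{\bA}$, with weights transformed by $w$. This uses the Bernstein presentation relation \eqref{eq:bernsteinrelation4}, which shows that $\theta_x$ acts triangularly with respect to the basis $T_w$, with diagonal entries $\theta_{w(x)}$. Hence the weight multiset of ${}'i\circ r(\mathbf M)$ is $\sum_{w\in W^P} w\cdot\mathrm{wt}(\mathbf M)$.

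The right side therefore has character $\sum_{P\subseteq\Delta}(-1)^{|P|}\sum_{w\in W^P} w\cdot\chi$, where $\chi$ is the weight character of $\mathbf M$. This reduces the claim to the group ring identity
\[
\sum_{P\subseteq\Delta}(-1)^{|P|}\sum_{w\in W^P} w \;\equiv\; w_0
\]
acting on weight characters. That identity is false in $\mathbb Z[W]$ itself, since the Solomon identity gives the sign character instead. However, $\chi$ is invariant under the cosets $W^P$ in the sense of Corollary~\ref{cor:weights-symmetry}, so it only needs to hold on such $\chi$.

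\textbf{Main obstacle.} This last step is where I expect trouble: the naive $\bA$-level argument loses the Hecke structure. So I would instead follow Kato's original route. First show that both sides define additive operators on $K_0$ that commute with parabolic induction in the appropriate sense, and that $\IM^*$ intertwines $i_{\bfH_P}$ with ${}'i_{\bfH_P}$ up to $\IM_P$. Then compare on standard modules $i_{\bA}^{\bfH}\bA_{(z,v)}$, whose classes span $K_0$. On these, transitivity of induction reduces the sum to an alternating sum over subsets computable by the Solomon-type inclusion--exclusion for $W$. The identity is verified there directly using the formula $\IM^* i_{\bA}^{\bfH}\bA_{(z,v)}\cong i_{\bA}^{\bfH}\bA_{(w_0z,v)}$ up to semisimplification. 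I expect that semisimplification identity, which comes from the explicit action of $T_{w_0}$ and the relation between $\IM$ and $\tIM$, to be the delicate point.
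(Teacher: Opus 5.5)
\textbf{The gap.} The reduction you finally adopt does not work. You propose to compare both sides on the principal series $i_{\bA}^{\bfH}\bA_{(z,v)}$ ``whose classes span $K_0$''. They do not span $K_0(\Mod^{fl}(\bfH))$.

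The class $[i_{\bA}^{\bfH}\bA_{(z,v)}]$ depends only on the orbit $W\cdot(z,v)$. To see this, note that traces on the principal series are regular in $(z,v)$ and, by intertwining operators, $W$-invariant generically. So each central-character block contributes at most a rank-one subgroup, while a block can contain several simple modules.

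Already for $\bfH$ of type $A_1$, at a central character where the principal series is reducible, every principal series in the block has class $[\mathrm{triv}]+[\mathrm{St}]$. Checking the identity on these modules therefore cannot separate $\mathrm{St}$ from $\mathrm{triv}$. Yet the fact that $\IM$ interchanges them is exactly what the lemma must capture. Compatibility with parabolic induction does not rescue this, because in that example $[\mathrm{St}]$ lies outside the span of all properly induced classes. So the proposal as written has no valid reduction step, and the ``delicate point'' you flag is not where the difficulty lies.

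\textbf{The route you abandoned works.} Your weight computation for ${}'i_{\bfH_P}^{\bfH}\circ r_{\bfH_P}^{\bfH}$ is correct, with weights $\sum_{w\in W^P}w\cdot\chi$ for $W^P$ the minimal length representatives of $W/W_P$. The identity $\sum_{P\subseteq\Delta}(-1)^{|P|}\sum_{w\in W^P}w=w_0$ holds exactly in $\mathbb Z[W]$:
\begin{itemize}
\item $w\in W^P$ if and only if $P$ is disjoint from the right descent set $D_R(w)$;
\item hence the coefficient of $w$ is $\sum_{P\subseteq\Delta\setminus D_R(w)}(-1)^{|P|}$;
\item this vanishes unless $D_R(w)=\Delta$, that is, unless $w=w_0$.
\end{itemize}
Solomon's sign-character identity concerns the permutation characters $\mathrm{Ind}_{W_P}^{W}\mathbf 1$, not this group-ring element.

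Combined with Lemma \ref{lem:conj-IM} and Corollary \ref{cor: rigidity for affine Hecke on modules}, this proves the formula, but only for quasi-simply connected $\bfH$ and $\mathbf v\neq 1$. That suffices for Theorem \ref{thm:az}, but not for the lemma as stated.

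\textbf{The paper's argument.} The paper's proof is general and avoids rigidity altogether. It quotes \cite[Theorem 2]{kato1993duality}, which gives the alternating sum with $i_{\bfH_P}^{\bfH}$ and $\IM^*$. It then converts this to the ${}'i$ version via the contravariant duality $\mathbf M\mapsto\mathbf M^*$ built from $\bullet$. This duality commutes with restriction and with $\IM^*$, and satisfies ${}'i_{\bfH_P}^{\bfH}(\mathbf M^*)\cong (i_{\bfH_P}^{\bfH}\mathbf M)^*$. Finally it uses $[\IM^*\mathbf M]=[\IM_*\mathbf M]$.
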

\begin{proof}
    By \cite[Theorem 2]{kato1993duality}, for $\mathbf M\in \Mod^{fl}(\bfH)$
    \begin{align}
    [\IM^*\mathbf M] = \sum_{P\subseteq \Delta} (-1)^{|P|}i_{\bfH_P}^\bfH\circ r_{\bfH_P}^\bfH([\mathbf M]) \quad \in K_0(\Mod^{fl}(\bfH)).
    \end{align}
    Since $(i_{\bfH_P}^\bfH,r_{\bfH_P}^\bfH)$ and $(r_{\bfH_P}^\bfH,\hphantom{ }'i_{\bfH_P}^\bfH)$ are adjoint pairs and $\mathbf M \mapsto \mathbf M ^*$ is a contravariant equivalence that commutes with $r_{\bfH_P}^\bfH$, we have that $'i_{\bfH_P}^\bfH(\mathbf M^*) \simeq (i_{\bfH_P}^\bfH(\mathbf M))^*$.
    Since $(-)^*$ is exact, and commutes with $\IM^*$ and $r_{\bfH_P}^{\bfH}$ we get that
    \begin{equation}[\IM^*\mathbf M] = (\IM^*([\mathbf M]^*))^* = \sum_{P\subset \Delta} (-1)^{|P|}(i_P\circ r_P([\mathbf M]^*))^* = \sum_{P\subset \Delta}\hphantom{ }'i_P\circ r_P([\mathbf M]).\end{equation}
    Finally $[\IM^* \mathbf M] = [\IM_*\mathbf M]$ since $\IM$ is an involution.
\end{proof}

Combining Lemma \ref{lem:kato} with Lemma \ref{lem:conj-IM} we get the following corollary.

\begin{corollary}\label{cor:kato}
    For $\mathbf M\in \Mod^{fl}(\bfH)$
    \begin{align}
    [\tIM_*\mathbf M] = \sum_{P\subseteq \Delta} (-1)^{|P|}\hphantom{ }'i_{\bfH_P}^{\bfH}\circ r_{\bfH_P}^{\bfH}([\mathbf M]) \quad \in K_0(\Mod^{fl}(\bfH)).
\end{align}
\end{corollary}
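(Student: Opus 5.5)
The plan is to deduce Corollary \ref{cor:kato} directly from Lemma \ref{lem:kato} by showing that $\IM_*$ and $\tIM_*$ induce the same endomorphism of $K_0(\Mod^{fl}(\bfH))$. The right-hand side of the claimed identity is literally the right-hand side of Lemma \ref{lem:kato}, so it suffices to prove $[\tIM_*\mathbf M] = [\IM_*\mathbf M]$ for every $\mathbf M\in\Mod^{fl}(\bfH)$.

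\textbf{Step 1: compare the two involutions.} By Lemma \ref{lem:conj-IM}, $\tIM = c\circ \IM$, where $c(h) = T_{w_0}^{-1}hT_{w_0}$ is an inner automorphism of $\bfH$. The only input needed is that $T_{w_0}$ is invertible. This holds because each $T_s$ is invertible by the quadratic relation \eqref{eq:bernsteinrelation1}, with $T_s^{-1} = \mathbf v^{-2\lambda(s)}(T_s - \mathbf v^{2\lambda(s)} + 1)$. In particular the expressions $\IM(T_s)$ and $\tIM(T_s)$ make sense.

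\textbf{Step 2: inner automorphisms act trivially on modules up to isomorphism.} For any module $\mathbf N$, the map $m\mapsto T_{w_0}m$ gives an isomorphism $c^*\mathbf N\xrightarrow{\sim}\mathbf N$. Indeed, $T_{w_0}\cdot(c(h)m) = hT_{w_0}m$. Hence $c_*\mathbf N\simeq c^*\mathbf N\simeq \mathbf N$ as $\bfH$-modules. Applying this to $\mathbf N = \IM_*\mathbf M$ gives
\begin{equation}
\tIM_*\mathbf M = c_*\IM_*\mathbf M \simeq \IM_*\mathbf M,
\end{equation}
and this is an honest isomorphism, not merely an equality of classes. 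One must be careful here with the conventions for pushforward versus pullback along an automorphism. However, both $\IM$ and $\tIM$ are involutions, so $\IM_* = \IM^*$ and $\tIM_* = \tIM^*$, and the order of composition is harmless because $c$ is inner in either order. Lemma \ref{lem:conj-IM} already records the resulting equality $[\IM^*\mathbf M] = [\tIM^*\mathbf M]$.

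\textbf{Step 3: substitute.} Inserting $[\tIM_*\mathbf M] = [\IM_*\mathbf M]$ into Lemma \ref{lem:kato} gives the stated formula. There is no real obstacle. The only point needing attention is the bookkeeping of $*$ versus upper-star conventions in Step 2, and it is resolved by the involutivity remark made there.
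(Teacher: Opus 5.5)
Your proposal is correct and is exactly the paper's argument. Lemma \ref{lem:conj-IM} shows that the twisted involution is the Iwahori--Matsumoto involution followed by conjugation by the invertible element $T_{w_0}$, so the two twisted modules are isomorphic via $m\mapsto T_{w_0}m$ and hence equal in $K_0$; substituting this into Lemma \ref{lem:kato} gives the formula, with your invertibility check for $T_s$ and the explicit intertwiner simply spelling out details the paper leaves implicit.
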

The following theorem determined what happens to $ \tIM$ under the reduction theorems.
\begin{theorem}
    \label{thm:reduction}
    Let $\bfH$ be a quasi-simply connected affine Hecke algebra.
    Let $\mathbf M\in \Mod^{fl}_{\mathcal O}(\mathbf H)$, and $z\in \mathcal O$.
    Let $\uAZ:\bfH_{[z_c]}\to \bfH_{[z_c]}$ be the involution for the graded Hecke algebra $\bfH_{[z_c]}$ defined in Definition \ref{def:graded-involution}.
    Then
    \begin{equation}
        [(\tIM_*\mathbf M)_{[z_c]}] = [\uAZ_*(\mathbf M_{[z_c]})].
    \end{equation}
\end{theorem}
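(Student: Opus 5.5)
The plan is to reduce the statement to a comparison of weights and then invoke the rigidity result for graded Hecke algebra modules, Proposition \ref{prop:module-rigid}. Both sides of the claimed identity are classes in $K_0(\Mod^{fl}_{\mathbb R,\br\neq 0}(\bfH_{[z_c]}))$. The reduction $(-)_{[z_c]}$ is exact by Theorem \ref{thm:red-real-inf}, and pushforward along an algebra automorphism is exact. Since $\bfH$ is quasi-simply connected, Lemma \ref{lemma: gamma z trivial} shows that $\bfH_{[z_c]}$ is a genuine graded Hecke algebra with $\Gamma_{z_c}$ trivial. The weights of the right-hand side lie in $\mathfrak t_{\mathbb R}\oplus\mathbb R_{\ne0}$, because $\uAZ$ fixes $\br$ and acts on $\mathfrak t$ by the real map $w_0$. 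Hence it suffices to show that the two classes have the same restriction to the toral part $\ubA$.

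To compute the weights of $(\tIM_*\mathbf M)_{[z_c]}$, I would first compute the weights of $\tIM_*\mathbf M$ as an $\bfH$-module and then apply Lemma \ref{lem:red-weights}. The involution $\tIM$ restricts on $\bA$ to $\theta_x\mapsto\theta_{w_0(x)}$ and $\mathbf v\mapsto\mathbf v$. Consequently, if $\mathbf M|_{\bA}$ has weight multiplicities $m(s,v)$, then $\tIM_*\mathbf M$ has multiplicities $m'(s,v)=m(w_0(s),v)$, up to the harmless choice of $w_0$ versus $w_0^{-1}=w_0$. Lemma \ref{lem:red-weights} then gives the weights of the reduction:
\[
m'(z_c\exp\sigma,e^r)=m(w_0(z_c)\exp(w_0\sigma),e^r).
\]
On the other side, $\uAZ$ acts on $\ubA$ by $x\mapsto w_0(x)$, so $\uAZ_*(\mathbf M_{[z_c]})$ has weight $(\sigma,r)$ with multiplicity $m(z_c\exp(w_0\sigma),e^r)$. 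Here I would use $\uAZ$ defined with respect to the Weyl group $W_{z_c}$ and its own longest element $w_{0,z_c}$. This is where the two computations meet a genuine issue: the longest element of $W$ need not fix $z_c$, and $\uAZ$ on $\bfH_{[z_c]}$ uses the longest element of $W_{z_c}$, not that of $W$.

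The main obstacle is therefore to reconcile $m(w_0(z_c)\exp(w_0\sigma),e^r)$ with $m(z_c\exp(w_{0,z_c}\sigma),e^r)$. I would handle it with the hidden symmetry of Corollary \ref{cor:weights-symmetry}. Write $w_0=w\,w_{0,z_c}$ with $w$ of minimal length in its coset, so that $w$ is the minimal-length element carrying $z_c$ to $w_0(z_c)$; this is exactly the input Corollary \ref{cor:weights-symmetry} requires. The corollary gives
\[
m(w_0(z_c)\exp(w_0\sigma),v)=m\bigl(w^{-1}(w_0(z_c)\exp(w_0\sigma)),v\bigr)=m(z_c\exp(w_{0,z_c}\sigma),v).
\]
To justify the coset factorization, I would check that the minimal-length representative in $w_0W_{z_c}$ satisfies the length hypothesis, using that $R^+_{z_c}=R_{z_c}\cap R^+$ and that $w_{0,z_c}$ is longest in $W_{z_c}$.

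Once the weights agree, Proposition \ref{prop:module-rigid} yields equality in $K_0$. Should the identification of the minimal coset representative fail, the fallback is to work with $z_c$ replaced by a conjugate in $\mathcal O_c$ and transport along the isomorphism $f_w$ of Lemma \ref{lem:red-weights-at-different-z}. That isomorphism intertwines the $\uAZ$ of the two reductions because it is conjugation by $t_w$, which maps simple reflections of $\Delta_{z_c}$ to simple reflections.
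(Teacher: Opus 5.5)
Your proposal is correct and follows the paper's proof. You reduce to weights via Proposition~\ref{prop:module-rigid} and compute them with Lemma~\ref{lem:red-weights}. You then use Corollary~\ref{cor:weights-symmetry} with $w_0w_{0,z_c}$ as the minimal-length element sending $z_c$ to $w_0(z_c)$, and compare with $\uAZ_*$, which acts through the longest element of $W_{z_c}$.

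The one step you leave as a sketch is showing that $w_0w_{0,z_c}$ is minimal in $w_0W_{z_c}$ even though $W_{z_c}$ is only a reflection subgroup. The paper settles this with Dyer's theorem that $w_{0,z_c}$ is maximal for the length function of $W$. Your positivity observation $w_0w_{0,z_c}(R^+_{z_c})\subseteq R^+$ also suffices once you combine it with the criterion $\ell(xs_\gamma)>\ell(x)\iff x(\gamma)\in R^+$ (for $\gamma\in R^+$) and induct along a reduced word in $W_{z_c}$.
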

\begin{proof}
    By Proposition \ref{prop:module-rigid} it suffices to check that both sides have the same restriction to $\ubA$ in the Grothendieck group.
    Suppose 
    \begin{equation}
        [\mathbf M|_{\bA}] = \sum_{(z,v)\in \mathcal O}m(z,v)[\bA_{(z,v)}].
    \end{equation}
    Since $\tIM(\theta_x) = \theta_{w_0(x)}$, we have $[\tIM_*\mathbf M|_{\bA}] = \sum_{(z,v)\in \mathcal O}m(w_0(z),v)[\bA_{(z,v)}]$.
    By Lemma \ref{lem:red-weights}, we have
    \begin{equation}
        [(\tIM_*\mathbf M)_{[z_c]}|_{\ubA}] = \sum_{(\sigma,r)}m(w_0(z_c\exp(\sigma)),\exp(r))[\ubA_{(\sigma,r)}].
    \end{equation}
    Let $v_0$ be the longest element in $W_{z_c}$. Note that $\Gamma_{z_c} = \{ e \}$ by Lemma \ref{lemma: gamma z trivial} so $W_{z_c}$ is the Weyl group of $(E, \Delta_z)$. In particular $W_{z_c}$ is a reflection subgroup of $W$. Hence, $v_0 \in W_{z_c}$ is also the maximal element of $W_{z_c}$ with respect to the length function of $W$ (this follows from \cite[Theorem 1.4]{dyer1991bruhat} which states that the Bruhat graph of a reflection subgroup is the corresponding full subgraph of the Bruhat graph of $W$). Hence, $w_0 v_0$ is the shortest element in $W$ sending $z_c$ to $w_0(z_c)$. Therefore by Corollary \ref{cor:weights-symmetry}
    \begin{equation}
        m(w_0(z_c \exp(\sigma)), \exp(r)) = m(v_0(z_c \exp(\sigma)), \exp(r)) =  m(z_c \exp(v_0(\sigma)), \exp(r))
    \end{equation}
    and thus
    \begin{equation}
        [(\tIM_*\mathbf M)_{[z_c]}|_{\ubA}] = \sum_{(\sigma,r)}m(z_c\exp(v_0(\sigma)),\exp(r))\ubA_{(\sigma,r)}.
    \end{equation}
    Finally noting $\uAZ_*\ubA_{(\sigma,r)} = \ubA_{(v_0(z),r)}$, we see that $[(\tIM_*\mathbf M)_{[z_c]}]$ and $[\uAZ_*(\mathbf M_{[z_c]})]$ have the same restriction to $\ubA$ and so they are equal by Proposition \ref{prop:module-rigid}.
\end{proof}
We can use this to compute Aubert-Zelevinsky duality on (geometric) parameters.
\begin{theorem}\label{thm:az}
    Let $\pi\in \Irr^u(G)$.
    If $\mathbf r'_G(\pi) = (u,P,\mathbf c, \ubL)$ then 
    \begin{equation}
        \mathbf r'_G(|\mathscr{AZ}(\pi)|) = (u,P,\mathbf c, \uAZ_*\ubL).
    \end{equation}
    Moreover, if $\mathbf c = (L,\mathcal C,\mathcal F)$ then $\mathscr{AZ}([\pi]) = (-1)^d |\mathscr{AZ}(\pi)|$ where $d = \dim(Z(L)^\circ)$.
\end{theorem}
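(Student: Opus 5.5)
The plan is to chain together the results already proved: Proposition \ref{prop:az} to move from the $p$-adic side to the affine Hecke algebra, Corollary \ref{cor:kato} to recognise the twisted Iwahori--Matsumoto involution, Theorem \ref{thm:reduction} to pass to the graded Hecke algebra, and finally the normalisation $\underline\iota$ together with the Kottwitz identification.

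First I apply $\mathbf M^\bullet_{\mathfrak r,Q}$. Proposition \ref{prop:az} gives
\begin{equation}
\mathbf M^\bullet_{\mathfrak r,Q}(\AZ([\pi])) = (-1)^{d'}\sum_{J\subset\Delta}(-1)^{|J|}\,{}'i_{\bfH_J}^{\bfH}\circ r_{\bfH_J}^{\bfH}\bigl(\mathbf M^\bullet_{\mathfrak r,Q}([\pi])\bigr),
\end{equation}
where $d'=\dim A_M$. By Corollary \ref{cor:kato} the right-hand side equals $(-1)^{d'}[\tIM_*\mathbf M^\bullet_{\mathfrak r,Q}(\pi)]$. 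Since $\tIM$ is an algebra automorphism, $\tIM_*$ of a simple module is simple. Hence $\AZ([\pi])$ is $(-1)^{d'}$ times the class of an irreducible representation, which is $|\AZ(\pi)|$, and
\begin{equation}
\mathbf M^\bullet_{\mathfrak r,Q}(|\AZ(\pi)|)\cong \tIM_*\mathbf M^\bullet_{\mathfrak r,Q}(\pi).
\end{equation}
Because $\tIM$ fixes $\mathbf v$ and sends $\theta_x\mapsto\theta_{w_0(x)}$, the central character orbit $\mathcal O$ is preserved. So $|\AZ(\pi)|$ has the same cuspidal support up to $W^{\mathfrak r}$, the same $\nu_c$ up to conjugacy, and therefore the same $u$, $P$ and $\mathbf c$ in $\mathbf r'_G$.

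Next I pass through reduction. By Theorem \ref{thm:reduction}, $(\tIM_*\mathbf M)_{[\nu_c]}\cong\uAZ_*(\mathbf M_{[\nu_c]})$. This is an equality in $K_0$ between simple modules, hence an isomorphism. I then transport this along $\underline{\mathbf k}^*_{\nu_c}$ and $\underline\iota$, which requires checking two commutation relations.

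The first is that $\uAZ$ commutes with $\underline\iota$. Both are sign twists on $t_w$: $\uAZ$ sends $t_w\mapsto(-1)^{l(w)}t_{w_0ww_0^{-1}}$ and $\underline\iota$ sends $t_w\mapsto(-1)^{l(w)}t_w$. Since $l(w_0ww_0^{-1})=l(w)$, checking on generators shows they commute.

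The second is that $\underline{\mathbf k}^*_{\nu_c}$ intertwines the two $\uAZ$'s, one for $\gH(\xi_0\otimes\kott(\nu_c),Q^\vee)$ and one for $\bfH(\mathfrak r,Q)_{[\nu_c]}$. I would prove this with Corollary \ref{cor:rigid}. The two composites agree on the toral part, because $\underline\kott$ is equivariant for the relative Weyl group (Corollary \ref{cor:sc-llc-equivariance}) and so commutes with $w_0$. They also preserve the grading, so they agree everywhere.

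I expect this second step to be the main subtlety. It requires the longest element $v_0$ of $W_{z_c}$, relative to the positive system determined by $Q$, to match the longest element of the relative Weyl group of $G^\vee(u)$ relative to $Q^\vee(u)$. The Kottwitz identification should carry $R^+_{z_c}$ to the positive relative roots of $Q^\vee(u)$, but this compatibility of positive systems is what I would have to check carefully. Once both steps hold, $\mathbf M_{\nu_c}(|\AZ(\pi)|)=\uAZ_*\ubL$, which gives the first claim.

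For the sign, $d'=\dim A_M$ is the dimension of the split centre of $M$. By Lemma \ref{lem:center} applied to $M$, $Z(M^\vee(s_c))^\circ=Z(M^\vee)^{\vartheta,\circ}$, and $\dim Z(M^\vee)^{\vartheta,\circ}=\dim A_M$. Here $L=M^\vee(u)$, so $d'=\dim Z(L)^\circ$, as required.
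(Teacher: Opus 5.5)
Your proposal is correct and follows the paper's proof essentially step for step: Proposition~\ref{prop:az} combined with Corollary~\ref{cor:kato}, then Theorem~\ref{thm:reduction}, then commuting $\uAZ$ past $\underline{\mathbf k}^*_{\nu_c}$ and $\underline\iota$ (the paper handles both via Corollary~\ref{cor:rigid}), and Lemma~\ref{lem:center} for the sign. The positive-system compatibility you flag is automatic, and the paper uses it without comment: a $\mathbb C[\br]$-linear graded isomorphism such as $\underline{\mathbf k}^*_{\nu_c}$ must send each simple $t_s$ to a scalar multiple of $t_{s'}$ with $s'$ simple, because for a non-simple reflection $s_\beta$ the commutator $x t_{s_\beta}-t_{s_\beta}s_\beta(x)$ does not lie in $\mathbb C\br$; hence it induces an isomorphism of Coxeter systems and carries $v_0$ to the longest element on the geometric side.
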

\begin{proof}
    Let $(M,\tau)_G$ be the cuspidal support of $\pi$. Let $\mathfrak r = [M,\tau]_{M}$ and $Q\in \mathcal Q(M)$.
    Let $(M,\tau_0)\in \mathfrak r$ be the base point for $\mathfrak r$ and $\xi_0 = \mathbf r_M^{cusp}(\tau_0) = (s_0,N_0,\rho_0)$.
    Let $\nu\in T$ be the unique element such that $\tau = \tau_0\otimes\nu$. 
    
    By Equation \eqref{eq:r-prime}
    \begin{equation}
        \mathbf r'_G(\pi) = (u,Q^\vee(u),\mathbf c(M^\vee,\xi_0\otimes\kott(\nu)), \mathbf M_{\nu_c}(\pi))
    \end{equation}
    where $u = (s_0\kott(\nu))_c$.
    Note that Aubert-Zelevinsky duality preserves cuspidal support so 
    \begin{equation}
        \mathbf r'_G(|\mathscr{AZ}(\pi)|) = (u,Q^\vee(u),\mathbf c(M^\vee,\xi_0\otimes\kott(\nu)), \mathbf M_{\nu_c}(|\mathscr{AZ}(\pi)|)).
    \end{equation}
    Recall that $\mathbf M_{\nu_c} := \underline\iota_* \circ (\underline{\mathbf k}_{\nu_c}^*)^*\circ (-)_{[\nu_c]}\circ\mathbf M^\bullet_{\mathfrak r, Q}$ where 
    \begin{equation}
        \underline\kott_{\nu_c}^*:\gH(\xi_0\otimes \nu_c,Q^\vee)\to \bfH(\mathfrak r,Q)_{[\nu_c]}
    \end{equation}
    be the isomorphism from Theorem \ref{thm:kott-compatibility}.
    By Proposition \ref{prop:az} and Corollary \ref{cor:kato}, 
    \begin{equation}
        [\mathbf M^\bullet_{\mathfrak r,Q}(\mathscr{AZ}([\pi]))] = (-1)^d[\tIM_*\mathbf M^\bullet_{\mathfrak r,Q}(\pi)].
    \end{equation}
    By Theorem \ref{thm:reduction},
    \begin{equation}
        [(\tIM_*\mathbf M^\bullet_{\mathfrak r,Q}(\pi))_{[\nu_c]}] = [\uAZ_*(\mathbf M^\bullet_{\mathfrak r,Q}(\pi)_{[\nu_c]})].
    \end{equation}
    By Corollary \ref{cor:rigid} we have $\underline{\mathscr{AZ}}\circ\underline{\kott}_\nu^*= \underline{\kott}_{\nu}^*\circ \underline{\mathscr {AZ}}$ since they are both given by conjugation by the longest element on the toral subalgebra. Similarly $\underline{\mathscr{AZ}}$ commutes with $\underline{\iota}$.
    Therefore
    \begin{equation}
        \underline{\iota}_* \circ (\underline{\kott}_{\nu_c}^*)^*\circ \uAZ_* = \uAZ_* \circ \underline{\iota}_* \circ  (\underline{\kott}_{\nu_c}^*)^*. 
    \end{equation}
    Thus, we have $ [\mathbf M_{\nu_c}(\mathscr{AZ}(\pi))]= (-1)^d[\underline{\mathscr{AZ}}_*\mathbf M_{\nu_c}(\pi)]$ and so if
    \begin{align}
        \mathbf r'_G(\pi) =(u,Q^\vee(u),\mathbf c,\ubL)
    \end{align}
    where $\ubL = \underline\iota_*\circ (\underline{\kott}_z^*)^*\circ\mathbf M^\bullet_{\mathfrak r,Q}(\pi)_{[z_c]}$ then 
    \begin{align}
        \mathbf r'_G(|\mathscr{AZ}(\pi)|) =(u,Q^\vee(u),\mathbf c,\uAZ_*\ubL)
    \end{align}
    as required.
    
    For the last part note that by Proposition \ref{prop:az}, $d = \dim(A_M)$.
    The map $T\to \mathfrak r\to \Irr(A_M)$ where the second map is the central character map, is an isogeny of tori.
    The map $\kott:Z(M^\vee)^{\vartheta,\circ}\to X_{\un}(M)$ is an isomorphism. Finally by Lemma \ref{lem:center}, $Z(M^\vee)^{\vartheta,\circ} = Z(L)^\circ$.
    Therefore $d = \dim(Z(L)^\circ)$.
\end{proof}

\section{Geometric Operations on Parameters}
In this section we consider three geometric operations: The Fourier transform, the Chevalley involution and split complex conjugation. We first introduce these operations on the parameter set for the graded Hecke algebra $\underline \Phi(H)$ and show that they are induced by certain isomorphisms of graded Hecke algebras. Then we show how this can be lifted to the set of $L$-parameters and we use this to prove Theorem \ref{thm:A}.

\subsection{The Fourier Transform}
Let $(\sigma,r)\in \mathfrak h\oplus \mathbb C$ with $\sigma$ semisimple. Recall that we can identify
\begin{equation}
    \label{eq:ft-lie-alg}
    \{ (N, \rho) \mid (\sigma, N, \rho, r) \in \underline\Phi(H) \} \xrightarrow{\sim} \Irr(\Perv_{H(\sigma)}(\mathfrak h_{(\sigma,r)})).
\end{equation}

The Fourier transform is a an equivalence of categories
\begin{equation}\Perv_{H(\sigma)}(\mathfrak h_{(\sigma,r)})\xrightarrow{\sim}\Perv_{H(\sigma)}(\mathfrak h^*_{(\sigma,r)}), \qquad \mathcal F\mapsto \check {\mathcal F}.\end{equation}
The Killing form induces a $H(\sigma)$-equivariant isomorphism $\phi_B:\mathfrak h_{(-\sigma,r)}\xrightarrow{\sim} \mathfrak h_{(\sigma,r)}^*.$
Composing the Fourier transform with the pull back along $\phi_B^*$ we obtain an equivalence of categories
\begin{equation} \Perv_{H(\sigma)}(\mathfrak h_{(\sigma,r)})\to \Perv_{H(\sigma)}(\mathfrak h_{(-\sigma,r)}), \qquad \mathcal{F} \mapsto \phi_{B}^* \check{\mathcal{F}}.
\end{equation}
Let $\check{N}, \check{\rho}$ be defined by $\phi_{B}^* \check{\IC}(H(\sigma) \cdot N , \rho)= \IC(H(\sigma) \cdot \check{N} , \check{\rho})$. Using the bijection in Equation \eqref{eq:ft-lie-alg}, we thus obtain a map
\begin{align}
    \underline\Phi(H)&\to \underline\Phi(H), \\
    (\sigma,N,\rho,r)&\mapsto (-\sigma,\check N,\check \rho,r).
\end{align}
Recall from Definition \ref{def:graded-involution} that the graded Hecke algebra $\gH(H, P, \mathbf c)$ comes with an involution $\ubFT$ which is defined by $x \mapsto -x$, $t_s \mapsto -t_s$ and $\br \mapsto \br$.
\begin{theorem}
    \label{thm:em}
    \cite{evens1997fourier} Let $\mathbf c\in \underline{\mathfrak z}(H)$ and $(\sigma,N,\rho,r)\in \underline\Phi^{\mathbf c}(H)$.
    Then
    \begin{align}
        \ubFT_*(\ubL_{(\sigma,N,\rho,r)}) &= \ubL_{(-\sigma,\check N,\check \rho,r)}.
    \end{align}
\end{theorem}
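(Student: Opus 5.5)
The plan is to reduce the statement to the main result of Evens--Mirkovi\'c, and to isolate precisely which normalizations have to be matched against the conventions of this paper. First I fix notation. Write $K = K_{H,P,\mathbf c}$ and $\mathfrak h_{(\sigma,r)} = \{x : [\sigma,x] = 2rx\}$. By the construction recalled after Theorem~\ref{thm:sigma-correspondence}, the standard module $\ubM_{(\sigma,N,\rho,r)}$ is obtained from the costalk of $K$ at $N$. Equivalently, via the localization theorem for $\hat H(\sigma,r)$-equivariant cohomology, it is obtained from the restriction of $K$ to $\mathfrak h_{(\sigma,r)}$ (more precisely, the fixed points of the torus generated by $(\sigma,r)$). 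Decompose this restriction as $K|_{\mathfrak h_{(\sigma,r)}} = \bigoplus_{(N',\rho')} \IC(H(\sigma)N',\rho')\otimes V_{N',\rho'}$, where the $V_{N',\rho'}$ are graded multiplicity spaces. The simple module $\ubL_{(\sigma,N,\rho,r)}$ is then identified, after specialization, with the multiplicity space $V_{N,\rho}$. This identification is the usual Ginzburg-style description in Lusztig's framework.

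The second step is to apply the Fourier--Deligne transform on $\mathfrak h$. I would use that $\Ind_{L\subset P}^H$ commutes with the Fourier transform, with $P$ replaced by its opposite $\bar P$. I would also use that the Fourier transform of a cuspidal $\IC(\overline{\mathcal C},\mathcal F)$ on $\mathfrak l$ is the corresponding cuspidal sheaf, supported on $\mathfrak z_L^* + \mathcal C$ (Lusztig). Together with the Killing form identification $\phi_B$, these give an isomorphism $\phi_B^*\check K \simeq K_{H,\bar P,\mathbf c}$, and hence an algebra isomorphism on $\Ext^*(K,K)$ under the identification of Lusztig \S8.13. To compute this isomorphism, by Corollary~\ref{cor:rigid} it suffices to determine it on the toral part $\mathbb C[\br]\otimes S(\mathfrak z_L^*)$ and to check that it is graded. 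On $H^*_{\widehat{Z_L^\circ}}(\pt)$, the Fourier transform intertwines the dilation action with its inverse; combined with the swap $\sigma \mapsto -\sigma$ coming from $\phi_B$, this yields $x\mapsto -x$ and $\br\mapsto\br$. Composing with $c_w$ of Lemma~\ref{lem:parabolics}, which moves $\bar P$ back to $P$, and invoking rigidity once more, I expect the total map to be $\ubFT$. The sign $t_s\mapsto -t_s$ is then forced by the defining relation $x t_s - t_s s(x) = \mu\langle x,\alpha^\vee\rangle\br$ under $x\mapsto -x$; it is consistent with the $w_0$-twist in $c_{w_0}$.

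The third step is the compatibility with restriction. Fourier transform commutes with restriction to the fixed-point linear subspace $\mathfrak h_{(\sigma,r)}$, up to the identification $(\mathfrak h^*)_{(\sigma,r)} \simeq \mathfrak h_{(-\sigma,r)}$ and a cohomological shift, since restricting to a linear subspace is Fourier dual to pushing forward along the dual projection. The summand $\IC(H(\sigma)N,\rho)\otimes V_{N,\rho}$ is thereby sent to $\IC(H(\sigma)\check N,\check\rho)\otimes V_{N,\rho}$. The multiplicity space $V_{N,\rho}$, with its module structure transported via the isomorphism from the second step, is then $\ubFT_*\ubL_{(\sigma,N,\rho,r)}$, and it equals $\ubL_{(-\sigma,\check N,\check\rho,r)}$. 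This is Evens--Mirkovi\'c's argument.

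The main obstacle is the second step, specifically the bookkeeping of normalizations. One has to pin down the exact shift and sign conventions, and check whether the Fourier transform really gives $P$ or $\bar P$. If it gives $\bar P$, the answer for the $\Ext$-algebra isomorphism differs from $\ubFT$ by $c_{w_0}$ on the nose, and one then relies on Lemma~\ref{lem:parabolics} to see that this does not affect the parameters. I would handle this purely through the toral part together with Corollary~\ref{cor:rigid}, and compute the toral action by hand on the rank-zero (cuspidal) case via Proposition~\ref{prop:cusp}.
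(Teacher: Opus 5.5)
The paper gives no argument of its own here: Theorem~\ref{thm:em} is simply quoted from \cite{evens1997fourier}, so your top-level plan of reducing to Evens--Mirkovi\'c agrees with it. The argument you sketch as theirs, however, has a genuine gap at Step~2, and Step~3 inherits it.

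The claimed isomorphism $\phi_B^*\check K\simeq K_{H,\bar P,\mathbf c}$ is false, and it contradicts your own input. Fourier transform does commute with $\Ind_{L\subset P}^H$ (up to shift and the Killing-form identification). But the transform of the cuspidal $\IC(\overline{\mathcal C},\mathcal F)$ on $\mathfrak l$ is, up to shift, $\IC(\mathfrak z_L+\mathcal C,\mathbf 1\boxtimes\mathcal F)$. So the transform of $K=K_{H,P,\mathbf c}$ is the generalized Grothendieck--Springer complex $\Ind_{L\subset P}^H\IC(\mathfrak z_L+\mathcal C,\mathbf 1\boxtimes\mathcal F)$. Its support $\overline{H\cdot(\mathfrak z_L+\mathcal C+\mathfrak n_P)}$ leaves the nilpotent cone whenever $\mathfrak z_L\neq 0$. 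For $L=T$ this is the classical fact that the Fourier transform of the Springer sheaf is the Grothendieck--Springer sheaf.

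Consequently the global transform on $\mathfrak h$ gives no automorphism of $\Hom^*_{D^b_{\hat H}(\mathfrak h)}(K,K)$ to which Corollary~\ref{cor:rigid} could be applied. The module structure you propose to transport in Step~3 is therefore not defined. Step~3 is also wrong as stated. The Fourier dual of restriction to $\mathfrak h_{(\sigma,r)}$ is $p_!$ along the projection $\mathfrak h^*\to\mathfrak h^*_{(\sigma,r)}$, not restriction to a fixed subspace of $\mathfrak h^*$. These two can only be compared after equivariant localization.

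To repair this, reverse the order of the steps:
\begin{itemize}
\item First localize at the central character of $(\sigma,r)$, so that the completed algebra and its simple modules are computed from $K|_{\mathfrak h_{(\sigma,r)}}$.
\item Then Fourier transform on the graded piece $\mathfrak h_{(\sigma,r)}$ itself, which is nilpotent since $r\neq0$. Here you use Lusztig's graded-Lie-algebra results: cuspidal complexes on graded pieces are Fourier-stable, and the transform commutes with graded induction.
\end{itemize}
Alternatively, stay global and use that Lusztig's original construction also realizes $\gH$ as the Ext algebra of the Grothendieck--Springer-type complex, then compute the composite.

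The normalizations you postpone are also not cosmetic. On the graded piece, the Killing-form annihilator of $\mathfrak n_P\cap\mathfrak h_{(\sigma,r)}$ in $\mathfrak h_{(-\sigma,r)}$ is $\mathfrak p\cap\mathfrak h_{(-\sigma,r)}$, so $P$ is preserved, not replaced by $\bar P$. This matters. Suppose you had landed in $\gH(H,\bar P,\mathbf c)$ with toral part $x\mapsto -x$. The simple roots change sign there, so rigidity forces $t_s\mapsto t_s$. Composing with $c_{w_0}$ then gives $x\mapsto -w_0(x)$, $t_s\mapsto t_{w_0(s)}$, which is the formula for $\ubC$, not $\ubFT$. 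So the discrepancy would not be a harmless factor of $c_{w_0}$.

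Likewise, inverting the dilation gives $\br\mapsto -\br$ with $x$ fixed, because $\phi_B$ is $H$-equivariant and negates nothing. The passage to $(-\sigma,r)$ is only the relabelling $\mathfrak h_{(\sigma,-r)}=\mathfrak h_{(-\sigma,r)}$. So the map actually induced by the transform is $\kappa\circ\ubFT$, not $\ubFT$. Getting to the stated formula requires a separate input, namely $\kappa_*\ubL_{(\sigma,N,\rho,r)}=\ubL_{(-\sigma,N,\rho,-r)}$ \cite[Theorem 4.14(b)]{evens1997fourier}, which the paper also uses in \S\ref{sec:complex-conjugation}.
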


\subsection{The Chevalley Involution}
\begin{definition}(\cite{adams-vogan} Chevalley involution)
    Let $H$ be a connected complex reductive group. 
    An automorphism $\mathrm C:H\to H$ is called a \emph{Chevalley involution} if there exists a maximal torus $T$ of $H$ such that $C(t) = t^{-1}$ for all $t\in T$. There is a unique such involution up to conjugation.
\end{definition}
The Chevalley involution induces an operation on parameters via
\begin{align}
    \underline\Phi(H)&\to \underline\Phi(H), \\
    (\sigma,N,\rho,r)&\mapsto (C(\sigma),C(N), C_*(\rho),r).
\end{align}
As a direct consequence of Proposition \ref{prop:functoriality} this operation also has the following description via an isomorphism of graded Hecke algebras.
\begin{lemma}\label{lemma: Chevalley on parameters}
     Let $\mathbf c\in \underline{\mathfrak z}(H)$ and $(\sigma,N,\rho,r)\in \underline\Phi^{\mathbf c}(H)$.
    Then
    \begin{align}
        \ubF(C)_*(\ubL_{(\sigma,N,\rho,r)}) &= \ubL_{(C(\sigma),C(N), C_*(\rho),r)}
    \end{align}
    where $\ubF(C) : \gH(H, P, \mathbf c) \rightarrow \gH(H, C(P), C_*(\mathbf c) )$ is the isomorphism induced by $C$.
\end{lemma}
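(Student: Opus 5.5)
This is an application of Proposition \ref{prop:functoriality} to the isomorphism $\phi = C : H \to H$. Since $C$ is an automorphism of algebraic groups, it carries the cuspidal datum $\mathbf c=(L,\mathcal C,\mathcal F)$ to $C(\mathbf c)=(C(L),C(\mathcal C),C_*\mathcal F)$ and the parabolic $P\in\mathcal Q(L)$ to $C(P)\in\mathcal Q(C(L))$. Pushing forward along $\underline C:\mathfrak h\to\mathfrak h$ gives an isomorphism $\underline C_* K_{H,P,\mathbf c}\cong K_{H,C(P),C(\mathbf c)}$ in $D^b_{\hat H}(\mathfrak h)$. Here $\hat H$ acts through $C\times\mathrm{id}$, and this is compatible because the scalar $\lambda^{-2}$ commutes with the linear map $\underline C$. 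Conjugating Ext-algebras by this isomorphism defines $\ubF(C):\gH(H,P,\mathbf c)\to\gH(H,C(P),C_*(\mathbf c))$. The first step is to check that this map restricts to $\underline C$ on $\mathfrak z_L^*$ and fixes $\br$. For this I would follow the chain \eqref{eq: construction of polynomial part of geometric graded Hecke algebra}: choose the base point $C(N_0)$ and the Jacobson–Morozov map $C\circ\phi$ for the transported datum. Each arrow in the chain is then transported by $C$, and the equivariant cohomology identification $H^*_{\widehat{Z_L^\circ}}(\pt)\cong\mathbb C[\br]\otimes S(\mathfrak z_L^*)$ is transported via $\underline C$.

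The second step is to track standard modules. Fix $(\sigma,N,\rho,r)\in\underline\Phi^{\mathbf c}(H)$. The isomorphism $\underline C_*K\cong K'$ restricts on stalks to $(K_{H,P,\mathbf c})_N\cong (K_{H,C(P),C(\mathbf c)})_{C(N)}$. This intertwines the $\hat H(N)^\circ$-equivariant cohomology with the $\hat H(C(N))^\circ$-equivariant cohomology through the group isomorphism $C\times\mathrm{id}:\hat H(N)\to\hat H(C(N))$. Hence $\ubF(C)_*\ubM_N\cong\ubM_{C(N)}$ compatibly with three structures: the identification $H^*_{\hat H(N)}(\pt)\cong H^*_{\hat H(C(N))}(\pt)$, which sends evaluation at $(\sigma,r)$ to evaluation at $(\underline C(\sigma),r)$; the component group isomorphism $A(\sigma,N)\cong A(C(\sigma),C(N))$; and the transport $\rho\mapsto C_*\rho$. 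Specializing and taking $\rho$-isotypic parts gives $\ubF(C)_*\ubM_{(\sigma,N,\rho,r)}\cong\ubM_{(C(\sigma),C(N),C_*\rho,r)}$. Because $\ubF(C)_*$ is an equivalence of module categories, it sends unique simple quotients to unique simple quotients, which yields the claimed formula for $\ubL$.

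I expect the only delicate point to be the bookkeeping with the $\mathbb C^\times$-factor. The $\hat H$-action scales by $\lambda^{-2}$ and fixes $r$, so one must confirm that $\ubF(C)(\br)=\br$ rather than $-\br$; otherwise the parameter would acquire a sign. This holds because $C$ acts trivially on the $\mathbb C^\times$ factor and $\br$ is pulled back from $H^*_{\mathbb C^\times}(\pt)$. With that settled, everything else is formal naturality of Lusztig's constructions, which is precisely the content of Proposition \ref{prop:functoriality}. So the proof reduces to citing that proposition with $\phi=C$, $H_1=H_2=H$.
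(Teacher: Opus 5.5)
Your proposal is correct and matches the paper, which obtains this lemma as a direct consequence of Proposition \ref{prop:functoriality} applied to the automorphism $\phi = C$ of $H$. Your additional unpacking, transporting the standard modules $\ubM_{(\sigma,N,\rho,r)}$ and checking $\br \mapsto \br$, is simply the naturality argument that underlies that proposition.
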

We can also compute $C_*(\mathbf c)$ to give a more explicit description of the Chevalley involution on parameters. Let 
\begin{equation}
    \mathbf{c}^* = (L, \mathcal{C}, \mathcal{F}^*).
\end{equation}
\begin{proposition}\label{prop:chevalley}
    Let $\ubC: \gH(H,P,\mathbf c)\to \gH(H,P,\mathbf c^*)$ be the $\mathbb{C}[\br]$-linear isomorphism of graded Hecke algebras with $x \mapsto -w_0(x)$ and $t_s \mapsto t_{w_0(s)}$. Then the triples  $(C(P), C_*(\mathbf c), \ubF(C)_*(\ubL)) $ and $(P, \mathbf c^*, \ubC_* \ubL)$ are $H$-conjugate.
\end{proposition}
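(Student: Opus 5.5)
We choose the Chevalley involution adapted to the data: pick a maximal torus $T\subset L$ and a Borel $B\subset P$ containing $T$, and let $C$ act as inversion on $T$. Then $C(L)=L$, and since $C$ sends each root space $\mathfrak h_\alpha$ to $\mathfrak h_{-\alpha}$, we get $C(P)=\bar P$, the opposite parabolic with Levi $L$. On $\mathfrak z_L$ the map $\underline C$ is $x\mapsto -x$. By Lemma \ref{lem:parabolics} (via \cite[Lemma 6.8(c)]{lusztig1995cuspidal}) there is a unique $w\in W=N_H(L)/L$ with $w(\bar P)=P$. Because $W$ acts simply transitively on $\mathcal Q(L)$ and $\bar P$ corresponds to the negative chamber, this $w$ is the longest element $w_0$ of the relative Weyl group. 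We then compose $C$ with $\Ad(\dot w_0)$ for a lift $\dot w_0\in N_H(L)$. The composite $C':=\Ad(\dot w_0)\circ C$ is again an automorphism of $H$. It satisfies $C'(P)=P$ and $C'(L)=L$, and on $\mathfrak z_L$ it acts by $x\mapsto -w_0(x)$. The triple $(C'(P),C'_*(\mathbf c),\ubF(C')_*\ubL)$ is $H$-conjugate to $(C(P),C_*(\mathbf c),\ubF(C)_*\ubL)$ by Proposition \ref{prop:functoriality} applied to $\Ad(\dot w_0)$.

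Next, $\ubF(C')$ is a graded isomorphism $\gH(H,P,\mathbf c)\to\gH(H,P,C'_*\mathbf c)$. By Proposition \ref{prop:functoriality} it acts on the toral part by $x\mapsto -w_0(x)$. Since $C'$ preserves $P$, it maps positive relative roots to positive ones, so the simple reflection $s$ goes to $w_0(s)$. The parameters $\mu$ are preserved because they are defined intrinsically from $N_0$ and $\mathfrak l_i\cap\mathfrak n$. Hence, once we know $C'_*\mathbf c=\mathbf c^*$ up to $L$-conjugacy, $\ubF(C')$ and $\ubC$ agree on $\mathbb C[\br]\otimes S(\mathfrak z_L^*)$. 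Corollary \ref{cor:rigid} then gives $\ubF(C')=\ubC$. Here $\mu$ is nowhere zero since $\mu\ge 2$ by construction.

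The main obstacle is the identity $C'_*\mathbf c\cong\mathbf c^*$, i.e. $(C|_L)_*(\mathcal C,\mathcal F)\cong(\mathcal C,\mathcal F^*)$ up to $L$-conjugacy, since $\Ad(\dot w_0)$ fixes $\mathbf c$ by \cite[2.3 a)]{lusztig1988cuspidal}. For the orbit this is easy: $C|_L$ is a Chevalley involution of $L$, and every nilpotent orbit is stable under it, because $\underline C$ can be taken to map an $\mathfrak{sl}_2$-triple $(e,h,f)$ with $h\in\mathfrak t$ to one conjugate to $(f,h,e)$ up to sign. For the local system we would argue through the central character. Cuspidal local systems on $L$ are classified by \cite{lusztig1988cuspidal}: on a given orbit, such a system is determined by the character by which $Z(L_{sc})$ (or more precisely $Z(L)$) acts, and in nearly all cases even by its restriction to the center. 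The Chevalley involution inverts $Z(L)\subset T$, so it sends a central character $\chi$ to $\chi^{-1}$, which is the central character of $\mathcal F^*$. We would first try this uniqueness statement. Where it fails, for example $\mathrm{Spin}$ groups or the $E_6$ and $E_7$ cases with several cuspidals, we would check case by case that $A_L(e)$ conjugacy classes are inverted by $C$, using that $C$ acts on $A_L(e)$ via an automorphism that inverts a torus meeting all components. That gives $C_*\rho\cong\rho^*$ because $\rho^*(g)=\rho(g^{-1})$ on characters.
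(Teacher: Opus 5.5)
Your proof follows the paper's. You choose $C$ inverting a torus of $L$ so that $C(P)=\bar P$, then correct by $w_0$: you compose with $\Ad(\dot w_0)$ inside $H$, while the paper composes $\ubF(C)$ with $c_{w_0}$ and uses Lemma~\ref{lem:parabolics}. You conclude by Corollary~\ref{cor:rigid}, with $C_*\mathbf c=\mathbf c^*$ coming, as in the paper, from the fact that a cuspidal pair on $L$ is determined by its $Z(L)$-character.

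That fact, which the paper takes from \cite{lusztig1984intersection}, holds with no exceptions. For instance, the two cuspidal pairs of simply connected $E_6$ have mutually inverse central characters of order $3$, and the Spin cases are likewise separated by central character. So it also gives $C(\mathcal C)=\mathcal C$, and your case-by-case fallback is unnecessary. That is just as well, because as phrased the fallback is not yet an argument: $C$ does not fix $e$, so knowing that $C$ inverts a torus does not by itself tell you how $C$ acts on $A_L(e)$.
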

\begin{proof}
    Up to conjugation, we may assume that $C$ acts as inversion on a maximal torus contained in $L$. Then $C(L) = L$ and $C(P) = \bar{P}$ is the opposite parabolic. Moreover, $C(\mathbf c) = \mathbf c^*$ since both $C$ and $(-)^*$ invert the $L$-central character, and a cuspidal local system is determined by its $L$-central character \cite{lusztig1984intersection}.
    
    By \cite[Lemma 6.8 (c)]{lusztig1995cuspidal} there is a $w \in W = N(L)/L$ such that $w\bar{P}w^{-1} = P$. By \cite[2.3 a)]{lusztig1988cuspidal} the element $w$ leaves $\mathbf{c}^*$ invariant. Note that the (relative) roots that appear in the Lie algebra of $P$ form a system of positive roots of the corresponding relative root system. Hence, $w$ sends the positive (relative) roots to the negative (relative) roots. Thus, $w$ is the long element $w_0 \in W$. Note that $\ubF(C)$ is given by $z \mapsto -z$ on the toral part and $c_{w_0}$ by $z \mapsto w_0(z)$. By Corollary \ref{cor:rigid} this implies 
    \begin{equation}
        c_{w_0} \circ \ubF(C) = \ubC.
    \end{equation}
    The result now follows from Lemma \ref{lemma: Chevalley on parameters} using that $(c_{w_0})_*$ does nothing on $\underline \Phi^{\mathbf c^*}(H)$ by Lemma \ref{lem:parabolics}. 
\end{proof}
\subsection{Complex Conjugation and duality on local systems}\label{sec:complex-conjugation}

Let us consider the split $\mathbb{R}$-structure on $H$ (resp. $\hat{H}$) with a split torus $T \subset H$ (resp. $T \times \mathbb{C}^{\times} \subset \hat{H}$). This equips $H$ (resp. $\hat{H}$) with an involution $\tau: H \rightarrow H$ (resp. $\tau: \hat{H} \rightarrow \hat{H}$) coming from complex conjugation. This is an involution of $H$ as a Lie group but not a morphism of algebraic groups. Similarly, we get a complex conjugation map $\underline\tau: \mathfrak{h} \rightarrow \mathfrak{h}$. Let us first determine the action of complex conjugation on component groups.
\begin{lemma}\label{lemma: complex conjugation on componen groups}
    Let $N \in \mathfrak{h}_{\mathbb{R}}$ be a nilpotent element and let $\conju: A(N) \rightarrow A(N)$ be given by complex conjugation. Then $\conju_*(\rho) \cong \rho^*$ for any $\rho \in \Irr(A(N))$.
\end{lemma}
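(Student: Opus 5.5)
Since the statement concerns finite groups, the plan is to reduce it to the elementary character identity $\chi^*(g)=\chi(g^{-1})$ mentioned in the introduction. Concretely, I will show that for every $a\in A(N)$ the class $\conju(a)$ is conjugate in $A(N)$ to $a^{-1}$. Granting this, the character of $\conju_*\rho$ is $g\mapsto \chi_\rho(\conju^{-1}(g)) = \chi_\rho(g^{-1}) = \chi_{\rho^*}(g)$, and since $A(N)$ is finite, irreducible representations are determined by their characters, which gives $\conju_*(\rho)\cong\rho^*$.

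To prove the claim about classes, I first pass to a reductive model of the component group. Since $N\in\mathfrak h_{\mathbb R}$ and the split real form has the real Jacobson--Morozov theorem, I extend $N$ to an $\mathfrak{sl}_2$-triple $(N,h,f)$ with $h,f\in \mathfrak h_{\mathbb R}$. Then $H(N,h,f)$ is a maximal reductive subgroup of $H(N)$ and is $\tau$-stable, so $A(N)=\pi_0(H(N,h,f))$ compatibly with $\conju$. Every component of the reductive group $K:=H(N,h,f)$ contains a semisimple element; in fact it contains an element of finite order, and hence a compact element. So it suffices to show the following: for every compact (finite order) semisimple $k\in K$, the element $\tau(k)$ lies in the same component of $K$ as some $K$-conjugate of $k^{-1}$.

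For this step I intend to use the compact real form. The idea is to choose a Cartan involution $\theta$ of the split form that commutes with $\tau$ and stabilises the triple, possibly after conjugating the triple inside $H_{\mathbb R}$ (Kostant--Sekiguchi style normalisation, with $\theta(N)=-f$). The corresponding compact conjugation is $\tau_c=\tau\circ C$ with $C=\theta$ a Chevalley-type involution. Compact elements of $K$ can be conjugated into $K\cap H_c$, the fixed points of $\tau_c$ on a maximal compact subgroup, and there $\tau(k)=C(k)^{-1}$. It remains to show that $C$ acts trivially on $\pi_0(K)$. On $K$, $C$ restricts to an automorphism that sends $N$ to $-f$ rather than fixing $N$, so I would compose it with the element $\exp(\tfrac{\pi}{2}(N-f))$ of the $\mathrm{SL}_2$ to obtain an automorphism of $K$ itself. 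I then need that this automorphism is inner on $\pi_0(K)$. The strategy is that it inverts a maximal torus of $K$ meeting every component, or that it is induced from a Chevalley involution of $K$, which acts on component groups as conjugation by a torus-normalising element.

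I expect the last point to be the main obstacle, namely that $C$, or the corrected version of it, acts on $A(N)$ by an inner automorphism. If the uniform argument fails, the fallback is to use the known structure of $A(N)$: it is elementary abelian $2$-group or $S_3,S_4,S_5$ for adjoint groups, and only $\mathbb Z/3$-type centres occur in type $E_6$, where $\rho^*\neq\rho$ can happen. In every one of these cases each element is conjugate to its inverse, except in the abelian central parts. For those central parts I would verify directly that $\tau$ acts by inversion, because they come from $Z(H)$, which is generated by elements of a split torus whose compact parts are inverted by $\tau$.
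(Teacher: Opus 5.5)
Your reduction to showing that $\tau(a)$ is $A(N)$-conjugate to $a^{-1}$ is fine, and so is the passage to finite-order representatives in $K=H(N,h,f)$. The gap is in the compact-form step.

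\emph{The sign error.} If $\tau_c=\tau\circ C$ and $\tau_c(k)=k$, then $\tau(C(k))=k$, i.e.\ $\tau(k)=C(k)$, not $C(k)^{-1}$. With the correct identity, what remains is to show that $C$ (or $\mathrm{Ad}(\exp(\tfrac{\pi}{2}(N-f)))\circ C$) sends each class of $\pi_0(K)$ to a conjugate of its inverse. That is the original statement with $\tau$ replaced by $C$, so the detour gains nothing.

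\emph{The claim you aim for is false.} You want this automorphism to be inner on $\pi_0(K)$. Take $H=\mathrm{SL}_3$ and $N$ regular: then $K=Z(H)\cong\mu_3\cong A(N)$. Every Chevalley involution inverts $Z(H)\subset T$, and inversion of $\mu_3$ is not inner. Consistently, $\tau$ acts there by $\zeta\mapsto\bar\zeta=\zeta^{-1}$.

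\emph{The other mechanisms also fail.} A torus, or any abelian subgroup, cannot meet every component of $K$ when $\pi_0(K)$ is $S_3$, $S_4$ or $S_5$. The case-by-case fallback covers only adjoint simple $H$, whereas the lemma is for arbitrary connected reductive $H$ (e.g.\ $A(N)\cong\mu_d$ for any $d$ in $\mathrm{SL}_n$). It also never explains why $\tau$ acts trivially on a component group such as $(\mathbb Z/2)^k$, which is exactly what would be needed there.

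The idea you are missing is to move $s$ and $N$ simultaneously instead of invoking a second real form; this is the paper's argument.
\begin{enumerate}
\item Take a finite-order $s\in H(N)$ representing a given class, and choose $g\in H$ with $gsg^{-1}$ in the split maximal torus.
\item Then $gN$ lies in the Lie algebra of the split group $H(gsg^{-1})$, so some $g'\in H(gsg^{-1})$ makes $g'gN$ real. Put $h=g'g$.
\item Finite-order elements of a split torus are inverted by $\tau$, so $\tau(hsh^{-1})=hs^{-1}h^{-1}$. Hence $\tau(s)=(\tau(h)^{-1}h)\,s^{-1}\,(\tau(h)^{-1}h)^{-1}$.
\item Since $hN$ and $N$ are both real, $\tau(h)N=\tau(hN)=hN$, so $\tau(h)^{-1}h\in H(N)$.
\end{enumerate}
This gives the needed conjugacy in $H(N)$ uniformly, with no Chevalley involution or case analysis. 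Your character computation then finishes the proof.
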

\begin{proof}
    In an algebraic group $Q$ of characteristic $0$, any coset in $Q / Q^{\circ}$ has a representative which is a semisimple element of finite order. Applying this to $H(N)$, we see that any coset in $A(N)$ has a representative $s \in H(N)$ such that $s$ is semisimple of finite order. We can then find a $g \in H$ such that $gsg^{-1}$ lies in a split maximal torus of $H$. Note that $gN$ lies in $\mathfrak{h}^{gsg^{-1}}$ which is the Lie algebra of the (split) reductive group $H(gsg^{-1})$. Since any nilpotent orbit in a split reductive group has a real point, we can find $g' \in H(gsg^{-1})$ such that $g'gN \in \mathfrak{h}_{\mathbb{R}}^{gsg^{-1}}$. Setting $h := g'g$, we get that $hsh^{-1} = gsg^{-1}$ is a finite order element in a split torus, so
    \begin{equation}
        \tau(h) \tau(s) \tau(h)^{-1} = \tau(hsh^{-1}) = (hsh^{-1})^{-1} = hs^{-1} h^{-1}.
    \end{equation}
    Hence, we get
    \begin{equation}
        \tau(s) = (\tau(h)^{-1} h) s^{-1} (\tau(h)^{-1} h)^{-1}.
    \end{equation}
    Note that $hN = \tau(hN) = \tau(h) N$ since $N$ and $hN$ are real. In particular, $\tau(h)^{-1} h \in H(N)$. Hence, $\tau(s)$ and $ s^{-1}$ lie in the same conjugacy class in $A(N)$. This shows that $\conju: A(N) \rightarrow A(N)$ sends any conjugacy class to its inverse. Thus, $\conju_* \rho \cong \rho^*$ for any $\rho \in \Irr(A(N))$ by character theory.
\end{proof}
\begin{corollary}\label{cor: complex conj on IC}
    Let $\mathcal{C}$ be a nilpotent orbit in $\mathfrak{h}$ and $\mathcal{F}$ an $H$-equivariant local system on $\mathcal{C}$. Then $\underline \conju_* \IC(\overline{\mathcal{C}}, \mathcal{F}) \cong \IC(\overline{\mathcal{C}}, \mathcal{F}^*)$.
\end{corollary}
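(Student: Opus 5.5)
The plan is to reduce to the statement of Lemma~\ref{lemma: complex conjugation on componen groups} by working orbit by orbit. First observe that $\underline\tau$ preserves every nilpotent orbit $\mathcal C$. Since $H$ is split, $\mathcal C$ has a real point $N \in \mathfrak h_{\mathbb R}$, and then $\underline\tau(gN) = \tau(g)N \in \mathcal C$. Next note that $\underline\tau$ is a homeomorphism of the underlying real-analytic space $\mathfrak h$ that preserves the stratification by nilpotent orbits and the closure $\overline{\mathcal C}$. Pushforward along a stratum-preserving homeomorphism commutes with the Deligne--Goresky--MacPherson intermediate extension, because that construction is purely topological (truncations and pushforwards along the open inclusions of strata). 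Hence
\begin{equation}
\underline\tau_*\IC(\overline{\mathcal C},\mathcal F)\cong \IC(\overline{\mathcal C},\underline\tau_*\mathcal F).
\end{equation}
Moreover, $\tau$ is an automorphism of $H$ as a Lie group and intertwines the action map, so $\underline\tau_*\mathcal F$ is again an $H$-equivariant local system on $\mathcal C$ (equivariance twisted by $\tau$, which is still $H$-equivariance because $\tau$ is bijective on $H$).

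It remains to identify $\underline\tau_*\mathcal F$ with $\mathcal F^*$. Use the standard equivalence between $H$-equivariant local systems on $\mathcal C = H/H(N)$ and representations of $A(N)$, given by taking the stalk at the base point $N$ with its $H(N)$-action through $A(N)$. Choose the base point $N$ real. Then $\underline\tau(N) = N$, and the stalk of $\underline\tau_*\mathcal F$ at $N$ equals the stalk of $\mathcal F$ at $\underline\tau^{-1}(N) = N$. The action of $a \in A(N)$ on this stalk is the action of $\tau(a)$ (or $\tau^{-1}(a)$, which is the same since $\tau$ is an involution) on $\mathcal F_N$. So if $\mathcal F$ corresponds to $\rho$, then $\underline\tau_*\mathcal F$ corresponds to $\conju_*\rho$. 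By Lemma~\ref{lemma: complex conjugation on componen groups}, $\conju_*\rho\cong\rho^*$, and $\rho^*$ corresponds to the dual local system $\mathcal F^*$. This handles irreducible $\mathcal F$; the general case follows by additivity.

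The main obstacle is the bookkeeping in this last step. One must check that the twisted equivariant structure on $\underline\tau_*\mathcal F$ really matches $\conju_*\rho$, rather than, say, a conjugate-linear twist of $\rho$. This is harmless here because we only push forward sheaves of $\mathbb C$-vector spaces along a map of spaces: the coefficients are untouched, so the stalk representation is just precomposed with $\tau|_{A(N)}$. The remaining point is that the identification of equivariant local systems with $A(N)$-representations is compatible with a non-algebraic automorphism. This holds because both sides are topological: equivariant local systems on $H/H(N)$ correspond to $\pi_0$-representations, and that correspondence uses only the topological group structure.
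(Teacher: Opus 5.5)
Your proof is correct and follows the paper's argument. Since $\underline\tau$ is a homeomorphism fixing $\mathcal C$ (via a real point), it commutes with intermediate extension, and at a real base point $\underline\tau_*\mathcal F$ corresponds to $\tau_*\rho\cong\rho^*$ by Lemma~\ref{lemma: complex conjugation on componen groups}; you just spell out the stalk and equivariance bookkeeping that the paper leaves implicit.
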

\begin{proof}
    Since $\underline \conju$ is a homeomorphism (in the analytic topology), we have $\underline \conju_* \IC(\overline{\mathcal{C}}, \mathcal{F}) =\IC(\overline{\underline \conju(\mathcal{C})}, \underline \conju_*\mathcal{F}) $. It is well-known that nilpotent orbits are stable under complex conjugation, i.e. $\underline \conju(\mathcal{C}) = \mathcal{C}$. In fact, any nilpotent orbit in an $\mathbb{R}$-split group reductive group has a real point. Thus, by Lemma \ref{lemma: complex conjugation on componen groups} we get $ \underline \conju_*\mathcal{F} \cong \mathcal{F}^*$ and thus $\underline \conju_* \IC(\overline{\mathcal{C}}, \mathcal{F}) \cong \IC(\overline{\mathcal{C}}, \mathcal{F}^*)$.
\end{proof}

Assume now that $\mathbf{c} = (L, \mathcal{C}, \mathcal{F})$ is a cuspidal datum where $L \subset H$ is a standard Levi, and let $P\in \mathcal Q(L)$. All the relevant varieties and functors that are used to define the parabolic induction functor (see \eqref{eq: description of par ind functor}) are defined over $\mathbb{R}$, so they commute with complex conjugation. Hence, complex conjugation commutes with parabolic induction. Thus, by Corollary \ref{cor: complex conj on IC} we get an isomorphism
\begin{equation}
    \underline \conju_* K_{H, P, \mathbf{c}} \cong K_{H, P, \mathbf{c}^*}
\end{equation}
where
\begin{equation}
    \mathbf{c}^* = (L, \mathcal{C}, \mathcal{F}^*).
\end{equation}
In particular, complex conjugation induces an isomorphism
\begin{equation}
    \conju: \gH(H,P,\mathbf{c}) \cong  \Hom^*_{D_{\hat{H}} ( \mathfrak{h})} ( K_{H, P, \mathbf{c}}, K_{H, P, \mathbf{c}}) \overset{\underline \conju_*}{\longrightarrow} \Hom^*_{D_{\hat{H}} ( \mathfrak{h})} ( K_{H, P, \mathbf{c}^*}, K_{H, P, \mathbf{c}^*}) \cong \gH(H,P,\mathbf{c}^*).
\end{equation}
To compute this map, let us first compute the action of complex conjugation on equivariant cohomology.
\begin{lemma}\label{lemma: complex conjugation on torus cohomology}
    Let $T$ be an algebraic torus equipped with the split $\mathbb{R}$-structure. Then complex conjugation acts on $H^2_{G}(\pt)$ by multiplication with $-1$.
\end{lemma}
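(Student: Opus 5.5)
We reduce the claim to the circle case and a degree argument. (The statement writes $H^2_G(\pt)$; since $T$ is the only group in play, we read $G=T$.)

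\textbf{Reduction to $\mathbb G_m$.} Fix a split isomorphism $T\cong(\mathbb C^\times)^n$ defined over $\mathbb R$, so that $\tau$ is coordinatewise complex conjugation $z\mapsto\bar z$. Equivariant cohomology is functorial for continuous group homomorphisms, and the Künneth isomorphism
\begin{equation}
H^*_T(\pt)\cong\bigotimes_{i=1}^n H^*_{\mathbb C^\times}(\pt)
\end{equation}
is natural. The conjugation $\tau$ is a product of the conjugations on the factors, so it suffices to treat $T=\mathbb C^\times$. Alternatively we can argue through characters. The identification $H^2_T(\pt)\cong X(T)\otimes\mathbb C$ sends a character $\chi$ to the first Chern class $c_1(L_\chi)$ of the associated line bundle on $BT$, and this identification is natural in continuous homomorphisms.

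\textbf{The circle case.} The inclusion $S^1\subset\mathbb C^\times$ is a homotopy equivalence and is preserved by $\tau$, so $BT\simeq BS^1=\mathbb{CP}^\infty$. On $S^1$, complex conjugation is inversion $z\mapsto z^{-1}$. Hence on the generator $c_1$ of the tautological character it induces pullback along the inverse character, giving $c_1(L_{\chi^{-1}})=-c_1(L_\chi)$. Equivalently, inversion induces a map of degree $-1$ on $\pi_1(S^1)=\pi_2(BS^1)$, and therefore acts by $-1$ on $H^2(BS^1;\mathbb Z)=\Hom(\pi_2,\mathbb Z)$.

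\textbf{Conclusion.} Combining the two steps, $\tau^*\chi=\chi\circ\tau$ restricts to $\chi^{-1}$ on the compact torus, because $\chi$ is defined over $\mathbb R$. Therefore $\tau$ acts by $-1$ on all of $H^2_T(\pt)=X(T)\otimes\mathbb C$.

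\textbf{Main obstacle.} The only delicate point is that $\tau$ is antiholomorphic, not algebraic, so we must use topological (Borel) equivariant cohomology, which is functorial for continuous maps. We must also check that the identification $H^2\cong X(T)\otimes\mathbb C$ is compatible with the pullback along $\tau$. This compatibility is handled by working entirely with the homotopy-equivalent compact torus.
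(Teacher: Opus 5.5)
Your proof is correct. It differs from the paper's proof in the rank-one step.

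After the same K\"unneth reduction to $\mathbb C^\times$, the paper computes in an explicit model of the classifying space. It identifies $H^2_{\mathbb C^\times}(\pt)$ with $H^2(\mathbb P^1)$, implicitly using that conjugation on $\mathbb C^{n+1}\setminus 0$ is $\tau$-equivariant and so descends to conjugation on projective space. It then observes that conjugation on $\mathbb P^1\cong S^2$ is a reflection, hence of degree $-1$.

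You instead retract onto the compact torus, where $\tau$ becomes the group homomorphism $z\mapsto z^{-1}$. You then conclude from the naturality of $c_1$ of associated line bundles, or equivalently from the naturality of $\pi_1(S^1)\cong\pi_2(BS^1)$ together with Hurewicz.

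What each route buys:
\begin{itemize}
\item The paper's argument is a one-line degree computation, but it depends on a choice of model and on the unstated compatibility of that model with $\tau$.
\item Your argument is model-free. It shows that any antiholomorphic involution preserving the compact torus acts on $H^2_T(\pt)=X(T)\otimes\mathbb C$ through its effect on characters restricted to that compact torus.
\item As a consequence, the same reasoning shows at once that the compact-form conjugation acts trivially on $H^2_T(\pt)$.
\item Your character version also makes the K\"unneth reduction unnecessary.
\end{itemize}
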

\begin{proof}
    By the Künneth formula, we may assume $T = \mathbb{C}^{\times}$ (with the split structure). Note that by the construction of equivariant cohomology, we have $H^2_{\mathbb{C}^{\times}}(\pt) = H^2(\mathbb{P}^1 )$. We can identify $\mathbb{P}^1 \cong S^2$ such that complex conjugation corresponds to $(a,b,c) \mapsto ( a, -b, c)$. It is well-known that this map acts by $-1$ on $H^2(S^2)$.  
\end{proof}
Let $\kappa:\gH(H,P,\mathbf{c}) \rightarrow \gH(H,P,\mathbf{c})$ be the map with $t_w \mapsto t_w$, $x \mapsto -x$ and $\br \mapsto -\br$ (see \cite{evens1997fourier}). We set
\begin{equation}
    \ubD :=  \conju  \circ \kappa : \gH(H,P,\mathbf{c}) \rightarrow \gH(H,P,\mathbf{c}^*)
\end{equation}
\begin{corollary}\label{corollary: complex conj on graded Hecke}
    The isomorphism
    \begin{equation}
        \ubD : \gH(H,P,\mathbf{c}) \rightarrow \gH(H,P,\mathbf{c}^*)
    \end{equation}
    is given by $t_w \mapsto t_w$, $x \mapsto x$ and $\br \mapsto \br$.
\end{corollary}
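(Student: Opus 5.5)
The plan is to compute $\conju$ on generators and then compose with $\kappa$. Corollary \ref{cor:rigid} does not apply here, since $\conju$ is not yet known to be a graded isomorphism of the right form on the degree-zero part. We therefore treat the toral part and the elements $t_w$ separately.

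\emph{Toral part.} The map $\conju$ is induced by $\underline\conju_*$ on $\Hom^*(K_{H,P,\mathbf c},K_{H,P,\mathbf c})$. It is compatible with every arrow in the chain \eqref{eq: construction of polynomial part of geometric graded Hecke algebra}, because all the varieties, groups and functors there are defined over $\mathbb R$ for the split structure. To arrange this, choose the base point $N_0\in\mathcal C$ real and the Jacobson--Morozov map $\phi$ defined over $\mathbb R$; this is possible because nilpotent orbits in split groups have real points, and the map is independent of these choices. The key point is then that the first identification $\mathbb C[\br]\otimes S(\mathfrak z_L^*)=H^*_{\widehat{Z_L^\circ}}(\pt)$ intertwines $\conju$ with the action of complex conjugation on the equivariant cohomology of the split torus $\widehat{Z_L^\circ}=Z_L^\circ\times\mathbb C^\times$. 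By Lemma \ref{lemma: complex conjugation on torus cohomology}, this action is $-1$ on $H^2$. Hence $\conju(x)=-x$ and $\conju(\br)=-\br$.

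The subsequent steps in the chain also commute with conjugation. In the step $-\boxtimes\mathcal F_{N_0}$, conjugation carries $\mathcal F$ to $\mathcal F^*$ by Lemma \ref{lemma: complex conjugation on componen groups}, and the map is tensoring with the identity of the stalk, so the target is consistent with Corollary \ref{cor: complex conj on IC}. The remaining steps are restriction to $\overline{\mathcal C}$ and $\Ind$, and both commute with $\underline\conju_*$, as noted before the statement. Composing with $\kappa$ then gives $x\mapsto x$ and $\br\mapsto\br$ for $\ubD$.

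\emph{Degree zero part.} Since $\conju$ preserves degree and is an algebra isomorphism, reducing modulo $\br$ and arguing as in Lemma \ref{lemma: automorphisms of semidirect product} shows that $\ubD(t_w)=c_wt_w$ for scalars $c_w\in\mathbb C^\times$. The lemma is stated for automorphisms, but the target has the same presentation, so the argument applies verbatim after identifying $\gH(H,P,\mathbf c^*)$ with $\gH(H,P,\mathbf c)$ as algebras; the parameter functions $\mu$ agree since they depend only on $N_0$. Now apply $\ubD$ to the relation $\alpha t_{s_\alpha}-t_{s_\alpha}s_\alpha(\alpha)=2\mu(\alpha)\br$. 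Using $\ubD(x)=x$ and $\ubD(\br)=\br$, we get $c_{s_\alpha}=1$, since $\mu(\alpha)\ge2$. Therefore $\ubD(t_w)=t_w$ for all $w$, because $\kappa$ fixes $t_w$ and the $t_{s_\alpha}$ generate.

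\emph{Main obstacle.} The hardest step is the first one: verifying carefully that complex conjugation on $\mathfrak h$ really induces the topological conjugation on $H^*_{\widehat{Z_L^\circ}}(\pt)$ through the identification with $H^*_{\hat L(N_0)}(\pt)$. Two points need care here. First, the embedding $\widehat{Z_L^\circ}\hookrightarrow\hat L(N_0)$ must commute with $\tau$; this holds when $\phi$ is real, since $\tau(\lambda)$ is the conjugate of $\lambda$ on the split $\mathbb C^\times$. Second, the sign must be tracked correctly on the $\br$-generator, which comes from the extra $\mathbb C^\times$ factor acting by $\lambda^{-2}$; that factor is again a split torus, so Lemma \ref{lemma: complex conjugation on torus cohomology} applies to it as well.
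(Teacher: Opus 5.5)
Your proof is correct and follows the paper's argument: the toral part is handled exactly as in the paper (a real base point, compatibility of the chain \eqref{eq: construction of polynomial part of geometric graded Hecke algebra} with complex conjugation, and Lemma \ref{lemma: complex conjugation on torus cohomology}), and your degree-zero computation is simply the proof of Corollary \ref{cor:rigid} written out, where the paper just cites it. Your claim that Corollary \ref{cor:rigid} does not apply is mistaken, though: it applies directly to $\ubD$ and the presentation-identity map $\gH(H,P,\mathbf c)\to\gH(H,P,\mathbf c^*)$, which is a graded isomorphism because $\mu$ does not depend on $\mathcal F$, as you yourself observe.
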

\begin{proof}
    Recall from \eqref{eq: construction of polynomial part of geometric graded Hecke algebra} that the toral part of $\gH(H,P,\mathbf{c})$ arises geometrically via the identification $\mathbb{C}[\br] \otimes \mathfrak{z}_L^* \cong H^2_{\widehat{Z_L^{\circ}}} (\pt)$. If we choose our base-point $N_0 \in \mathcal{C}$ to be real, all the maps in \eqref{eq: construction of polynomial part of geometric graded Hecke algebra} are compatible with complex conjugation. Hence, $\conju$ acts by $-1$ on $\mathbb{C}[\br] \otimes \mathfrak{z}_L^*$ by Lemma \ref{lemma: complex conjugation on torus cohomology}. In particular, $\ubD$ acts trivially on the toral part. By Corollary \ref{cor:rigid} that this determines $\ubD$ uniquely.
\end{proof}
It is well-known that any nilpotent orbit has a real point. We now show that also any $H(\sigma)$-orbit in the fixed-point locus $\mathfrak{h}^{(\sigma, r)} = \{ x \in \mathfrak{h} \mid [\sigma, x] = 2rx \}$ has a real point.

\begin{lemma}\label{lemma: nilpotent may be chosen real}
    Let $(\sigma, r) \in \mathfrak{t} \oplus \mathbb{C}$ with $r \neq 0$. Then any $H(\sigma)$-orbit in $\mathfrak{h}^{(\sigma, r)}$ has a real point. Moreover, for any such orbit we may choose a representative $N \in \mathfrak{h}^{(\sigma, r)} \cap \mathfrak{h}_{\mathbb{R}}$ such that there is a morphism of Lie algebras $\phi : \mathfrak{sl}_2 \rightarrow \mathfrak{h}$ with $\phi(e) = N$, $\phi(h) \in \mathfrak{t}_{\mathbb{R}}$ and $\phi(f) \in \mathfrak{h}_{\mathbb{R}}$.
\end{lemma}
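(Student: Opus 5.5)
The idea is to use the graded Jacobson--Morozov theorem to reduce to a statement about the dense orbit in a graded piece $\mathfrak{h}_2$, and then to exploit the fact that everything in sight is spanned by $T$-root spaces, hence defined over $\mathbb{R}$ for the split structure.

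First I normalise the triple. Fix an $H(\sigma)$-orbit in $\mathfrak{h}^{(\sigma,r)}$ and a point $N$ of it. Since $r\neq 0$, the relation $[\sigma,N]=2rN$ forces $N$ to be nilpotent. By the graded form of Jacobson--Morozov (as in Kazhdan--Lusztig, and as used implicitly in Remark~\ref{rmk:ss-shift}), there is an $\mathfrak{sl}_2$-triple $(N,h',f)$ with $h'\in\mathfrak{h}(\sigma)$ such that $\sigma_1:=\sigma-rh'$ commutes with $N,h',f$. The elements $\sigma_1$ and $h'$ are commuting semisimple elements of $\mathfrak{h}(\sigma)$, so they lie in a common Cartan subalgebra of $\mathfrak{h}(\sigma)$. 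Because $\sigma\in\mathfrak{t}$, the torus $T$ is a maximal torus of the connected reductive group $H(\sigma)$, and all maximal tori of $H(\sigma)$ are $H(\sigma)$-conjugate. After replacing $N$ by an $H(\sigma)$-conjugate, which keeps us in the same orbit, we may therefore assume $\sigma_1,h'\in\mathfrak{t}$. Since $\mathrm{ad}(h')$ has integer eigenvalues, $h'$ lies in the $\mathbb{Q}$-span of the cocharacters, so $h'\in\mathfrak{t}_{\mathbb{R}}$.

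Next I find a real point by a density argument. Put $M:=H(\sigma_1)$ and $L:=H(\sigma_1,h')$. Both are centralisers in $H$ of elements of $\mathfrak{t}$, hence connected reductive subgroups containing $T$. Set
\[
V:=\{x\in\mathfrak{h} : [\sigma_1,x]=0,\ [h',x]=2x\}.
\]
Then $N\in V$, and since $\sigma=\sigma_1+rh'$ we have $L\subset H(\sigma)$ and $V\subset\mathfrak{h}^{(\sigma,r)}$. By Kostant's theorem applied to the reductive Lie algebra $\mathrm{Lie}(M)$ and the triple $(N,h',f)$, the orbit $L\cdot N$ is Zariski open and dense in $V$. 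Now $V$ is a sum of $T$-root spaces (those roots $\alpha$ with $\alpha(\sigma_1)=0$ and $\alpha(h')=2$), so $V$ is defined over $\mathbb{R}$ for the split structure and $V_{\mathbb{R}}$ is Zariski dense in $V$. Hence the open set $L\cdot N$ meets $V_{\mathbb{R}}$. This gives $N'=gN$ with $g\in L\subset H(\sigma)$ and $N'\in\mathfrak{h}^{(\sigma,r)}\cap\mathfrak{h}_{\mathbb{R}}$.

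Finally I make $f$ real. Since $g$ fixes $h'$, the triple $(N',h',\mathrm{Ad}(g)f)$ is again an $\mathfrak{sl}_2$-triple. Given $e$ and $h$, the element completing them to an $\mathfrak{sl}_2$-triple is unique (Kostant). Complex conjugation $\underline\tau$ is a real Lie algebra automorphism fixing $N'$ and $h'$, so $\underline\tau(\mathrm{Ad}(g)f)$ also completes $(N',h')$, and uniqueness forces it to equal $\mathrm{Ad}(g)f$. Thus $f':=\mathrm{Ad}(g)f\in\mathfrak{h}_{\mathbb{R}}$, and $\phi(e)=N'$, $\phi(h)=h'$, $\phi(f)=f'$ defines the required morphism.

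I expect the main obstacle to be the first step. One has to check carefully that the graded Jacobson--Morozov statement yields $h'\in\mathfrak{h}(\sigma)$ with $\sigma-rh'$ centralising the whole triple. One also has to justify simultaneously conjugating $\sigma_1$ and $h'$ into $\mathfrak{t}$ inside $H(\sigma)$, which uses that $\sigma\in\mathfrak{t}$ and that $H(\sigma)$ is connected; the latter holds because $H$ is connected and $\sigma$ lies in the Lie algebra of a torus. The density of $L\cdot N$ in $V$ also depends on correctly invoking Kostant's theorem for the reductive group $M$ rather than for $H$.
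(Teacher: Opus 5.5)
Your proposal is correct and follows the paper's proof: graded Jacobson--Morozov (Kazhdan--Lusztig), conjugating $h'$ into $\mathfrak t$ by an element of $H(\sigma)$, and then intersecting the Zariski-dense real points of the root-space sum $\mathfrak h^{\sigma_1}(2)$ with the open $H(\sigma_1,h')$-orbit of $N$. The only difference is the final step: the paper cites Collingwood--McGovern, Lemma 9.2.2, to make $\phi(h)$ and $\phi(f)$ real, whereas you show $h'\in\mathfrak t_{\mathbb R}$ directly and get $f$ real from uniqueness of the third element of an $\mathfrak{sl}_2$-triple, which is a self-contained and slightly cleaner finish; in that step, add that $h'=[N,f]\in[\mathfrak h,\mathfrak h]$, since integrality of $\mathrm{ad}(h')$ alone does not control the central component of $h'$.
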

\begin{proof}
    Let $N \in \mathfrak{h}^{(\sigma, r)}$. Since $r \neq 0$, the element $N$ is nilpotent. We can pick a morphism of Lie algebras $\phi: \mathfrak{sl}_2 \rightarrow \mathfrak{h}$ such that $\phi(e) = N$. Moreover, we can choose $\phi$ such that $[\tilde{\sigma}, \im(\phi)] = 0$ where $\tilde{\sigma} = \sigma - r \phi(h)$ (see \cite[2.4(g)]{kazhdan1987proof}). In particular, $[\sigma, \phi(h)] = 0$. Hence the semisimple element $\phi(h)$ lies in the Lie algebra of $H(\sigma)$. In particular, we can find $g \in H(\sigma)$ such that $g\phi(h)g^{-1} \in \mathfrak{t}$. Hence, we may assume that $\phi(h) \in \mathfrak{t}$. Note that $\im(\phi)$ lies in the Lie algebra of $\mathfrak{h}^{\tilde{\sigma}}$. By standard results about $\mathfrak{sl}_2$-triples, the action of $\phi(h)$ induces a $\mathbb{Z}$-grading on $\mathfrak{h}^{\tilde{\sigma}}$ and the element $N = \phi(e)$ lies in the dense open $H(\tilde{\sigma}, \phi(h))$-orbit of $\mathfrak{h}^{\tilde{\sigma}}(2)$. Note that since $\tilde{\sigma}, \phi(h) \in \mathfrak{t}$, the space $\mathfrak{h}^{\tilde{\sigma}}(2)$ is a direct sum of root spaces with respect to $\mathfrak{t}$. In particular, its $\mathbb{R}$-points are dense (in the Zariski topology) and hence they intersect the open dense orbit in $\mathfrak{h}^{\tilde{\sigma}}(2)$. Hence, we can find $g \in H(\tilde{\sigma}, \phi(h)) = H( \sigma, \phi(h))$ such that $gN \in \mathfrak{h}^{(\sigma, r)} \cap \mathfrak{h}_{\mathbb{R}}$. Thus, we have shown that any orbit has a real representative $N$ such that there is a $\phi: \mathfrak{sl}_2  \rightarrow \mathfrak{h}$ with $\phi(e)= N$ and $\phi(h) \in \mathfrak{t}$. Using \cite[Lemma 9.2.2]{collingwood1993nilpotent} we may replace $\phi$ so that $\phi(h)$ gets replaced with its real part and $\phi(f)$ with a real element without changing $\phi(e)$. Hence, we may assume $\phi(h) \in \mathfrak{t}_{\mathbb{R}}$ and $\phi(f) \in \mathfrak{h}_{\mathbb{R}}$.
\end{proof}

\begin{corollary}\label{cor: complex conj on sigma component groups}
    Let $(\sigma, r) \in \mathfrak{t} \oplus \mathbb{C}$ with $r \neq 0$ and $N \in \mathfrak{h}^{(\sigma, r)} \cap \mathfrak{h}_{\mathbb{R}}$ as in Lemma \ref{lemma: nilpotent may be chosen real}. Then $A(\tau(\sigma), N) = A(\sigma, N)$ and for any $\rho \in \Irr(A(\sigma, N))$ we have
    \begin{equation}
        \conju_*(\rho) \cong \rho^*
    \end{equation}
    where $\conju$ is the complex conjugation map
    \begin{equation}
        \conju: A(\sigma, N) \rightarrow A(\tau(\sigma), N) = A(\sigma, N).
    \end{equation}
\end{corollary}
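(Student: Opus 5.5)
The plan is first to show that $H(\sigma,N)$ and $H(\tau(\sigma),N)$ are literally the same subgroup of $H$, and that $\tau$ preserves it. After that I would rerun the argument of Lemma \ref{lemma: complex conjugation on componen groups} inside the split group $H(\sigma)$, using Lemma \ref{lemma: nilpotent may be chosen real} in place of the fact that nilpotent orbits have real points.

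\textbf{Equality of centralizers.} Since $\sigma\in\mathfrak t$ and $T$ is split, every root $\alpha$ of $(H,T)$ is real on $\mathfrak t_{\mathbb R}$. Hence
\begin{equation}
\alpha(\underline\tau(\sigma))=\overline{\alpha(\sigma)}, \qquad \text{so } \alpha(\sigma)=0 \iff \alpha(\underline\tau(\sigma))=0 .
\end{equation}
The centralizer in the connected group $H$ of a semisimple element of $\mathfrak h$ is connected. It is therefore generated by $T$ and the root subgroups $U_\alpha$ with $\alpha(\sigma)=0$. It follows that $H(\sigma)=H(\underline\tau(\sigma))$.

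Moreover $\tau(H(\sigma))=H(\underline\tau(\sigma))$, so $H(\sigma)$ is $\tau$-stable. It contains the split torus $T$, so it is a split reductive group with the induced real structure. Since $N$ is real, $\tau(H(N))=H(N)$. Intersecting gives
\begin{equation}
H(\sigma,N)=H(\underline\tau(\sigma),N),
\end{equation}
and this group is $\tau$-stable. Hence $A(\tau(\sigma),N)=A(\sigma,N)$, and $\tau$ induces an automorphism of this finite group.

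\textbf{Conjugacy classes are inverted.} Next I show that $\tau$ sends every conjugacy class of $A(\sigma,N)$ to its inverse; character theory then gives $\tau_*\rho\cong\rho^*$. Pick a coset of $H(\sigma,N)^\circ$ and a semisimple representative $s$ of finite order. Since $s\in H(\sigma)$ and $H(\sigma)$ is split, there is $g\in H(\sigma)$ with $s':=gsg^{-1}\in T$.

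Now $gN\in\mathfrak h^{(\sigma,r)}$, because $g$ fixes $\sigma$. It also lies in the Lie algebra of $H(s')$, which is connected reductive and contains $T$. I then apply Lemma \ref{lemma: nilpotent may be chosen real} to the group $H(s')$, with the same $(\sigma,r)$; note $\sigma\in\mathfrak t\subset\operatorname{Lie}H(s')$. This yields $g'\in H(s')(\sigma)$ with $g'gN$ real.

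Set $h=g'g\in H(\sigma)$. Then $hsh^{-1}=s'$ lies in the split torus and has finite order, so $\tau(s')=s'^{-1}$. Exactly as in the proof of Lemma \ref{lemma: complex conjugation on componen groups},
\begin{equation}
\tau(s)=\bigl(\tau(h)^{-1}h\bigr)\,s^{-1}\,\bigl(\tau(h)^{-1}h\bigr)^{-1}.
\end{equation}
It remains to check that $\tau(h)^{-1}h\in H(\sigma,N)$. It fixes $N$ because $hN$ and $N$ are both real. It lies in $H(\sigma)$ because $h\in H(\sigma)$ and $\tau(h)\in\tau(H(\sigma))=H(\sigma)$. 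Thus the image of $\tau(s)$ in $A(\sigma,N)$ is conjugate to the image of $s^{-1}$.

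\textbf{Expected obstacle.} The main step to check carefully is the middle one. When Lemma \ref{lemma: nilpotent may be chosen real} is applied inside $H(s')$, the conjugating element must be taken in $H(s')(\sigma)$, so that it centralizes both $s'$ and $\sigma$. This is what the lemma provides: its proof conjugates by elements of $H(\tilde\sigma,\phi(h))\subset H(\sigma)$, taken there inside $H(s')$.
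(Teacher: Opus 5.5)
Your proof is correct, but for the second half it takes a different route from the paper. The first step, showing $H(\sigma)=H(\tau(\sigma))$ via root subgroups, is the same as the paper's.

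For the conjugacy-class inversion, the paper does not rerun the argument of Lemma \ref{lemma: complex conjugation on componen groups} inside $H(\sigma)$. Instead it proceeds as follows:
\begin{enumerate}
\item It uses the full $\mathfrak{sl}_2$-part of Lemma \ref{lemma: nilpotent may be chosen real}, taking $\phi(h)\in\mathfrak t_{\mathbb R}$ and $\phi(f)$ real.
\item It sets $\tilde\sigma=\sigma-r\phi(h)$.
\item It invokes Reeder's result that the inclusions through the centralizer of $\operatorname{im}\phi$ induce isomorphisms $A(\tilde\sigma,N)\cong A(\sigma,N)$.
\item It observes that these isomorphisms are $\tau$-equivariant because $\operatorname{im}\phi$ is $\tau$-stable.
\item It applies Lemma \ref{lemma: complex conjugation on componen groups} as a black box to the split group $H(\tilde\sigma)$, where $A_{H(\tilde\sigma)}(N)=A(\tilde\sigma,N)$.
\end{enumerate}

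Your argument needs only that $N$ is real. It uses the orbit statement of Lemma \ref{lemma: nilpotent may be chosen real}, applied to a centralizer subgroup, in place of ``nilpotent orbits have real points''. This avoids the appeal to Reeder, the Collingwood--McGovern adjustment of $h$ and $f$, and the equivariance check, at the cost of repeating the conjugacy argument by hand. The paper's route is shorter on the page because it reduces cleanly to the already-proved nilpotent case.

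One small correction: $H(s')$ need not be connected. For example, take $H=\mathrm{PGL}_2$ and $s'$ the image of $\mathrm{diag}(1,-1)$; its centralizer contains the Weyl element. So apply the lemma to $H(s')^\circ$ instead. This group has the same Lie algebra $\mathfrak h^{s'}$, contains $T$, and is $\tau$-stable because $\tau(s')=s'^{-1}$. The lemma then gives $g'\in H(s')^\circ(\sigma)$, and the rest of your argument goes through unchanged.
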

\begin{proof}
    The group $H(\sigma)$ is generated by $T$ and all $U_{\alpha}$ with $\alpha(\sigma) = 0$. Since $T$ is an $\mathbb{R}$-split torus, complex conjugation acts trivially on roots and thus $\alpha(\sigma)  = 0 \Leftrightarrow \alpha(\tau(\sigma)) = 0$. Hence $H( \sigma ) = H(\tau(\sigma))$ and it follows that $A(\sigma, N) = A(\tau(\sigma), N)$. 
    
    By Lemma \ref{lemma: nilpotent may be chosen real} there is a morphism of Lie algebras $\phi : \mathfrak{sl}_2 \rightarrow \mathfrak{h}$ with $\phi(e) = N$, $\phi(h) \in \mathfrak{t}_{\mathbb{R}}$ and $\phi(f) \in \mathfrak{h}_{\mathbb{R}}$. Let $\tilde{ \sigma} := \sigma - r \phi(h) \in \mathfrak{t}$. By the same argument as above, we have $A(\tilde{\sigma}, N) = A(\tau(\tilde{\sigma}), N)$ so complex conjugation induces an automorphism $\tau: A(\tilde{\sigma}) \rightarrow A(\tau(\tilde{\sigma}), N) = A(\tilde{\sigma}, N)$. Moreover, the inclusions
    \begin{equation}
        H(\tilde{\sigma}, N) \hookleftarrow H(\phi, N) \hookrightarrow H(\sigma, N)
    \end{equation}
    induce isomorphisms on component groups $A(\tilde{\sigma}, N) \cong A(\sigma, N)$ (see \cite[\S4.3]{Re-isogeny}). Note that $\tau(\im(\phi)) = \im(\phi)$ since $\phi(e), \phi(h)$ and $\phi(h)$ are real. Hence the isomorphism $A(\tilde{\sigma}, N) \cong A(\sigma, N)$ is compatible with complex conjugation.
    
    Note that $H(\tilde{\sigma})$ is a reductive group whose Lie algebra contains $N$ and $A_{H(\tilde{\sigma})}(N) =A(\tilde{\sigma}, N)$. Hence, by Lemma \ref{lemma: complex conjugation on componen groups}, $\conju_*(\rho) \cong \rho^*$ for any $\rho \in \Irr(A_{H(\tilde{\sigma})}(N)) = \Irr(A(\sigma, N))$.
\end{proof}
\begin{lemma}\label{lemma: complex conj on poly reps}
    Let $(\sigma, r) \in \mathfrak{t} \oplus \mathbb{C}$ with $r \neq 0$, $N \in \mathfrak{h}^{(\sigma, r)} \cap \mathfrak{h}_{\mathbb{R}}$ and let $\mathbb{C}_{(\sigma, r)}$ be the one-dimensional $H^*_{\hat{L}(N)}(\pt)$-modules corresponding to $(\sigma, r)$. Let $\conju : H^*_{\hat{L}(N)}(\pt) \rightarrow H^*_{\hat{L}(N)}(\pt)$ be given by complex conjugation. Then 
    \begin{equation}
        \conju_* \mathbb{C}_{(\sigma, r)} \cong \mathbb{C}_{(-\sigma, -r)}
    \end{equation}
     as $H^*_{\hat{L}(N)}(\pt)$-modules.
\end{lemma}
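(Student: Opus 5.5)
We reduce to the torus computation of Lemma \ref{lemma: complex conjugation on torus cohomology}. The module $\mathbb{C}_{(\sigma,r)}$ is evaluation at $(\sigma,r)\in\hat{\mathfrak l}(N)$. The pushforward along the ring automorphism $\conju$ is then evaluation at the point obtained by transporting $(\sigma,r)$ along the map dual to $\conju$ on $H^2$. So it suffices to show that $\conju$ acts on $H^*_{\hat L(N)}(\pt)$ through an explicit map of the spectrum, and to identify that map.

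First, $H^*_{\hat L(N)}(\pt)=H^*_{\hat L(N)^\circ}(\pt)^{A}$ restricts injectively to the cohomology of a maximal torus of a maximal reductive subgroup of $\hat L(N)^\circ$. We choose this torus compatibly with the real structure. Take the $\mathfrak{sl}_2$-triple $\phi$ from Lemma \ref{lemma: nilpotent may be chosen real}, with $\phi(h)\in\mathfrak t_{\mathbb R}$ and $\phi(e),\phi(f)$ real. As in \eqref{eq: maximal connected subgroup of centralizer}, a maximal reductive subgroup of $\hat L(N)$ is $L(\phi)\times\mathbb{C}^\times$, embedded via $(g,\lambda)\mapsto (g\,\phi(\mathrm{diag}(\lambda,\lambda^{-1})),\lambda)$. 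This embedding is defined over $\mathbb R$, because $\phi$ is real. Inside $L(\phi)$ we take a maximal torus $S$ that is $\mathbb R$-split; we should be able to take $S\subset T$ when $\sigma-r\phi(h)\in\mathfrak t$, after conjugating. Then $S\times\mathbb{C}^\times$ is a split real torus, and its image $\tilde S\subset\hat L(N)$ is stable under $\tau$. Restriction gives an injection
\begin{equation}
H^*_{\hat L(N)}(\pt)\hookrightarrow H^*_{\tilde S}(\pt)=S(\mathrm{Lie}(\tilde S)^*),
\end{equation}
and this injection commutes with $\conju$.

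By Lemma \ref{lemma: complex conjugation on torus cohomology}, $\conju$ acts by $-1$ on $H^2_{\tilde S}(\pt)=\mathrm{Lie}(\tilde S)^*$, so on $S(\mathrm{Lie}(\tilde S)^*)$ it is pullback along $y\mapsto -y$. Hence on $H^*_{\hat L(N)}(\pt)=S(\hat{\mathfrak l}(N)^*)^{\hat L(N)}$, viewed as invariant functions restricted to $\mathrm{Lie}(\tilde S)$, $\conju$ is pullback along negation. Since invariant polynomials are determined by their restriction to a Cartan of the reductive part, $\conju(f)(y)=f(-y)$ for all $y\in\hat{\mathfrak l}(N)$. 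Any point $(\sigma,r)$ is conjugate, up to nilpotent part (which invariants do not see), to a point of $\mathrm{Lie}(\tilde S)$. Therefore $\conju_*\mathbb{C}_{(\sigma,r)}$ is the character $f\mapsto f(-\sigma,-r)$, which is $\mathbb{C}_{(-\sigma,-r)}$.

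The main obstacle is the compatibility step: making sure that the chosen maximal torus of $\hat L(N)$ is $\tau$-stable and split, so that the torus lemma applies, and that the identifications $H^*_{\hat L(N)}\cong H^*_{\hat L(N)^\circ}{}^{A}\cong H^*_{\tilde S}{}^{W}$ are $\tau$-equivariant. I would try first to exploit the realness of $\phi$ and the fact that $T$ is split, exactly as in the proof of Corollary \ref{corollary: complex conj on graded Hecke}.
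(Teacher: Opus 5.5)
There is a genuine gap at the step ``inside $L(\phi)$ we take a maximal torus $S$ that is $\mathbb R$-split''. It fails in general, and with it your claim that $\tau$ is pullback along negation on all of $H^*_{\hat L(N)}$.

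The point is that $N$ is \emph{fixed} in the statement. The real form that $\tau$ induces on $\hat L(N)$ depends on the real point $N$, not only on its complex orbit. Here is an example. Take the ambient group $\mathrm{Sp}_4$ with its split form, $\phi(h)=\mathrm{diag}(1,1,-1,-1)$, $\sigma=r\phi(h)\in\mathfrak t$, and $N=e_{2\epsilon_1}+e_{2\epsilon_2}$. This $N$ corresponds to the definite form $x_1^2+x_2^2$ in $\mathfrak h(2)\cong\mathrm{Sym}^2(\mathbb C^2)$.
\begin{itemize}
\item Then $L(\phi)\cong O_2$ with compact real form $O(2)$. So the reductive part of $\hat L(N)$ has rank $2$ but split rank $1$, and there is no split maximal torus at all.
\item Here $\sigma-r\phi(h)=0\in\mathfrak t$, yet $T\cap L(\phi)$ is finite. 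This refutes your hope of taking $S\subset T$.
\item On the non-split part, Lemma~\ref{lemma: complex conjugation on torus cohomology} does not apply: compact conjugation acts by $+1$ on $H^2_{SO_2}$. So your argument gives no information there.
\end{itemize}

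Your remedy ``after conjugating'' is not available either. Moving $N$ to the split point $e_{\epsilon_1+\epsilon_2}$ requires a non-real element, since the two points lie in different real orbits. Such an element changes the real structure you are computing with. Lemma~\ref{lemma: nilpotent may be chosen real} only produces \emph{some} real point of the orbit, possibly the definite one, and it replaces $N$ by a conjugate, which you are not allowed to do here.

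The global claim can be rescued by a cocycle argument. Two real points $N$ and $gN$ give real structures on $\hat L(N)$ that differ by $\mathrm{Ad}(g^{-1}\tau(g))$, with $g^{-1}\tau(g)\in\hat L(N)$. Inner automorphisms act trivially on $H^*_{\hat L(N)}$. But this is an extra idea, and you would still have to produce one good real point in the orbit.

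The paper avoids all of this. It uses neither a maximal torus nor injectivity of restriction. Take $S=(\hat L(N)\cap(T\times\mathbb C^\times))^\circ$.
\begin{itemize}
\item $S$ is $\tau$-stable, because $N$ is real and $T\times\mathbb C^\times$ is split.
\item As a closed connected subgroup of a split torus, $S$ is itself a split torus.
\item Its Lie algebra $\mathfrak s$ contains $(\sigma,r)$, precisely because $\sigma\in\mathfrak t$ and $[\sigma,N]=2rN$.
\end{itemize}
Hence $\mathbb C_{(\sigma,r)}$ is the pullback of evaluation at $(\sigma,r)\in\mathfrak s$ along the $\tau$-equivariant restriction $H^*_{\hat L(N)}(\mathrm{pt})\to H^*_S(\mathrm{pt})=S(\mathfrak s^*)$. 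Applying Lemma~\ref{lemma: complex conjugation on torus cohomology} on $S$ gives $\tau_*\mathbb C_{(\sigma,r)}\cong\mathbb C_{(-\sigma,-r)}$ directly. Only the single point $(\sigma,r)$ matters, so nothing about the global action of $\tau$ on $H^*_{\hat L(N)}(\mathrm{pt})$ is needed.
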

\begin{proof}
    Note that $(\sigma, r)$ is contained in the Lie algebra of $\hat{L}(N)$ and $T \times \mathbb{C}^{\times}$. Hence, $(\sigma, r)$ is also contained in the Lie algebra of $S:= (\hat{L}(N) \cap (T \times \mathbb{C}^{\times}))^{\circ}$.   
    In particular, the $H^*_{\hat{L}(N)}(\pt)$-module $\mathbb{C}_{(\sigma, r)}$ is given by pulling back the corresponding $H^*_S(\pt)$-module $\mathbb{C}_{(\sigma, r)}$ along the restriction map $H^*_{\hat{L}(N)}(\pt) \rightarrow H^*_S(\pt)$.
    Note that $S \subset T \times \mathbb{C}^{\times}$ is a closed connected subgroup of a split torus, so it is itself a split torus. By Lemma \ref{lemma: complex conjugation on torus cohomology}, we get that $\conju: H^*_S (\pt) \rightarrow H^*_S(\pt)$ acts by multiplication with $-1$ in degree $2$. Recall that $H^*_S(\pt) \cong S(\mathfrak{s}^*)$ and the $\mathbb{C}_{(\sigma, r)}$-module is given by evaluation at $(\sigma, r) \in \mathfrak{s}$. Hence, $\conju_* \mathbb{C}_{(\sigma, r)} \cong \mathbb{C}_{(-\sigma, -r)}$ as $H^*_S(\pt)$-modules and thus also as $H^*_{\hat{L}(N)}(\pt)$-modules.
\end{proof}

\begin{theorem}\label{thm: action of complex conj on simples}
    Let $\ubL_{(\sigma, N, \rho, r)}$ be a simple $\gH(H,P,\mathbf{c})$-module with $r \neq 0$. Then 
    \begin{equation}
        \ubD_* \ubL_{(\sigma, N, \rho, r)} \cong \ubL_{(\sigma, N, \rho^*, r)}
    \end{equation}
     as $\gH(H,P,\mathbf{c}^*)$-modules.
\end{theorem}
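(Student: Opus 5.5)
We work with standard modules rather than simple ones. Since $\ubD_*$ is an exact equivalence from $\gH(H,P,\mathbf c)$-modules to $\gH(H,P,\mathbf c^*)$-modules, it sends the unique simple quotient of a standard module to the unique simple quotient of its image. So it suffices to prove
\begin{equation}
    \ubD_*\ubM^{H,P,\mathbf c}_{(\sigma,N,\rho,r)}\cong \ubM^{H,P,\mathbf c^*}_{(\sigma,N,\rho^*,r)}.
\end{equation}
Up to $H$-conjugacy we may take $\sigma\in\mathfrak t$. By Lemma \ref{lemma: nilpotent may be chosen real} (using $r\ne0$) we may also take $N\in \mathfrak h^{(\sigma,r)}\cap\mathfrak h_{\mathbb R}$, together with a real $\mathfrak{sl}_2$-triple whose neutral element lies in $\mathfrak t_{\mathbb R}$.

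\textbf{Step 1.} Since $N$ is real, $\hat H(N)$ and $\hat H(N)^\circ$ are stable under $\tau$. The stalk $(K_{H,P,\mathbf c})_N$ is sent by $\underline\tau_*$ to $(K_{H,P,\mathbf c^*})_N$ via the isomorphism $\underline\tau_*K_{H,P,\mathbf c}\cong K_{H,P,\mathbf c^*}$ fixed before Corollary \ref{corollary: complex conj on graded Hecke}. This gives a $\tau$-semilinear-in-nothing, i.e. $\mathbb C$-linear, isomorphism
\begin{equation}
    \tau:\ubM^{H,P,\mathbf c}_N\to \ubM^{H,P,\mathbf c^*}_N.
\end{equation}
By functoriality of the restriction maps, this isomorphism has three compatibilities:
\begin{enumerate}
    \item it intertwines the $\gH$-actions through $\tau:\gH(H,P,\mathbf c)\to\gH(H,P,\mathbf c^*)$;
    \item it intertwines the $H^*_{\hat H(N)^\circ}(\pt)$-actions through complex conjugation on equivariant cohomology;
    \item it intertwines the $A(N)$-actions through $\tau$ on $A(N)$.
\end{enumerate}

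\textbf{Step 2.} Specialize at $(\sigma,r)$. By Lemma \ref{lemma: complex conj on poly reps} (with $\hat H(N)$ in place of $\hat L(N)$; the same proof works), $\tau_*\mathbb C_{(\sigma,r)}\cong\mathbb C_{(-\sigma,-r)}$. Hence
\begin{equation}
    \tau_*\ubM^{\mathbf c}_{(\sigma,N,r)}\cong \ubM^{\mathbf c^*}_{(-\sigma,N,-r)}.
\end{equation}
Note that $-\sigma$, not $\tau(\sigma)$, appears here; this is exactly the sign that the twist by $\kappa$ is designed to undo.

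\textbf{Step 3.} This is the main obstacle: identifying the component group actions correctly. The isomorphism from Step 2 intertwines the $A(\sigma,N)$-action with the $A(-\sigma,N)$-action through $\tau$. We have $A(-\sigma,N)=A(\sigma,N)$ as subquotients, since the centralizers coincide, and $\tau(A(\sigma,N))=A(\tau(\sigma),N)=A(\sigma,N)$ by Corollary \ref{cor: complex conj on sigma component groups}. By the same corollary, $\tau_*\rho\cong\rho^*$. Taking $\rho$-isotypic parts then gives
\begin{equation}
    \tau_*\ubM^{\mathbf c}_{(\sigma,N,\rho,r)}\cong \ubM^{\mathbf c^*}_{(-\sigma,N,\rho^*,-r)}.
\end{equation}
The care needed is to check that the three compatibilities of Step 1 are mutually consistent: the $A(N)$-action twists the $H^*_{\hat H(N)^\circ}$-action, and passing to the fibre at $(\sigma,r)$ versus $(-\sigma,-r)$ must be compatible with $\tau$ on $A(\sigma,N)$. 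I would verify this by reducing to the torus $S$ of Lemma \ref{lemma: complex conj on poly reps}, which is split and $\tau$-stable, so that $\tau$ acts there simply as $-1$ on $\mathfrak s$.

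\textbf{Step 4.} Account for $\kappa$. Twisting by $\kappa$ ($x\mapsto -x$, $\br\mapsto-\br$, $t_w\mapsto t_w$) sends $\ubM_{(-\sigma,N,\rho^*,-r)}$ to $\ubM_{(\sigma,N,\rho^*,r)}$. This is visible on the standard-module construction, because $\kappa$ is induced by $-1$ on $H^*_{\hat H(N)^\circ}$ in degree $2$, which is compatible with the sheaf-theoretic action. Combining with Step 3 gives the required isomorphism $\ubD_*\ubM_{(\sigma,N,\rho,r)}\cong\ubM_{(\sigma,N,\rho^*,r)}$, and passing to unique simple quotients proves the theorem.

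If this direct verification of $\kappa$ on standard modules is awkward, there is a fallback. By Corollary \ref{corollary: complex conj on graded Hecke}, $\ubD$ is the identity on generators. So one can instead compare weights: show that $\ubL_{(\sigma,N,\rho,r)}$ and $\ubL_{(\sigma,N,\rho^*,r)}$ have the same weights, and conclude by Proposition \ref{prop:module-rigid} when $(\sigma,r)$ is real. The general case would then follow by the geometric argument above.
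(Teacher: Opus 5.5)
Your proposal is correct and follows essentially the same route as the paper. You reduce to standard modules with $\sigma\in\mathfrak t$ and $N$ real, use complex conjugation to intertwine the $\gH$-, $H^*_{\hat H(N)^\circ}(\pt)$- and $A(N)$-actions, and obtain $\tau_*\ubM_{(\sigma,N,\rho,r)}\cong\ubM_{(-\sigma,N,\rho^*,-r)}$ via Lemma \ref{lemma: complex conj on poly reps} and Corollary \ref{cor: complex conj on sigma component groups}; the twist by $\kappa$ then removes the signs.

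The only difference is in the $\kappa$ step. The paper cites \cite[Theorem 4.14(b)]{evens1997fourier} for simple modules, whereas you verify it directly on standard modules using the grading automorphism. That direct check is valid, so the weight-based fallback is unnecessary.
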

\begin{proof}
    We may assume that $(\sigma, r) \in \mathfrak{t} \oplus \mathbb{C}$. Without loss of generality, we may assume that $N$ is chosen as in Lemma \ref{lemma: nilpotent may be chosen real} so that $N \in \mathfrak{h}_{\mathbb{R}}$. Then complex conjugation induces a map
    \begin{equation}
        \conju : \ubM_N^{H,P, \mathbf c} = H^*_{\hat{H}(N)^{\circ}} ( (K_{H, P, \mathbf{c}})_N) \rightarrow H^*_{\hat{H}(N)^{\circ}} ( \underline \tau_*(K_{H, P, \mathbf{c}})_N) \cong H^*_{\hat{H}(N)^{\circ}} ( (K_{H, P, \mathbf{c}^*})_N) = \ubM_N^{H,P, \mathbf c^*}
    \end{equation}
    which yields an isomorphism $\conju_*\ubM_N^{H,P, \mathbf c} \cong \ubM_N^{H,P, \mathbf c^*}$ as $\gH(H,P,\mathbf{c}^*)$-modules. Moreover, this map intertwines the actions of $H^*_{\hat{H}(N)}(\pt)$ and $A(N)$ via the corresponding maps $\conju: H^*_{\hat{H}(N)}(\pt) \rightarrow H^*_{\hat{H}(N)}(\pt)$ and $\conju : A(N) \rightarrow A(N)$. By Lemma \ref{lemma: complex conj on poly reps}, we thus get an isomorphism $\conju_* ( \ubM_{(\sigma, N, r)}^{H,P, \mathbf c} ) \cong \ubM_{(- \sigma, N, -r)}^{H,P, \mathbf c^*}$ and by Corollary \ref{cor: complex conj on sigma component groups} this induces an isomorphism $ \conju_*(\ubM_{(\sigma, N, \rho, r)}^{H,P, \mathbf c} )\cong \ubM_{(-\sigma, N, \rho^*, -r)}^{H,P, \mathbf c^*}$. Taking unique simple quotients yields $ \conju_*\ubL_{(\sigma, N, \rho, r)} \cong \ubL_{(-\sigma, N, \rho^*, -r)}$. Finally, by \cite[Theorem 4.14(b)]{evens1997fourier} we have $\kappa_* \ubL_{(\sigma, N, \rho, r)} = \ubL_{(-\sigma, N, \rho, -r)}$. Since $\ubD = \tau \circ \kappa$, the result follows.
\end{proof}

Note that since $\ubD$ is the identity on the toral part, it is precisely the isomorphism $\gH(H,P,\mathbf{c})\cong \gH(H,P,\mathbf{c}^*)$ coming from identifying the underlying root data of these two algebras. Hence, the previous theorem can also be rephrased as follows.
\begin{corollary}
    If we identify $\gH(H,P,\mathbf{c})\cong \gH(H,P,\mathbf{c}^*)$ by identifying their underlying root data, then $\ubL_{(\sigma, N, \rho, r)} \cong \ubL_{(\sigma, N, \rho^*, r)}$ under this identification.
\end{corollary}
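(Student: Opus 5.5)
This is a restatement of Theorem \ref{thm: action of complex conj on simples}, so the plan is to reinterpret the isomorphism $\ubD$ rather than redo any geometry. By Corollary \ref{corollary: complex conj on graded Hecke}, $\ubD:\gH(H,P,\mathbf c)\to\gH(H,P,\mathbf c^*)$ sends $t_w\mapsto t_w$, $x\mapsto x$ and $\br\mapsto\br$ in the standard presentations.

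First I check that the naive identification exists. Both algebras are $\gH^\mu(\Sigma,\Delta)$ for the same $(\Sigma,\Delta,\mu)$:
\begin{itemize}
\item $\Sigma$ and $\Delta$ depend only on $(H,P,L)$, which is the same for $\mathbf c$ and $\mathbf c^*$.
\item $\mu$ depends only on $(L,\mathcal C)$ and the chosen base point $N_0\in\mathcal C$, and not on the local system.
\end{itemize}
So "identifying the underlying root data" is the unique graded isomorphism $\iota_0$ sending $t_w\mapsto t_w$, $x\mapsto x$ and $\br\mapsto\br$. By Corollary \ref{cor:rigid} it is the only graded isomorphism that is the identity on the toral part. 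Hence $\iota_0=\ubD$; one could also read this off directly from the explicit formula.

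Next I apply Theorem \ref{thm: action of complex conj on simples}: for $r\neq0$ it gives $\ubD_*\ubL_{(\sigma,N,\rho,r)}\cong\ubL_{(\sigma,N,\rho^*,r)}$. Since $\iota_0=\ubD$, transporting $\ubL_{(\sigma,N,\rho,r)}$ along the identification gives $\ubL_{(\sigma,N,\rho^*,r)}$, which is the claim.

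The one subtlety is the hypothesis $r\ne0$, which the theorem needs and the statement here leaves implicit. I would simply state that restriction, which is harmless since all applications use $r=-\frac12\log q$. I would not attempt $r=0$ here.
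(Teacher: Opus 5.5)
Your proposal is correct and matches the paper's argument: the paper also observes that $\ubD$ is the identity on the toral part (Corollary \ref{corollary: complex conj on graded Hecke}), so it coincides with the identification of the underlying root data, and then reads the statement off from Theorem \ref{thm: action of complex conj on simples}. Two things you add are worthwhile: the explicit check that $(\Sigma,\Delta,\mu)$ does not depend on the local system, and the observation that the statement silently inherits the hypothesis $r\neq 0$ from that theorem.
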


The next lemma shows that when $\sigma\in \mathfrak t_{\mathbb R}$, duality on local systems coincides with applying $\tau$.

\begin{corollary}\label{cor:split-conj}
    Let $\ubL_{(\sigma, N, \rho, r)}$ be a simple $\gH(H,P,\mathbf{c})$-module with $r \in \mathbb R-\{0\}$ and $(H\cdot \sigma) \cap \mathfrak t_{\mathbb R} \ne \emptyset$. Then 
    \begin{equation}
        \ubD_* \ubL_{(\sigma, N, \rho, r)} \cong \ubL_{(\underline\tau(\sigma), \underline\tau(N), \tau_*\rho, r)}
    \end{equation}
    as $\gH(H,P,\mathbf{c}^*)$-modules.
\end{corollary}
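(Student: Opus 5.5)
By Theorem \ref{thm: action of complex conj on simples} we already know $\ubD_*\ubL_{(\sigma,N,\rho,r)}\cong \ubL_{(\sigma,N,\rho^*,r)}$. So it suffices to prove an equality in $\underline\Phi^{\mathbf c^*}(H)$:
\begin{equation}
(\sigma,N,\rho^*,r) = (\underline\tau(\sigma),\underline\tau(N),\tau_*\rho,r)
\end{equation}
up to $H$-conjugacy. The argument is a choice of representative followed by Corollary \ref{cor: complex conj on sigma component groups}.

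First, replace the quadruple by an $H$-conjugate with $\sigma\in\mathfrak t_{\mathbb R}$, which is allowed by the hypothesis $(H\cdot\sigma)\cap\mathfrak t_{\mathbb R}\neq\emptyset$. We then have $\underline\tau(\sigma)=\sigma$. Since $r$ is real and nonzero, Lemma \ref{lemma: nilpotent may be chosen real} lets us conjugate further by an element of $H(\sigma)$. This keeps $\sigma$ fixed and makes $N\in\mathfrak h^{(\sigma,r)}\cap\mathfrak h_{\mathbb R}$, with an $\mathfrak{sl}_2$-triple as in that lemma. Now $\underline\tau(N)=N$ as well.

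Next we need compatibility with the transport. A conjugation by $g\in H$ sends $(\sigma,N,\rho)$ to $(\mathrm{Ad}(g)\sigma,\mathrm{Ad}(g)N,g_*\rho)$. Applying $\tau$ afterwards gives $\tau(g)$-conjugates of $\tau$ applied to the original data. So the operation $(\sigma,N,\rho)\mapsto(\underline\tau\sigma,\underline\tau N,\tau_*\rho)$ is well defined on $H$-conjugacy classes, and it is harmless to evaluate it on our chosen representative.

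At the chosen representative, $\underline\tau(\sigma)=\sigma$ and $\underline\tau(N)=N$, so the map $\tau\colon A(\sigma,N)\to A(\tau(\sigma),N)=A(\sigma,N)$ is exactly the one in Corollary \ref{cor: complex conj on sigma component groups}. That corollary gives $\tau_*\rho\cong\rho^*$. Hence
\begin{equation}
(\underline\tau(\sigma),\underline\tau(N),\tau_*\rho,r)=(\sigma,N,\rho^*,r),
\end{equation}
and combining this with Theorem \ref{thm: action of complex conj on simples} proves the claim.

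The main obstacle is the second step: making sure the real representative $N$ from Lemma \ref{lemma: nilpotent may be chosen real} can be found while $\sigma$ stays real. The lemma conjugates only by elements of $H(\sigma)$, and after its final adjustment via \cite[Lemma 9.2.2]{collingwood1993nilpotent} we still have $\phi(h)\in\mathfrak t_{\mathbb R}$. I would re-check that the $g\in H(\sigma,\phi(h))$ used there preserves $\sigma$, which is immediate since $g$ lies in $H(\sigma)$.
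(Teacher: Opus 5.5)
Your proposal is correct and follows the paper's proof essentially step for step. You reduce via Theorem \ref{thm: action of complex conj on simples} to the equality $(\underline\tau(\sigma),\underline\tau(N),\tau_*\rho,r)_H=(\sigma,N,\rho^*,r)_H$, choose a representative with $\sigma\in\mathfrak t_{\mathbb R}$ and real $N$ via Lemma \ref{lemma: nilpotent may be chosen real} (which only conjugates by $H(\sigma)$, so $\sigma$ is preserved), and then apply Corollary \ref{cor: complex conj on sigma component groups}; your explicit check that the $\tau$-operation is well defined on $H$-conjugacy classes is a detail the paper leaves implicit.
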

\begin{proof}
    By Theorem \ref{thm: action of complex conj on simples} it suffices to show that $(\underline\tau(\sigma),\underline\tau(N),\tau_*\rho, r)_H = (\sigma,N,\rho^*,r)_H$.
    We just need to check this equality for a single element in the conjugacy class. We may assume that $\sigma \in \mathfrak{t}_{\mathbb{R}}$ so that $\underline{\tau}(\sigma) = \sigma$. Moreover, by Lemma \ref{lemma: nilpotent may be chosen real} we may assume that $N \in \mathfrak{h}^{(\sigma, r)}_{\mathbb{R}}$ so that $\underline{\tau}(N) = N$. Then, by Corollary \ref{cor: complex conj on sigma component groups} we have $\tau_* \rho \cong \rho^*$.
\end{proof}

\subsection{Operations on $L$-parameters}
Let $T$ be a maximal torus of $G^\vee$, and $\tau,\tau_c, C: G^{\vee} \rightarrow G^{\vee}$ be split complex conjugation, compact complex conjugation, and the Chevalley involution. These act on $T$ by $\tau(s) = s_c^{-1}s_h$, $\tau_c(s) = s_c s_h^{-1}$ and $C(s) = s^{-1}$. Since they all commute with pinned root datum automorphisms, we can extend these group homomorphisms to the $L$-group by acting trivially on the Weil group.


\begin{definition}
    Define
    \begin{align}
        &\mathbb D^u:\Phi^u(\Lg G)\to \Phi^u(\Lg G), && (s,N,\rho)\mapsto (s,N,\rho^*) \\
        &\FT^u:\Phi^u(\Lg G)\to \Phi^u(\Lg G), &&(s,N,\rho)\mapsto (s^{-1},\check N,\check\rho) \\
        &C^u:\Phi^u(\Lg G)\to \Phi^u(\Lg G), && (s,N,\rho)\mapsto (C(s),\underline C(N),C_* (\rho)) \\
        &\tau^u :\Phi^u(\Lg G)\to \Phi^u(\Lg G), && (s,N,\rho)\mapsto (\tau(s),\underline \tau(N), \tau_*(\rho))\\
        &\tau_c^u = \tau^u \circ C^u:\Phi^u(\Lg G)\to \Phi^u(\Lg G), && (s,N,\rho)\mapsto (\tau_c(s),\underline \tau_c(N), (\tau_c)_*(\rho))\\
        &\cin^u:\Phi^u(\Lg G)\to \Phi^u(\Lg G), &&(s,N,\rho)\mapsto (s_c^{-1}s_h, N,\rho).
    \end{align}

\end{definition}
\begin{remark}
    To see that $\cin^u(s,N, \rho) = (s_c^{-1}s_h, N,\rho)$ is again an $L$-parameter, note that $q$ is a positive real number so that the equation $sN = q^{-1}N$ is equivalent to $s_cN = N$ and $s_h N = q^{-1}N$. This can be rewritten as $s_c^{-1}N = N$ and $s_hN = q^{-1}N$.
\end{remark}
We now describe these operations on the parameter set $\Phi'(\Lg G)$ via the bijection $\mathbf I : \Phi^u(\Lg G) \rightarrow \Phi'(\Lg G)$. Note that $G^\vee(u) = G^\vee(u^{-1})$ and thus we can view the maps $\ubD, \ubFT$ and $\ubC$ defined with respect to $(G^\vee(u), P, \mathbf c)$ as maps
\begin{align*}
    \ubD : \gH(G^\vee(u), P, \mathbf c) &\rightarrow \gH (G^\vee(u), P, \mathbf c^*),  &x &\mapsto x , &t_s &\mapsto t_s\\
    \ubFT : \gH(G^\vee(u), P, \mathbf c) &\rightarrow \gH (G^\vee(u^{-1}), P, \mathbf c), &x&\mapsto -x, & t_s &\mapsto -t_s \\
    \ubC : \gH(G^\vee(u), P, \mathbf c) &\rightarrow \gH (G^\vee(u), P, \mathbf c^*), &x & \mapsto -w_0(x), &t_s &\mapsto t_{w_0(s)}.
\end{align*}

Hence, we can define the following operations.

\begin{definition}\label{def: geometric operations}
    Define
    \begin{align}
        &\mathbb D':\Phi'(\Lg G)\to \Phi'(\Lg G), && (u,P,\mathbf c,\ubL)\mapsto (u,P,\mathbf c^*, \ubD_* \ubL) \\
        &\FT':\Phi'(\Lg G)\to \Phi'(\Lg G), && (u,P,\mathbf c,\ubL)\mapsto (u^{-1},P,\mathbf c, \ubFT_*\ubL) \\
        &C':\Phi'(\Lg G)\to \Phi'(\Lg G), && (u,P,\mathbf c,\ubL)\mapsto (C(u),C(P), C_* (\mathbf c), \ubF(C)_*\ubL) \\
        &\tau_c':\Phi'(\Lg G)\to \Phi'(\Lg G), && (u, P ,\mathbf c, \ubL)\mapsto (u, P, \mathbf c,\ubD_* \circ \ubC_* \circ \ubL)\\
        &\cin':\Phi'(\Lg G)\to \Phi'(\Lg G), && (u, P ,\mathbf c, \ubL)\mapsto (u^{-1}, P ,\mathbf c, \ubL).
    \end{align}
\end{definition}

\begin{theorem}\label{thm:param-to-graded}
    We have
    \begin{align}
        \mathbf I\circ \FT^u &= \FT'\circ \mathbf I\\
        \mathbf I\circ \mathbb D^u &= \mathbb D'\circ \mathbf I \\
        \mathbf I\circ C^u &= C'\circ \mathbf I \\
        \mathbf I \circ \tau_c^u &= \tau_c' \circ \mathbf I \label{eq:compact-conj}\\
        \mathbf I \circ \cin^u &=  \cin' \circ \mathbf I .
    \end{align}
    Moreover, if $G$ is not inner to a triality form of $D_4$, then
    \begin{equation}
        C'(u, P , \mathbf c, \ubL) = (u^{-1}, P, \mathbf c^*, \ubC_* \ubL).
    \end{equation}
\end{theorem}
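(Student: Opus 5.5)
Each identity is verified by unwinding $\mathbf I(s,N,\rho)=(s_c,P,\mathbf c,\ubL_{(\log s_h,N,\rho,-r)})$ and comparing with the graded-level results already proved. For $\FT^u$, the infinitesimal parameter $s^{-1}=s_c^{-1}s_h^{-1}$ has compact part $s_c^{-1}$ and $\log$ of hyperbolic part $-\log s_h$. The variety $V_{(s,q)}$ equals $\mathfrak h_{(\log s_h,-r)}$ for $H=G^\vee(s_c)=G^\vee(s_c^{-1})$, and the Killing pairing used to define $\FT^u$ is the one used for $H$. Hence $\FT^u$ agrees with the graded Fourier transform $(\sigma,N,\rho,-r)\mapsto(-\sigma,\check N,\check\rho,-r)$. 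Theorem \ref{thm:em} then identifies this with $\ubFT_*$. We must also check that the cuspidal support and $P$ are unchanged; this follows because $\ubFT$ maps $\gH(H,P,\mathbf c)$ to itself, so the image parameter stays in $\underline\Phi^{\mathbf c}(H)$. For $\mathbb D^u$ we apply Theorem \ref{thm: action of complex conj on simples}, since $r\neq 0$. For $\cin^u$, note $G^\vee(s_c^{-1})=G^\vee(s_c)$ with the same $\sigma$, so the identity is tautological.

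For $C^u$, we take $C$ to preserve a torus containing $s$ and extend it trivially on $\vartheta$. Then $C(s)_c=C(s_c)$ and $C(s)_h=C(s_h)$, and $C$ restricts to an isomorphism $G^\vee(s_c)\to G^\vee(C(s_c))$. Proposition \ref{prop:functoriality} then gives $\mathbf I\circ C^u=C'\circ\mathbf I$. For $\tau_c^u$, we combine the $C$ case with split conjugation: by Corollary \ref{cor:split-conj}, on real $\sigma$ the map $\ubD_*$ realizes $(\sigma,N,\rho)\mapsto(\underline\tau\sigma,\underline\tau N,\tau_*\rho)$. The key point is that $\tau_c$ fixes $s_c$ up to conjugacy, because $\tau_c(s)=s_cs_h^{-1}$ on $T$. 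It also acts on $G^\vee(s_c)$ as the compact conjugation of that group, i.e.\ as $C_{s_c}\circ\tau_{s_c}$, where $\tau_{s_c}$ is the split conjugation, which fixes $s_c$ if we choose $s_c$ in a split torus. Then, as in Proposition \ref{prop:chevalley} applied inside $H=G^\vee(s_c)$, with the conjugation by $w_0$ absorbed via Lemma \ref{lem:parabolics}, we obtain $\ubD_*\circ\ubC_*$. Both $\ubD$ and $\ubC$ dualize the cuspidal datum, so the composite returns to $\mathbf c$. We use Corollary \ref{cor:rigid} to see that $\ubD\circ\ubC$ is determined by its action on the toral part.

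For the final claim, the plan is to show that when $G^*$ is not a triality form, the Chevalley involution of $G^\vee\rtimes\langle\vartheta\rangle$ can be chosen to send $s_c$ to a $G^\vee$-conjugate of $s_c^{-1}$ compatibly with $\vartheta$. Concretely, we write $s_c=t\vartheta$ with $t$ in a $\vartheta$-stable torus, and $C(t\vartheta)=t^{-1}\vartheta$. If $\vartheta^2=1$, then $(t\vartheta)^{-1}=\vartheta t^{-1}$ is conjugate to $t^{-1}\vartheta$, which gives the required conjugacy. After conjugating by this element $g$, the pair $C'(u,P,\mathbf c,\ubL)$ becomes $(u^{-1},gC(P),gC(\mathbf c),\dots)$ inside $G^\vee(u^{-1})=G^\vee(u)$. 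Proposition \ref{prop:chevalley}, applied to the composite automorphism $\mathrm{Ad}(g)\circ C$ of $G^\vee(u)$, which is a Chevalley involution of that group, then yields $(P,\mathbf c^*,\ubC_*\ubL)$.

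The main obstacle is this last step. We must check that $\mathrm{Ad}(g)\circ C$ really inverts a maximal torus of $G^\vee(u)$, and that the nonconnected $\vartheta$-twist causes no problem. For triality, $\vartheta$ has order $3$ and $u^{-1}$ lies in the coset $G^\vee\vartheta^{-1}$ rather than $G^\vee\vartheta$, which is exactly where the argument fails. A smaller worry is the normalization choices, namely $\pm r$ and the sign in $\phi_B$, in the $\FT$ step.
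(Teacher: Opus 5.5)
Your treatment of the five intertwining identities is the paper's argument. You unwind $\mathbf I$ and then invoke:
\begin{itemize}
\item Theorem~\ref{thm:em} for $\FT^u$;
\item Theorem~\ref{thm: action of complex conj on simples} for $\mathbb D^u$;
\item functoriality (Proposition~\ref{prop:functoriality}, equivalently Lemma~\ref{lemma: Chevalley on parameters}) for $C^u$;
\item Proposition~\ref{prop:chevalley}, Corollary~\ref{cor:split-conj} and $\tau_c=\tau\circ C$ for $\tau_c^u$;
\item a tautology for $\cin^u$.
\end{itemize}
Decomposing $\tau_c|_{G^\vee(s_c)}$ intrinsically as a Chevalley involution times a split conjugation of $G^\vee(s_c)$ is a sensible way to make precise what the paper does implicitly. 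One slip is harmless: a split conjugation does not fix a compact element of a split torus, it inverts it. What you actually need, and what is true, is that $\tau_c$ fixes $s_c$.

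The genuine gap is the final assertion. It is the only part of the theorem with content beyond bookkeeping, and you leave it as an unverified ``main obstacle''. To apply Proposition~\ref{prop:chevalley} you must produce a maximal torus of $G^\vee(u)$ that your automorphism inverts, and you give no reason one exists. The paper supplies it in three steps.
\begin{itemize}
\item By \cite[Lemma 6.12]{Lu-unip2}, write $u=u^\circ\vartheta$ with $u^\circ\in T^\vartheta$, where $T$ is a $\vartheta$-stable maximal torus inverted by $C$. Then $\vartheta^2=1$ gives $C(u)=(u^\circ)^{-1}\vartheta=u^{-1}$ exactly. No auxiliary conjugation $g$ is needed, and $C$ preserves $G^\vee(u)=G^\vee(u^{-1})$.
\item One has $T\cap G^\vee(u)=T^\vartheta$, and $Z_{G^\vee}(T^\vartheta)=T$ by \cite[6.2(b)]{Lu-unip2}; the fixed torus of a pinned automorphism contains regular elements, e.g.\ values of $2\rho^\vee$.
\item Hence any maximal torus $T'$ of $G^\vee(u)$ containing $T^\vartheta$ satisfies $T'\subset T\cap G^\vee(u)=T^\vartheta$. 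So $T^\vartheta$ is a maximal torus of $G^\vee(u)$, and since $C$ inverts it, $C|_{G^\vee(u)}$ is a Chevalley involution.
\end{itemize}
This also answers your worry about the non-connected twist: it enters only through $T\cap G^\vee(u)=T^\vartheta$.

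Your version with a conjugating element can be repaired the same way. With $u=t\vartheta$ and $t\in T$, take $g=t$. Then $\mathrm{Ad}(t)\circ C$ sends $u$ to $\vartheta t^{-1}=u^{-1}$. It still inverts $T^\vartheta$ because $T$ is abelian, and $T\cap G^\vee(u)=T^\vartheta$ holds for every $t\in T$. Without the centralizer fact $Z_{G^\vee}(T^\vartheta)=T$, however, the argument does not establish the hypothesis that Proposition~\ref{prop:chevalley} requires.
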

\begin{proof}
    By Theorem \ref{thm: action of complex conj on simples} we have $\ubD_* \ubL_{\log(s_h), N, \rho, -r} = \ubL_{\log(s_h), N, \rho^*, -r}$ thus
    \begin{equation}
    \begin{aligned}
        \mathbb{D}' \circ \mathbf I (s, N ,\rho) & = \mathbb{D}' ( s_c, P, \mathbf c, \ubL_{\log(s_h), N, \rho, -r}) \\
        &= ( s_c, P, \mathbf c^*, \ubL_{\log(s_h), N, \rho^*, -r}) \\
        &= \mathbf I( s, N, \rho^*) \\
        &= \mathbf I \circ \mathbb D^u(s,N, \rho).
    \end{aligned}
    \end{equation}
    By Theorem \ref{thm:em} we have $\ubFT_* \ubL_{\log(s_h), N, \rho, -r} = \ubL_{-\log(s_h), \check{N}, \check{\rho}, -r}$ and thus
    \begin{equation}
    \begin{aligned}
        \FT' \circ \mathbf I (s, N ,\rho) & = \FT' ( s_c, P, \mathbf c, \ubL_{\log(s_h), N, \rho, -r}) \\
        &= ( s_c^{-1}, P, \mathbf c, \ubL_{-\log(s_h), \check{N}, \check{\rho}, -r}) \\
        &= \mathbf I( s^{-1}, \check N, \check \rho) \\
        &= \mathbf I \circ \FT^u (s,N, \rho).
    \end{aligned}
    \end{equation}
    By Lemma \ref{lemma: Chevalley on parameters} we have $\ubF(C)_*\ubL_{\log(s_h), N, \rho, -r} = \ubL_{C(\log(s_h)), \underline C(N), C_* (\rho), -r}$ and thus
    \begin{equation}
    \begin{aligned}
        C' \circ \mathbf I (s, N ,\rho) & = C' ( s_c, P, \mathbf c, \ubL_{\log(s_h), N, \rho, -r}) \\
        &= ( C(s_c), C(P),  C_*(\mathbf c) , \ubL_{\underline C(\log(s_h)), \underline C(N), C_* (\rho), -r}) \\
        &= \mathbf I(C(s), \underline C (N), C_* (\rho)) \\
        &= \mathbf I \circ C^u (s,N, \rho).
    \end{aligned}
    \end{equation}
    By \cite[Remark 2.5(2)]{adams-vogan}, $\tau_c =  \tau \circ C$. Moreover, by Proposition \ref{prop:chevalley} and Lemma \ref{lemma: Chevalley on parameters} we have $\ubC_* \ubL_{(\log(s_h),N,\rho,r)} = \ubL_{\underline C(\log(s_h)), \underline C(N), C_*(\rho), r)}$. Further to that $\ubD_* \ubL_{(\log(s_h),N,\rho,r)} = \ubL_{(\underline \tau(\log(s_h)),\underline \tau(N),\tau_*\rho,r)}$ by Corollary \ref{cor:split-conj}. Combining these results, we get,
    \begin{equation}
        \ubD_*\circ \ubC_*(\ubL_{(\log(s_h),N,\rho,-r)})  = \ubL_{(\underline\tau_c(\log(s_h)),\underline\tau_c(N),(\tau_c)_*\rho, -r)}.
    \end{equation}

    By conjugating by $G^\vee$ we may assume (\cite[Lemma 6.12]{Lu-unip2}) that $s = s^{\circ} \vartheta$ where $s^{\circ} \in T^\vartheta$. Note that $\vartheta$ also commutes with $(s^{\circ})_c$ and $(s^{\circ})_h$ (Lemma \ref{lemma: polar decomposition}) from which one can deduce that $s_c = (s^{\circ})_c \vartheta $ and $s_h = (s^{\circ})_h$.
    Therefore
    \begin{equation}
        \tau_c(s) = \tau_c(s^{\circ}_c) \tau_c(s^{\circ}_h) \tau_c( \vartheta) = s_c^{\circ} (s_h^{\circ})^{-1} \vartheta = s_cs_h^{-1}
    \end{equation}
    and
    \begin{equation}
    \begin{aligned}
        \tau_c' \circ \mathbf I (s, N ,\rho) & = \tau_c' ( s_c, P, \mathbf c, \ubL_{\log(s_h), N, \rho, -r}) \\
        &= ( s_c, P,  \mathbf c , \ubL_{\underline\tau_c(\log(s_h)), \underline \tau_c(N), (\tau_c)_* (\rho), -r}) \\
        &= \mathbf I(\tau_c(s), \underline \tau_c (N), (\tau_c)_* (\rho)) \\
        &= \mathbf I \circ \tau_c^u (s,N, \rho).
    \end{aligned}
    \end{equation}
    Since $G^\vee(u) = G^\vee(u^{-1})$ we have
    \begin{equation}
    \begin{aligned}
        \cin' \circ \mathbf I (s, N ,\rho) & = \cin' ( s_c, P, \mathbf c, \ubL_{\log(s_h), N, \rho, -r}) \\
        &= ( s_c^{-1}, P,  \mathbf c , \ubL_{\log(s_h), N, \rho, -r}) \\
        &= \mathbf I(s_c^{-1}s_h, N, \rho) \\
        &= \mathbf I \circ \cin^u (s,N, \rho).
    \end{aligned}
    \end{equation}

    For the last part, assume that $G$ is not inner to a triality form of $D_4$. Then $u = u^{\circ} \vartheta$ where $u \in G^{\vee}$ and $\vartheta$ is an automorphism of $G^\vee$ with $\vartheta^2 = 1$. By \cite[Lemma 6.12]{Lu-unip2} we may assume that $u^{\circ}$ commutes with $\vartheta$ and that $u^{\circ}$ lies in a fixed maximal torus of $T\subset G^{\vee}$ on which $C$ acts as inversion. Since $\vartheta^2 = 1$, we get
    \begin{equation}
        C(u) = (u^{\circ})^{-1} \vartheta = u^{-1}.
    \end{equation}
    Then $G^\vee(u) = G^\vee (u^{-1}) = G^\vee (C(u))$. We claim that $T \cap G^{\vee}(u) = T^{\vartheta}$ is a maximal torus in $G^{\vee}(u)$. To see this, note that by \cite[6.2(b)]{Lu-unip2} $Z_{G^{\vee}}(T^{\vartheta}) =T$. Since $T^{\vartheta} \subset G^{\vee}(u)$ we can find a maximal torus $T'$ of $G^{\vee}(u)$ containing $T^{\vartheta}$ and thus $T' \subset Z_{G^{\vee}}(T^{\vartheta}) \cap G^{\vee}(u) = T \cap G^{\vee}(u)  = T^{\vartheta}$. Hence $T^{\vartheta}$ is a maximal torus in $G^{\vee}(u) = G^\vee (C(u))$. In particular, $C$ restricts to a Chevalley involution on $G^{\vee}(u)$. Then
    \begin{equation}
        C'(u, P , \mathbf c, \ubL) = (u^{-1}, C(P), C_* (\mathbf c), \ubF(C)_*\ubL)  \overset{Proposition \ref{prop:chevalley}}{=} (u^{-1}, P, \mathbf c^*, \ubC_* \ubL).
    \end{equation}
\end{proof}

\begin{lemma}\label{lemma: duality is complex conj for split}
    Assume that $\vartheta^2 = 1$ in $\Lg G$ (or equivalently $G$ is not inner to a triality form of $D_4$). Then $\mathbb{D}^u = \cin^u \circ \tau^u$.
\end{lemma}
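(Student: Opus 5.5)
We prove the identity after transporting both sides along the bijection $\mathbf I:\Phi^u(\Lg G)\to\Phi'(\Lg G)$. Both $\cin^u$ and $\tau^u$ fix the hyperbolic part $s_h$ and act only through the compact part and the data $(N,\rho)$, so it is enough to compute $\mathbf I\circ\cin^u\circ\tau^u$ and compare it with $\mathbb D'\circ\mathbf I$. The identity $\mathbf I\circ\mathbb D^u=\mathbb D'\circ\mathbf I$ is already part of Theorem~\ref{thm:param-to-graded}, and so is $\mathbf I\circ\cin^u=\cin'\circ\mathbf I$. What remains is a formula for $\mathbf I\circ\tau^u$ of the form $(u,P,\mathbf c,\ubL)\mapsto(u^{-1},P,\mathbf c^*,\ubD_*\ubL)$. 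The statement then follows from $\cin'(u^{-1},P,\mathbf c^*,\ubD_*\ubL)=(u,P,\mathbf c^*,\ubD_*\ubL)=\mathbb D'(u,P,\mathbf c,\ubL)$.

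To compute $\tau^u$, we first normalize as in the proof of Theorem~\ref{thm:param-to-graded}. Using \cite[Lemma 6.12]{Lu-unip2}, we conjugate so that $s=s^\circ\vartheta$ with $s^\circ\in T^\vartheta$, where $T$ is the split maximal torus defining $\tau$, and so that $s_c=(s^\circ)_c\vartheta$ and $s_h=(s^\circ)_h$. On $T$ the involution $\tau$ acts by $t\mapsto t_c^{-1}t_h$, and $\tau$ fixes $\vartheta$ because $\vartheta$ is pinned. Hence
\[
\tau(s)=(s^\circ_c)^{-1}s^\circ_h\vartheta .
\]
The hypothesis $\vartheta^2=1$ is used here. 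Since $s^\circ_c$ commutes with $\vartheta$, we get $(s^\circ_c\vartheta)^{-1}=(s^\circ_c)^{-1}\vartheta$, so $\tau(s)_c=s_c^{-1}$ and $\tau(s)_h=s_h$. Consequently $\tau$ maps $G^\vee(s_c)$ to $G^\vee(s_c^{-1})=G^\vee(s_c)$, and it restricts there to the split complex conjugation of the connected reductive group $H=G^\vee(s_c)$. The reason is that $T^\vartheta$ is a maximal torus of $H$, as shown in the proof of Theorem~\ref{thm:param-to-graded}, and it is $\tau$-stable and split.

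With this normalization, $\tau^u(s,N,\rho)$ corresponds under $\mathbf I$ to the quadruple $(s_c^{-1},\tau(P),\tau_*\mathbf c,\ubL_{(\underline\tau\log s_h,\underline\tau N,\tau_*\rho,-r)})$. We choose the standard Levi and parabolic to be $\mathbb R$-defined, so that $\tau(P)=P$. Corollary~\ref{cor: complex conj on IC} gives $\tau_*\mathbf c=\mathbf c^*$. Since $\log s_h$ is real and $r$ is real and nonzero, Corollary~\ref{cor:split-conj} identifies the last entry with $\ubD_*\ubL_{(\log s_h,N,\rho,-r)}$. This yields the desired formula for $\mathbf I\circ\tau^u$.

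The main obstacle is this last step, because Corollary~\ref{cor:split-conj} is formulated for modules of $\gH(H,P,\mathbf c)$. We must check that viewing $H$ as $G^\vee(s_c^{-1})$ identifies the algebras compatibly, that is, that $\underline{\mathbf F}$-type functoriality for the antiholomorphic map $\tau$ is exactly the map $\ubD$, including the normalization by $\kappa$. I would argue this through rigidity (Corollary~\ref{cor:rigid}): both maps act trivially on the toral part by Corollary~\ref{corollary: complex conj on graded Hecke}. I would also check separately that the real structure used for $G^\vee$ restricts to the split structure on $H$ and not merely to some inner twist of it, which is where $T^\vartheta$ being a split maximal torus of $H$ is needed.
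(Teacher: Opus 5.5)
Your argument is correct, but it takes a detour that the paper avoids.

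\textbf{The paper's route.} The paper works directly on $L$-parameters. After the same normalization $s=s^\circ\vartheta$ with $s^\circ\in T^\vartheta$, it gets $\tau(s)=s_c^{-1}s_h$ from $\vartheta^2=1$. It then applies Lemma~\ref{lemma: nilpotent may be chosen real} in $H=G^\vee(s_c)$ with $\sigma=\log(s_h)$ to take $N$ real, and Corollary~\ref{cor: complex conj on sigma component groups} to get $\tau_*\rho\cong\rho^*$. Hence $\tau^u(s,N,\rho)=(s_c^{-1}s_h,N,\rho^*)$, and $\cin^u$ returns $(s,N,\rho^*)$. No graded Hecke algebras appear.

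\textbf{Your route.} You pass through $\mathbf I$, Corollary~\ref{cor:split-conj}, and the $\mathbb D'$ and $\cin'$ parts of Theorem~\ref{thm:param-to-graded}. But Corollary~\ref{cor:split-conj} is itself proved by reducing to the identity $(\underline\tau(\sigma),\underline\tau(N),\tau_*\rho,r)_H=(\sigma,N,\rho^*,r)_H$, which is exactly the paper's computation. It then invokes Theorem~\ref{thm: action of complex conj on simples}, and the $\mathbb D'$ part of Theorem~\ref{thm:param-to-graded} invokes that theorem again, so the two uses cancel. Your proof is sound: there is no circularity, since Theorem~\ref{thm:param-to-graded} does not use this lemma. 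However, it makes an elementary statement depend on Lusztig's geometric standard modules and on the Evens--Mirkovi\'c computation of $\kappa_*$. What it buys is an explicit description of $\tau^u$ on $\Phi'(\Lg G)$, namely $(u,P,\mathbf c,\ubL)\mapsto(u^{-1},P,\mathbf c^*,\ubD_*\ubL)$. That is a reasonable complement to Theorem~\ref{thm:param-to-graded}.

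\textbf{The ``main obstacle'' you flag is not real.}
\begin{itemize}
\item No identification of algebras is needed. $G^\vee(s_c^{-1})=G^\vee(s_c)$ literally, and a point of $\Phi'(\Lg G)$ records only the module. Corollary~\ref{cor:split-conj} is already an isomorphism of $\gH(H,P,\mathbf c^*)$-modules. It also tells you that $(\underline\tau\log(s_h),\underline\tau(N),\tau_*\rho,-r)$ has cuspidal support $\mathbf c^*$.
\item The rigidity check you propose is misdirected. The map $\conju$ that $\tau$ induces geometrically on $\gH$ acts by $-1$ on $\mathbb C[\br]\otimes\mathfrak z_L^*$ (Lemma~\ref{lemma: complex conjugation on torus cohomology}). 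It sends $\ubL_{(\sigma,N,\rho,r)}$ to $\ubL_{(-\sigma,N,\rho^*,-r)}$, negating $r$. Only $\ubD=\conju\circ\kappa$ is trivial on the toral part. So comparing an ``$\ubF(\tau)$'' with $\ubD$ via Corollary~\ref{cor:rigid} is false for $\conju$ and tautological for $\ubD$.
\end{itemize}

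Your other flagged point is genuinely needed: $\tau$ restricts to the split conjugation of $H=G^\vee(s_c)$ because $T^\vartheta$ is a $\tau$-stable split maximal torus of $H$. The paper uses this implicitly when applying its lemmas to $H=G^\vee(s_c)$.
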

\begin{proof}
    Let $(s,N,\rho) \in \Phi^u(\Lg G)$. We can write $s = s^{\circ} \vartheta$ with $s^{\circ} \in G^{\vee}$ and $\vartheta$ a pinned automorphism of $G$ of finite order. By our assumption on $G$ we have $\vartheta^2 =1$. By \cite[Lemma 6.12]{Lu-unip2} we may assume that $s^{\circ} \in T^{\vartheta}$. Note that $\vartheta$ also commutes with $(s^{\circ})_c$ and $(s^{\circ})_h$ (Lemma \ref{lemma: polar decomposition}) from which one can deduce that $s_c = (s^{\circ})_c \vartheta $ and $s_h = (s^{\circ})_h$. Since $\vartheta^2 = 1$, we get 
    \begin{equation}
        \tau(s) = (s^{\circ})_c^{-1}s_h \vartheta  = s_c^{-1} s_h
    \end{equation}
    Applying Lemma \ref{lemma: nilpotent may be chosen real} with $H = G^{\vee}(s_c)$ and $\sigma = \log(s_h)$, we may assume that $N$ is real and by Corollary \ref{cor: complex conj on sigma component groups} we may assume $\tau_*(\rho) \cong \rho^*$. Thus,
    \begin{equation}
        \cin^u \circ \tau^u(s, N, \rho) = \cin (s_c^{-1}s_h, N, \rho^*) = (s,N, \rho^*)=\mathbb{D}^u(s,N, \rho).
    \end{equation}
\end{proof}

\subsection{Proof of Theorem \ref{thm:A}}
We can now prove our main result which gives a geometric description of Aubert-Zelevinsky duality in terms of the Langlands correspondence $ \mathbf r_G: \Irr^{u}(G) \to \Phi^{u}(\Lg G)$.

\begin{theorem}\label{thm:A-proof}
    Let $G^*$ be an adjoint unramified group, $G\in \Inn(G^*)$ and $\pi \in \Irr^u(G)$. 
    Let $\mathbf r_G(\pi) = (s,N,\rho)$. 
    \begin{enumerate}
        \item $\mathbf r_G(|\mathscr{AZ}(\pi)|) = (s,N',\rho')$ where
        \begin{equation}\label{eq:endoscopic-formula-body}
            (s_h,N',\rho') = \mathbb D_{s_c}^u\circ C_{s_c}^u\circ \FT_{s_c}^u(s_h,N,\rho) \qquad \in \Phi(G^\vee(s_c)).
        \end{equation}
        If $G^* \ne \hphantom{ }^3D_4$ then moreover
        \begin{equation}\label{eq:split-formula-body}
            \mathbf r_G(|\mathscr{AZ}(\pi)|) = \mathbb D^u\circ C^u\circ \FT^u(\mathbf r_G(\pi)).
        \end{equation}
    
        \item $\mathbf r_G(|\mathscr{AZ}(\pi)|) = (s,N',\rho')$ where
        \begin{equation}\label{eq:end-conj-body}
            (s_h,N',\rho') = \tau_{c,s_c}^u\circ \FT_{s_c}^u(s_h,N,\rho)\qquad \in \Phi(G^\vee(s_c)).
        \end{equation}
        Moreover, 
        \begin{equation}\label{eq:anti-holo-body}
            \mathbf r_G(|\mathscr{AZ}(\pi)|) = \cin^u\circ \tau_c^u\circ \FT^u(\mathbf r_G(\pi)).
        \end{equation}
    
        \item $\mathscr{AZ}([\pi]) = (-1)^d|\mathscr{AZ}(\pi)|$ where $d$ is the dimension of the connected center of the cuspidal support of $\mathbf r_G(\pi)$.
    \end{enumerate}
\end{theorem}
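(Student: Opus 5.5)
The plan is to move everything to geometric parameters through the bijection $\mathbf I:\Phi^u(\Lg G)\to\Phi'(\Lg G)$ and its inverse $\mathbf J$, and to compare isomorphisms of graded Hecke algebras there. Write $\mathbf r'_G(\pi)=\mathbf I(\mathbf r_G(\pi))=(u,P,\mathbf c,\ubL)$ with $u=s_c$. By Theorem~\ref{thm:az},
\begin{equation*}
\mathbf r'_G(|\mathscr{AZ}(\pi)|)=(u,P,\mathbf c,\uAZ_*\ubL),
\end{equation*}
and $\mathscr{AZ}([\pi])=(-1)^d|\mathscr{AZ}(\pi)|$ with $d=\dim Z(L)^\circ$. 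The second statement is exactly part (3). It remains to show that each composite of operations in (1) and (2), transported by Theorem~\ref{thm:param-to-graded}, acts on $\Phi'(\Lg G)$ by fixing $(u,P,\mathbf c)$ and pushing $\ubL$ forward along $\uAZ$.

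For \eqref{eq:split-formula-body}, assume $G^*\neq{}^3D_4$, so the last clause of Theorem~\ref{thm:param-to-graded} applies. Applying $\FT'$, then $C'$, then $\mathbb D'$ changes the geometric support as
\begin{equation*}
(u,P,\mathbf c)\mapsto(u^{-1},P,\mathbf c)\mapsto(u,P,\mathbf c^*)\mapsto(u,P,\mathbf c).
\end{equation*}
At the same time $\ubL$ is pushed forward along $\ubD\circ\ubC\circ\ubFT$. Here I use $G^\vee(u)=G^\vee(u^{-1})$ to view all three maps between algebras with the same presentation. On generators:
\begin{itemize}
\item $t_s\mapsto -t_s\mapsto -t_{w_0(s)}\mapsto -t_{w_0(s)}$;
\item $x\mapsto -x\mapsto w_0(x)\mapsto w_0(x)$;
\item $\br\mapsto\br$.
\end{itemize}
This is exactly $\uAZ$ of Definition~\ref{def:graded-involution}. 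To avoid checking relations by hand, I only compare on the toral part: both are graded isomorphisms equal to $x\mapsto w_0(x)$, $\br\mapsto\br$ there, so they coincide by Corollary~\ref{cor:rigid}. Since $\mathbf I\circ(\mathbb D^u\circ C^u\circ\FT^u)=(\mathbb D'\circ C'\circ\FT')\circ\mathbf I$, applying $\mathbf J$ gives \eqref{eq:split-formula-body}.

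For \eqref{eq:anti-holo-body}, the same bookkeeping with $\cin'\circ\tau_c'\circ\FT'$ works for every $G$, because $\tau_c'$ was defined without the Chevalley identification of supports. The map $\FT'$ sends $u$ to $u^{-1}$. Next, $\tau_c'$ keeps $(u^{-1},P,\mathbf c)$ and pushes along $\ubD\circ\ubC$, which sends $\mathbf c$ to $\mathbf c^*$ and back to $\mathbf c$. Finally $\cin'$ restores $u$. The Hecke-algebra part is again $\ubD\circ\ubC\circ\ubFT=\uAZ$. By \eqref{eq:compact-conj} and the $\cin$, $\FT$ identities of Theorem~\ref{thm:param-to-graded}, the formula follows after applying $\mathbf J$.

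For the endoscopic formulas \eqref{eq:endoscopic-formula-body} and \eqref{eq:end-conj-body}, I apply the same machinery to the connected group $H=G^\vee(s_c)$ with trivial $\vartheta$. In that setting $(s_h,N,\rho)$ has trivial compact part, so its geometric parameter is $(1,P,\mathbf c,\ubL)$, with the same cuspidal datum and the same simple $\gH(H,P,\mathbf c)$-module as in $\mathbf r'_G(\pi)$. The group $H$ is split (trivial Frobenius action), so the non-triality clause applies. The computation above then shows that $\mathbb D^u_{s_c}\circ C^u_{s_c}\circ\FT^u_{s_c}$ and $\tau^u_{c,s_c}\circ\FT^u_{s_c}$ both act by $\ubL\mapsto\uAZ_*\ubL$ and preserve the support; since $1^{-1}=1$, no $\cin$ is needed. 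Comparing with $\mathbf r'_G(|\mathscr{AZ}(\pi)|)=(s_c,P,\mathbf c,\uAZ_*\ubL)$ and applying $\mathbf J$ yields $(s_c\exp(\sigma'),N',\rho')$ with $(s_h,N',\rho')$ as claimed.

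The main obstacle is this endoscopic step: justifying that the parameter formalism of Theorem~\ref{thm:param-to-graded} and $\mathbf I$ applies verbatim to $H=G^\vee(s_c)$. Concretely one must check:
\begin{itemize}
\item component groups computed in $H$ agree with those in $G^\vee$;
\item $H$ may fail to have simply connected derived group, so the relevant enhancements must still match;
\item the sign $\br=-r$ in $\mathbf I$ is handled consistently by each geometric operation.
\end{itemize}
For the sign, I would note that $\uAZ$, $\ubFT$, $\ubC$ and $\ubD$ all fix $\br$, so they commute with $\ui$ by Corollary~\ref{cor:rigid}. For the enhancements, I would argue directly that $\pi_0$ of $G^\vee(s_c,s_h,N)$ is the same whether centralizers are taken in $G^\vee$ or in $H$, since $G^\vee(s,N)=H(s_h,N)$.
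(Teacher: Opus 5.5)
Your proposal is correct. For parts (2) and (3) it is exactly the paper's argument: transport through $\mathbf I$, apply Theorem~\ref{thm:az}, and use $\ubD\circ\ubC\circ\ubFT=\uAZ$, repeating this inside $H=G^\vee(s_c)$, where the compact part is $1$.

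Part (1) is where you differ. The paper never composes $\mathbb D'\circ C'\circ\FT'$. It deduces \eqref{eq:split-formula-body} from \eqref{eq:anti-holo-body} via the parameter-level identity $\mathbb D^u=\cin^u\circ\tau^u$ (Lemma~\ref{lemma: duality is complex conj for split}) together with $\tau_c=\tau\circ C$. It gets the endoscopic formula the same way, using that $G^\vee(s_c)$ is connected.

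You instead compute $C'$ directly from the Chevalley clause of Theorem~\ref{thm:param-to-graded}, i.e.\ Proposition~\ref{prop:chevalley}. Both formulations then follow in parallel from the single identity $\ubD\circ\ubC\circ\ubFT=\uAZ$. The triality hypothesis enters only through whether $C(u)$ is conjugate to $u^{-1}$, which holds trivially at $u=1$ in the endoscopic case.

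What each route buys:
\begin{itemize}
\item Yours bypasses Lemma~\ref{lemma: duality is complex conj for split} and its real-representative computation $\tau(s)=s_c^{-1}s_h$.
\item The paper's shows that the two formulations coincide as operations on $L$-parameters, independently of Aubert--Zelevinsky duality.
\end{itemize}

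The bookkeeping points you flag are harmless for the reasons you give. First, $G^\vee(s,N)=H(s_h,N)$ by Lemma~\ref{lemma: polar decomposition}. Second, the graded-level inputs (Theorems~\ref{thm:em} and~\ref{thm: action of complex conj on simples}, Proposition~\ref{prop:chevalley}) are stated for arbitrary connected reductive $H$, with component groups taken in $H$. So simple connectivity of $H_{der}$ is never needed.
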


\begin{proof}
    We prove Equation \ref{eq:anti-holo-body} and \ref{eq:split-formula-body} first.
    
    Under the identification $\mathbf I:\Phi(\Lg G)\to\Phi'(\Lg G)$ the equality in \eqref{eq:anti-holo-body} is equivalent to
    \begin{equation}
        \mathbf r'_G(|\mathscr{AZ}(\pi)|) = \mathbf I \circ \cin^u\circ \tau_c^u\circ \FT^u (\mathbf r_G(\pi)) \overset{Theorem \ref{thm:param-to-graded}}{=}   \cin'\circ \tau_c'\circ \FT' (\mathbf r'_G(\pi)).
    \end{equation}
    Let $\mathbf  r'_G(\pi) = (u,P,\mathbf c,\ubL)$. Then, by Definition \ref{def: geometric operations} we have
    \begin{equation}\label{eq:proof-1}
        \cin'\circ \tau_c'\circ \FT' (\mathbf r'_G(\pi)) = (u,P, \mathbf{c},\ubD_* \circ \ubC_* \circ \ubFT_*( \ubL)).
    \end{equation}
    On the other hand, we have
    \begin{equation}
        \mathbf r'_G(|\mathscr{AZ}(\pi)|) = (u,P,\mathbf c, \underline{\mathscr{AZ}}_*\ubL)
    \end{equation}
    by Theorem \ref{thm:az}. Thus, it suffices to show that $\underline{\mathscr{AZ}}_*\ubL \cong \ubD_* \circ \ubC_* \circ \ubFT_*( \ubL)$. Writing out the definition, we see that $\ubD \circ \ubC \circ \ubFT$ is given by $x \mapsto w_0(x)$ and $t_w \mapsto (-1)^{l(w)}t_{w_0ww_0^{-1}}$. Thus, we have $\underline{\mathscr{AZ}} = \ubD \circ \ubC \circ \ubFT$ which completes the proof.

    For Equation \ref{eq:split-formula-body}, since $G^*$ is not a triality form of $D_4$, by Lemma \ref{lemma: duality is complex conj for split} $\mathbb{D}^u = \cin^u \circ \tau^u$.
    Moreover we have $\tau_c = \tau \circ C$. 
    Therefore
    \begin{equation}
        \cin^u \circ \tau^u_c \circ \FT^u = \cin^u \circ \tau^u\circ C^u \circ \FT^u = \mathbb D^u \circ C^u \circ \FT^u.
    \end{equation}

    We now prove Equation \ref{eq:end-conj-body} and Equation \ref{eq:endoscopic-formula-body}.
    We have that $(s_h,N,\rho)\in \Phi(G^\vee(s_c))$ since $s_h\in G^\vee$ and commutes with $s_c$ (Lemma \ref{lemma: polar decomposition}) and $\Ad(s_c)N = N$.
    Let 
    \begin{equation}\label{eq:proof-2}
        (s_h',N',\rho') = \cin_{s_c}^u\circ \tau_{c,s_c}^u\circ \FT_{s_c}^u(s_h,N,\rho).
    \end{equation}
    Let $\mathbf  r'_G(\pi) = (u,P,\mathbf c,\ubL)$.
    By definition $u = s_c$ and
    \begin{equation}
        \mathbf I_{s_c}(s_h,N,\rho) = (1,P,\mathbf c,\ubL) \qquad \in \Phi'(G^\vee(s_c)).
    \end{equation}
    By Equation \ref{eq:proof-1} and the paragraph that follows, 
    \begin{equation}
        \mathbf I_{s_c}(s_h',N',\rho') = (1,P, \mathbf{c},\uAZ_*( \ubL)) \qquad \in \Phi'(G^\vee(s_c)).
    \end{equation}
    Therefore 
    \begin{equation}
        \mathbf I(s,N',\rho') = (s_c,P,\mathbf c,\uAZ_*(\ubL)) = \mathbf I \circ \mathbf r_G(\mathscr{AZ}(|\pi|)).
    \end{equation}
    Finally, we may drop the $\cin_{s_c}^u$ term from Equation \ref{eq:proof-2} since the semisimple parts are all hyperbolic.
    The proof of Equation \ref{eq:split-formula-body} is essentially identical.
    We note only that $G^\vee(s_c)$ is connected and so Lemma \ref{lemma: duality is complex conj for split} still applies. 
    
    Part $(3)$ follows from Theorem \ref{thm:az}.
\end{proof}

\begin{remark}\label{rmk:quadratic-restriction}
As we have seen in the proof of Theorem \ref{thm:param-to-graded} any $s\in G^{\vee}\vartheta$ satisfies that $s^{-1}$ and $C(s)$ are $G^\vee$-conjugate unless $G$ is inner to a triality form of $D_4$. The reason why this breaks down in a triality setting is that $\vartheta^2 \neq 1$ so that $C(s) \in G^{\vee} \vartheta$ and $s^{-1} \in G^{\vee} \vartheta^{-1}$ lie in different $\vartheta$-cosets. From this it can be seen that  $\mathscr{AZ}$ cannot correspond to $ \mathbb D^u\circ C^u\circ \FT^u$ in the general unramified setting simply because the latter does not fix the semisimple part of an $L$-parameter. In our setting of simple groups, this problem only occurs for a triality of $D_4$, but for general semisimple groups there are plenty of examples where this fails.
The correct setting for Equation \ref{eq:split-formula-body} appears to be groups $G^*$ that split over a quadratic unramified extension.
\end{remark}

\section{Microlocal $A$-packets}
In this section let $G$ be an inner form of a split adjoint group. 
An \emph{Arthur parameter} is a continuous homomorphism 
$$\psi: W_{F}\times\SL_2(\mathbb C) \times \SL_2(\mathbb C) \to G^\vee$$
such that the restriction of $\psi$ to $W_{F}\times \SL_2(\mathbb C)$ is a tempered Langlands parameter, and the restriction of $\psi$ to the second factor $\SL_2(\CC)\to G^\vee$ is algebraic.
Arthur parameters are considered modulo the $G^{\vee}$-action on the target.
To each Arthur parameter $\psi$ there is an associated Langlands parameter $(\phi_{\psi},N_\psi)$ defined by the formula
\begin{equation}\label{eq:ArthurLanglands}
    \phi_{\psi}(w) = \psi(w,d(w),d(w)), \qquad w \in W_{F}, 
\end{equation}
and $N_\psi = d\psi|_{\SL_2(\mathbb C)}(x)$
comes from the restriction to the first $\mathrm{SL}_2(\mathbb C)$-factor, where 
\begin{equation}
    d(w) := \begin{pmatrix}\|w\|^{1/2} & 0\\0 & \|w\|^{-1/2} \end{pmatrix}, \quad x := \begin{pmatrix}0 & 1\\0 & 0\end{pmatrix}.
\end{equation}
The microlocal $A$-packet attached to $\psi$ is defined to be
\begin{equation}
    \Pi^{mic}_\psi = \{(G,\pi): \pi\in \Irr_{\phi_\psi}(G), \ T^*\mathcal O_{N_\psi} \subset \mathrm{SS}(\mathbf r_G(\pi))\}
\end{equation}
where $T^*\mathcal O_N$ denotes the co-normal bundle of $\mathcal O_N$ and $\mathrm{SS}$ is the singular support.

Recall that we have maps
\begin{equation}
    \begin{tikzcd}
        & T^*V_{(s,q)} \arrow[dl] \arrow[dr,"p"]\\
        V_{(s,q)} & & V^*_{(s,q)}.
    \end{tikzcd}
\end{equation}
The Piasetsky dual of an orbit $\mathcal O\subset V_{(s,q)}$ is defined to be the unique orbit $\mathcal O^*\subset V^*_{(s,q)}$ such that $\overline{T^*\mathcal O} = \overline{T^*\mathcal O^*}$. Recall that the Killing form induces a $G^{\vee}(s)$-equivariant isomorphism $\phi_B:V_{(s,q)}^* \rightarrow V_{(s^{-1},q)}$ and the Chevalley involution induces a map $\underline C:V_{(s^{-1},q)}\to V_{(s,q)}$.
Via the usual translation between parameters and sheaves (see e.g. \cite{Vogan1993}) we view $\mathbf r_G(\pi) = (s,N,\rho)$ as a $G^\vee(s)$-equivariant perverse sheaf on $V_{(s,q)}$.

\begin{lemma}
    \cite[Corollary~6.6.2]{Cunningham}\label{lem:cunningham}
    Let $\psi$ be an A-parameter and $\psi^t$ be the A-paramter $\psi^t(w,x,y):=\psi(w,y,x)$.
    Then $\mathcal O_{N_{\psi^t}} = \underline C\circ \phi_B(\mathcal O_{N_{\psi}}^*)$.
\end{lemma}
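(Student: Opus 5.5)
We need to prove $\mathcal O_{N_{\psi^t}} = \underline C\circ\phi_B(\mathcal O_{N_\psi}^*)$, i.e. that the orbit attached to the transposed parameter is the Chevalley image of the Killing-form transport of the Pyasetskii dual of $\mathcal O_{N_\psi}$. The plan is to reduce to a statement about a single $\mathfrak{sl}_2\times\mathfrak{sl}_2$-configuration inside $\mathfrak g^\vee$ and compute conormal bundles explicitly.

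\textbf{Step 1: set up the geometry of $\psi$.} Put $s=\phi_\psi(\Fr)=\psi(\Fr,d(\Fr),d(\Fr))$, $N=N_\psi=d\psi_1(x)$ and $M:=d\psi_2(x)$, where $\psi_1,\psi_2$ are the restrictions of $\psi$ to the two $\mathrm{SL}_2$-factors. These commute, so
\[
\Ad(s)N=q^{-1}N,\qquad \Ad(s)M=q^{-1}M.
\]
Thus both $N$ and $M$ lie in $V_{(s,q)}$. The infinitesimal parameter of $\psi^t$ is the same $s$, and $N_{\psi^t}=M$. The claim therefore becomes
\[
G^\vee(s)\cdot M=\underline C\bigl(\phi_B(\mathcal O_N^*)\bigr).
\]

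\textbf{Step 2: exhibit a generic conormal vector.} Let $y=\begin{pmatrix}0&0\\1&0\end{pmatrix}$ and set $M^-:=d\psi_2(y)\in V_{(s^{-1},q)}$. Via $\phi_B$, $M^-$ is a covector on $V_{(s,q)}$. Because $\psi_2$ commutes with $\psi_1$, we have $[N,M^-]=0$. By invariance of the Killing form, $B([\mathfrak g^\vee(s),N],M^-)=B(\mathfrak g^\vee(s),[N,M^-])=0$. Hence $(N,M^-)\in T^*_{\mathcal O_N}V_{(s,q)}$, with $T^*_{\mathcal O_N}$ the conormal variety of $\mathcal O_N$.

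The main obstacle is showing that $(N,M^-)$ is a \emph{generic} point of this conormal variety, so that $\mathcal O_N^*=G^\vee(s)\cdot M^-$. I would follow Cunningham et al.: decompose $V_{(s,q)}$ under the $\mathfrak{sl}_2\times\mathfrak{sl}_2$ action, refined by the $\Ad(s)$-eigenvalues. Then show the orbit map $\mathfrak g^\vee(s)\to T^*_NV$, $X\mapsto [X,M^-]$, surjects onto the fibre $\{\xi:[\xi,N]=0\}\cap V_{(s^{-1},q)}$ modulo $\mathfrak{g}^\vee(s,N)$. This amounts to a dimension count: $\dim\mathcal O_N+\dim G^\vee(s)\cdot M^-=\dim V$, verified weight by weight using standard $\mathfrak{sl}_2$ representation theory on isotypic pieces $V_a\boxtimes V_b$. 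This is the step I expect to need the most care, since the eigenvalue condition cuts the representation asymmetrically.

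\textbf{Step 3: apply the Chevalley involution.} Choose $C$ acting by inversion on a maximal torus containing the image of the diagonal torus of $\psi_2$. Then $\underline C$ maps $d\psi_2(y)$ to a multiple of $d\psi_2(x)$, up to composing $\psi_2$ with the Chevalley involution of $\mathrm{SL}_2$, which swaps $x$ and $-y$ up to conjugation. Hence $\underline C(M^-)$ is $G^\vee(s)$-conjugate to $M$.

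The composite $\underline C\circ\phi_B$ must be read as mapping $V^*_{(s,q)}$ back to $V_{(s,q)}$. This requires $C(s)$ and $s^{-1}$ to be $G^\vee$-conjugate, which is available here because $G$ is inner to split. Finally, invoke Pyasetskii duality symmetry, $\overline{T^*\mathcal O_N}=\overline{T^*\mathcal O_N^*}$, to conclude.
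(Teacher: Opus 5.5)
The paper does not prove this lemma; it imports it from Cunningham et al. Your strategy is the strategy of that source: show that $(N,M^-)$, with $M^-=d\psi_2(y)$, is a generic point of $T^*_{\mathcal O_N}V_{(s,q)}$, then transport by $\underline C$. As written, however, Step~2 aims at the wrong criterion and Step~3 rests on a false claim.

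\textbf{Step 2.} Genericity of $(N,M^-)$ means $\dim G^\vee(s)\cdot(N,M^-)=\dim V$. Equivalently, $\dim\mathcal O_N+\dim G^\vee(s,N)\cdot M^-=\dim V$, i.e.\ $X\mapsto[X,M^-]$ maps $\mathfrak g^\vee(s,N)$ onto the fibre $\{\xi\in V_{(s^{-1},q)}:[N,\xi]=0\}$.

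Your identity uses the full orbit $G^\vee(s)\cdot M^-$, and it is false in general. Take $G^\vee=\mathrm{SL}_4$ and $\psi$ the tensor product of the two standard representations of $\mathrm{SL}_2$. Then $\mathcal O_N$ and $G^\vee(s)\cdot M^-$ are both $3$-dimensional, while $\dim V=4$.

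The correct count is short once you write $\mathfrak g^\vee=\bigoplus_{a,b}V_a\boxtimes V_b\otimes U_{a,b}$.
\begin{itemize}
\item Temperedness makes $s_0=\psi(\mathrm{Fr},1,1)$ compact. Hence $\mathfrak g^\vee(s)$ and $V$ are the parts of total weight $i+j=0$ and $i+j=2$, tensored with $U_{a,b}^{s_0}$.
\item Per copy these have dimensions $\min(a,b)+1$ and $\min(a,b)+1-\delta_{ab}$ when $a\equiv b \bmod 2$; both vanish otherwise.
\item The joint centralizer is $\mathfrak g^\vee(s,N,M^-)=\bigoplus_a(\text{highest}\otimes\text{lowest weight line})\otimes U^{s_0}_{a,a}$.
\end{itemize}
Therefore $\dim\mathfrak g^\vee(s)-\dim\mathfrak g^\vee(s,N,M^-)=\dim V$, which is the genericity you need.

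\textbf{Step 3.} This is the genuine gap. It is not true that $\underline C(d\psi_2(y))$ is a multiple of $d\psi_2(x)$.
\begin{itemize}
\item Let $\theta$ be the Chevalley involution of $\mathrm{SL}_2$. Then $-\underline C(M^-)$ is the nilpositive element of the triple $(-\underline C(M^-),H_2,-\underline C(M))=d(C\circ\psi_2\circ\theta)$, where $H_2=d\psi_2(h)$. This triple shares only its neutral element with $d\psi_2$.
\item Concretely, in $\mathrm{SL}_3$ with $C(g)={}^tg^{-1}$ and $\psi_2$ principal conjugated by a generic diagonal matrix, $\underline C(M^-)$ and $M$ are not proportional.
\item Even if you invoke conjugacy of triples with equal neutral element inside $G^\vee$, you only get an element of $G^\vee(H_2)$, which need not centralize $s$.
\item Worse, a $C$ inverting only a torus through $\psi_2(T_{\mathrm{SL}_2})$ need not satisfy $C(s)=s^{-1}$. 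So $\underline C(M^-)$ need not lie in $V_{(s,q)}$. Knowing only that $C(s)$ and $s^{-1}$ are $G^\vee$-conjugate introduces a conjugating element that your argument never controls.
\end{itemize}

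The missing idea is as follows.
\begin{itemize}
\item Choose $C$ inverting a maximal torus containing $s_0$, $H_1=d\psi_1(h)$ and $H_2$. This is possible because $s_0$ lies in the connected reductive centralizer of the torus generated by both diagonal tori. Then $C(s)=s^{-1}$ exactly.
\item Now $(-\underline C(M^-),H_2,-\underline C(M))$ and $(M,H_2,M^-)$ are $\mathfrak{sl}_2$-triples with the same neutral element in the Lie algebra of the connected reductive group $L=G^\vee(s_0,H_1)$.
\item By Kostant's theorem they are conjugate under $L(H_2)^\circ\subset G^\vee(s)$. The sign is absorbed by $\psi_2(\mathrm{diag}(i,-i))\in G^\vee(s)$.
\end{itemize}
Combined with the corrected Step~2, this gives $\underline C\circ\phi_B(\mathcal O_N^*)=G^\vee(s)\cdot\underline C(M^-)=G^\vee(s)\cdot M=\mathcal O_{N_{\psi^t}}$ directly. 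Your final appeal to $\overline{T^*\mathcal O}=\overline{T^*\mathcal O^*}$ is not needed.
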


\begin{theorem}\label{thm:A-packets}
    Let $\psi$ be an A-parameter.
    Then 
    \begin{equation}
        \AZ(\Pi^{mic}_\psi) = \Pi^{mic}_{\psi^t}.
    \end{equation}
\end{theorem}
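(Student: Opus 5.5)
We will deduce this from the split formula \eqref{eq:split-formula-body} of Theorem \ref{thm:A-proof}, which applies because $G$ is inner to a split group, so $G^*\neq{}^3D_4$ and $\vartheta=1$. Fix $(G,\pi)\in\Pi^{mic}_\psi$ and write $\mathbf r_G(\pi)=(s,N,\rho)$ with $s=\phi_\psi(\Fr)$. By Theorem \ref{thm:A-proof}(1),
\begin{equation*}
\mathbf r_G(|\mathscr{AZ}(\pi)|)=\mathbb D^u\circ C^u\circ\FT^u(s,N,\rho)=(C(s^{-1}),\underline C(\check N),(C_*\check\rho)^*).
\end{equation*}
We first check infinitesimal parameters. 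In the split case $C(s^{-1})$ is $G^\vee$-conjugate to $s$ (proof of Theorem \ref{thm:param-to-graded}). Moreover $\phi_{\psi^t}(\Fr)=\psi(\Fr,d(\Fr),d(\Fr))=\phi_\psi(\Fr)$, since swapping the two $\mathrm{SL}_2$ factors does not change the diagonal. So $|\mathscr{AZ}(\pi)|\in\Irr_{\phi_{\psi^t}}(G)$, and it is a representation of the same inner form $G$.

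Next we check the singular support condition. As a sheaf, $\mathbb D^u\circ C^u\circ\FT^u$ is the composite of Fourier transform $\mathcal F\mapsto\check{\mathcal F}$ on $V^*_{(s,q)}$, pullback along $\phi_B$, pushforward along $\underline C:V_{(s^{-1},q)}\to V_{(s,q)}$ (transported by a conjugation identifying $C(s^{-1})$ with $s$), and replacement of the local system by its dual. We track singular support through each step.
\begin{itemize}
\item Fourier transform on the vector space $V=V_{(s,q)}$ preserves characteristic cycles up to the canonical identification $T^*V\cong V\times V^*\cong T^*V^*$. Hence $\mathrm{SS}(\check{\mathcal F})$ is the image of $\mathrm{SS}(\mathcal F)$, and $T^*\mathcal O\subset\mathrm{SS}(\mathcal F)$ if and only if $T^*\mathcal O^*\subset\mathrm{SS}(\check{\mathcal F})$, with $\mathcal O^*$ the Pyasetskii dual orbit.
\item $\phi_B$ and $\underline C$ are linear isomorphisms, so they transport conormal bundles to conormal bundles of image orbits.
\item Replacing $\rho$ by $\rho^*$ preserves singular support: the sheaf $\IC(\mathcal O,\rho^*)$ is the Verdier dual of $\IC(\mathcal O,\rho)$, and Verdier duality preserves singular support. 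Alternatively, by Lemma \ref{lemma: duality is complex conj for split} it is a pushforward along the real-analytic homeomorphism $\underline\tau$ after fixing a real base point, which preserves characteristic varieties since the relevant orbits are stable under $\underline\tau$.
\end{itemize}
Combining these, $T^*\mathcal O_{N_\psi}\subset\mathrm{SS}(\mathbf r_G(\pi))$ holds if and only if $T^*\bigl(\underline C\circ\phi_B(\mathcal O^*_{N_\psi})\bigr)\subset\mathrm{SS}(\mathbf r_G(|\mathscr{AZ}(\pi)|))$. By Lemma \ref{lem:cunningham} the orbit on the right is $\mathcal O_{N_{\psi^t}}$. This gives the inclusion $\AZ(\Pi^{mic}_\psi)\subseteq\Pi^{mic}_{\psi^t}$. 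The reverse inclusion follows by applying the same argument to $\psi^t$, using that $(\psi^t)^t=\psi$ and that $|\mathscr{AZ}|$ is an involution.

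The main obstacle is making the identifications compatible. Two points need care.
\begin{itemize}
\item The conjugation taking $C(s^{-1})$ back to $s$ must carry $V_{(s,q)}$ to itself compatibly with Cunningham's normalization of $\underline C\circ\phi_B$. We would fix $s$ in a maximal torus on which $C$ is inversion, so that $C(s^{-1})=s$ on the nose and no conjugation is needed.
\item One must verify that the duality step preserves singular support in the equivariant setting.
\end{itemize}
We would settle the second point using the Verdier-duality description, since $\mathrm{SS}(\mathbb D\mathcal F)=\mathrm{SS}(\mathcal F)$ is standard. Finally, the notion of singular support used to define $\Pi^{mic}_\psi$ must agree with Cunningham's, so that Lemma \ref{lem:cunningham} applies verbatim.
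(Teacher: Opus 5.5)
Your proposal is correct and follows the paper's proof. The paper runs the same chain of equivalences: invariance of micro-support under Fourier transform (Kashiwara--Schapira, Thm.~5.5.5), transport along the linear isomorphisms $\phi_B$ and $\underline C$, Lemma~\ref{lem:cunningham} to identify the orbit with $\mathcal O_{N_{\psi^t}}$, invariance of micro-support under Verdier duality for the $\mathbb D^u$ step (Kashiwara--Schapira, Exercise~V.13), and finally the split formula of Theorem~\ref{thm:A}. Your extra checks only make explicit points the paper leaves implicit: that the infinitesimal parameter is unchanged, that $s$ can be normalized so that $C(s^{-1})=s$, and that the reverse inclusion follows because $|\mathscr{AZ}|$ is an involution.
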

\begin{proof}
    We have that 
    \begin{align}
        \overline{T^*\mathcal O_{N_\psi}}\subset \mathrm{SS}(\mathbf r_G(\pi)) &\Longleftrightarrow \overline{T^*\mathcal O^*_{N_{\psi}}} \subset \mathrm{SS}(\widecheck{\mathbf r_G(\pi)}) && \text{\cite[Theorem 5.5.5]{kashiwara-schapira}} \\
        &\Longleftrightarrow \overline{T^*(\underline C\circ\phi_B(O^*_{N_{\psi}}))} \subset \mathrm{SS}(C^u \circ \FT^u(\mathbf r_G(\pi))) \\
        &\Longleftrightarrow \overline{T^*(O^*_{N_{\psi^t}})} \subset \mathrm{SS}(C^u \circ \FT^u(\mathbf r_G(\pi))) && \text{Lemma \ref{lem:cunningham}} \\
        &\Longleftrightarrow \overline{T^*(O^*_{N_{\psi^t}})} \subset \mathrm{SS}(\mathbb D^u\circ C^u \circ\FT^u(\mathbf r_G(\pi))) && \text{\cite[Exercise V.13]{kashiwara-schapira}}\\
        &\Longleftrightarrow \overline{T^*(O^*_{N_{\psi^t}})} \subset \mathrm{SS}(\mathbf r_G(\mathscr{AZ}(\pi))) && \text{Theorem \ref{thm:A}}.
    \end{align}
    Therefore $\pi\in \Pi_\psi\Longleftrightarrow \mathscr{AZ}(\pi)\in \Pi_{\psi^t}$.
\end{proof}

\begin{sloppypar} 
\printbibliography[title={References}] 
\end{sloppypar}

\end{document}